\begin{document}

\newcommand{\barint}{\overline{\hspace{.65em}}\!\!\!\!\!\!\int}
\newcommand{\ep}{\varepsilon}
\newcommand{\logeps}{|\log\ep|}
\newcommand{\jep}{j_\ep}
\newcommand{\loc}{ {\mbox{\scriptsize{loc}}} }
\newcommand{\R}{{\mathbb R}}
\newcommand{\C}{{\mathbb C}}
\newcommand{\J}{{\mathbb J}}
\newcommand{\T}{{\mathbb T}}
\newcommand{\Z}{{\mathbb Z}}
\newcommand{\N}{{\mathbb N}}
\newcommand{\wt}{\widetilde}
\newcommand{\Q}{{\mathbb Q}}
\newcommand{\Hdf}{{\mathcal{H}}}
\newcommand{\calD}{{\mathcal{D}}}
\newcommand{\calC}{{\mathcal{C}}}
\newcommand{\calS}{{\mathcal{S}}}
\newcommand{\calL}{{\mathcal{L}}}
\newcommand{\dg}{\operatorname{dg}}
\newcommand{\calF}{{\mathcal{F}}}
\newcommand{\calG}{{\mathcal{G}}}
\newcommand{\calB}{\mathcal{B}}
\newcommand{\calH}{{\mathcal{H}}}
\newcommand{\calJ}{{\mathcal{J}}}
\newcommand{\calT}{{\mathcal{T}}}
\newcommand{\calM}{{\mathcal{M}}}
\newcommand{\dist}{\operatorname{dist}}
\newcommand{\spt}{\operatorname{spt}}
\def\rest{\hskip 1pt{\hbox to 10.8pt{\hfill
\vrule height 7pt width 0.4pt depth 0pt\hbox{\vrule height 0.4pt
width 7.6pt depth 0pt}\hfill}}}
\newcommand{\bd}{\partial}

\newcommand{\vol}{\,\mbox{vol}}

\newcommand{\bn}[1]{ ({\texttt{#1}})}

\newcommand{\beq}{\begin{equation}}
\newcommand{\eeq}{\end{equation}}

\def\logeps{{|\!\log\varepsilon|}}

\theoremstyle{plain}
\newtheorem{theorem}{Theorem}
\newtheorem{goal}{Goal}
\newtheorem{proposition}{Proposition}
\newtheorem{lemma}{Lemma}
\newtheorem{corollary}{Corollary}

\theoremstyle{definition}
\newtheorem{definition}{Definition}

\theoremstyle{remark}
\newtheorem{remark}{Remark}
\newtheorem{example}{Example}
\newtheorem{warning}{Warning}

\numberwithin{equation}{section}
\setcounter{tocdepth}{3}

\newcommand{\bl}{\color{blue}}
\definecolor{darkgreen}{rgb}{0,0.55,0}
\newcommand{\blue}[1]{{\textcolor{blue}{#1}}} 
\newcommand{\red}[1]{{\textcolor{red}{#1}}} 
\newcommand{\green}[1]{{\textcolor{darkgreen}{#1}}}

\newcommand{\Om}{\Omega}
\newcommand{\om}{\omega}
\newcommand{\e}{\varepsilon}
\newcommand{\abs}[1]{\left\vert{#1}\right\vert}
\newcommand{\norm}[1]{\left\Vert{#1}\right\Vert}
\newcommand{\p}{\partial}
\newcommand{\calO}{\mathcal{O}}
\newcommand{\vp}{\varphi}
\newcommand{\Tgi}{T_{\gamma_i}}

\title[Nearly parallel vortex filaments]{Nearly Parallel Vortex Filaments in the 3D Ginzburg-Landau Equations}

\author{Andres Contreras \and Robert L. Jerrard  }
\address{Department of Mathematical Sciences, New Mexico State University, Las Cruces, New Mexico, USA}\email{acontre@nmsu.edu}
\address{Department of Mathematics, University of Toronto,
Toronto, Ontario, Canada}\email{rjerrard@math.toronto.edu}

\maketitle

\begin{abstract} We introduce a framework to study the occurrence of vortex filament concentration in $3D$ Ginzburg-Landau theory. We derive a functional that describes the free-energy 
of a collection of nearly-parallel quantized vortex filaments
in a cylindrical $3$-dimensional domain, in certain
scaling limits; it is shown to arise
as the $\Gamma$-limit of a sequence of scaled
Ginzburg-Landau functionals. Our main result establishes for the first time a long believed connection between the Ginzburg-Landau functional and the energy of nearly parallel filaments that applies to many mathematically and physically relevant situations where clustering of filaments is expected. In this setting it also constitutes a higher-order asymptotic expansion of the Ginzburg-Landau energy, a refinement over the arclength functional approximation. Our description of the vorticity region significantly improves on previous studies and enables us to rigorously distinguish a collection of multiplicity one vortex filaments from an ensemble of fewer higher multiplicity ones. 
As an application,
we prove the existence of solutions of the Ginzburg-Landau equation
that exhibit clusters of vortex filaments whose small-scale
structure is governed by the limiting free-energy
functional. 
\end{abstract}

\maketitle

\section{Introduction}


Let $\Om\subseteq \R^3$ and $\e>0$ small.
For $u\in H^1(\Om;\C),$ the Ginzburg-Landau energy is given by
\beq
F_\e(u; \Omega) = F_\ep(u) :=\int_\Om e_\e(u) \ ,
\qquad
e_\e(u):=\frac 12\abs{\nabla u}^2+\frac{1}{4\e^2}(1-\abs{u}^2)^2.
\eeq
We want to derive an effective interaction energy for $n \ge 2$ vortex filaments in the context of Ginzburg-Landau theory; this will allow us to prove the existence of solutions of the 
Ginzburg-Landau equations  --- that is, critical points of $F_\ep(\cdot)$ --- with a particular geometric structure that we detail  below.

A model setting for studying  nearly parallel vortex filaments is one found in fluid dynamics  \cite{KMD} which we adopt. 
Thus, we will always consider $\Omega$ of the form
\begin{equation}\label{formofOmega}
\Om:=\om\times(0,L), \qquad\quad \omega\subset \R^2 \mbox{ bounded, open, simply connected, $\partial \omega$ smooth.}
\end{equation}
Throughout this paper, we write points in $\Omega$ in the form $(x,z)$
with $x\in \omega$ and $z\in (0,L)$.
We always assume that $0\in \omega$,
and the configurations of interest to us are those with $n$ vortex lines close to the vertical $\{0\}\times(0,L).$

Given $u = u^1+iu^2\in H^1(\Om;\C),$ we define
the {\em momentum} and {\em vorticity} vector fields\footnote{The vorticity can also be defined as a $2$-form, and indeed this is the perspective we will adopt throughout most of this paper. 
}, denoted $j(u)$ and $\mathcal Ju$ respectively, by
\begin{equation}\label{Ju.def}
j(u)  := Im(\bar u \,\nabla u), \qquad \mathcal Ju := \frac 12 \nabla\times \, j(u) = \nabla u^1 \times \nabla u^2.
\end{equation}
It is known  (see  \cite{MSZ, abo}) that for every $n\in \mathbb N$, 
there exist solutions
$(u_\e)$ of the Ginzburg-Landau equations for which the energy 
and vorticity concentrate around  $\{0\}\times (0,L)$ in the sense that
\begin{equation}
\int_\Omega \phi \frac{e_\e(u_\e)}{\logeps}dx\, dz \rightarrow
n\pi \int_0^L \phi(0,z) \ dz,  \qquad \mbox{for all }\phi \in C(\Omega)
\label{econn}\end{equation}
and
\begin{equation}
\int_\Omega \varphi\cdot \mathcal Ju_\ep\,  dx\,dz\ \rightarrow  n \pi \int_0^L \varphi(0,z)  \cdot e_z   dz
\qquad\mbox{ for all  $\varphi \in C^1_c(\Omega;\R^3)$}, 
\label{vconn}\end{equation}
where $e_z$ is the standard unit vector in the $z$ direction. 
These are interpreted 
as stating that the solutions $(u_\e)$ exhibit $1$ or more vortex
filaments, carrying a total of n quanta of vorticity, clustering near
the segment $\{0\}\times (0,L) .$

Our results give a precise description of the way in which this clustering occurs. 
In particular for $0<\ep\ll 1$ we find solutions with the following properties: 
\begin{itemize}
\item each solution possesses $n$ distinct filaments, identified
as curves along which the vorticity concentrates, each of multiplicity
$1$ (rather than a smaller number of filaments of higher multiplicity).
\item these filaments are separated by distances of order $|\log\ep|^{-1/2}$.
\item after dilating horizontal distances by a factor of $\logeps^{1/2}$, 
the limiting geometry of the vortex filaments is governed by a particular free-energy functional,
see \eqref{G0.def} below.
\end{itemize}

For the solutions we find,  the limiting vorticity (after rescaling in the horizontal variables)
is concentrated on $n$ curves of the form
\[
z\in (0,L) \ \ \longmapsto \ \ (f_i(z),z) 
\]
where the function $f = (f_1,\ldots, f_n)$ minimizes
\begin{equation}\label{G0.def}
G_0(f) :=\pi \int_0^L \left(\frac 12 \sum_{i=1}^n| f_i'|^2 -\sum_{i\ne j}\log|f_i-f_j| \right) dz
\end{equation}
in $H^1\left((0,L);(\R^2)^n\right)$,
subject to certain boundary conditions. 
The length scale of vortex separation
\begin{equation}
h_\ep = \logeps^{-1/2}
\label{hep.def}\end{equation}
 is critical in the sense that it gives rise to a limiting functional in which the $\frac 12 |f'|^2$ term and the logarithmic repulsion terms roughly balance.


\medskip

The solutions we find with the above properties will be obtained as 
minimizers and local minimizers of the Ginzburg-Landau energy with
suitable boundary conditions. The description of the fine structure
of the vorticity in these solutions will be deduced from 
a detailed asymptotic description of the energy and vorticity of
sequences $(u_\ep) \subset H^1(\Omega;\C)$ of functions
with $n$ vortex filaments clustering on a scale $h_\ep$
around the segment
$\{0\}\times (0,L)$. 
Very roughly speaking, we will prove that in certain regimes,
if $(u_\ep)_{\ep\in (0,1]}$ is a sequence with limiting rescaled vorticity 
described by $f\in H^1((0,L); (\R^2)^n)$, then
\begin{equation}\label{rough}
`` F_\ep(u_\ep) \approx \mbox{ logarithmically divergent term } + G_0(f) '' \quad\mbox{ as }\ep\to 0.
\end{equation}
See Theorem \ref{main} below for a precise statement. 
Thus, the functional $G_0$ may be seen as an
asymptotic energy associated to the family
$(F_\ep)_{\ep\in (0,1]}$, after renormalizing by subtraction
of the divergent term.
In fact, $G_0(\cdot)$ has already been identified as a candidate 
for the asymptotic renormalized energy of a family of nearly
parallel vortex filaments in \cite{DK}. A related effective
energy funtional is found via
formal arguments, and in a somewhat different setting, in \cite{Aft-Riv}.

The divergent term in \eqref{rough} is related to the arclength (with multiplicity)
of the limiting vorticity. This
reflects  the well-known connection between
the Ginzburg-Landau energy and the arclength of limiting
vortex filaments. Numerous specific results of this sort are known,
including for example \cite{Riv, S2, LR, BBO,  JSo,  BBM,
 abo}. In this context, the term $\frac \pi 2\int_0^L \sum_{i=1}^n |f_i'|^2 dz$
 in $G_0(\cdot )$ may be seen as the linearization of arclength, the leading-order
 asymptotic energy.

In $2D$, an asymptotic
expansion of the energy, in the spirit of \eqref{rough},
was first carried out in the seminal work
of Bethuel, Brezis and H\'elein  \cite{BBH} and later extended to several other two-dimensional contexts (see for example \cite{SandSerfbook}). The term
$-\pi \int_0^L\sum_{i\ne j} \log|f_i-f_j| dz$ in $G_0(\cdot)$ in essence arises from 
a fundamental object, the $2D$ ``renormalized energy", introduced in
\cite{BBH}.

Until now, higher dimensional counterparts of the results of Bethuel {\em et al}
\cite{BBH}  with a comparable degree of precision have been very elusive. 
The corresponding 3D results -- the rigorous version of \eqref{rough}, 
stated in Theorem \ref{main} ---  are the main
contribution of this paper.
However, since they require considerable notation
to state, we first describe our existence theorems  in more detail.

\subsection{Solutions of the Ginzburg-Landau equations with vortex clustering}\label{S1.1}

We will study minimizers and local minimizers of the Ginzburg-Landau energy $F_\ep(\cdot \;;\Omega)$, for $\Omega = \omega\times (0,L)$ as described in \eqref{formofOmega}, with Dirichlet data on $\omega \times  \{0 , L\}$ and natural boundary conditions on $\partial \omega \times (0,L)$; see \eqref{dirichlet} below
for the precise formulation. The Dirichlet conditions may be understood
to require that $n$ vortex filaments enter and leave $\Omega$ at certain points
within a distance at most $O(h_\ep)$ from  the ends of the segment
$\{0\}\times (0,L)$. 

More precisely, we will consider boundary data $w_\ep^z$, for $z\in \{0,L\}$, of the form
\begin{equation}\label{formofw}
w_\ep^z(x) = \prod_{j=1}^n  \Big[ \exp\left( i \beta(x, p_{\ep, j}(z)) \right)
\zeta_\e(x- p_{\ep, j}(z))\Big]
\end{equation}
where
\begin{itemize}
\item $\beta(\cdot, \cdot)$ is defined so that $\frac {w^z_\ep}{|w^z_\ep|}$
is {\em exactly} the canonical harmonic map of Bethuel {\em et al} (see \cite{BBH},
section I.3) with singularities $(p_{\ep,j}(z))_{j=1}^n$
and natural boundary conditions on $\partial \omega$;
see \eqref{beta.def} for the details.
\item $\zeta_\e$ has the form $\zeta_\ep(x) = \rho_\ep(|x|)  \frac{ x_1+ix_2}{|x|}$,
and $\rho_\ep:[0,\infty)\to [0,1]$ satisfies
\begin{equation}\label{zetaep}
\rho_\ep(0) = 0, \qquad 0 \le \rho_\ep' \le C/\ep, \qquad \rho_\ep(s) \ge \left(1 - C \ep/s\right)_+
\end{equation}
\item $(p_{\ep,j}(z))$ are sequences in $\omega$ such that 
\begin{equation}\label{qep.lim}
q_{\ep,j}(z) := h_\ep^{-1} p_{\ep, j}(z)  \longrightarrow  q^0_j (z)\qquad\mbox{ as $\ep\to 0$,  \ for some $q_j^0(z), j=1,\ldots, n$. }
\end{equation}
\end{itemize}


Note that we  do {\em not} assume that the points $(p_{\ep, j}(z))$ are
distinct. For example, $p_{\ep, j}(z) = 0$ for all $j$ and for $z = 0,L$
is allowed by our assumptions.

In considering minimizers, we will assume that $\Omega = \omega\times (0,L)$
satisfies
\begin{equation}\label{Gzero.min}
L < 2\dist(0,\partial\omega) .
\end{equation}
This condition is close to necessary; see Remark \ref{R.nec} below. 

Throughout this paper we  write $B(r,x)$ or $B_r(x)$ to denote the {\em open} ball in $\R^2$ of radius $r$ and center $x$,
and $B(r) := B(r,0)$.

We present our first result relating minimizers of $F_\e$ to those of the reduced functional $G_0 .$ The $W^{-1,1,}$ norm, which appears in the statement, is
defined in \eqref{FvsW11} below.

\begin{theorem}\label{minimizers}
Assume that $\Omega = \omega\times(0,L)$ satisfies \eqref{Gzero.min},
and that  $(u_\ep)$ minimizes $F_\ep(\cdot ; \Omega)$ in the space 
\begin{equation}
\mathcal A_\ep := \{ u\in H^1(\Omega;\C) : \ u(x, 0) = w_\e^0(x), \ \ u(x,L) = w_\e^L(x) \}
\label{dirichlet}\end{equation}
for a sequence of boundary data $\{ w_\ep^{0}, w_\ep^L\}_{\ep\in (0,1]}\subset H^1(\omega;\C)$ 
as described in \eqref{formofw} -- \eqref{qep.lim}.

\smallskip

Then setting $v_\ep(x,z) = u_\ep(h_\ep x,z)$,
the  vorticities $\{  \mathcal Jv_\ep\}_{\ep\in (0,1]}$
are precompact in $W^{-1,1}(B(R)\times (0,L))$ for every $R>0$,
and any limit $J^*$ of a convergent subsequence, 
as $\ep\to 0$,
is a vector-valued measure of the form
\begin{equation}
\int \varphi \cdot dJ^* \ = \ 
\pi \sum_{i=1}^n \int_0^L \varphi(\gamma^*_i(z)) \cdot \gamma^*_i{}'(z) \, dz 
\qquad
\mbox{ for }\varphi \in C_c(\R^2\times (0,L); \R^3).
\label{formJstar}\end{equation}
Here $ \gamma^*_i(z) = (f_i^*(z),z)$, and
$f^*= (f^*_1,\ldots, f^*_n)\in H^1((0,L);\R^2)$ minimizes $G_0(\cdot)$
in 
\begin{equation}
\mathcal A_0 := \left\{ f\in H^1((0,L); (\R^2)^n ) \ : \sum_i \delta_{f_i(z)} = \   \sum_i \delta_{q^0_i(z)} \mbox{ for }z \in \{0,L\}  \right \}
\label{Azero.def}\end{equation}
for $q^0_i(z)$ appearing in \eqref{qep.lim}.

 \end{theorem}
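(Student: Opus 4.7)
The plan is to deduce Theorem \ref{minimizers} from the $\Gamma$-convergence statement in Theorem \ref{main} via a standard direct-method argument. Writing $C_\ep$ for the logarithmically divergent leading term in \eqref{rough}, I would use Theorem \ref{main} as a black box providing three ingredients: (a) a recovery sequence $\tilde u_\ep \in \mathcal A_\ep$ for any target profile $\tilde f \in \mathcal A_0$, with $F_\ep(\tilde u_\ep) - C_\ep \to G_0(\tilde f)$; (b) compactness of the rescaled vorticities $\mathcal J v_\ep$ in $W^{-1,1}(B(R) \times (0,L))$ whenever $F_\ep(u_\ep) - C_\ep$ is bounded; and (c) a $\liminf$ inequality $G_0(f) \le \liminf (F_\ep(u_\ep) - C_\ep)$ whenever $\mathcal J v_\ep$ converges to an object of the form \eqref{formJstar} determined by $f$.

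First I would check that \eqref{Gzero.min} and \eqref{qep.lim} guarantee that $\mathcal A_0$ is non-empty and that $G_0$ admits a minimizer on $\mathcal A_0$: the constraint at $z \in \{0,L\}$ pins the configuration, and any $f$ with $\int_0^L \tfrac12|f'|^2 \le M$ must stay inside a ball whose radius is controlled by $M$, $L$, and $\max_{z\in\{0,L\}}|q^0(z)|$, so the $-\log|f_i-f_j|$ terms are bounded above along a minimizing sequence; the hypothesis $L < 2\dist(0,\partial\omega)$ moreover ensures that the minimizing curves stay inside $\omega$ after rescaling, making step (a) available with the prescribed Dirichlet data \eqref{formofw}. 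Fixing a minimizer $\tilde f$ of $G_0$ on $\mathcal A_0$ and applying (a) we obtain a competitor $\tilde u_\ep \in \mathcal A_\ep$; since $u_\ep$ is a minimizer,
\[
\limsup_{\ep\to 0}\bigl(F_\ep(u_\ep) - C_\ep\bigr) \le G_0(\tilde f) = \min_{\mathcal A_0} G_0.
\]

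Second, compactness (b) applied to this bounded sequence furnishes a subsequence along which $\mathcal J v_\ep \to J^*$ in $W^{-1,1}(B(R) \times (0,L))$ for every $R$, with $J^*$ of the form \eqref{formJstar} for some $f^* \in H^1((0,L);(\R^2)^n)$. The key point to verify is that $f^* \in \mathcal A_0$, that is, that the trace of the curve-structure at $z \in \{0,L\}$ equals $\sum_i \delta_{q_i^0(z)}$. Because the boundary data $w_\ep^z$ are constructed in \eqref{formofw}--\eqref{zetaep} so that $\mathcal J w_\ep^z$ concentrates exactly on the points $p_{\ep,j}(z) = h_\ep q_{\ep,j}(z)$ with $q_{\ep,j}(z) \to q_j^0(z)$, the rescaled traces of $\mathcal J v_\ep$ at $z=0,L$ converge to $\pi\sum_j \delta_{q_j^0(z)} \otimes e_z$; combining this with the $H^1$ regularity of $f^*$ (which allows one to take traces) one concludes $\sum_i \delta_{f_i^*(z)} = \sum_i \delta_{q_i^0(z)}$ at the two endpoints.

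Finally, the $\liminf$ inequality (c) gives $G_0(f^*) \le \liminf(F_\ep(u_\ep)-C_\ep)$, and chaining with the upper bound above yields $G_0(f^*) \le G_0(\tilde f) = \min_{\mathcal A_0}G_0$, so $f^*$ is itself a minimizer of $G_0$ on $\mathcal A_0$. The main obstacle is the boundary-trace identification in the third step: the $p_{\ep,j}(z)$ are not assumed distinct, so the Jacobians of the boundary data \eqref{formofw} may have atoms of multiplicity $>1$ and the canonical harmonic map factor $\exp(i\beta)$ degenerates there; one must therefore reason at the level of vector-valued measures and use that $W^{-1,1}$ convergence is compatible with taking slice-traces on $\{z=0\}$ and $\{z=L\}$ after a suitable mollification argument. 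A secondary technical point is justifying that the recovery sequence $\tilde u_\ep$ produced by Theorem \ref{main} can indeed be arranged to match the prescribed Dirichlet data $(w_\ep^0,w_\ep^L)$ exactly, rather than up to an asymptotically negligible modification.
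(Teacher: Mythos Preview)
Your outline has the right $\Gamma$-convergence skeleton, but it skips the one step that is genuinely nontrivial and for which the hypothesis \eqref{Gzero.min} is actually needed. Theorem~\ref{main}(a) does not apply under the energy bound \eqref{scaling2} alone: it also requires \eqref{scaling1}, namely that the \emph{unrescaled} vorticity $J_x u_\ep(\cdot,z)$ concentrates near $n\pi\delta_0$ for almost every height. You assert compactness of $\mathcal Jv_\ep$ as a black box, but nothing you have written prevents the vortex filaments of a minimizer from reaching $\partial\omega\times(0,L)$ and terminating there; indeed Remark~\ref{R.nec} shows that without \eqref{Gzero.min} this is exactly what happens. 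Your stated use of $L<2\dist(0,\partial\omega)$ --- ``ensures the minimizing curves stay inside $\omega$ after rescaling'' --- is not the point (after rescaling by $h_\ep^{-1}$ the domain is huge and containment is trivial).

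The paper's proof closes this gap by a topological length argument at the level of currents. One extends $u_\ep$ vertically to $\Omega^\delta=\omega\times(-\delta,L+\delta)$ using the boundary data, applies Theorem~\ref{UKC} to get an integer-multiplicity limit $J$ with $\partial J=0$ and $M_{\Omega^\delta}(J)\le nL+O(\delta)$, and decomposes $J$ into indecomposable curves. Each curve either connects the two caps (length $\ge L+2\delta$) or escapes to $\partial\omega\times[0,L]$ (length $\ge R+\delta$); comparing total mass with the upper bound and using $2R>L$ forces all $n$ curves to be vertical segments, yielding \eqref{scaling1}. Once this is in hand, Theorem~\ref{main}(a) applies directly and already delivers $f^*\in\mathcal A_0$ (the endpoint identification $[f(z_0)]=[q^0]$ is part of its conclusion), so your slice-trace worry is moot. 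The recovery-sequence-with-boundary-data issue you flag is real and is handled separately by Lemma~\ref{RS+BC}.
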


After we present Theorem \ref{localmin} about local minimizers, we discuss both results in the context of $3D$ Ginzburg-Landau theory and on the study of concentration phenomena in elliptic PDE's at large.

\begin{remark}\label{R.nec}
Given any $\bar x\in \partial \Omega$ and $\delta<L/2$,
one can construct 
a sequence of functions $(u_\ep)\subset H^1(\Omega;\C)$ satisfying the 
boundary conditions of Theorem \ref{minimizers} above,
with energy and vorticity concentrating around the
straight line segments connecting
$(0,0)$ to $(\bar x,\delta)$ and $(\bar x, L-\delta)$ to $(0,L)$,
and such that
\[
\lim_{\ep\to 0} \frac 1 \logeps\int_\Omega e_\e(u_\ep) =  n \pi 2(|\bar x|^2+\delta^2)^{1/2}.
\]
This follows from results in \cite{abo}.
In particular, if $\dist(0,\partial \omega)< \frac 12 L$, one can choose $\bar x$ and $\delta$
so that 
the right-hand side is strictly less that $n \pi L$. Then Theorem
\ref{main} below implies that for any minimizing sequence,
vorticity cannot concentrate around
$\{0\}\times(0,L)$. 

\end{remark}

Next we state a result about local minimizers.
We will say that $f^*\in \mathcal A_0$ is a strict local minimizer of $G_0$
if there exists $\delta>0$ such that 
\[
G_0(f^*) < G_0(f)\quad \mbox{ for all $f\in \mathcal A_0$ such that $0< \|f - f^*\|_{H^1((0,L);(\R^2)^n)} < \delta$ }.
\] 
The point is that we understand ``local" with respect to the natural topology
which here
is $H^1((0,L); (\R^2)^n)$. Similarly, $u_\ep$ is a strict local minimizer of $F_\ep$
in $\mathcal A_\ep$ if there exists $\delta>0$ such that 
\[
F_\ep(u_\ep) < F_\ep(u)\quad\mbox{ for all $u\in \mathcal A_\ep$ such that  }0< \| u - u_\ep\|_{H^1(\Omega;\C)} < \delta \, .
\]
In particular, a local minimizer $u_\ep$ of $F_\ep$ is a solution of the Ginzburg-Landau equations,

\begin{equation}\label{eqn.GL}
\boxed{-\Delta u_\e =\frac{1}{\e^2}(1-\vert u_\e\vert^2) u_\e ,}
\end{equation}
and a local minimizer $f^*$ of $G_0(\cdot)$ satisfies 
\begin{equation}\label{eqn.NPVF}
\boxed{- f_i^*{}'' - \sum_{j\ne i}\frac{f_i^* - f_j^*}{|f_i^*-f_j^*|^2} = 0 \qquad \qquad\mbox{ for }i=1,\ldots, n.}
\end{equation}

The system \eqref{eqn.NPVF} appears in various contexts; its solutions are equilibria of reduced systems in fluid mechanics \cite{KMD, KPV, LM} and supplemented with suitable periodic conditions they correspond to trajectories in the planar $n$-body problem with logarithmic potential (examples of periodic orbits with or without collisions and for different potentials may be found in \cite{CM, FT, Chen, BFT}). 

\begin{theorem}\label{localmin}
Let $(w_\ep^{0,L})_{\e\in (0,1]} \subset H^1(\omega;\C)$ be sequences satisfying \eqref{formofw} -- 
\eqref{qep.lim}, and 
assume that $f^*$ is a strict local minimizer of $G_0$ in  $\mathcal A_0$, and that $f^*_i(z)\ne f^*_j(z)$
for all $i\ne j$ and $z\in (0,L)$.

Then there exists a sequence of local minimizers of $F_\ep(\cdot;\Omega)$ in
$\mathcal A_\ep$ such that for $v_\ep(x,z) := u_\ep(h_\ep x,z)$,
the vorticities $\mathcal  Jv_\ep$ converge in $W^{-1,1}(B(R)\times (0,L))$
for all $R>0$ to the vector-valued measure of the form \eqref{formJstar}, where
$\gamma^*_i(z) = (f_i^*(z),z)$ for $i=1,\ldots, n$.
 \end{theorem}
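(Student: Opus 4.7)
My plan is to deduce the result from Theorem \ref{main} by a constrained minimization argument that converts the strict local minimality of $f^*$ in $G_0$ into a genuine local minimizer of $F_\ep$.

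The first step is a quantitative upgrade of the strict local minimality: I would establish that there exist $\delta_0 \in (0,\delta)$ and $\mu>0$ with
$$G_0(f) \geq G_0(f^*) + \mu \quad\text{for all } f \in \mathcal A_0 \text{ with } \|f-f^*\|_{H^1} = \delta_0.$$
Arguing by contradiction, take a minimizing sequence $f_k$ on the sphere $\|f-f^*\|_{H^1}=\delta_0$ with $G_0(f_k)\to G_0(f^*)$; the ball constraint gives $H^1$-boundedness, the assumption $f^*_i\neq f^*_j$ ensures continuity of the logarithmic interaction near $f^*$, and weak $H^1$-compactness combined with strict local minimality forces the weak limit to be $f^*$; convergence of energies then upgrades to strong $H^1$-convergence, contradicting $\|f_k-f^*\|_{H^1}=\delta_0$.

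Next I would introduce the constrained class
$$\mathcal A_\ep^{*} := \Bigl\{ u \in \mathcal A_\ep : F_\ep(u) - n\pi L\logeps \leq G_0(f^*) + \tfrac\mu2 \ \text{ and }\ \|\mathcal J v_\ep - J^*\|_{W^{-1,1}(B(R)\times(0,L))} \leq \eta \Bigr\},$$
where $J^*$ denotes the vorticity current associated with the curves $\gamma^*_i(z)=(f^*_i(z),z)$, and $\eta>0$ is to be chosen. For small $\ep$ the recovery sequence of Theorem \ref{main} provides an element of $\mathcal A_\ep^{*}$, and the set is weakly closed in $H^1(\Omega;\C)$ using the compactness part of Theorem \ref{main}. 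Coercivity of $F_\ep$ (boundary conditions plus Poincar\'e) and the direct method then yield a constrained minimizer $u_\ep\in\mathcal A_\ep^{*}$.

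The decisive step is to show that both constraints are slack at $u_\ep$ for small $\ep$. The energy constraint is slack because the recovery sequence already gives $F_\ep(u_\ep) \leq n\pi L\logeps + G_0(f^*) + o(1)$. For the vorticity constraint, applying the lower bound of Theorem \ref{main} to a $W^{-1,1}$-convergent subsequence yields a limit current $J$ corresponding to curves $f\in\mathcal A_0$ with $G_0(f)\leq G_0(f^*)$ and $\|J-J^*\|_{W^{-1,1}}\leq\eta$. Passing this inequality through the rectifiable structure and using the $H^1$-bound on $f$ together with the Sobolev embedding $H^1((0,L))\hookrightarrow C^{0,1/2}([0,L])$ shows that $f$ is uniformly close to $f^*$ provided $\eta$ is small, so after the correct labelling $f$ lies in the open $H^1$-ball of radius $\delta_0$ around $f^*$. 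Strict local minimality forces $f=f^*$, and matching of energies then upgrades the convergence $f_\ep \to f^*$ to strong $H^1$, so $\|\mathcal Jv_\ep - J^*\|_{W^{-1,1}}\to 0$. Both constraints being slack, any sufficiently small $H^1$-perturbation of $u_\ep$ remains in $\mathcal A_\ep^{*}$, so $u_\ep$ is a local minimizer of $F_\ep$ in $\mathcal A_\ep$, hence a solution of \eqref{eqn.GL}, and the stated convergence of vorticities follows.

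The principal obstacle is converting the $W^{-1,1}$-closeness of vorticities into $H^1$-closeness of the associated curves: ensuring that, subject to the energy ceiling, a small $\mathcal J v_\ep - J^*$ forces $f_\ep$ into an $H^1$-neighborhood of $f^*$ rather than near some other (possibly lower-energy) minimizer of $G_0$. This step genuinely exploits the strict local minimality hypothesis, the non-collision $f^*_i\neq f^*_j$ (which keeps the supports of the slice measures well-separated), and the $H^1$-coercivity provided by the $\frac\pi2\int|f'|^2$ term in $G_0$.
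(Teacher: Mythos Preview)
Your overall strategy—minimize $F_\ep$ over a vorticity-constrained subset of $\mathcal A_\ep$, then show the constraint is inactive via Theorem~\ref{main}—is exactly the paper's approach. However, there is a genuine gap at the step you yourself flag as the principal obstacle. You write that uniform ($C^0$) closeness of $f$ to $f^*$, obtained from $W^{-1,1}$-closeness of the currents and the Sobolev embedding, implies that ``$f$ lies in the open $H^1$-ball of radius $\delta_0$ around $f^*$.'' This is false in general: an $f$ can be uniformly close to $f^*$ with $\|f'\|_{L^2}$ bounded (from the energy bound $G_0(f)\le G_0(f^*)$) and still have $\|f'-(f^*)'\|_{L^2}$ of order one. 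So you cannot invoke strict $H^1$-local minimality at this point, and your Step~1 (the quantitative gap $\mu$ on the $H^1$-sphere) does not help either, since it says nothing about $f$ with $\|f-f^*\|_{H^1}>\delta_0$.

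The paper closes this gap in Lemma~\ref{L19} by a sequential argument rather than a single-$f$ argument: one takes a sequence $f^k\in\mathcal A_0$ with $G_0(f^k)\le G_0(f^*)$ and $\|\sum_i(\Gamma_{f^k_i}-\Gamma_{f^*_i})\|_F\to 0$; boundedness of $G_0$ gives an $H^1$ bound, the flat-norm convergence is upgraded to uniform convergence via slicing and a tube argument (using the non-collision hypothesis), hence the weak $H^1$ limit is $f^*$, and finally matching of energies $G_0(f^k)\to G_0(f^*)$ upgrades weak to strong $H^1$ convergence. Only then does strict local minimality force $f^k=f^*$ for large $k$. Your sketch contains all the right ingredients but assembles them in the wrong order; the energy-matching step must come \emph{before} the appeal to strict local minimality, not after. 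Once Lemma~\ref{L19} is in place, the paper uses a single flat-norm constraint on $\star Ju_\ep$ in $\Omega$ (no separate energy constraint is needed, and your quantitative gap $\mu$ is not used), and the contradiction argument is immediate.
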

 
Theorems \ref{minimizers} and \ref{localmin} are the first results that show existence of solutions to the Ginzburg-Landau equation \eqref{eqn.GL} whose vorticity asymptotically concentrates along the graphs of solutions of the system \eqref{eqn.NPVF}. A connection between \eqref{eqn.GL} and \eqref{eqn.NPVF} has long been suspected and supporting evidence can be found in \cite{MSZ,DK, JSt} and the references therein. It can be seen from the proofs that our results do not only describe the asymptotic geometry of the vorticity, but as a by-product also give a very precise asymptotic expansion of the energy of the maps $u_\e$ in terms of the functional $G_0$ from \eqref{G0.def}, which may be seen as a $3D$ renormalized energy of the vortex filaments, in the spirit of the $2D$ renormalized
energy introduced in \cite{BBH}.  One of our achievements here is an improved compactness for the vorticity of configurations satisfying \eqref{scaling1}--\eqref{scaling1aa} below. 

In the scalar setting of the Allen-Cahn equation, recent results that describe interface clustering have been obtained, 
see for example \cite{dPKW, dPKPW, dPKWY}. In the scalar case interfaces are of codimension $1;$ higher condimension defects introduce new difficulties we  have to deal with when proving our results, which may be seen as first analogs of the clustering phenomenon for the vector-valued Ginzburg-Landau equation.

We note that Theorem \ref{localmin} does not require condition
\eqref{Gzero.min}, and also has the additional advantage of not requiring that
$f^*_i(z)\ne f^*_j(z)$ for $i\ne j$, when $z \in \{0,L\}$. 
We remark however that if one wants to allow collisions between filaments (that is, values of $z\in (0,L)$ for which
$f^*_i(z) = f^*_j(z)$ for some $i\ne j$),
then the right definition of ``local minimizer" for
our purposes would become more complicated, since there are then
multiple essentially different $f\in H^1((0,L);(\R^2)^n)$ that represent
the same vortex paths. As a result, one would need a notion of local minimizers
in a suitable quotient space of $H^1((0,L);(\R^2)^n)$. 
We prefer to avoid these technicalities here, since we do not know any
examples of local minimizers, in this sense, for which the filaments collide.
However, related considerations appear in the proofs of
Lemmas \ref{L.X} and \ref{L.choosef} for example.

\subsection{Some definitions and notation}\label{somen}

As remarked above, the vorticity can be realized as either a vector field or a $2$-form.
For $u = u^1+iu^2$, we will write 
\[
Ju = du^1 \wedge du^2  = \frac 12 d (Im(\bar u \, du))
\]
for the vorticity $2$-form, compare \eqref{Ju.def}.

It is convenient to state some of our results in the language of 
geometric measure theory.

If $U$ is an open subset of some Euclidean space, then a $k$-current  $U$
is a bounded linear functional on the
space $\calD^k(U)$ of smooth $k$-forms with compact support in $U$.

We will often encounter $1$-currents (as well as $0$-currents, which
can be identified with distributions).
We will be especially interested in some particular classes of 
$1$-currents.
First, to any Lipschitz curve $\gamma:(a,b)\to U$, there is a 
corresponding $1$-current $T_\gamma$, defined by
\[
T_\gamma(\vp):= \int_\gamma \vp \ = \ 
\int_a^b \langle \vp(\gamma(t)) , \gamma'(t)\rangle dt.
\]
Here and below, $\langle \cdot , \cdot \rangle$
denotes the dual pairing between  covectors and vectors, so that
if $\vp = \phi_1 dx^1+ \phi_2 dx^2 + \phi_3 dz$ and $v = (v^1,v^2, v^3)$,
then 
\[
\langle \varphi , v \rangle = \ \phi_i  v^i .
\]
(Throughout this work we implicitly sum over repeated indices.)
Thus, $T_\gamma$ acts on a $1$-form $\vp$ via integration of $\vp$ over the curve
parametrized by $\gamma$.

Given a Lipschitz function $f:(0,L)\to \R^2$, we will write 
\begin{equation}\label{Gammaf.def}
\Gamma_f := T_\gamma \quad\mbox{ for $\gamma:(0,L)\to \R^2\times(0,L)$
defined by $\gamma(z) = (f(z),z).$}
\end{equation}
Thus, $\Gamma_f$ is the $1$-current in $\R^2\times (0,L)$ corresponding to
the graph of $f$ over the segment $(0,L)$ of the $z$-axis.
More generally, if  $f\in H^1((0,L);\R^2)$, then we define
$\Gamma_f = T_\gamma$, where $\gamma$ is a Lipschitz reparametrization
of the curve $z\mapsto (f(z),z)$. Such a reparametrization exists,
since the condition $f\in H^1$ guarantees that the curve has finite arclength.

As a special case of the above, we will write $\Gamma_0$ to denote the current corresponding to the
vertical segment $z\in (0,L)\mapsto (0,z)\in \omega\times (0,L)$. With this notation, the limit in \eqref{vconn}
is written as $n\pi \Gamma_0(\vp)$ (if we view it as acting on $1$-forms rather than vector fields).

\smallskip

The other class of $1$-currents that often arises in this paper
is the following:
given $u = u^1+iu^2\in H^1(\Omega; \C)$, we will write $\star Ju$ to denote the
$1$-current defined by
\beq\label{starJ.def}
\star Ju(\vp):=\int_\Om \langle \vp,  \mathcal Ju \rangle  dx
\ = \ \int_\Omega\varphi\wedge Ju , \qquad \qquad \vp \in \calD^1(\Omega).
\eeq

In what follows, it will often be the case that most of the information
encoded in the vorticity $Ju$ (or its distributional
realization $\star Ju$) is already contained in its  $z$ component, that is, the 
part of the vorticity that describes rotation in the $x_1x_2$ plane, orthogonal to the
$z$-axis. This will be denoted by 
\[
J_x u := \partial_1 u^1 \,\partial_2u^2 - \partial_1u^2 \, \partial_2u^1 .
\]
This is  the Jacobian determinant  with respect to the $x$ variables. 
Observe that 
if $\vp \in \calD^1(\Omega)$
has the form $\vp= \phi \, dz$, for $\phi$ a smooth compactly supported function, then
\begin{equation}\label{starJ.Jx}
\star Ju(\phi \, dz) = \int_\Omega \phi(x,z) J_xu(x,z) dx\, dz
\end{equation}
as follows directly from the definitions \eqref{Ju.def},  \eqref{starJ.def}.

\smallskip

If $S$ is  a  $k$-current on an open subset $U$ of a Euclidean space, we use the notation
\beq\label{F.def}
\norm{S}_{F(U)}:=
\sup\{S(\vp):\vp\in D^k(U), \max(\norm{\vp}_\infty,\norm{d\vp}_\infty)\leq 1\}
\eeq
for the {\em flat norm} of $S$. This quantity will be finite
for every current $S$ that arises in this paper.

We note that a $0$-current $S$ on a set $U\subset \R^m$ is just a distribution -- that is,
a bounded linear functional on the space $\calD^0(U)$ of smooth, compactly supported  $0$-forms, 
or functions. 
If $\vp$ is a $0$-form, then $\| d\vp \|_\infty = \|\nabla \vp \|_\infty$, 
and it follows that
\begin{align}
\| S \|_{F(U)} 
&=\sup \{ S(\vp)  : \vp\in D^0(\Om), \max(\norm{\vp}_\infty,\norm{\nabla\vp}_\infty)\leq 1\} 
\nonumber\\
&=: \| S\|_{W^{-1,1}(U)},
\label{FvsW11}
\end{align}
so we will sometimes use these two notations interchangeably for $0$-currents.
For a $1$-current, the flat norm is somewhat stronger than the $W^{-1,1}$ norm.

The {\em mass} of a $k$-current $S$ on an open set $U$ is defined by
\[
M(S) = 
\sup\{S(\vp):\vp\in D^k(U), \norm{\vp}_\infty \leq 1\} . 
\]
If $\gamma$ is an  injective Lipschitz curve then it is easy to check that
\[
M(T_\gamma) = \mbox{length}(\gamma).
\]

\subsection{Configurations with nearly parallel vortex lines}

As suggested above, our main PDE results will be obtained
from a careful study of the energy $F_\e(u_\ep)$
for  certain sequences of functions $(u_\ep)\subset H^1(\Omega;\C)$
with properties that will be
easily verified in PDE applications.

First, we are interested in sequences
that exhibit $n\ge 2$ vortex lines clustering around the segment $\{0\}\times (0,L)$.
More precisely, we will assume that
\begin{equation}
\label{scaling1}
\int_0^L \| J_x u_\ep (\cdot, z) - \pi n \delta_0\|_{F(\om)}\, dz\to 0
\qquad\mbox{ as }\e\to 0.
\end{equation}

\begin{remark}\label{rem2}
We show  below that 
$\int_0^L \| J_x u_\ep (\cdot, z) - \pi n \delta_0\|_{F(\om)}\, dz \  \le \| \star Ju_\ep  -n\pi \Gamma_0\|_{F(\Omega)}$, see Section \ref{S.currents}.
As a result, \eqref{scaling1} follows from
the assumption
\[
\| \star Ju_\ep - n \pi \Gamma_0\|_{F(\Omega)}  \rightarrow 0  \qquad\mbox{ as }\e\to 0,
\]
which may appear more natural than \eqref{scaling1}.
\end{remark}

We will also assume that there is at least one height $z_0\in [0,L]$
and points $q^0_1,\ldots, q^0_n\in \R^2$, not necessarily distinct, such that 
\begin{equation}
\|  J_x u_\e(\cdot, z_0) - \pi\sum_{i=1}^n \delta_{h_\ep q^0_i}\|_{F(\omega)}  = o(h_\ep)
\qquad\mbox{ as $\ep\to 0$, \ \ and }
\label{scaling1a}
\end{equation}
\begin{equation}
\int_\omega e^{2d}_\e (u_\ep)(x,z_0) \ dx \le M \logeps
\qquad\mbox{ for some }M>0,
\label{scaling1aa}
\end{equation}
where
\[
e_\e^{2d}(u):=\frac 12\abs{\nabla_xu}^2+\frac{1}{4\e^2}(1-\abs{u}^2)^2,
\quad\qquad \nabla_x:=(\p_{x_1},\p_{x_2}).
\]
Condition \eqref{scaling1a} implies that at the height $z_0$, there are exactly 
$n$ vortices, all  within distance $O(h_\ep)$  --- the critical length scale --- 
of the vertical axis.
The energy bound \eqref{scaling1aa} acts to ensure that
the behaviour of $u_\e$ at height $z_0$ carries meaningful information about its
behaviour at nearby heights.

\begin{center}
\begin{figure}[!ht]
\begin{minipage}{0.8\linewidth}
\includegraphics[trim = 0mm 0mm 0mm 0mm, clip, width=10cm,
  height=10cm,
  keepaspectratio,]{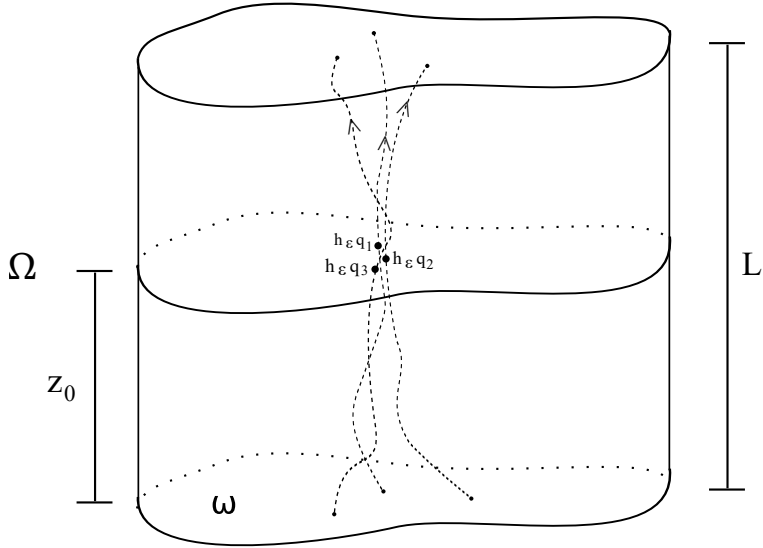}
\end{minipage}
\caption{Example of a vortex configuration satisfying \eqref{scaling1}--\eqref{scaling1aa}.}
\end{figure}
\end{center}

It turns out  --- this is a consequence of Theorem \ref{main} below --  
that under the above assumptions, 
\[
F_\e(u_\e) \ge   n\pi L\abs{\log\e}+\pi n(n-1)L\abs{\log h_\e} - O(1).
\]
We therefore introduce
\begin{equation}\label{Gep.def}
G_\e(u):=F_\e(u)
-\left[ n\pi L\abs{\log\e}+\pi n(n-1)L\abs{\log h_\e}+\kappa_n(\Om)\right],
\end{equation}
where the constant $\kappa_n(\Omega)$ is defined in \eqref{kappan.def} below; it is connected to the {\em renormalized energy} introduced in \cite{BBH}.
Finally, we will assume\footnote{We will number  certain specific constants $c_1, c_2, \ldots$ that appear repeatedly in our arguments,
whereas other generic constants will be denoted just by $C$. Throughout, all 
constants are independent of $\e$, but may depend for example on $\Omega$ and on
parameters such as $c_1$.}
 that there exists some constant $c_1$ such that 
\begin{equation}
\label{scaling2}
G_\e(u_\ep) \le c_1\  . 
\end{equation}
This is a stringent  energy bound that will be shown to require that
the vortices are nearly parallel. Our arguments will show that
once there is some height
$z_0$ 
at which there are $n$ vortices at distance $O(h_\ep)$ from
the $z$ axis, as in \eqref{scaling1a}, \eqref{scaling1aa}, the energy bound \eqref{scaling2} together with \eqref{scaling1}
essentially forces the filaments to remain
$O(h_\ep)$ from the $z$-axis throughout their entire length.

We remark that \eqref{scaling2} implies the much less precise bound
\begin{equation}\label{ref.asked}
F_\e(u_\ep) \le C \logeps
\end{equation}
which is used numerous times throughout this paper.

Assumptions  \eqref{scaling1} - \eqref{scaling1aa} above
are adapted to the study of  problems for which 
boundary data is prescribed on $ \omega\times \{0,L\}$.
Natural assumptions on the boundary data, such as those 
described in \eqref{formofw} -- \eqref{qep.lim}, then guarantee that
\eqref{scaling1a}, \eqref{scaling1aa} are satisfied. 
Most of our results
will remain valid if one does not assume \eqref{scaling1a}, \eqref{scaling1aa}, 
but assumption \eqref{scaling1} is replaced by the stronger condition
\begin{equation}\label{old.scaling1}
\int_0^L \| J_x u (\cdot, z) - \pi n \delta_0\|_{F(\om)}\, dz
\le C h_\ep \,  .
\end{equation}
This is often adequate for the construction of {\em local} minimizers of $F_\ep$,
and under this assumption some of our arguments
can be simplified. However, \eqref{old.scaling1} is hard to
verify directly for sequences of energy-minimizers,
as considered in Theorem \ref{minimizers}.                                                                                                                                          
As in Remark \ref{rem2}, assumption \eqref{old.scaling1}
follows directly if one instead assumes the
(arguably more natural) condition
$\| \star Ju_\ep - n\pi \Gamma_0\|_{F(\Omega)}
\le C h_\ep$.

\subsection{Asymptotic behavior  of energy and vorticity}

Our main result is the $\Gamma$-convergence of $G_{\e}$ to $G_0.$

\begin{theorem}
(a) 
Assume that $(u_\ep)\subset H^1(\Omega;\C)$ is a sequence satisfying 
\eqref{scaling2}, together with either
\eqref{scaling1} -- \eqref{scaling1aa}  or \eqref{old.scaling1}.

Then, setting $v_\e(x,z) = u_\e(h_\e x, z)$ for $x\in \omega_\ep = h_\e^{-1}\omega$
and $z\in (0,L)$, there exists some $f = (f_1,\ldots, f_n)\in H^1((0,L), (\R^2)^n)$ such 
that  after passing to a subsequence
if necessary:
\begin{equation}\label{comp}
J_x v_\e \to\pi \delta_{[ f(\cdot)]} \quad  \mbox{ in $W^{-1,1}(B(R)\times(0,L))$ for every $R>0$}
\end{equation}
where $\delta_{[f(\cdot)]}$ denotes the measure on $\R^2\times (0,L)$ defined by
\begin{equation}
\int \phi \delta_{[f(\cdot)]} :=  \sum_{i=1}^n \int_0^L \phi( f_i(z), z) \ dz \ = \ 
\sum_{i=1}^n \Gamma_{f_i} (\phi \, dz).
\label{deltavecf}\end{equation}
Moreover, there exists some $\sigma\in S^n$ (the symmetric group of degree $n$) such that $f_i(z_0) = q^0_{\sigma(i)}$ for
$i=1,\ldots, n$, where
$z_0,  (q^0_i)$ appear in \eqref{scaling1a}. And finally,
\begin{align}
\label{uniflowse}
G_0(f) \le \liminf_{\e\to 0} G_\e(u_\e).
\end{align}

(b) Conversely, given $f\in H^1((0,L), (\R^2)^n)$, there exists $(u_\e)\subset H^1(\Omega,\C)$
such that \eqref{comp} holds, together with \eqref{scaling1} -- \eqref{scaling1aa},
and in addition \footnote{In the special case when $\omega$ is a ball, results similar to those of part $(b)$ above
are proved in \cite{DK}.}
\begin{equation}\label{recseqbd}
G_0(f) \ge \limsup_{\e\to 0} G_\e(u_\e).
\end{equation}

(c) In addition, whenever $(u_\e)$ satisfies \eqref{comp}, \eqref{recseqbd},
we have the improved compactness
\begin{equation}\label{ICITCOT}
\| \star Jv_\ep -  \pi  \sum_{i=1}^n \Gamma_{f_i}\|_{F(B(R)\times (0,L))}  \rightarrow 0  \ \ \mbox{ as }\e\to 0,\qquad\mbox{ for every }R>0.
\end{equation}
\label{main}
\end{theorem}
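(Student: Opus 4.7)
The proof follows the classical $\Gamma$-convergence architecture: slicewise compactness and a $\liminf$ inequality for part (a), an explicit recovery sequence for part (b), and a rigidity argument exploiting tightness of the bounds for part (c). The overall strategy is to work slice-by-slice, exploiting the critical relation $h_\ep^2\logeps=1$: for a.e.\ $z\in(0,L)$ the restriction $u_\ep(\cdot,z)$ is a 2D Ginzburg--Landau configuration on $\omega$ carrying $n$ vortices near the $z$-axis, and the $\partial_z$-component of $|\nabla u_\ep|^2$ produces precisely the kinetic term of $G_0$ because a filament moving at velocity $|p_{\ep,i}'(z)|=h_\ep|f_i'(z)|$ radiates in-core energy $\sim\pi|p_{\ep,i}'|^2\logeps=\pi|f_i'(z)|^2$, finite in the limit exactly thanks to this choice of $h_\ep$.

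For part (a), I would first invoke the $2D$ Jerrard--Soner/Sandier--Serfaty theory slicewise: the anchor \eqref{scaling1a}--\eqref{scaling1aa} identifies $n$ vortex locations $p_{\ep,i}(z_0)$ at the reference height, and \eqref{scaling1} propagates this to a.e.\ $z$. After a measurability/labelling argument tracking the $n$-tuple continuously through near-collisions (the main subtlety being the production of a genuine $H^1$ path), I set $f_{\ep,i}(z):=h_\ep^{-1}p_{\ep,i}(z)$ and extract an $H^1$-weak limit $f$. The slicewise Bethuel--Brezis--H\'elein renormalized-energy lower bound
\[
\int_\omega e_\ep^{2d}(u_\ep(\cdot,z))\,dx \,\ge\, n\pi\logeps - \pi\!\sum_{i\ne j}\log|p_{\ep,i}(z)-p_{\ep,j}(z)| + W_\omega + o(1),
\]
combined with $-\log|p_{\ep,i}-p_{\ep,j}|=|\log h_\ep|-\log|f_{\ep,i}-f_{\ep,j}|$, accounts after integration over $z$ for every logarithmic term of $G_\ep$ together with the interaction piece of $G_0(f)$. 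What remains, and is by far the hardest step, is the kinetic lower bound
\[
\int_0^L\!\!\int_\omega \tfrac12|\partial_z u_\ep|^2\,dx\,dz \,\ge\, \tfrac{\pi}{2}\int_0^L\!\sum_i|f_i'(z)|^2\,dz - o(1);
\]
I would tackle it by localizing near each filament to an annulus of inner radius $\sim\ep$ and outer radius comparable to the inter-filament distance $\sim h_\ep$, using the canonical-harmonic-map structure to write $\partial_z u_\ep\approx -h_\ep f_i'(z)\cdot\nabla_x[\text{vortex profile at }p_{\ep,i}]$, so that the radial integral $\int r^{-1}\,dr\sim\tfrac12\logeps$ combines with $h_\ep^2\logeps=1$ to produce $\tfrac{\pi}{2}|f_i'|^2$ per filament. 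Inter-filament cross terms need to be shown subleading, which I expect will demand nontrivial orthogonality or Pohozaev-type cancellations.

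For part (b), the product ansatz
\[
u_\ep(x,z):=\prod_{i=1}^n \bigl[\exp\!\bigl(i\beta(x,h_\ep f_i(z))\bigr)\,\zeta_\ep\bigl(x-h_\ep f_i(z)\bigr)\bigr]
\]
for smooth non-intersecting $f$, with a boundary layer of vanishing energy near $z\in\{0,L\}$ realizing the prescribed data $w_\ep^{0,L}$, yields a recovery sequence whose energy splits into a slicewise $2D$ contribution (which by \cite{BBH} accounts for all logarithmic terms, the constant $\kappa_n(\Omega)$, and the interaction $-\pi\sum_{i\ne j}\log|f_i-f_j|$) and a $\partial_z$ contribution equal to $\tfrac{\pi}{2}\sum|f_i'|^2+o(1)$ by the same annular computation as in (a); standard density and diagonal arguments extend to general $f\in H^1$, with filament collisions removed by $O(\delta)$ perturbations whose defect is absorbed into the $\limsup$. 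Finally, part (c) follows by rigidity: under \eqref{comp}--\eqref{recseqbd} no inequality in the lower-bound analysis of (a) can have asymptotic slack, so near each filament $\partial_z u_\ep$ must quantitatively match its optimal ansatz, which in turn forces the full current $\star Jv_\ep$ --- not merely its horizontal slice $J_xv_\ep$ --- to converge to $\pi\sum_i\Gamma_{f_i}$ in the flat norm on bounded subsets. The dominant obstacle throughout is the kinetic lower bound in part (a), where the singular vortex-core structure has to be reconciled with the $H^1$ regularity of the limiting filament paths.
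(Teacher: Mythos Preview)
Your outline for parts (b) and (c) is broadly right, but part (a) has a genuine gap in the kinetic lower bound, and the paper's mechanism is quite different from what you propose.

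\medskip

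\textbf{The kinetic term.} You propose to localize near each filament and write $\partial_z u_\ep\approx -h_\ep f_i'(z)\cdot\nabla_x[\text{vortex profile}]$. This identity is available for the \emph{recovery} sequence you build in (b), but not for a general sequence satisfying only \eqref{scaling1}--\eqref{scaling2}: nothing forces $u_\ep$ to be close to a canonical harmonic map, nor $\partial_z u_\ep$ to be tied to $\nabla_x u_\ep$ in this way. The paper instead uses a $\tau$-dilation trick (Lemma~\ref{labels}): define $\psi^\tau_\ep(x,z)=v_\ep(x,\tau z)$ on the stretched cylinder, apply the 3D Jacobian compactness Theorem~\ref{UKC} to $\psi^\tau_\ep$, and observe that the limiting rectifiable current must connect the vortex configurations at heights $z_1/\tau$ and $z_2/\tau$. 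This converts $\int|\partial_z v_\ep|^2/\logeps$ into a curve-length minimization, and sending $\tau\to 0$ linearizes arclength to recover $\frac\pi 2\sum|f_i'|^2$. No pointwise structure of $u_\ep$ is used --- only the global energy bound. (The paper notes that a Sandier--Serfaty product-estimate route is also possible, but your annular computation as written is neither of these.)

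\medskip

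\textbf{Circularity and the auxiliary scale.} You extract an $H^1$-weak limit of $f_\ep=h_\ep^{-1}p_\ep$, which presupposes $\int_\Omega|\partial_z u_\ep|^2\le C$. Under \eqref{scaling1}--\eqref{scaling1aa} alone this is not known a priori; Lemma~\ref{c.subset} gives only $o(\logeps)$. The paper breaks the circularity by introducing an intermediate scale $\ell_\ep:=\max\{h_\ep,\,(c_3^{-1}\logeps^{-1}\int|\partial_z u_\ep|^2)^{1/2}\}$, running the entire compactness and lower-bound argument at scale $\ell_\ep$, and only in the final step (via the refined inequality \eqref{Gep.split} and a bootstrap on the constant $c_3$) concluding $\ell_\ep=h_\ep$. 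Your outline skips this, which is where the argument would stall under the weaker hypothesis.

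\medskip

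\textbf{Part (c).} The paper's argument is more concrete than ``no slack forces matching the ansatz'': one shows via a slicing/decomposition lemma (Lemma~\ref{L.levelset}) that $J-\sum\Gamma_{f_i}$ is supported in finitely many horizontal planes, then uses tightness of the energy on thin slabs $\{|z-z_j|<\delta_0\}$ together with Theorem~\ref{UKC} to rule out any horizontal mass. No near-filament structure of $u_\ep$ is invoked.
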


In its full strength, Theorem \ref{main} does not only lend itself to applications such as Theorems \ref{minimizers} and \ref{localmin} but it also provides a framework to analyze clustering of filaments in more general $3D$ Ginzburg-Landau problems. Instances where this occurrence is expected are considered in the works \cite{MSZ, DK}. Moreover, the lengthscale $h_\e$ is rather natural and it can be imposed by the geometry \cite{MSZ} or by physical effects as in \cite{AC.ARMA} where distinct filaments are shown to concentrate at this lengthscale due to an external magnetic field in a much simpler setting. It is also a starting point towards establishing a correspondence between solutions, not necessarily stable, of \eqref{eqn.NPVF} and \eqref{eqn.GL} (see \cite{JSt} for an appropriate strategy).

 Some of the most salient features of the $\Gamma$-convergence of $G_\e$ to $G_0$ that may be applied to other situations are the accurate characterization of the vorticity and the expansion of the energy $F_\e(u_\e)$ up to $o(1)$ for the solutions predicted in Theorems \ref{minimizers} and \ref{localmin}. Following \cite{JSo, abo} we would be only able to conclude that
\[\frac{1}{\pi}\star J u_\e \to n \Gamma_0 \mbox{ in }W^{-1,1}(\Om),\]
and that
 \[F_\e (u_\e)= n\pi L\vert\log \e\vert +o(\vert\log \e\vert) .\] In comparison \eqref{ICITCOT} allows us to identify the vorticity in a very precise manner that in particular lets us distinguish $n$ distinct filaments, while \eqref{uniflowse} and \eqref{recseqbd} capture a renormalized energy of the filaments that stores fine information about the geometry of the filaments and their mutual interaction. This realization of minimizers of $F_\e$ is a $3D$ analog, in the present setting, of the asymptotic description in \cite{BBH}.

Theorem \ref{main} is also robust in the sense that rather small adaptations of the proof
could be used to establish variants, such as a corresponding
$\Gamma$-convergence result with Dirichlet boundary conditions on 
$\partial \omega\times (0,L)$. For example, given
$g\in C^\infty(\partial \omega; S^1)$  of degree $n$, 
the conclusions of Theorem \ref{main} still
hold for a sequence
$(u_\ep)$ in
\begin{equation}\label{d0}
\{ u\in H^1(\Omega;\C) : u(x,z) = g(x)\mbox{ for }x\in \partial \omega, z\in (0,L)\}
\end{equation}
satisfying the hypotheses of Theorem \ref{main},
as long as the constant $\kappa_n(\Omega)$
appearing in the definition \eqref{Gep.def} 
of $G_\ep$ is changed 
to a new constant $\kappa_n(\Omega;g)$,
which we display in \eqref{knog.def}.
Briefly, the point is that $\kappa_n(\Omega)$
is constructed from the Bethuel-Brezis-H\'elein
renormalized energy on $\omega$ with natural boundary conditions,
whereas $\kappa_n(\Omega;g)$ uses the renormalized
energy on $\omega$ with Dirichlet data $u=g$ on $\partial\omega$. The only places where this requires any changes in the 
proof of Theorem \ref{main} are the following:
\begin{enumerate}
\item  the proof of Lemma \ref{L3b}. Here
\eqref{L3a.sharper} would need to be modified
by changing the natural (Neumann) renormalized energy 
to the Dirichlet renormalized energy for $g$, which is then
carried through the rest of the proof. 

We remark that the verification of \eqref{L3a.sharper} 
relies on Theorem 2  in \cite{JSp2}. This result
remains true for Dirichlet boundary conditions, with
only cosmetic changes in the proof, although we do not know
any reference that presents all the details.

\item the construction of the recovery
sequence in Section \ref{section5}.  Here one
would  need to build the recovery sequence out of the
canonical harmonic map with Dirichlet, rather than 
Neumann, boundary conditions. This would simply entail
a change in the boundary conditions for the phase factor $\beta$,
defined in \eqref{beta.def}.
\end{enumerate}
More generally, if one fixes $A\subset \partial \Omega$ and considers
a sequence of functions of the form
\begin{equation}
(u_\ep)\subset \{ u\in H^1(\Omega;  \C) \ : \ u = g_\ep \mbox{ on }A\}
\label{formofdata}\end{equation}
and satisfying the hypotheses of Theorem \ref{main},
then one expects parallel results to hold, if
$\kappa_n$ in the definition of $G_\ep$ is modified to a suitable 
$\kappa_n(\Omega; A; g_\ep) \ge \kappa_n(\Omega)$.

In particular, we consider boundary data of the form \eqref{formofdata}
in Theorems \ref{minimizers} and \ref{localmin}, with $A = \omega \times \{0,L\}$.
In order to avoid technicalities related to estimates of the impact of
general boundary data on the energy, we focus on
carefully chosen data, for which in fact 
$\kappa_n(\Omega; A; g_\ep) = \kappa_n(\Omega)$.

\subsection{Outline of the paper}

In the following lines, we explain the main ideas in the proofs. We begin by presenting the key steps in establishing Theorem \ref{main}. In what follows, all the statements about the asymptotic behavior of objects depending on the family $(u_\e)$ are understood to hold up to passing to a subsequence.

\medskip

{\bf Proof of Theorem \ref{main}}

\medskip

The general strategy consists in splitting the contributions of the energy into two parts, one coming from $e^{2d}_\e(u) ,$ the other from $|\partial_z u|^2 ,$ and to obtain sharp lower bounds for each piece.

Roughly speaking, if one knew the vorticity of a sequence of maps $(u_\e)$ obeyed \eqref{comp} and \eqref{deltavecf} for some $f\in H^1((0,L), (\R^2)^n)$, then one would expect that on a typical height $z_0 ,$ the graph of $f$ should cross $\R^2\times \{z_0\}$ transversally and, accordingly, for small enough $\e,$ the restricted maps $(v_\e(\cdot\, , z_0))$ should have $n$ well defined vortices close to  $f_1 (z_0), \ldots, f_n (z_0).$ Then, arguments in Ginzburg-Landau theory should yield the lower bound for $u_\e$

\[\int_{\om} e^{2d}_\e (u_\e(x, z_0)) dx \geq \pi n \vert \log \e\vert - \pi n(n-1) \log \|J_x (u_\e(\cdot, z_0))-n\pi \delta_0\|_{F(\om)}-C  \]
on such a height. From this one should be able to deduce after some work

\[ \int_\Om e^{2d}_\e (u_\e) \geq \left(\pi n \vert \log \e\vert + \pi n(n-1)\vert\log h_\e\vert\right)\vert S\vert -\mbox{logarithmic interaction terms}-C, \]
where $S$ is the set of these typical heights, which one expects to be close to having full measure in $(0, L) .$ On the other hand, for maps 
independent of the $z$-variable, this lower bound holds with $\vert S\vert=L$ and corresponds to the total energy, up to a constant. This suggests that the remainder should account for variations in the energy from that of perfectly parallel filaments, that is

\begin{equation}\label{dzu.lbd}
\frac 12 \int_\Om \vert \partial_z u_\e \vert^2 \geq  \frac \pi 2\int_0^L \sum_i\vert f_i' \vert^2 dz - o(1) \qquad\mbox{ as } \ep\to 0.
\end{equation}

In light of this, the first step to a rigorous argument is to establish some preliminary lower bounds under assumptions \eqref{scaling1}, \eqref{scaling2} and a corresponding initial compactness for the vorticity, somewhat weaker than \eqref{comp} and \eqref{deltavecf}. This will later allow for improving both the lower bounds and compactness property, upon subsequent analysis using \eqref{scaling1a} and \eqref{scaling1aa}. 

{\bf First estimates.} In Section \ref{section2} we define a notion of `good height' (see \eqref{G.def} below). Heuristically speaking, a height $z_\e$ is considered good for our purposes if $J_x u_\e (\cdot, z_\e)$ is close to $n\pi \delta_0$ on many balls centered at the origin. This choice makes possible to deduce that for any such height either
\[\int_\om e^{2d}_\e (u_\e(x, z_\e))dx \geq \pi(n+\theta) \vert \log \e\vert
\qquad\mbox{ for some $\theta>0$, see Lemma \ref{L3b} below,}
\]
in which case we have a considerable energy excess of order $\mathcal{O}(\vert\log \e\vert),$ or else a very detailed description of the vortex structure is available which allows for very precise lower bounds in terms of the renormalized energy \eqref{expforren} (see Lemma \ref{L3b}). Right from the outset, elementary inequalities using this fact tell us that \eqref{scaling1} implies that the set $\mathcal{G}^\e$ of good heights has measure $\vert \mathcal{G}^\e\vert= L-o(1),$ which is not enough for obtaining a lower bound that accounts for all the divergent part of the energy, but at least let us conclude in Lemma \ref{c.subset} below that for any interval $(a, b) \subseteq (0,L),$

\begin{equation}\label{intro.exp1}\int_a^b \int_\om e^{2d}_\e (u_\e)dx dz =n\pi (b-a)\vert \log\e\vert+o(\vert\log \e\vert).\end{equation}

This fact will be used to prove the key Proposition \ref{P1} and a first compactness property of the vorticity of suitably rescaled maps.

{
{\bf A key estimate.}
We define $\calB^\ep := (0,L)\setminus \calG^\ep$.
 The next task is to exploit the definition of good height and an auxiliary result, Lemma \ref{L8}, to show that if $z_\e \in \calB^\ep,$ somewhat surprisingly, we have the much stronger lower bound on the $2d$ energy. }

\begin{equation}\label{intro.exp2}\int_{\om}e^{2d}_\e (u_\e)(x, z_\e) dx \geq \e^{-\alpha},\end{equation}
for some absolute positive constant $\alpha>0.$  { Thus $\calB^\ep$ can be understood
either as the set of ``bad heights", on which we do not have detailed information
about vortex structure, or as the set of ``better heights", which enjoy a very 
strong lower energy bound.
Note that \eqref{intro.exp2} immediately implies, thanks to \eqref{scaling2}, that 
$\calB^\ep$ is smaller in measure than a power of $\e .$}

The idea here is that with the aid of Lemma \ref{L8}, we can show that there is a constant $c$ such that for any {``bad" }height $b,$ there are many cylinders $\om\times (a,b) ,$ such that at least one of the following holds
\[\int_{\om\times\{b,a\}}e^{2d}_\e (u_\e) \ dx\geq 2\e^{-\alpha}, \mbox{ or }\int_{\om\times (a,b)}e^{2d}_\e (u_\e) \ dx dz\geq c\vert\log\e \vert .\]
However, choosing $a$ close enough to $b$, which we show can be done, we can rule out the latter thanks to \eqref{intro.exp1}. We also show that we can choose $a$ so that $\int_{\om\times\{a\}} e^{2d}_\e (u_\e) \ dx<\e^{-\alpha}$ and thus we deduce the desired bound \eqref{intro.exp2} from the former possibility.

{\bf Characterizing the vorticity.} Section \ref{section3} deals with the compactness of $(u_\e),$ in particular we show the vorticity concentrates along $n$ vortex filaments which we identify as $H^1$-curves over $(0,L)$, and we prove the lower bound \eqref{dzu.lbd}. 
We remark that results similar to \eqref{dzu.lbd} are known in somewhat different
contexts, in which one has for example strong bounds on
$\int_\omega e^{2d}_\ep(u_\ep(x,  z))dx$ that are {\em uniform} with respect to the $z$
variable, as well as limiting vortex curves that are known not to intersect; see
\cite{waveJerrard, LinWave, SSprod}. Our proof here 
borrows some ideas from these earlier works.
We choose to follow \cite{waveJerrard}, but one could also give a different
proof that relies more on ideas introduced in \cite{SSprod}.

We start by noticing that thanks to \eqref{intro.exp1} we have

\[\int_\Om \vert \partial_z u_\e \vert^2 =o(\vert\log\e\vert),\]
and therefore we may find a natural scale $\ell_\e$ such that $h_\e\leq\ell_\e=o(1),$ which we show at the end of the proof to be equal to $h_\e$,  such that the rescaled maps $v_\e (x,z):=u_\e (\ell_\e x, z)$ satisfy that their normalized energies

\[\frac{1}{\vert \log \e'\vert}\int_{\Om_\e}  e_{\e '}(v_\e)\]
are uniformly bounded, where $\e':=\e/\ell_\e$ and $\Om_\e:=\om_\e\times(0,L) ; \, \om_\e:=\ell_\e^{-1} \om .$ This puts us in a position to apply an abstract compactness result from \cite{JSo}(see also \cite{abo}), which we will do at several stages in the proof of part $(c)$ of Theorem \ref{main}.  A key point is to show that there is a dense subset $H$ of $(0,L),$ such that for any heights $z_1< z_2 \in H,$ we have that

\begin{equation}\label{zzz.intro}\frac{\pi}{2}\min_{\sigma\in S^n}\sum_{i=1}^n\frac{\abs{p_i(z_1)-p_{\sigma(i)}(z_2)}^2}{z_2-z_1}
\ \le \ \liminf_{\e\to 0} \int_{\om_\e\times (z_1, z_2)} \frac{\vert\partial_z v_\e(x, z)\vert^2}{2 \vert\log \e\vert} dx dz,\end{equation}
where for $j=1,2,$ it holds that

\begin{eqnarray*}
&&J_x v_\ep(\cdot, z_j)  \to \pi \sum_{i=1}^{n} \delta_{p_i(z_j)} 
\qquad
\mbox{ in }W^{-1,1}(B(R)), \ \ \mbox{ for all }R>0.
\end{eqnarray*}
In fact something stronger is proved in Lemmas \ref{labels}--\ref{L.choosef}. This type of estimates let us relate the information of the vorticity of nearby heights, whose variations can be controlled by $\vert\partial_z v_\e\vert^2/\vert\log\e\vert$ at any scale. In particular, we make use of \eqref{zzz.intro} to identify the limiting vortex filaments and to gain the anticipated control on the modulus of continuity of the corresponding $f.$

{\bf Refined lower bounds and compactness.} We complete the proof of the compactness and lower bounds, that is part $(a)$ of Theorem \ref{main}, in Subsection \ref{lyhcoinciden} by combining the previous steps and appealing to \eqref{scaling1a}, \eqref{scaling1aa}. We obtain a very precise lower bound for $e^{2d}_\e(u_\e)$ in terms of the intermediate scale 
$\ell_\e$ in Lemma \ref{lb.by.Fatou}; it follows from a fact we establish a bit earlier: given $z\in H,$  it holds that 

\begin{eqnarray}\label{2dfatou.intro}\int_\omega e^{2d}_\e(u_\ep(x,z)) \, dx -  \big[
  n(\pi \logeps+\gamma)   + n(n-1)\pi  |\log\ell_\ep |
+ n^2 H_\omega(0,0)\big] \nonumber\\
 \geq  -\pi \sum_{i\ne j} \log|q_i(z) - q_j(z)|, \end{eqnarray}
where $J_x v_\ep(\cdot, z)  \to \pi \sum_{i=1}^{n} \delta_{q_i(z)}\mbox{ in }W^{-1,1}(B(R))$ for all $R>0.$ 
For more details see Lemmas \ref{lemma11} and \ref{L.X}.

An application of Fatou's lemma to \eqref{2dfatou.intro} and all the previous analysis yield the lower bound

\[ G_\e (u_\e) \ge \pi n(n-1) \log (h_\e /\ell_\e ) +G_0 (f)+o(1).\]

Finally, we proceed to conclude the proof of part $(a)$ by using \eqref{scaling1a}, \eqref{scaling1aa} to show that the intermediate scale $\ell_\e$ actually coincides with $h_\e$ in this case.

{\bf Stronger compactness and the construction of a recovery sequence.} We turn to the proof of part $(c)$ of Theorem \ref{main} in Section \ref{icfts}. So far we know that $J_x v_\e\to \pi \delta_{[f(\cdot)]}\mbox{ in } W^{-1,1} (B(R)\times (0,L)) \mbox{ for every } R>0;$ here we show that if there is no loss in \eqref{uniflowse} then in fact $f$ captures all of the limiting vorticity, that is

\[\frac{1}{\pi}\star J v_\e \mbox{ converges to }\sum_{i=1}^n \Gamma_{f_i} \mbox{ in the flat norm }F (B(R)\times (0,L)), \mbox{ for every } R>0. \] 
The proof hinges upon the measure-theoretic Lemma \ref{L.levelset}, which tells us that the limiting vorticity $J$ (whose existence is guaranteed thanks to Theorem \ref{UKC}) is given by $\sum_{i=1}^n \Gamma_{f_i}$ plus some indecomposable pieces supported on a union of horizontal planes. However, the tightness in the energy precludes the presence of these horizontal components.

To conclude the proof of Theorem \ref{main}, it remains to show the existence of a recovery sequence for the effective energy $G_0(f)$ for any admissible $f.$ 
We construct such families $(u_\e)$ based on canonical harmonic maps and estimates from \cite{JSp1, JSp2}. This is done in Subsection \ref{section5}.

\medskip

{\bf Theorems \ref{minimizers} and \ref{localmin}}

\medskip

These results rest on the $\Gamma$-convergence result. The extra technical difficulty we need to address is the refinement of the construction of a recovery sequence that is now additionally required to satisfy prescribed boundary data; this is a delicate matter because for us boundary conditions with multiple degree vortices are perfectly acceptable. This is taken care of in Section \ref{section6} where we present the proof of Theorem \ref{minimizers}. To prove Theorem \ref{localmin} we need to relate local minimizers with respect to two different topologies; after this is done the existence of local minimizers with the desired properties follows from standard arguments (see \cite{MSZ}). This is achieved in Section \ref{sectionforlocalmin}.

The article concludes with an appendix where the proofs of some technical results from Section \ref{section2} are included.
\vskip.3in
{\it \bf Acknowledgments}
The research of both authors was partially supported by the National Science and
Engineering Research Council of Canada under operating grant 261955. The first author wishes to thank the
Fields Institute hosting him during the Fall of 2014, and the stimulating environment provided by the thematic program ``Variational Problems in Physics, Economics and Geometry'' where part of this work was carried out. 

\section{Preliminaries}

A key object in our study, and more generally in the asymptotic analysis of the
Ginzburg-Landau functional, is the {\em renormalized energy},
introduced by Bethuel, Brezis, and H\'elein \cite{BBH}, which in our context may be defined
as follows.

For $y\in \om$, we write 
$H_{\om}(\cdot,y)$ to denote the solution to 
\begin{equation}\label{H.def}
\Delta_x H_\omega(\cdot,y)=0\mbox{ in }\om,\; \qquad
H_\omega(x,y)=-\log\abs{x-y}\mbox{ for }x\in \p\om.
\end{equation}
For distinct points $p_1,\ldots, p_n\,\in\om$
we define 
\beq\label{expforren}
W_\om(p_1,\ldots, p_n)=-\pi \left(\sum_{i\neq j}\log\abs{p_i-p_j}+\sum_{i,j}H_{\om}(p_i, p_j)\right).
\eeq
The constant $\kappa_n(\Omega)$ appearing in \eqref{Gep.def} 
is given by 
\begin{equation}\label{kappan.def}
\kappa_n(\Om):=-\pi n^2 LH_\omega(0,0)+nL\gamma,
\end{equation}
Here
$\gamma$ is the constant defined in \cite{BBH} as
\beq
\gamma:=\lim_{\e\to 0}(I(1,\e)+\pi\log\e),\label{BBHconstant1}
\eeq
where
\beq
I(R,\e):=\min 
\left\{\frac{1}{2}\int_{B(0,R)}e_\ep^{2d}(u)dx \ ; \ 
u\in H^1(B(R), \C),u=\frac x{|x|}\mbox{ on }\p B(R) \right\}
\label{BBHconstant2}
\eeq

\begin{remark}\label{rem.d1}
We remark that natural boundary conditions on $\partial \omega$ are built into the 
definition of $H_\omega$ and hence also $W_\omega$ and $\kappa_n(\Omega)$.
In particular, if we wish to consider Dirichlet data on $\partial \omega\times (0,L)$
of the form contemplated in \eqref{d0}, then the 
correct substitute for $\kappa_n$ is
\begin{equation}
\kappa_n(\Omega, g)  = -\pi n^2 L H(0,0;g) + nL\gamma
\label{knog.def}
\end{equation}
where $H(\cdot, y;g)$ is harmonic in $\omega$ for every  $y$, and
\begin{equation}\label{mercury}
\nu(x)\cdot \nabla_xH(x,y;g) = j_\tau g(x) + \nu \cdot \frac{(x-y)}{|x-y|^2}\quad\mbox{ for }x\in \partial \omega .
\end{equation}
Here $j_\tau g(x) = Im(\bar g, \nabla_\tau g)(x)$.
Similarly, the correct renormalized energy for \eqref{d0} is defined as in \eqref{expforren}, but with $H_{\omega}(p_i, p_j;g)$ in place of $H_\omega(p_i,p_j)$.
\end{remark}

\subsection{More about currents}\label{S.currents}

In Section \ref{somen}, we defined $k$-currents, and we introduced
several particular classes of  currents of interest, including for example the $1$-current $T_\gamma$
associated to a Lipschitz curve $\gamma$.  

In general, if $T$ is a $k$-current, then $\partial T$ is the $(k-1)$-current defined by
$\partial T (\phi) := T(d\phi)$. For example, if $U$ is an open set in $\R^n$ and
$\gamma:[a,b]\to \bar U$ is a Lipschitz curve such that $\gamma(s)\in U$ for $s\in (a,b)$, then
\[
\partial T_\gamma(\phi) = \phi(\gamma(b)) - \phi(\gamma(a))\qquad\mbox{ for }\phi \in \calD^0(U).
\]
In particular, $\partial T_\gamma=0$ in $U$ if $\gamma(a)$ and $\gamma(b)$ belong to $\partial U$, or if $\gamma(a)=\gamma(b)$.

We will frequently encounter  {\em integer multiplicity rectifiable}  $1$-currents
in various $3$-dimensional open sets $U$. Such a current $T$ 
admits a moderately complicated
description in general, but if $M(T)<\infty$ and $\partial T = 0$ in $U$, which will always be the case for us, then it can always be represented in the form
\begin{align}
T&=\sum_{i\in I}T_{\gamma_i},\qquad\qquad \p\Tgi=0\mbox{ in }U \mbox{  for all }i\in I, \nonumber\\
M(T)&=\sum_{i\in I}M(\Tgi)   \ = \sum_{i\in I}\calH^1(\gamma_i) < \infty.
\label{characofonecurrents}
\end{align}
for some family of Lipschitz maps $\{\gamma_i\}_{i\in I}$,
with $I$ at most countable. (Here and below, $\calH^k$ and $\calL^k$
denote $k$-dimensional Hausdorff and Lebesgue measure, respectively.)

The remainder of this section is used only rarely and can be skipped until needed.

It will be useful to consider slices of
various currents, including $\star Ju$ and $\Gamma_0$, by the function $\zeta(x,z) = z$.
In general a slice\footnote{In order for  the slices of $S$ to be well-defined, $S$ must satisfy some mild
regularity hypotheses, which will always hold for us.}
 of a $1$-current $S$
in $\Omega$ by $\zeta^{-1}\{z\}$
is a $0$-current
supported in  $\zeta^{-1}\{z\}$, which is denoted $\langle S, \zeta, z\rangle$.
For the currents we are interested in, we have explicit formulas:
\[
\langle \star Ju, \zeta, z\rangle(g) = \int_{\om} J_x u(x,z) g(x,z) \ dx
 \ \ \mbox{ a.e. }z,
\qquad\quad\mbox{ for }u\in H^1(\Omega;\C)
\]
\[
\langle \Gamma_{f} , \zeta, z\rangle  =
\delta_{(f(z),z)}  \ \ \mbox{ a.e. }z,
\qquad\quad\mbox{ for }f\in H^1((0,L);\R^2)
\]
using notation introduced in \eqref{Gammaf.def}.
In particular,
\[
\langle \Gamma_0, \zeta, z\rangle = \delta_{(0,z)} 
\]
Note that 
$\langle \star Ju, \zeta, z\rangle$ can (for {\em a.e.} $z$) be identified with
the Jacobian of $u(\cdot, z)\in H^1(\om; \C)$, and as a result
\[
\|  \langle \star J u - n\pi \Gamma_0 , \zeta, z\rangle\|_{F(\Omega)} 
= \| J_{x} u(\cdot, z) - n\pi \delta_0 \|_{F(\om)}.
\]
In Remark \ref{rem2} above, we have asserted that
\[
\int_0^L \| J_x u (\cdot, z) - \pi n \delta_0\|_{F(\om)}\, dz \  \le \| \star Ju  -n\pi \Gamma_0\|_{F(\Omega)}.
\]
This estimate, which is not really used in this paper, is a direct consequence of the above considerations and the  general fact
that
\begin{equation}
\int_0^L  \| \langle S, \zeta, z\rangle \|_{F(\omega)}dz \  \le \  \|S\|_{F(\Omega)}
\qquad \mbox 
{for any current $S$ in $\Omega$.}
\label{slice.est}\end{equation}
proved in Federer \cite{Federer} 4.2.1.

\section{Preliminary lower bounds for the $2d$ energy}\label{section2}

In this section we prove lower bounds for the $2d$ energy
under assumption \eqref{scaling1}.

\subsection{A criterion for vorticity}

We first introduce a criterion that will allow us to detect, roughly speaking, when
a function $w\in H^1(\omega;\C)$ has $n$ vortices rather near the origin.
There are many ways of doing this; the one we choose is designed to
facilitate the proof of a key estimate that we establish in Proposition \ref{P1} below.

We henceforth write
\[
r^* := 1 \wedge \dist(0,\partial \omega) = \min\{1,  \dist(0,\partial \omega)\}.
\]
Given $w \in H^1(\omega;\C)$,
we will use the notation
\begin{equation}
\calS_n(w) := \left\{ s\in \left
(\frac {r^*}2,  r^* \right) : \ \left| \int_{B(s)} J_xw(x) \ dx - n\pi \right| \le   1 \right\}.
\label{calS.def}\end{equation}

We will later show that $w$ has various good properties  if $\calS_n(w)$ is large in the sense that 
\begin{equation}\label{Slarge}
|\calS_n(w)|\ge \frac {r^*}4.
\end{equation}
This says that on a majority of balls $B(s)$, $\frac {r^*}2 <s<{r^*}$, the vorticity contained in $B(s)$ is not too far from $n\pi$.

We first establish some estimates that
will later allow us to show that
if $(u_\e)\subset H^1(\Omega, \C)$ satisfies assumption \eqref{scaling1} 
and $\e$ is small, then
$w(\cdot)  = u_\e(\cdot , z)$ satisfies \eqref{Slarge} for most 
values of $z$.

\begin{lemma} If $w\in H^1(\omega;\C)$,
then
\begin{equation}
|\calS_n(w)| \ge  \frac {r^*}2 - \| J_x w  - n\pi\delta_0\|_{F(\om)} \, .
\label{GvsF1}\end{equation}

Also, if  $|\calS_n(w)| \ge \frac {r^*}4$, then there exists $\phi\in W^{1,\infty}_0(\om)$
such that $0\le \phi \le 1$, $\|\nabla\phi \|_\infty \le 4/{r^*}$, supp$(\phi)\subset B(r^*)$, $\phi = 1$ in $B(r^*/2)$,  and
\begin{equation}
\left| \int_\omega \phi(x) \, J_xw(x) \, d x - n\pi\right| \le 1.
\label{GvsF2}\end{equation}
\label{GvsFlat}\end{lemma}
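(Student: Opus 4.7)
The plan is to prove both parts by the same mechanism: averaging the indicator functions $\chi_{B(s)}$ over the parameter $s \in (r^*/2, r^*)$ with appropriately chosen weights, producing radial Lipschitz test functions whose pairings against $J_xw - n\pi \delta_0$ are computable via Fubini. Throughout, abbreviate $g(s):= \int_{B(s)} J_x w\,dx - n\pi$, so that $\calS_n(w) = \{s\in(r^*/2,r^*) : |g(s)| \le 1\}$; note $g$ is continuous since $J_xw \in L^1(\omega)$ (as $w \in H^1$).

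For part (a), let $B := (r^*/2,r^*) \setminus \calS_n(w) = \{|g|>1\}$, split it as $B^\pm := \{\pm g >1\}$, and define $\sigma(s) := \operatorname{sign}(g(s))\,\mathbf 1_{B}(s)$ and
\[
\psi(x) := \int_{r^*/2}^{r^*} \sigma(s)\,\chi_{B(s)}(x)\,ds.
\]
This $\psi$ is radial, supported in $\overline{B(r^*)}$, has radial derivative $\psi'(r) = -\sigma(r)$ so $\|\nabla\psi\|_\infty \le 1$, and satisfies $\|\psi\|_\infty \le |B| \le r^*/2 \le 1$. Hence (after standard mollification) $\psi$ is admissible for the flat norm. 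Since $\chi_{B(s)}(0) = 1$ for all $s>0$, Fubini yields
\[
(J_xw - n\pi\delta_0)(\psi) \;=\; \int_{r^*/2}^{r^*} \sigma(s)\,g(s)\,ds \;=\; \int_{B^+} g \,-\, \int_{B^-} g \;>\; |B^+| + |B^-| \;=\; |B|,
\]
because $|g|>1$ on $B$. Therefore $|B| \le \|J_xw - n\pi\delta_0\|_{F(\omega)}$, which rearranges into \eqref{GvsF1}.

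For part (b), assume $|\calS_n(w)| \ge r^*/4$ and perform the analogous averaging, but now uniformly over $\calS_n(w)$:
\[
\phi(x) := \frac{1}{|\calS_n(w)|}\int_{\calS_n(w)} \chi_{B(s)}(x)\,ds.
\]
Because $\calS_n(w) \subset (r^*/2,r^*)$, $\phi$ is radial with $\phi \equiv 1$ on $B(r^*/2)$, $\phi \equiv 0$ off $\overline{B(r^*)}$, and $0 \le \phi \le 1$; its radial derivative has modulus $\chi_{\calS_n(w)}(r)/|\calS_n(w)| \le 4/r^*$, so $\phi \in W^{1,\infty}_0(\omega)$ with all the claimed properties. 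Applying Fubini once more,
\[
\int_\omega \phi(x)\,J_xw(x)\,dx \,-\, n\pi \;=\; \frac{1}{|\calS_n(w)|}\int_{\calS_n(w)} g(s)\,ds,
\]
whose modulus is at most $1$ by the very definition of $\calS_n(w)$, giving \eqref{GvsF2}.

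The only nonroutine point is verifying that the Lipschitz radial profiles $\psi$ and $\phi$ are admissible in the smooth formulations of the flat norm \eqref{F.def} and of $W^{1,\infty}_0$; this is handled by mollification, which does not increase either $\|\cdot\|_\infty$ or $\|\nabla\cdot\|_\infty$ and converges uniformly, so the pairings with $J_xw \in L^1(\omega)$ and $\delta_0$ converge. A mild boundary issue arises when $r^* = \dist(0,\partial\omega)$ so that $\overline{B(r^*)}$ touches $\partial\omega$; this is dealt with by replacing $r^*$ by $r^*-\delta$ inside the support of the test functions and letting $\delta \downarrow 0$.
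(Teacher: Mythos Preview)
Your proof is correct and is essentially the paper's own argument, phrased slightly differently: where the paper defines the radial test profiles by prescribing their derivatives $g'(s)=-\sigma(s)$ and then integrating by parts, you build the same profiles directly as superpositions $\int \sigma(s)\chi_{B(s)}(\cdot)\,ds$ and apply Fubini; the resulting functions $\psi$ and $\phi$ coincide with the paper's $g(|\cdot|)$ in both parts. Your remarks on mollification and the $r^*=\dist(0,\partial\omega)$ boundary case address minor technicalities that the paper leaves implicit.
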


\begin{proof}
Consider a  measurable subset $A\subset \left(\frac {r^*}2,{r^*} \right)$
and let  $g:(0,\infty)\to \R$
be the (unique) compactly supported  Lipschitz continuous function 
such that
\[
g'(s) = -{\bf 1}_{s\in A} \operatorname{sign}\left(
 \int_{B(s)} J_x w \, dx - n\pi 
\right) \quad \mbox{a.e. } s.
\]
Since $g'(s)=0$ for $s\ge {r^*}$, it is clear that $g(s)=0$ for $s\ge {r^*}$,
and hence
\[
g(s) = -\int_s^{r^*} g'(t)\ dt \qquad\mbox{ for }0\le s \le {r^*}.
\]
Thus
\begin{align*}
\int_{s\in A} \left|\int_{B(s)} (J_x w  - n\pi \delta_0)\right| ds 
&=
- \int_0^{r^*} g'(s) \left(  \int_{B(s)} (J_xw - n\pi \delta_0)\right) ds\\
&= - \int_{B({r^*})} \int_{|x|}^{r^*} g'(s)  (J_xw - n\pi \delta_0) \ ds
\\
&=
\int_{\om} g(|x|)(J_xw - n\pi \delta_0).
\end{align*}
If we take  $A := \left(\frac {r^*}2,{r^*} \right)\setminus \calS_n(w)$, then it follows from
the definition of $\calS_n(w)$ that 
\[
\frac  {r^*}2 - |\calS_n(w)| \ = \ | A|  \ \le \ 
\int_{s\in A} \left|\int_{B(s)} (J_x w - n\pi \delta_0)\right| ds .
\]
Also,  $\max( \| g \|_\infty, \| g'\|_\infty) \le 1$
so
\[
\int_{\om} g(|x|)(J_x w - n\pi \delta_0)
\le
\| J_xw - n\pi\delta_0\|_{F(\om)}.
\]
The first conclusion of the lemma follows by combining these inequalities.

To prove the final conclusion of the lemma, assume that
$|\calS_n(w)|\ge \frac {r^*}4$, and let $g(s)$ be the (unique) compactly
supported Lipschitz continuous  function such that
\[
g'(s) = - |\calS_n(w)|^{-1} {\bf 1}_{\calS_n(w)} \qquad \mbox{\em a.e. }s.
\]
Then $0\le g \le 1$, and $\|g'\|_\infty = |\calS_n(w)|^{-1} \le 4/{r^*}$.
Moreover, arguing as above one computes that
\begin{multline*}
\int_{\om} g(|x|) (J_xw - n\pi\delta_0) 
= -\int_0^{r^*} g'(s) 
\left(\int_{B(s)} J_xw-n\pi\delta_0\right)ds\\
= \frac 1{|\calS_n(w)|} \int_{\calS_n(w)} 
\left(\int_{B(s)} J_xw-n\pi\delta_0\right)ds.
\end{multline*}
Then the final conclusion \eqref{GvsF2}, with $\phi(x) = g(|x|)$, now follows directly
from the definition of $\calS_n(w)$.
\end{proof}

We now identify  the above-mentioned good properties
enjoyed by a function $w$ satisfying \eqref{Slarge}.
We show that such a function either
has a very well-defined vortex structure and
associated sharp lower energy bounds,
or else it has excess energy, in the sense that 
condition \eqref{L3b.h2} below fails.
More precisely, we have

\begin{lemma}
There exist positive numbers $\theta, a,b$, 
depending on $n$ and $r^*$, such that $b<a$, and the following holds:

Assume that $w\in H^1(\omega;\C)$ and that $|\calS_n(w)| \ge \frac {r^*}4$ and
\begin{equation}\label{L3b.h2}
\int_\omega e^{2d}_\e (w)(x) \,dx \le  \pi(n+\theta)\logeps \ .
\end{equation}
If  $0<\e<\e_0� = \e_0 (\theta,a,b,n),$ then   there exist $p^\ep_1,\ldots, p^\ep_n\in \om,$ such that
\begin{equation}
\| J_x w - \pi \sum \delta_{p^\ep_i}\|_{F(\om)} \le \ep^a \ ,
\label{L3b.c1}\end{equation}
\begin{equation}
\dist(p^\ep_i, \partial \omega)\ge \frac {r^*} {8}\mbox{ for all }i, \qquad\qquad
|p^\ep_i - p^\ep_j| \ge \ep^b \ \mbox{ for }i\ne j, \qquad \mbox{ and }
\label{L3b.c2}\end{equation}
\begin{equation}
\int_{\omega} e^{2d}_\ep(w) dx \ \ge \  n(\pi\vert \log\e \vert +\gamma) +  W_{\omega}(p^\ep_1,\ldots, p^\ep_n)
- C(n,\theta)\e^{(a-b)/2} \ 
\label{L3a.sharper}\end{equation}
where $W_\omega$ is the renormalized energy defined in \eqref{expforren}
and $\gamma$ is defined in \eqref{BBHconstant1}.
\label{L3b}\end{lemma}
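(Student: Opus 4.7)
The plan is to extract a precise $n$-vortex structure from $w$ by combining a ball construction with the sharp renormalized-energy expansion of Jerrard--Spirn \cite{JSp2}. First I would apply a vortex ball construction (as in \cite{JSo}) to obtain a finite family of disjoint closed balls $\{\overline{B}(p_i^\ep, r_i^\ep)\}_{i=1}^N$ covering $\{|w|\le 1/2\}$, with $\sum_i r_i^\ep \le \ep^{a_0}$ for some $a_0>0$ depending only on $n$, integer degrees $d_i^\ep := \deg(w/|w|,\partial B(p_i^\ep,r_i^\ep))$, and the first-order lower bound $\int_{\cup B_i} e^{2d}_\ep(w)\ge \pi\sum_i |d_i^\ep|\log(r_i^\ep/\ep) - C$. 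Standard Jacobian estimates then give $\|J_xw - \pi\sum_i d_i^\ep \delta_{p_i^\ep}\|_{F(\om)}\le C\ep^{a_0}$. Pairing this approximation against the cutoff $\phi$ produced by Lemma \ref{GvsFlat} --- and using \eqref{L3b.h2} to localize every vortex well inside $B(r^*/2)$, else a single ball's $\pi|d_i^\ep||\log\ep|$ contribution already overshoots --- yields $|\sum_i d_i^\ep - n|\le \tfrac1\pi + o(1)$; since $\sum_i d_i^\ep\in\Z$ this forces $\sum_i d_i^\ep = n$ for small $\ep$. Combining the first-order lower bound with \eqref{L3b.h2} also gives $\sum_i|d_i^\ep|\le n+\theta+o(1)$, so choosing $\theta<1$ yields $\sum_i|d_i^\ep|=n$ and hence, since $\sum_i d_i^\ep = n$, each $d_i^\ep\in \Z_{\ge 0}$.

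Next I would invoke the refined expansion from \cite[Theorem 2]{JSp2}, which upgrades the ball-construction estimate to
\[
\int_\om e^{2d}_\ep(w)\,dx \ \ge\ \pi\sum_i (d_i^\ep)^2\logeps + \gamma\sum_i|d_i^\ep| + W_\om^{*}(p_i^\ep;d_i^\ep) - o(1),
\]
where $W_\om^*$ is the multi-degree analogue of \eqref{expforren}. Comparison with \eqref{L3b.h2} gives $\sum_i (d_i^\ep)^2 \le n+\theta$; combined with $\sum_i d_i^\ep = n$ and $d_i^\ep\in\Z_{\ge 0}$, the choice $\theta<2$ forces each $d_i^\ep\in\{0,1\}$, so exactly $n$ of the $p_i^\ep$ carry unit degree (the remaining zero-degree centers contribute nothing to $J_xw$ and can be dropped). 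The expansion then specializes to \eqref{L3a.sharper} with the standard $W_\om$ of \eqref{expforren}, and the flat-norm bound \eqref{L3b.c1} holds with any $a\in(0,a_0)$.

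Finally, the separation $|p_i^\ep - p_j^\ep|\ge \ep^b$ and the boundary distance $\dist(p_i^\ep,\partial\om)\ge r^*/8$ both follow by contradiction from \eqref{L3b.h2}: $W_\om$ contains the repulsive pair terms $-\pi\sum_{i\ne j}\log|p_i^\ep - p_j^\ep|$ together with the diagonal terms $\pi\sum_i H_\om(p_i^\ep,p_i^\ep)$, which blow up as a vortex approaches $\partial\om$, so if either quantity collapsed faster than a fixed positive power of $\ep$ the preceding lower bound would already exceed $\pi(n+\theta)\logeps$; this fixes $b\le\theta/2$. The main obstacle is coordinating the three exponents $\theta,a,b$ consistently: $b$ must be small enough that the interaction terms in $W_\om$ stay of order $o(\logeps)$, $a$ must exceed $b$ so that \eqref{L3b.c1} actually resolves the individual $p_i^\ep$, and the implicit $o(1)$ errors in \cite[Theorem 2]{JSp2} must be controlled uniformly across the admissible configurations so as to yield the quantitative remainder $C(n,\theta)\ep^{(a-b)/2}$.
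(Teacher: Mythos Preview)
Your overall architecture is right---ball construction, then pin down the degrees and locations, then invoke \cite{JSp2} for the sharp bound---and your first paragraph essentially matches Step~1 of the paper's appendix proof. But the middle step, where you force all $d_i^\ep=1$ and the separation $|p_i^\ep-p_j^\ep|\ge\ep^b$, is circular as written.

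The problem is that Theorem~2 of \cite{JSp2} is not a tool for \emph{discovering} the vortex structure; it takes \eqref{L3b.c1} and \eqref{L3b.c2} as \emph{hypotheses}. In particular it requires the points to be already separated by at least $\ep^b$ (this is the $\rho_\alpha\ge\tfrac12\ep^b$ in the footnote following Lemma~\ref{L3b.bis}) and of degree $+1$. You cannot apply it to a configuration with a possible degree-$2$ vortex, or to two vortices at unknown distance, and then read off $\sum (d_i^\ep)^2\le n+\theta$. Your separation argument via $W_\om$-blowup has the same defect: it presupposes that the expansion \eqref{L3a.sharper} already holds, which in turn presupposes the separation you are trying to prove.

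What the paper does instead (Appendix~\ref{App.A}, Steps 2--4) is a direct multi-scale annular argument that does not use \cite{JSp2} at all. Starting from the ball collection at scale $\sigma=\ep^{2a}$, it grows concentric annuli around the nonzero-degree balls, merging when they touch, and uses the elementary estimate $\int_{\partial B(r,x)}e_\ep^{2d}(w)\,d\calH^1\ge \pi d^2/r\,(1-C\ep^{1-a})$ for $r\ge\ep^a$. If some $d_k^\sigma\ge 2$, or if two degree-$1$ balls merge before reaching scale $\ep^b$, the accumulated annular energy already exceeds $\pi(n+4a)\logeps$ or $\pi(n+2b)\logeps$ respectively, contradicting \eqref{L3b.h2} for the choice $\theta\le b<a$. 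Only \emph{after} this establishes \eqref{L3b.c1}--\eqref{L3b.c2} is \cite{JSp2} invoked, and it then delivers \eqref{L3a.sharper} with the explicit remainder $C\ep^{(a-b)/2}$.

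One further point: the boundary distance $\dist(p_i^\ep,\partial\omega)\ge r^*/8$ does not come from $H_\om$-blowup. With the definition \eqref{H.def} one has $H_\om(p,p)\to+\infty$ as $p\to\partial\omega$, which sends $W_\om\to-\infty$---the wrong direction for a contradiction with \eqref{L3b.h2}. In the paper the boundary distance drops out of the localization step: since $\sum d_k^\sigma\phi(x_k^\sigma)\ge n-\tfrac1\pi-o(1)$ and $\sum d_k^\sigma=n$ with $d_k^\sigma\ge0$, each nonzero-degree center satisfies $\phi(x_k^\sigma)>\tfrac12$, and then $\operatorname{Lip}(\phi)\le 4/r^*$ with $\operatorname{supp}\phi\subset B(r^*)$ forces $|x_k^\sigma|\le\tfrac78 r^*$.
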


The proof of Lemma \ref{L3b} is postponed to Appendix \ref{App.A}.
For now we only remark that
the verification of \eqref{L3b.c1}, \eqref{L3b.c2} involves
a vortex ball argument, and that we will deduce 
\eqref{L3a.sharper} by appealing to results in 
\cite{JSp2}, for which \eqref{L3b.c1} 
and \eqref{L3b.c2} supply the hypotheses.
Incidentally, the reason we have assumed that
$\omega$ is simply connected is that this
condition is imposed in \cite{JSp2}.

Our next result is a corollary of Lemma \ref{L3b}.

\begin{lemma}\label{L3a}
Assume that $u_\ep(\cdot, z)\in H^1(\omega;\C)$, that $|\calS_n(u_\ep(\cdot, z))| \ge \frac {r^*}4$,
and that \eqref{L3b.h2} holds.

Then for $0<\ep<\ep_0$, 
\begin{equation}
\label{L3a.c1}
\int_{\om} e^{2d}_\ep(u_\e(x,z)) dx 
\ge n\pi \abs{\log\e} - C  -\pi n(n-1) \log \| J_x u_\e(\cdot, z) - n\pi\delta_0\|_{F(\om)} 
\end{equation}
and
\begin{equation}
\int_{\om} e^{2d}_\ep(u_\e(x,z)) dx 
\ge n\pi \abs{\log\e} - C  \ .
\label{L3a.c2}\end{equation}
\end{lemma}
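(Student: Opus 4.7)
The plan is to derive both conclusions as direct consequences of Lemma \ref{L3b}. Under the stated hypotheses \eqref{L3b.h2} and $|\calS_n(u_\ep(\cdot,z))|\ge r^*/4$, Lemma \ref{L3b} produces $n$ vortex points $p^\ep_1,\ldots,p^\ep_n\in \om$ satisfying \eqref{L3b.c1}, \eqref{L3b.c2}, and the sharp expansion \eqref{L3a.sharper} in terms of the renormalized energy $W_\om$. Because $\dist(p^\ep_i,\partial\om)\ge r^*/8$, the $p^\ep_i$ lie in a fixed compact set $K\Subset \om$ on which the harmonic kernel $H_\om(\cdot,\cdot)$ is bounded; hence $|\pi\sum_{i,j}H_\om(p^\ep_i,p^\ep_j)|\le C$. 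Substituting into \eqref{L3a.sharper} and absorbing the $\ep^{(a-b)/2}$ term, both inequalities reduce to bounding $-\pi\sum_{i\neq j}\log|p^\ep_i-p^\ep_j|$ from below.

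The weaker estimate \eqref{L3a.c2} is then immediate: since $|p^\ep_i-p^\ep_j|\le \operatorname{diam}(\om)$, the sum $\sum_{i\neq j}\log|p^\ep_i-p^\ep_j|$ is bounded above by $n(n-1)\log\operatorname{diam}(\om)$, a constant which can be absorbed into $C$.

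For the sharper bound \eqref{L3a.c1}, write $\lambda:=\|J_xu_\ep(\cdot,z)-n\pi\delta_0\|_{F(\om)}$. The plan is to show that for $\ep$ small, $\max_{i\neq j}|p^\ep_i-p^\ep_j|\le C\lambda$, whence $-\sum_{i\neq j}\log|p^\ep_i-p^\ep_j|\ge -n(n-1)\log\lambda - C$. The triangle inequality for the flat norm and \eqref{L3b.c1} first give $\|\pi\sum_i\delta_{p^\ep_i}-n\pi\delta_0\|_{F(\om)}\le\lambda+\ep^a$. Testing against a Lipschitz function $\phi$ that equals $-|x|$ on $K$ and is cut off near $\partial\om$ (so $\phi(0)=0$, $\phi(p^\ep_i)=-|p^\ep_i|$, and $\max(\|\phi\|_\infty,\|\nabla\phi\|_\infty)\le C_\om$; for example $\phi:=-|x|\eta$ for a routine smooth cutoff $\eta$ equal to $1$ on $K$) extracts $\pi\sum_i|p^\ep_i|\le C_\om(\lambda+\ep^a)$, hence $|p^\ep_i-p^\ep_j|\le 2C_\om(\lambda+\ep^a)$. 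Combining with the separation $|p^\ep_i-p^\ep_j|\ge \ep^b$ from \eqref{L3b.c2} and using $b<a$, for $\ep$ small enough one has $\ep^a\le \ep^b/(4C_\om)\le \lambda$, so $\lambda+\ep^a\le 2\lambda$ and $|p^\ep_i-p^\ep_j|\le C\lambda$. The only mildly delicate ingredient, and the closest thing to a real obstacle, is this flat-norm-to-pointwise-location passage: one must design a single test function $\phi$ that simultaneously vanishes at the origin, resolves the location of each $p^\ep_i$, and is admissible for $F(\om)$. Once the cutoff construction is in place, the separation lower bound $\ep^b$ from Lemma \ref{L3b} plays the decisive role, ruling out the troublesome regime in which $\lambda$ would otherwise be allowed to be much smaller than $\ep^a$.
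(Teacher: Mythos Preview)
Your proposal is correct and follows essentially the same approach as the paper's proof: invoke Lemma \ref{L3b}, bound the $H_\om$ terms using $\dist(p^\ep_i,\partial\om)\ge r^*/8$, and then control $|p^\ep_i-p^\ep_j|$ in terms of $\lambda$ by testing the flat norm against a Lipschitz function that equals $-|x|$ (up to an additive constant) near the origin. The paper uses the slightly simpler test function $\phi(x)=(r^*-|x|)^+$ in place of your cutoff construction, and obtains $\pi\sum_i|p^\ep_i|\le \lambda+\ep^a$ directly (with constant $1$ rather than $C_\om$), but the logic is otherwise identical, including the use of the separation bound $\ep^b$ from \eqref{L3b.c2} to absorb $\ep^a$ into $\lambda$.
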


\begin{proof}
We wish to deduce the conclusions of the lemma from \eqref{L3a.sharper}. To do this,
we must estimate
\[
 W_{\omega}(p^\ep_1,\ldots, p^\ep_n)
=-\pi \left(\sum_{i\neq j}\log\abs{p^\ep_i-p^\ep_j}+\sum_{i,j}H_{\om}(p^\ep_i, p^\ep_j)\right)
\]
where $H_\om$ is defined in \eqref{H.def}. 

It is clear that $H_\om$ is smooth in the interior
of $\om\times \om$, so \eqref{L3b.c2} implies that $|H_\om(p^\ep_i, p^\ep_j)|\le C$
for all $i,j$.

To estimate the other terms, we write 
$s_\e :=  \| J_x u_\e(\cdot, z) - n\pi\delta_0\|_{F(\om)}$
for convenience.
Then it follows from \eqref{L3b.c1} and the triangle inequality that
\[
\| \pi \sum_{i=1}^n ( \delta_{p^\ep_i} - \delta_0) \|_{W^{-1,1}} \le s_\ep + \e^a.
\]
On the other hand, setting $\phi(x) = (r^*-|x|)^+$, 
\[
\| \pi \sum_{i=1}^n ( \delta_{p^\ep_i} - \delta_0) \|_{W^{-1,1}} \ge \int_{\omega} \phi(x)
\pi \sum_{i=1}^n ( \delta_0 - \delta_{p^\ep_i} ) =  \pi \sum |p^\ep_i|.
\]
Thus $|p^\ep_i - p^\ep_j| \le s_\e+ \ep^a \le 2s_\e $ for all $i\ne j$.
(The last inequality follows from \eqref{L3b.c2}, which with the above shows that 
$s_\e \ge  \e^b -  \e^a \ge \frac 12 \e^b$.)
These imply that 
\[
 W_{\omega}(p^\ep_1,\ldots, p^\ep_n)
\ge
-\pi n (n-1) \log s_\e - C.
\]
Also, since $|p^\e_i| \le \mbox{diam}(\omega)$ for all $i$, it is clear that 
$W_\om(p^\ep_1,\ldots, p^\ep_n) \ge -C$.
\end{proof}

\subsection{Good heights}

We now fix $u_\e\in H^1(\Omega;\C)$ satisfying \eqref{scaling1}. 

We will say that a height $z$ is {\em good} if $u_\e (\cdot, z)$ satisfies
\eqref{Slarge}, and we define $\calG^\ep_1$ to be the set of good heights:
\begin{equation}
\calG^\ep_1 := \left\{ z\in (0,L) : \left| \calS_n(u_\e(\cdot, z)) \right| \ge \frac {r^*}4  \right\}\ .
\label{G.def}\end{equation}
We also define
\[
\calB^\ep_1 := (0,L)\setminus \calG^\ep_1.
\]
The results of the previous sections immediately imply that the good set is big:

\begin{lemma}
\label{L.B1small}
For $u_\ep \in H^1(\Om;\C)$ satisfying \eqref{scaling1}, we have the estimates
\[
L - \left| \calG^\ep_1 \right| =  \left| \calB^\ep_1 \right|\  \le   \frac  4{r^*}
 \int_0^L\| J_xu_\e(\cdot, z) -n\pi\delta_0\|_{F(\omega)}dz \ .
\]
\end{lemma}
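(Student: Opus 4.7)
The plan is to deduce this directly from the first conclusion of Lemma \ref{GvsFlat}, namely the pointwise (in $z$) estimate
\[
|\calS_n(w)| \ \ge \ \tfrac{r^*}{2} - \|J_x w - n\pi\delta_0\|_{F(\omega)},
\]
applied to the slice $w = u_\ep(\cdot, z)$. Specifically, first I would observe that for $z \in \calB^\ep_1$ the definition \eqref{G.def} of good heights gives $|\calS_n(u_\ep(\cdot,z))| < r^*/4$, so the estimate above forces
\[
\|J_x u_\ep(\cdot,z) - n\pi\delta_0\|_{F(\omega)} \ > \ \tfrac{r^*}{2} - \tfrac{r^*}{4} \ = \ \tfrac{r^*}{4}
\qquad \text{for all } z \in \calB^\ep_1.
\]

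The second step is a Chebyshev-type integration over $\calB^\ep_1$: integrating the above pointwise inequality gives
\[
\tfrac{r^*}{4}\,|\calB^\ep_1| \ \le \ \int_{\calB^\ep_1} \|J_x u_\ep(\cdot,z) - n\pi\delta_0\|_{F(\omega)}\,dz \ \le \ \int_0^L \|J_x u_\ep(\cdot,z) - n\pi\delta_0\|_{F(\omega)}\,dz,
\]
which rearranges to the claimed bound. The identity $L - |\calG^\ep_1| = |\calB^\ep_1|$ is just the definition of the complementary set in $(0,L)$.

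There is no real obstacle here: the estimate is an immediate consequence of Lemma \ref{GvsFlat}(a) and the definition of $\calG^\ep_1$, and assumption \eqref{scaling1} enters only implicitly, since it ensures that the right-hand side tends to $0$ as $\ep\to 0$ and hence that $|\calB^\ep_1|\to 0$ — a qualitative consequence that is not formally part of the lemma's statement but is the reason it is useful in the subsequent analysis.
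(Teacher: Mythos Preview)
Your proof is correct and follows essentially the same approach as the paper's: both observe via \eqref{GvsF1} that $z\in\calB^\ep_1$ forces $\|J_x u_\ep(\cdot,z)-n\pi\delta_0\|_{F(\omega)} > r^*/4$, and then obtain the measure bound by what the paper phrases as Chebyshev's inequality and you phrase as direct integration over $\calB^\ep_1$---the same argument.
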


\begin{proof}
By \eqref{GvsF1}, we know that if $z\in \calB^\ep_1$, then
$ \| J_xu_\e(\cdot, z) -n\pi\delta_0\|_{F(\omega)} > \frac {r^*}4$.
In addition,  Chebyshev's inequality implies that for any $s>0$,
\begin{equation}
\calL^1\left(\left\{ 
z\in (0,L) :  \| J_x u (\cdot, z) - \pi n \delta_0\|_{F(\om)}  \ge s 
\right\} \right)  \ \le   \frac 1s
\int_0^L \| J_x u (\cdot, z) - \pi n \delta_0\|_{F(\om)}\, dz .
\label{Fed2}\end{equation}
The lemma follows by combining these facts.
\end{proof}

As shown in Lemma \ref{L3a}, if $z\in \calG^\ep_1$, then $u_\e(\cdot,z)$ satisfies good lower energy 
bounds. 
Perhaps surprisingly, under hypotheses that are 
satisfied by minimizers and local minimizers
whose vorticity clusters around the vertical segment, $u_\e(\cdot,z)$ satisfies {\em much stronger}
lower energy bounds for $z\in \calB^\ep_1$. Indeed, in Subsection \ref{proofprop1} we will
prove

\begin{proposition}
Assume that $u_\ep$ satisfies \eqref{scaling1} 
and  \eqref{scaling2}. Then for every $\alpha\in \left(0,\frac 23\right)$, there exists $\ep_0$ such that 
if $0<\ep \le \ep_0$, then
\begin{equation} \label{P1prop}
\int_{\omega} e_\ep(u_\ep(x,z)) \ dx \ \ge \ \  \ep^{-\alpha}
\qquad \mbox{ for every $z\in \calB^\ep_1 .$}
\end{equation}
As a result, using the upper bound \eqref{ref.asked} on $F_\ep$, if $\ep < \ep_0$ then
$\left| \calB^\ep_1 \right|\ \le C \e^{\alpha} \vert \log \e\vert $.
 \label{P1}\end{proposition}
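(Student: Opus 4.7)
The plan, following the introduction's outline, is to derive the lower bound \eqref{P1prop} at a bad height $z_0\in\calB^\ep_1$ by applying Lemma \ref{L8} to a short interval $(z_1,z_0)$ and exploiting its dichotomy, with $z_1$ chosen to have controlled $2d$-energy. Roughly speaking, Lemma \ref{L8} will provide a constant $c>0$ such that for every bad $z_0$ and every nearby $z_1$, at least one of the following holds:
\emph{(a)} $\int_\om e^{2d}_\ep(u_\ep(\cdot,z_0))\,dx+\int_\om e^{2d}_\ep(u_\ep(\cdot,z_1))\,dx\ge 2\ep^{-\alpha}$, or
\emph{(b)} $\int_{\om\times(z_1,z_0)}e^{2d}_\ep(u_\ep)\,dx\,dz\ge c|\log\ep|$.
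My strategy is to select $z_1$ so that alternative \emph{(b)} fails and the slice energy at $z_1$ is strictly less than $\ep^{-\alpha}$, so that \emph{(a)} must hold and forces the desired estimate at $z_0$.

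Concretely, fix $\delta\in(0,L/2)$ with $n\pi\delta<c/2$, and assume without loss of generality that $z_0>\delta$; set $I:=(z_0-\delta,z_0)$. By Lemma \ref{c.subset} (equation \eqref{intro.exp1}),
\[
\int_{\om\times I}e^{2d}_\ep(u_\ep)\,dx\,dz \;=\; n\pi\delta|\log\ep|+o(|\log\ep|)\;<\;c|\log\ep|
\]
for all sufficiently small $\ep$. Consequently, for every $z_1\in I$,
\[
\int_{\om\times(z_1,z_0)}e^{2d}_\ep(u_\ep)\,dx\,dz\;\le\;\int_{\om\times I}e^{2d}_\ep(u_\ep)\,dx\,dz\;<\;c|\log\ep|,
\]
so alternative \emph{(b)} fails. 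On the other hand, Chebyshev's inequality applied to the same slice-average bound produces a $z_1\in I$ with $\int_\omega e^{2d}_\ep(u_\ep(\cdot,z_1))\,dx\le C|\log\ep|/\delta$, which is smaller than $\ep^{-\alpha}$ for $\ep$ small enough. The dichotomy of Lemma \ref{L8} applied to $(z_1,z_0)$ then forces alternative \emph{(a)}; subtracting the bound at $z_1$ gives
\[
\int_\omega e_\ep(u_\ep(x,z_0))\,dx\;\ge\;\int_\omega e^{2d}_\ep(u_\ep(x,z_0))\,dx\;\ge\;\ep^{-\alpha},
\]
which is \eqref{P1prop}. The measure estimate on $\calB^\ep_1$ is an immediate consequence of Chebyshev together with \eqref{ref.asked}:
\[
|\calB^\ep_1|\,\ep^{-\alpha}\;\le\;\int_{\calB^\ep_1}\!\!\int_\omega e_\ep(u_\ep)\,dx\,dz\;\le\;F_\ep(u_\ep)\;\le\;C|\log\ep|.
\]

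The main obstacle I expect is Lemma \ref{L8} itself, since the argument above is combinatorial once the dichotomy is in hand. The precise threshold $\alpha<2/3$ should emerge there, from balancing a flux-type identity for the $z$-evolution of $J_xu_\ep$ (whose error is controlled by $\int|\partial_z u_\ep|^2\le F_\ep\lesssim|\log\ep|$, together with the slice $2d$-energies) against the assumed slice bound $\ep^{-\alpha}$; the range $\alpha<2/3$ is the regime in which the change in the vorticity restricted to balls $B(s)$ between the two heights is small enough to turn a bad height $z_0$ into a good one, unless the boundary energies sum to at least $2\ep^{-\alpha}$.
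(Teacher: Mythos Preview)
Your high-level plan matches the paper's: rule out the ``bulk'' alternative with Lemma~\ref{c.subset} on a short interval, then use the ``boundary'' alternative together with a well-chosen comparison height to force large slice energy at the bad height. The measure bound on $\calB^\ep_1$ is also correctly derived.

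However, the dichotomy you state --- that for \emph{every} nearby $z_1$ either the two slice energies on $\omega$ sum to $\ge 2\ep^{-\alpha}$ or the slab energy on $\omega\times(z_1,z_0)$ is $\ge c|\log\ep|$ --- is not what Lemma~\ref{L8} delivers, and the mechanism you sketch for it (a flux identity for the $z$-evolution of $J_xu_\ep$ controlled by $\int|\partial_z u_\ep|^2$) is not the right one. Two concrete gaps:

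\emph{(i) The comparison height must be Jacobian-good, not merely energy-small.} Your $z_1$ is selected only by Chebyshev on the slice energy. But the only information distinguishing a bad height is a \emph{Jacobian} condition: $|\calS_n(u_\ep(\cdot,z_0))|<r^*/4$. If $z_1$ is also bad in this sense, there is no contradiction to extract and your dichotomy can fail. The paper instead picks $z_g$ in a set $\calG^\ep_3$ requiring both $\|J_xu_\ep(\cdot,z_g)-n\pi\delta_0\|_{F(\omega)}\le r^*/16$ and slice energy $\le (r^*/32)\ep^{-\alpha}$; assumption~\eqref{scaling1} ensures such heights abound.

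\emph{(ii) Lemma~\ref{L8} is applied to lateral cylinders $\partial B(s)\times I$, not to slabs.} Comparing the bad height with the Jacobian-good $z_g$ produces a set $\calT$ of radii $s$, with $|\calT|\ge r^*/8$, on which $|\int_{B(s)}J_xu_\ep(\cdot,z_0)-\int_{B(s)}J_xu_\ep(\cdot,z_g)|\ge\tfrac12$. By Stokes this equals $|\int_{\partial B(s)\times I}Ju_\ep|$, so the cylinder Jacobian is bounded below. Lemma~\ref{L8} (via Remark~\ref{rem.cyl}) then gives, for each such $s$, either $\int_{\partial B(s)\times\{z_0,z_g\}}e_\ep\ge\ep^{-\alpha}$ or $\int_{\partial B(s)\times I}e_\ep\ge \tfrac{c_2}{2}|\log\ep|-o(1)$. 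Integrating over $s$ by the coarea formula, the energy bound on the slab caps the measure of radii falling in the second alternative, so a set of radii of measure $\ge r^*/16$ must satisfy the first; integrating that in $s$ gives $\int_{\omega\times\{z_0,z_g\}}e_\ep\ge (r^*/16)\ep^{-\alpha}$, and subtracting the $z_g$ contribution finishes.

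In short, the ``dichotomy'' you want is not a black box but emerges from this radial-slicing step; once you add the Jacobian condition on $z_1$ and route Lemma~\ref{L8} through the cylinders $\partial B(s)\times I$, your outline coincides with the paper's proof.
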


This proposition  will play an important role in
the proof of Theorem \ref{main}, although there
in fact a much weaker estimate than \eqref{P1prop} would suffice.

We continue with some easier estimates that will be used several times,
including in the proof of 
Proposition \ref{P1}.

\begin{lemma}Assume that $(u_\ep)$ satisfies \eqref{scaling1}, \eqref{scaling2}.
If $A$ is any measurable subset of  $(0,L)$, then 
\beq\label{Uon2D}\lim_{\e\to 0} \frac 1\logeps \int_A\int_{\om} e_\ep(u_\ep) dx\ dz=
\lim_{\e\to 0} \frac 1\logeps \int_A\int_{\om} e^{2d}_\ep(u_\ep) dx\ dz  =  n\pi |A| 
\eeq
and as a consequence
\begin{equation}
\int_\Omega |\partial_z u_\e|^2 \, dx \, dz \ = \ o(\logeps).
\label{uzolog}\end{equation}
\label{c.subset}\end{lemma}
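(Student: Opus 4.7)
The plan is to split $e_\ep = e^{2d}_\ep + \tfrac 12|\partial_z u_\ep|^2$, establish the $e^{2d}_\ep$ version of \eqref{Uon2D} first, extract \eqref{uzolog} as a consequence, and then combine these to obtain \eqref{Uon2D} for $e_\ep$.

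For the lower bound I would work on the set of good heights. By Lemma \ref{L.B1small} and assumption \eqref{scaling1} we have $|\calB^\ep_1|=o(1)$, so $|A\cap \calG^\ep_1| \to |A|$ for every measurable $A\subseteq (0,L)$. For each $z\in \calG^\ep_1$, either hypothesis \eqref{L3b.h2} fails, in which case $\int_\om e^{2d}_\ep(u_\ep(x,z))\,dx \ge \pi(n+\theta)\logeps$ trivially, or Lemma \ref{L3a} applies and supplies \eqref{L3a.c2}. In either case, for all sufficiently small $\ep$,
\[ \int_\om e^{2d}_\ep(u_\ep(x,z))\,dx \ge n\pi\logeps - C \qquad\text{for every } z\in\calG^\ep_1. \]
Integrating over $A\cap\calG^\ep_1$ gives $\liminf_{\ep\to 0}\logeps^{-1}\int_A\int_\om e^{2d}_\ep(u_\ep)\,dx\,dz \ge n\pi|A|$.

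For the matching upper bound I would use \eqref{scaling2}: since $h_\ep=\logeps^{-1/2}$, we have $\pi n(n-1)L|\log h_\ep|=O(\log\logeps)$ and $\kappa_n(\Om)=O(1)$, so \eqref{scaling2} yields $F_\ep(u_\ep)\le n\pi L\logeps + o(\logeps)$. Applying the previously established lower bound to the complement $(0,L)\setminus A$ and subtracting from $F_\ep(u_\ep)\ge \int_\Om e^{2d}_\ep(u_\ep)$,
\[ \int_A\int_\om e_\ep(u_\ep)\,dx\,dz \le F_\ep(u_\ep) - \int_{(0,L)\setminus A}\int_\om e^{2d}_\ep(u_\ep)\,dx\,dz \le n\pi|A|\logeps + o(\logeps). \]
Since $e_\ep\ge e^{2d}_\ep$, this matching upper bound also holds with $e^{2d}_\ep$ in place of $e_\ep$, which proves \eqref{Uon2D} for the $2d$ energy. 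Taking $A=(0,L)$ and subtracting the resulting $2d$ identity from the global upper bound on $F_\ep(u_\ep)$ gives $\tfrac 12\int_\Om|\partial_z u_\ep|^2 = o(\logeps)$, which is \eqref{uzolog}; combining \eqref{uzolog} with the $2d$ identity then yields \eqref{Uon2D} for $e_\ep$. No step here presents a genuine obstacle: the argument is essentially a book-keeping exercise that plays the pointwise-in-$z$ good-height lower bound of Lemma \ref{L3a} against the coarse global upper bound implied by \eqref{scaling2}, with the smallness of $|\calB^\ep_1|$ ensuring that the inaccessible bad heights do not contribute to the leading order.
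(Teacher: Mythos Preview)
Your argument is correct and essentially identical to the paper's own proof: both use the dichotomy on good heights (either \eqref{L3b.h2} fails or Lemma \ref{L3a} applies) to get the pointwise-in-$z$ lower bound $n\pi\logeps-C$ on $\calG^\ep_1$, combine this with $|\calB^\ep_1|=o(1)$ for the liminf, and then play the global bound from \eqref{scaling2} against the lower bound on the complement to obtain the limsup. The only cosmetic difference is that the paper carries $e_\ep$ and $e^{2d}_\ep$ together in one chain of inequalities and reads off \eqref{uzolog} at the end, whereas you do the $e^{2d}_\ep$ case first and then circle back; the content is the same.
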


\begin{proof}
If $z\in \calG^\ep_1$, then
\[
\int_\omega e_\e(u_\e(x,z)) dx \ge 
\int_\omega e_\e^{2d}(u_\e(x,z)) dx \ge  n\pi \logeps - C.
\]
This follows from Lemma \ref{L3a} if \eqref{L3b.h2} holds, and if not  it is immediate.
Thus
\begin{align*}
\frac 1 \logeps\int_A\int_{\om} e_\ep(u_\ep) dx\ dz 
&
\ge  \frac 1\logeps \int_{\calG^\ep_1\cap A} \int_\omega e^{2d}_\e(u_\e) \, dx \, dz
\\
&\ge 
|A\cap \calG^\ep_1| (n\pi - o(1)) \quad\mbox{ as }\ep\to 0.
\end{align*}
Also, it is clear from \eqref{scaling1} and Lemma \ref{L.B1small} that
$|A \cap \calG^\ep_1| \ge  |A| - |\calB^\ep_1| \to |A|$ as $\ep\to 0$.
Thus
\begin{equation}
\liminf_{\e\to 0} \frac 1 \logeps\int_A\int_{\om} e_\ep(u_\ep) dx\ dz \ge\liminf_{\e\to 0} \frac 1 \logeps\int_A\int_{\om} e^{2d}_\ep(u_\ep) dx\ dz  \ge n\pi |A|.
\label{unif1}\end{equation}
Then using \eqref{scaling2} and applying \eqref{unif1} to $\tilde A = (0,L)\setminus A$, 
\begin{eqnarray} 
\limsup_{\e\to 0} \frac 1 \logeps\int_A\int_{\om} e^{2d}_\ep(u_\ep) dx\ dz  
&\le &
\limsup_{\e\to 0} \frac 1 \logeps\int_A\int_{\om} e_\ep(u_\ep) dx\ dz \
\nonumber\\
& \le &  n\pi L -  n\pi |\tilde A| = n\pi |A|.\nonumber
\end{eqnarray}
This is \eqref{Uon2D}. Since $\frac 12|\partial_z u|^2 = e_\ep(u) - e^{2d}_\ep(u)$,
we obtain \eqref{uzolog} as a direct consequence of \eqref{Uon2D} and \eqref{scaling2}.
\end{proof}

We next define another ``good set", consisting of the set of points $z$ such that $u_\ep(\cdot, z)$ satisfies the hypotheses of Lemmas \ref{L3b} and \ref{L3a}.
This will appear often in the proof of the $\Gamma$-limit lower bound and compactness assertions.

\begin{lemma}\label{L.BadE.est}Assume that $(u_\e)$ satisfies \eqref{scaling1} and
\eqref{scaling2}, and define 
\begin{equation}\label{GBep2.def}
\calG^\ep_2 := \left\{ z\in \calG^\ep_1 \ :  \int_\omega e^{2d}_\ep(u_\ep(x,z))dx  \le \pi (n+\theta)\logeps
\right\}, \qquad 
\calB^\ep_2 := (0,L)\setminus \calG^\ep_2,
\end{equation}
where $\theta$ is the constant from Lemma \ref{L3b}. 
Then
\[
|\calB^\ep_2| = o(1) 
\qquad\qquad
|\calG^\ep_2| =  L- o(1) \qquad\mbox{ as }\ep\to 0 .
\]
\end{lemma}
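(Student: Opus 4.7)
The plan is to decompose $\calB^\ep_2 = \calB^\ep_1 \sqcup (\calG^\ep_1 \setminus \calG^\ep_2)$ and handle each piece separately. The first piece is already under control: combining Lemma \ref{L.B1small} with assumption \eqref{scaling1} yields $|\calB^\ep_1| = o(1)$ immediately. Thus the whole task reduces to proving $|\calG^\ep_1 \setminus \calG^\ep_2| = o(1)$, i.e.\ that the subset of good heights where \eqref{L3b.h2} fails has vanishing measure.

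For this I would play the two-sided estimates on the total $2d$ energy against each other. From above, Lemma \ref{c.subset} applied with $A = (0,L)$ gives the exact asymptotic
\[
\int_0^L\!\!\int_\omega e^{2d}_\ep(u_\ep)\,dx\,dz \ = \ n\pi L\logeps + o(\logeps).
\]
From below, I would split the $z$-integral into three pieces. On $\calG^\ep_2$, the hypotheses of Lemma \ref{L3a} are met by definition, so \eqref{L3a.c2} yields $\int_\omega e^{2d}_\ep(u_\ep(\cdot,z))\,dx \ge n\pi\logeps - C$. On $\calG^\ep_1 \setminus \calG^\ep_2$, the very definition of $\calG^\ep_2$ in \eqref{GBep2.def} forces $\int_\omega e^{2d}_\ep(u_\ep(\cdot,z))\,dx > \pi(n+\theta)\logeps$. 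On $\calB^\ep_1$, I would simply use the trivial bound $e^{2d}_\ep \ge 0$, since Lemma \ref{L.B1small} already tells us that set is negligible.

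Putting these together, writing $\beta_\ep := |\calG^\ep_1 \setminus \calG^\ep_2|$ and using $|\calG^\ep_1| = L - o(1)$, the lower bound becomes
\[
(n\pi\logeps - C)(|\calG^\ep_1| - \beta_\ep) + \pi(n+\theta)\logeps\,\beta_\ep \ \le \ n\pi L\logeps + o(\logeps).
\]
After rearrangement this reads $\pi\theta\logeps\,\beta_\ep \le n\pi\logeps(L - |\calG^\ep_1|) + o(\logeps) + C|\calG^\ep_1|$, whose right-hand side is $o(\logeps)$. Dividing by $\pi\theta\logeps$ gives $\beta_\ep = o(1)$, and the lemma follows.

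I do not expect a real obstacle here: the only nontrivial ingredients are inputs proved earlier (Lemmas \ref{L.B1small}, \ref{L3a}, and \ref{c.subset}), and the argument is essentially a one-line pigeonhole comparing the exact total energy $n\pi L \logeps$ with the sharper lower bound $\pi(n+\theta)\logeps$ enforced on $\calG^\ep_1 \setminus \calG^\ep_2$. The positive constant $\theta > 0$ supplied by Lemma \ref{L3b} is precisely what creates the energy excess needed to conclude. Proposition \ref{P1} is not required for this lemma, so there is no circularity concern.
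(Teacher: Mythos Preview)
Your proof is correct and follows essentially the same approach as the paper: decompose $\calB^\ep_2 = \calB^\ep_1 \cup (\calG^\ep_1\setminus\calG^\ep_2)$, dispatch $|\calB^\ep_1|$ via Lemma~\ref{L.B1small}, and bound $|\calG^\ep_1\setminus\calG^\ep_2|$ by comparing the global upper bound on $\int_\Omega e^{2d}_\ep(u_\ep)$ with the lower bounds from Lemma~\ref{L3a} on $\calG^\ep_2$ and the definitional bound $>\pi(n+\theta)\logeps$ on $\calG^\ep_1\setminus\calG^\ep_2$. The only cosmetic difference is that you invoke Lemma~\ref{c.subset} for the upper bound whereas the paper cites \eqref{scaling2} directly; both yield $n\pi L\logeps + o(\logeps)$.
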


\begin{proof}
We will write 
\[
 \widetilde \calB^\ep_2 :=
 \left\{ z\in \calG^\ep_1 \ :  \int_\omega e^{2d}_\ep(u_\ep(x,z))dx  > \pi (n+\theta)\logeps
\right\} .
\]
Note that $\calB^\ep_2 = \calB^\ep_1\cup \widetilde \calB^\ep_2$.
 Lemma \ref{L.B1small} and \eqref{scaling1} imply
that $L - |\calG_1^\ep| = |\calB^\ep_1| \to 0$, so we only need to prove that
$|\widetilde \calB^\ep_2| \to 0$ as $\ep\to 0$.
Toward this end we use Lemma \ref{L3a} and the definition of
$\widetilde \calB^\ep_2$ to compute
\begin{align*}
\logeps(Ln\pi + o(1))
&\overset{\eqref{scaling2}}\ge
\int_\Omega e^{2d}_\ep(u_\ep)\,dx\,dz \\
&\ge 
\int_{\calG^\ep_1\setminus \widetilde \calB^\ep_2}
\int_\omega e^{2d}_\ep(u_\ep)\,dx\,dz 
+
\int_{\widetilde \calB^\ep_2}
\int_\omega e^{2d}_\ep(u_\ep)\,dx\,dz 
\\
&\ge
(|\calG^\ep_1| -  |\widetilde\calB^\ep_2|)( n\pi\logeps - C)
+
 |\widetilde\calB^\ep_2|( \pi(n+\theta)\logeps \\
&=
|\calG^\ep_1| n \pi \logeps + \theta \pi \logeps |\widetilde \calB^\ep_2| - C \ .
\end{align*}
As already noted, $|\calG^\ep_1|\to L$ as $\ep \to 0$, 
so the conclusions follow.
\end{proof}

\subsection{A Jacobian estimate with weak boundary control}

The following result plays an important role in the proof of Proposition \ref{P1}.

\begin{lemma}
Let $D := (\R/\ell\Z)\times I$ for some interval $I$ and some $\ell>0$, and assume that
$w\in H^1(D;\C)$.
Then for every $\alpha\in (0,\frac 23)$, there exists $c_2 = c_2( \alpha)$
such that for all sufficiently small $\e>0$ (where ``sufficiently small"
depends on $\alpha$ and $D$),  at least one of the
following holds:
\begin{equation}
\int_{\partial D} e_\e(w) d\calH^1 \ \ge \e^{-\alpha}
\label{L8.d1}\end{equation}
or
\begin{equation}
\int_{D} e_\e(w) \ \ge  c_2 \abs{\log\e}  \left| \int_D Jw  \right|  - c_2 \e^{1-\alpha}\abs{\log\e}.
\label{L8.d2}\end{equation}
In fact we will show that one may take $c_2 = \frac \pi 8 (1-\frac {3\alpha}2)$.
\label{L8}\end{lemma}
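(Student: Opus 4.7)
The plan is a case analysis. If $\int_{\partial D} e_\e(w)\, d\calH^1 \geq \e^{-\alpha}$ then \eqref{L8.d1} holds, so I assume the opposite. To establish \eqref{L8.d2} I will combine a Stokes-type boundary identity (expressing $\int_D Jw$ via $j(w)$ on $\partial D$) with a vortex-ball lower bound in the interior. Write $I = [a,b]$, so that $\partial D$ is the disjoint union of two circles $C_t := (\R/\ell\Z) \times \{t\}$, $t \in \{a,b\}$; the standing bound yields $\int_{C_t} |\nabla w|^2 \leq 2\e^{-\alpha}$ and $\int_{C_t} (1-|w|^2)^2 \leq 4\e^{2-\alpha}$ on each circle.

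The first step is pointwise control on $|w|^2-1$ on $\partial D$. A 1D Sobolev embedding combined with the above bounds gives the crude estimate $\|w\|_{L^\infty(\partial D)} \leq C\e^{-\alpha/2}$, which I insert into the 1D Gagliardo-Nirenberg inequality applied to $u := |w|^2 - 1$ on each $C_t$ (noting that $|u'| \leq 2|w|\,|\nabla w|$) to obtain
\[
\big\| |w|^2 - 1 \big\|_{L^\infty(C_t)} \leq C\,\e^{(1-3\alpha/2)/2}.
\]
The positivity of this exponent requires exactly $\alpha < 2/3$, and is the source of the $(1 - 3\alpha/2)$ factor in the final constant $c_2$. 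For $\e$ small enough, this ensures $|w|\ge 1/2$ on $\partial D$, so $w$ admits a polar representation $w=\rho e^{i\phi}$ on each $C_t$ with $\phi$ defined modulo $2\pi$. By Stokes ($2Jw = dj(w)$),
\[
\int_D Jw \;=\; \tfrac{1}{2}\int_{\partial D} j(w)\cdot\tau\,d\calH^1 \;=\; \tfrac{1}{2}\sum_{t\in\{a,b\}}\pm\int_{C_t}\rho^2\,d\phi.
\]
Decomposing $\int_{C_t}\rho^2\,d\phi = 2\pi d_t + \int_{C_t}(\rho^2-1)\,d\phi$ with integer $d_t := \tfrac1{2\pi}\oint_{C_t} d\phi$, and bounding the error by Cauchy-Schwarz via $\|\rho^2-1\|_{L^2(C_t)}\le 2\e^{1-\alpha/2}$ and $\|d\phi\|_{L^2(C_t)}\le 2\|\nabla w\|_{L^2(C_t)}\le C\e^{-\alpha/2}$, I deduce, with $d := d_b-d_a\in\Z$,
\[
\left|\int_D Jw - \pi d\right| \;\le\; C\,\e^{1-\alpha}.
\]

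For the interior energy bound I may assume $\int_D e_\e(w)\le M|\log\e|$, else \eqref{L8.d2} is immediate. Since $|w|\ge 1/2$ on $\partial D$, the bad set $\{|w|<1/2\}$ lies strictly inside $D$, and a Sandier-Jerrard vortex-ball construction yields a finite disjoint family of closed balls $B_i\subset D$ with integer degrees $d_i$ satisfying $\sum_i \int_{B_i} e_\e(w) \ge c(\alpha)\,\pi \sum_i |d_i|\,|\log\e|$. Integrating the locally defined closed $1$-form $d\phi$ around $\partial(D\setminus\bigcup_i B_i)$ gives the topological identity $\sum_i d_i = d$, hence $\sum_i|d_i|\ge|d|$ and $\int_D e_\e(w)\ge c(\alpha)\pi|d|\,|\log\e|$. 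Combining with the bound on $|\int_D Jw - \pi d|$ produces \eqref{L8.d2}. The main obstacle is tracking the precise constant $c_2 = \frac{\pi}{8}(1-\frac{3\alpha}{2})$: the factor $1-\frac{3\alpha}{2}$ is inherited from the Gagliardo-Nirenberg step and from the largest admissible radius in the ball-growth process (which must stay clear of $\partial D$ given only the $\e^{-\alpha}$ boundary-energy excess), while the numerical $\pi/8$ reflects a specific choice of stopping radius in the construction. The remaining pieces---the 1D interpolation on each $C_t$ and the Cauchy-Schwarz error estimate---are routine.
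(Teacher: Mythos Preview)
Your overall strategy---reduce $\int_D Jw$ to an integer degree $d$ via the boundary, then bound the energy below by a vortex-ball construction---matches the paper's. The boundary step is fine (though the paper uses the direct one-dimensional estimate $\int_\Sigma e_\ep(|w|)\ge \tfrac{c}{\ep}\|1-|w|\|_{L^\infty(\Sigma)}^2$ rather than Gagliardo--Nirenberg, obtaining the sharper exponent $(1-\alpha)/2$ for the modulus; your attribution of the factor $(1-\tfrac{3\alpha}{2})$ to the interpolation step is misplaced).

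The real gap is in the ball construction. You assert that a standard Sandier--Jerrard argument with balls $B_i\subset D$ yields $\sum_i\int_{B_i}e_\ep(w)\ge c(\alpha)\,\pi\sum_i|d_i|\,|\log\ep|$, but the constant in such a bound depends on the stopping radius, and for balls constrained to lie in $D$ that radius is bounded by the distance from the vortex set to $\partial D$---which you never control. Knowing only $|w|\ge 1/2$ on $\partial D$ does not prevent a vortex from sitting at distance $\ep^{0.99}$ from $\partial D$, where its ball would contribute only $\sim 0.01\,\pi|\log\ep|$. The paper resolves this by \emph{modifying} the construction to allow balls that intersect $\partial D$: for radii $r\le r_1:=\min(\tfrac{\pi^2}{16}e_{\partial D}^{-1},r_0)$, Cauchy--Schwarz and the bound $e_{\partial D}<\ep^{-\alpha}$ give $\int_{\partial D\cap B}|\partial_\tau(w/|w|)|\le\pi$, so the degree around $\partial(B\cap D)$ is still controlled and the usual annular estimate $\int_{\partial B\cap D}e_\ep(w)\ge\lambda_\ep(r,d)$ survives. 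Growing balls to total radius $r_1\ge c\,\ep^\alpha$, combined with $|d|\le C\sqrt{e_{\partial D}}\le C\ep^{-\alpha/2}$, gives $r_1/|d|\ge c\,\ep^{3\alpha/2}$ and hence $\Lambda_\ep(r_1/|d|)\ge\tfrac{\pi}{4}(1-\tfrac{3\alpha}{2})|\log\ep|-C$; this is where the factor $(1-\tfrac{3\alpha}{2})$ actually comes from.
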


Conclusion \eqref{L8.d2} is an example of a Jacobian estimate, relating the
Ginzburg-Landau energy and
the Jacobian of a complex-valued function $w$ on a domain $D$.
Well-known examples show that an like \eqref{L8.d2} cannot hold
without some information about the behaviour of $w$ on $\partial D$.
The lemma shows that the very weak bounds
$\int_{\partial D} e_\e(w) d\calH^1 < \e^{-\alpha}$ --- that is, the failure of
\eqref{L8.d1} --- is sufficient information for this purpose.

\begin{remark}
A very similar argument shows that the same result is
true if $D$ is a bounded, open subset of $\R^2$, with smooth boundary.
In this case the geometry of
sets such as  $\partial B \cap D$ and $B\cap\partial D$ 
(if $B$ is a ball) becomes
more complicated. However, on small scales, which are all that matter for this
argument, 
these complications basically vanish.
\label{rem:L8}\end{remark}

In the rest of the subsection we will make use of the symbol $\partial$ to represent either the topological boundary of a set, or the boundary in the sense of  Stoke's theorem, the meaning being clear from the context.

\begin{proof}
{\bf 1}.  ({\it Preliminaries}).
By a density argument, we may assume that $w$ is smooth in $\bar D$. 
We assume that 
\begin{equation}
e_{\partial D}  := \int_{\partial D}e_\ep(w) \, d\calH^1   
< \ep^{-\alpha} ,
\label{ebdy.small}\end{equation}
and  we will show that then \eqref{L8.d2} holds for all sufficiently small $\ep$.

We first recall  that 
if $\Sigma$ is a Lipschitz arc (that is, the image of an injective Lipschitz curve)
in $\bar D$ and
$\calH^1(\Sigma) \ge \ep$, then
\begin{equation}
\int_\Sigma e_\ep(|w|) d \calH^1 \ \ge \ \frac c \ep \| 1-|w|\  \|_{L^\infty(\Sigma)}^2.
\label{modulus.bound}\end{equation}
See for example \cite[Lemma 2.3]{JSo} for a proof.
%
In particular this applies to $\Sigma := \partial D$, 
so  \eqref{ebdy.small}  implies that 
\begin{equation}
\| 1 - |w| \,\|_{L^\infty(\partial D)} \  \le \    c \ep^{(1-\alpha)/2} \le \frac 13 \qquad
\mbox{ for sufficiently small }\ep.
\label{modw.bounds}\end{equation}
We record a couple of consequences of this fact.
First, basic properties 
of the degree
imply that for $v := w/|w|$ (well-defined on $\partial D$)
\[
2\pi d^*: = 2\pi \deg(w;\partial D) = \int_{\partial D} j(v)\cdot \tau \ = \ \int_{\partial D} \frac 1{|w|^2} j(w)\cdot \tau \ .
\]
Also, an integration by parts shows that
\[
\int_D Jw \ = \ \frac 12 \int_{\partial D} j(w)\cdot \tau.
\]
Thus, since  that $|j(w)| \le |w| \ | \nabla w|$
we deduce from \eqref{modw.bounds} that 
\begin{multline}\label{dstar.J}
\left| \pi d^*-  \int_DJw\right| \ = \ 
\left|
\frac 12\int_{\partial D} \left( \frac 1{|w|^2}-1\right) j(w)\cdot \tau
\right| \\
\le
2\int_{\partial D} \big| 1-|w|^2 \big| \  | \nabla w|
\le 4\ep e_{\partial D} \overset{\eqref{ebdy.small}}
\le C \ep^{1-\alpha}.
\end{multline}
We may thus assume that 
\begin{equation}\label{dstar.nonzero}
d^*\ne 0 
\end{equation}
since otherwise \eqref{L8.d2} follows immediately from \eqref{dstar.J}.
Next, we again use \eqref{modw.bounds} to find that
\begin{equation}\label{J.bound}
\left|
\int_D Jw \right| = 
\left|
\frac 12 \int_{\partial D}j(w)\cdot \tau \right| \le 
\int_{\partial D}| \nabla w|  \ \le \ 
C \sqrt {e_{\partial D}} \le C\ep^{-\alpha/2}\ .
\end{equation}

What follows is a modification, in which we exploit \eqref{ebdy.small} to control
certain boundary terms, of the classical procedure of obtaining lower bounds for Ginzburg-Landau functionals in terms of the Jacobian by means of a ball construction \cite{Jlower, S}.

{\bf 2}.  ({\it Basic estimates}).
For $v = \frac w{|w|}$ as above, recall that 
\begin{equation}
e_\ep(w) = \frac 12  |w|^2 | \nabla v|^2 + e_\ep(|w|),
\qquad
e_\ep(|w|) := \frac 12 |\nabla |w||^2+ \frac 1 {4\ep^2}(|w|^2-1)^2.
\label{L8.0}\end{equation}
In particular this implies that $| \nabla v|\le \frac 1{|w|}| \nabla w|$.

Next, let $B$ be a ball\footnote{We have in mind a ball
with respect to the natural notion of distance in $(\R/\ell\Z)\times \R$.} of radius $r$. 
Fix $r_0>0$, depending on $D$, such that if $r<r_0$, then $\partial D \cap B$
consists of at most one line segment, necessarily of length at most $2r$ 
and $\partial B\cap D$ is an arc of a circle (It suffices to
take $r_0 < \frac 12\min(\ell, |I|)$).
Then
\eqref{modw.bounds} and elementary inequalities imply that 
\[
\int_{\partial D \cap B} |\partial_\tau v| d\calH^1 \ \le
\ 2 \int_{\partial D \cap B} |\partial_\tau w| d\calH^1 \ \le  4 \sqrt r \sqrt{e_{\partial D}},
\]
where $\partial_\tau w$ denotes the tangential derivative.
Thus 
\begin{equation}
\int_{\partial D \cap B} |\partial_\tau v| d\calH^1 \ \le \pi ,
\qquad\qquad\qquad \mbox{ if } r \le
 r_1 :=\min( \frac {\pi^2}{16} e_{\partial D}^{-1},  r_0) \, .
\label{L8.1}\end{equation}
On the other hand, if $d := \deg(w ; \partial(B\cap D))$ is well-defined 
and nonzero, then
\[
d =  \frac 1{2\pi} \int_{\partial (B\cap D)} j(v)\cdot \tau. 
\]
Since $|j(v)\cdot \tau|\le |v|  \ |\partial_\tau v| \ = \  |\partial_\tau v|$, we combine this with \eqref{L8.1} to find that
\[
\int_{\partial B\cap D} |\partial_\tau v| \ \ge \  (2\pi |d| - \pi) \ge  \pi |d|
\qquad\qquad \mbox{ if } r \le r_1.
\]
In particular, if $m := \min_{\partial B\cap D}|w|$, it follows from this,
\eqref{L8.0} and \eqref{modw.bounds} that 
\begin{align*}
\int_{\partial B\cap D} e_\ep(w)
&\ge
\frac{m^2}2\int_{\partial B\cap D}|\partial_\tau v|^2 + \int_{\partial B\cap D} e_\ep(|w|)\\\
&\ge 
\frac {m^2}{2 \calH^1(\partial B\cap D)}\left(\int_{\partial B\cap D} |\partial_\tau v| \right)^2 + \frac c \ep (1-m)^2\\
&\ge
\frac {m^2 \pi d^2}{4 r} + \frac c \ep (1-m)^2 \
\qquad\qquad\qquad\qquad \mbox{ if } r \le  r_1.
\end{align*}

If we define
\begin{equation}
 \lambda_\ep(r, d) := 
\min_{m\in [0,1]}
\frac {m^2 \pi d^2}{4 r} + \frac c \ep (1-m)^2 \
\label{lambda.def}\end{equation}
then it follows that for any ball $B$, 
\begin{multline}
\int_{\partial B\cap D} e_\ep(w) \ge \lambda_\ep(r,d) \\
\mbox
{ if } d := \deg(w;\partial (B\cap D)) \mbox{ and }r = \mbox{radius}(B)\le  r_1 :=\min( \frac {\pi^2}{16} e_{\partial D}^{-1},  r_0).
\label{basic.estimate}\end{multline}

{\bf 3}. ({\it Lower bounds via a ball construction}) With estimate \eqref{basic.estimate} in hand, we can carry out a 
vortex ball construction, as described in  Appendix \ref{App.A},
as long as all balls have radius at most $r_1$.
We sketch the main steps, following the presentation in \cite{Jlower, JSo}.
To get started, we invoke  Proposition 3.3 in \cite{Jlower}, which shows that there exists a finite collection $\{ B^0_i\}$
of closed, pairwise disjoint balls such that
\begin{align}
S_E&\subset \cup B^0_i \qquad\mbox{ for $S_E$ defined in \eqref{SE.def}},
\label{initial.balls1}
\\
r_i^0 &\ge \ep\mbox{ for all }i,
\label{initial.balls2}
\\
\int_{B_i^0\cap S_E} e_\ep(w) &\ge \frac {c_0}\ep r_i^0 \ge \Lambda_\ep (r_i^0) := \int_0^{r_i^0} \lambda_\ep(r,1)\wedge \frac{c_0}\ep \ dr .
\label{initial.balls3}
\end{align}
Here $r^0_i$ denotes the radius of $B^0_i$ and $c_0$ a constant, independent of $w$ and $\ep$.

If $\sum_i r_i^0 \ge r_1$, then it follows from \eqref{initial.balls3}, the choice of $r_1$
(see \eqref{basic.estimate}) and \eqref{ebdy.small}  that
\[
\int_D e_\ep(w) \ \ge \frac {c_0}\ep r_1 \ge \ c \ep^{\alpha-1}.  
\]
Since $\alpha<2/3$, this together with \eqref{J.bound} implies that \eqref{L8.d2} holds for all small $\ep$. We may therefore assume that $\sum r_i^0 < r_1$. 
Next, from the  additivity properties of the degree, \eqref{initial.balls1}, and  the definition  \eqref{SE.def} of $S_E$, we see that\footnote{
Strictly speaking, if $V$ is a set such that $\partial V \cap S_E \ne \emptyset$, then
here and below, $\deg(w; \partial V)$ should be replaced by 
$\dg(w;\partial V_i)$, see \eqref{dg.def} for the definition. This does not change the argument
in any essential way.}
\[
0\overset{\eqref{dstar.nonzero}}\ne d^* = \deg(w;\partial D) = \sum_i \deg (w; \partial (B^0_i\cap D)) =: \sum_i d^0_i.
\]
Thus at least one ball has nonzero degree, and hence
\[
\sigma^0 := \min_i \frac{r^0_i}{|d^0_i|} < r_1.
\]
%

We may now follow a standard vortex ball argument construction as summarized in
Lemma \ref{L.vballs}, but using \eqref{lambda.def}, \eqref{basic.estimate} in place
of the usual estimates \eqref{lambdastandard.def}, \eqref{gllbd.1}. 
For a range of $\sigma>\sigma^0$, this yields
a collection $\calB(\sigma) = \{ B^\sigma_k\}_{k=1}^{k(\sigma)}$ 
of balls such that 
\begin{align}
S_E
&\subset \cup_k B^\sigma_k
\label{vballs1}\\
\int_{B^\sigma_k\cap \om} e^{2d}_\e(w)\, dx  &\ge \frac {r_k^\sigma}\sigma \Lambda_\ep (\sigma),\qquad\hspace{1em}
\mbox{ for }
\, r^\sigma_k := \operatorname{radius}(B^\sigma_k) \, ,
\label{vballs2}\\
r_k^\sigma &\ge \sigma |d^\sigma_k|
\hspace{5em}\mbox{ for }
d^\sigma_k := \deg(w ;\partial (B^\sigma_k\cap D))  \,.
\label{vballs3}
\end{align}
Moreover, $\sigma\mapsto  \sum_k r^\sigma_k$ is a continuous, nondecreasing
function. This process may be continued as long as all balls in
the collection have radius at most $r_1$.

The estimates we obtain in this way are both worse and better than
the classical ones, summarized in Lemma \ref{L.vballs} for example.
They are worse in that we have a somewhat
weaker lower bound (compare  \eqref{lambda.def} and
\eqref{lambdastandard.def})
and we can only continue as long as every ball has radius at most $r_1$;
but better in that all the estimates we obtain apply to all balls, even those that intersect
$\partial D$. This is not true in the classical case, compare for example \eqref{vballs3}.

We stop the ball construction when  the sum of the radii is exactly $r_1$. Then \eqref{vballs2}, \eqref{vballs3}, and the fact\footnote{This is easily checked, and a proof can be for example \cite{Jlower}, Proposition 3.1.} that $s\mapsto \frac 1 s \Lambda_\ep(s)$ is nonincreasing, imply
the lower bound\begin{equation}
\int_D e_\ep(w) \ \ge  \  |d^*| \Lambda_\ep(\frac {r_1} {|d^*|}), \qquad
\quad\mbox{where $d^* = \deg(w;\partial D) = \sum d_k^\sigma$. }
\label{ball.est}\end{equation}
The last equality follows from \eqref{vballs1} and the additivity of the degree.

{\bf 4}. ({\it  Estimating the right-hand side of \eqref{ball.est}})

In view of \eqref{dstar.J}, in order to prove \eqref{L8.d2}, it suffices to show that 
\[
\Lambda_\ep(\frac{r_1}{|d^*|})
\ge 
c \logeps \qquad\mbox{ for all sufficiently small $\ep>0$}.
\]
It is straightforward to check (see \cite{Jlower} or \cite{JSo}) that 
\[
\Lambda_\ep(\sigma) \ge  \frac \pi 4 \log \frac \sigma\ep - C.
\]
Also, it follows from \eqref{dstar.J} and \eqref{J.bound} that 
$|d^*| \le C \sqrt{e_{\partial D}} \le C\ep^{-\alpha/2}$.
As a result, if $r_1 = r_0$, then
\[
\Lambda(\frac {r_1}{|d^*|}) \ge  \frac \pi 2\log( \frac {r_0} {C\ep^{1-\alpha/2}}) - C
=  \frac \pi 2(1-\frac \alpha 2)\logeps - C 
\ge \frac \pi 4 (1-\frac \alpha 2)\logeps
\]
if $\ep$ is small enough. On the other hand, if $r_1 = \frac {\pi^2}{16}e_{\partial D}^{-1}$ then
\begin{multline*}
\Lambda_\ep(\frac {r_1}{|d^*|}) \ge 
\Lambda_\ep(\frac {e_{\partial D}^{-1}}{C\sqrt{e_{\partial D}}}) \ge 
\frac \pi 2 \log (\frac{ e_{\partial D}^{-3/2}} \ep) - C \\
\overset{\eqref{ebdy.small}}\ge
\frac \pi 2  (1-\frac {3\alpha}2) \logeps  - C \ge
\frac \pi 4(1-\frac {3\alpha}2) \logeps 
\end{multline*}
if $\ep$ is small enough.
\end{proof}

\begin{remark}
The cylinder  $\partial B(t)\times I \subset \R^3$ may be parametrized by the map
\[
i:(\R/2\pi t)\times I \to \R^3, \qquad
i(s,z) = ( t \cos (\frac st), t\sin(\frac st), z).
\]
and then
\begin{multline*}
\int_{\partial B(s) \times I} Ju =
 \int_{(\R/2\pi s)\times I } i^* Ju 
=  \int_{(\R/2\pi s)\times I } i^* u^* (d\, \mbox{area})\\
=  \int_{(\R/2\pi s)\times I }(u\circ i)^* (d\, \mbox{area})
=  \int_{(\R/2\pi s)\times I } J(u\circ i).
\end{multline*}
Here $u^*(d \,\mbox{area})$ denotes the pullback by $u:\Omega\to \C\cong \R^2$
of the area form $dx_1 \wedge dx_2$ on $\R^2$. Thus, writing $u = (u_1, u_2)$, 
we have $u^*(dx_1 \wedge dx_2) = du_1\wedge du_2 = Ju$. Similarly $i^* Ju$
denotes the pullback by $i$ of $Ju$.

Also, the area formula implies that 
\[
\int_{\partial B(s)\times I} e_\ep(u_\ep) d\calH^2 \ \ge 
\int_{(\R/2\pi s)\times I } e_{\ep}^{2d}(u_\ep \circ i) d\calH^2.
\]
Thus Lemma \ref{L8} immediately implies the same result for 
cylinders in $\R^3$.
\label{rem.cyl}\end{remark}

\subsection{Proof of Proposition \ref{P1}}\label{proofprop1}

Here we combine the definition of a good height and the implicit global information about the vorticity provided by Lemma \ref{L8}.

\begin{proof}[Proof of Proposition \ref{P1}]

{\bf Step 1}. Fix $\alpha\in (0,\frac 23)$, and fix $z_b\in \calB^\ep_1$.

We define a new ``good set" :
\[
 \calG^\ep_3 := \left\{ z\in (0,L) : \| J_x u_\ep(\cdot, z) - \pi n \delta_0\|_{F(\omega)} \le  \frac {r^*}{16},
\int_\omega e_\ep(u_\ep(x,z)) dx \le  \frac {r^*}{32} \ep^{-\alpha}\right\}.
\]
It follows from  \eqref{Fed2}, and \eqref{scaling2} and Chebyshev's inequality (for the condition involving the energy)   that 
\[
|(0,L)\setminus  \calG^\ep_3| \le C( 
\int_0^L \| J_x u_\ep(\cdot, z) - n\pi\delta_0\|_{F(\omega)}+ \e^{\alpha/2}).
\]
Thus if $\ep$ is small enough, in view of \eqref{scaling1},  we may fix $z_g\in  \calG^\ep_3$ such that
\begin{equation}
\frac 12 \delta   \le |z_b - z_g| \le  \delta
\label{where.zg}\end{equation}
for $\delta = \delta(\alpha)$ to be fixed below.

If we define 
\[
\widetilde \calS^\ep(z)
:= \left\{s\in \left(\frac {r^*}2,{r^*}\right) : \left |\int_{B(s)} J_xu_\ep(x,z) \ dx - n\pi \right| \le \frac 12 \right\},
\]
then the proof of Lemma \ref{GvsFlat} shows that  for every $z$,
\[
\left| \widetilde \calS^\ep(z)\right|
\ge \frac {r^*}2 - 2\|  J_xu(\cdot, z) - n\pi\delta_0\|_{F(\om)},
\]
and thus 
\[
\mbox{ if $z\in  \calG^\ep_3$, then } \left| \widetilde \calS^\ep(z) \right| \ge \frac {3{r^*}}8.
\]
In particular, this holds for $z=z_g$.

On the other hand, the definition of $\calB^\ep_1$ implies that the set
\[
\left \{s\in \left(\frac {r^*}2,{r^*}\right) : \left |\int_{B(s)} J_xu_\ep(x,z_b) \ dx - n\pi \right| \ge  1\right\}
\]
has measure at least ${r^*}/4$.
Thus the intersection of this set with $\widetilde \calS^\ep(z_g)$ has measure 
at least ${r^*}/8$. As a result, the set
\begin{equation}
\calT := 
\left \{s\in \left(\frac {r^*}2,{r^*}\right) : \left|
\int_{B(s)} J_xu_\ep(x,z_b) \ dx -  \int_{B(s)}J_xu_\ep(x,z_g) dx 
\right| 
\ge  \frac 12 \right\}
\label{calT.def}\end{equation}
satisfies
\begin{equation}
\left| \calT \right| \ge \frac {r^*}8.
\label{calT.big}\end{equation}

{\bf Step 2}. In what follows, if $M$ is an oriented $2$d submanifold of $\Omega$,
we write $\int_M Ju_\ep$ to denote the integral of $Ju_\ep$ over $M$ in the standard sense of
differential geometry (recalling that $Ju_\ep$ is a $2$-form).
Then in particular\footnote{This 
identity specifies our convention for orienting $B(s)\times \{z\} $, which
is the standard one.}
\begin{equation}
\int_{B(s)}J_x u _\ep(x,z)\ dx = \int_{ B(s)\times \{z\} } J u _\ep \, .
\label{L7.e2}\end{equation}
Now let $I = (z_0,z_1)$, where $z_0 = \min (z_b,z_g)$ and $z_1 = \max(z_g,z_b)$.
Since $d \, Ju_\ep =0$, we find from Stokes Theorem that
\[
0
 \ = \ 
\int_{B(s)\times I} d \, Ju_\ep 
\ = \ \int_{\partial(B(s)\times I)} Ju_\ep ,
\]
and upon breaking $\partial(B(s)\times I)$ into pieces in the natural way, we find that
\[
\int_{\partial B(s) \times I} Ju_\ep =  \int_{B(s) \times \{z_0\}} Ju_\ep - \int_{B(s) \times \{z_1\}} Ju_\ep .
\]
It thus follows from  \eqref{L7.e2} and the definition \eqref{calT.def} of $\calT$
that
\begin{equation}
\left|\int_{\partial B(s) \times I} Ju_\ep\right| \ge \frac 12 \qquad\mbox{ if }s\in \calT.
\label{calT1}\end{equation}

\vskip.3in
\begin{center}
\begin{figure}[!ht]
\begin{minipage}{0.8\linewidth}
\includegraphics[trim = 0mm 0mm 0mm 0mm, clip, width=10cm,
  height=10cm,
  keepaspectratio,]{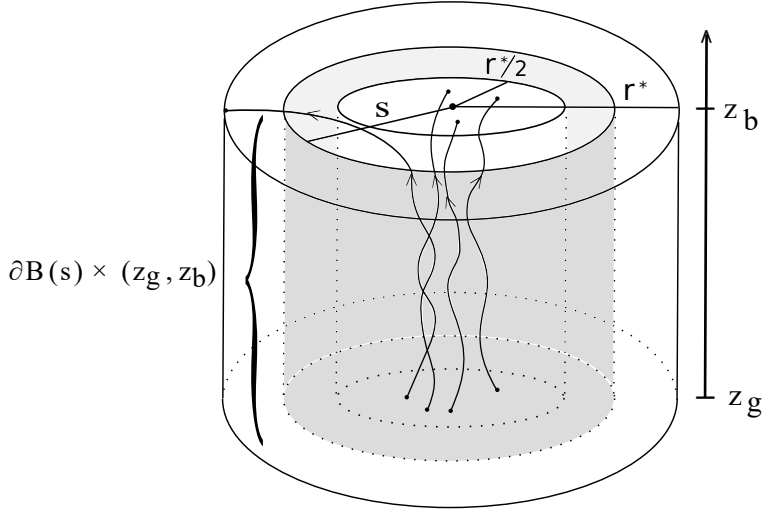}
\end{minipage}
\caption{Configuration with good height $z_g$ and bad height $z_b$.}
\end{figure}
\end{center}
\vskip.3in

{\bf Step 3}. Now define
\[
\calT_* := \left \{s\in \calT \ : \ \int_{\partial B(s)\times \{ z_g, z_b\} } e_\ep(u_\ep) d\calH^1 <
\ep^{-\alpha} \right\}, \quad\qquad \calT^* := \calT\setminus \calT_*.
\]
We will show that $|\calT^*| \ge \frac {r^*}{16}$. To do this, in view of \eqref{calT.big}, it
suffices to show that $|\calT_*| < \frac {r^*}{16}$ for all small $\ep$.
To do this, note that the coarea formula, Lemma \ref{L8} (see also Remark \ref{rem.cyl})
and \eqref{calT1}
imply that
\begin{align*}
\int_{\omega \times I}e_\ep(u_\ep) \ dx \ dz
&\ge
\int_{s\in \calT_*}
\left( \int_{\partial B(s)\times I} e_\ep(u_\ep) \ d\calH^2\right)\\
&\ge 
\frac 12 c_2 \logeps  |\calT_*|- c \ep^{1-\alpha}\logeps.
\end{align*}
On the other hand, we know from Lemma \ref{c.subset}
and \eqref{where.zg} 
that
\[
\int_{\omega \times I}e_\ep(u_\ep) \ dx \ dz
\le n  \pi \logeps (|I|+  o(1))  \  \le n\pi \logeps(\delta + o(1)).
\]
By a suitable choice of $\delta$, we can therefore guarantee that
$|\calT_*|< \frac {r^*}{16}$ for all sufficiently small $\ep>0$.

{\bf Step 4}. 
Since $|\calT^*|   \ge \frac {r^*}{16}$, 
we easily see that
\[
\int_{\omega \times \{z_g, z_b\}} e_\ep(u_\ep) d\calH^2
\ge
\int_{s\in \calT^*} \int_{\partial B(s)\times \{z_g, z_b\}} e_\ep(u_\ep) \ge \frac {r^*}{16} \ep^{-\alpha}.
\]
Since 
\[
\int_{\omega \times \{z_g\}} e_\ep(u_\ep) d\calH^2 \le \frac {r^*}{32}\ep^{-\alpha}
\]
by definition of $\calG^\ep_3$,
we conclude that 
\[
\int_{\omega \times \{z_b\}} e_\ep(u_\ep) d\calH^2 \ge \frac {r^*}{32}\ep^{-\alpha}
\]
for all sufficiently small $\ep$.
This is the conclusion \eqref{P1prop} of the Proposition, up to the factor $\frac{r^*}{32}$, which can be absorbed
by taking a  larger choice of  $\alpha$ and a correspondingly smaller $\ep_0 .$
\end{proof}

\begin{remark}
The proof shows 
that $e_\ep(u_\ep)$ may be replaced by 
$e_\ep^{2d}(u_\ep)$ in the conclusion
\eqref{P1prop}, since in fact only tangential components of the boundary energy
appear in the $\int_{\partial D} e_\ep d\calH^1 \ge \ep^{-\alpha}$ part of the possibilities contemplated in
Lemma \ref{L8}.
\end{remark}

\subsection{Alternate hypothesis \eqref{old.scaling1}}

Recall that in Theorem \ref{main}, we have assumed energy bounds \eqref{scaling2},
together with either \eqref{scaling1} -- \eqref{scaling1aa} or \eqref{old.scaling1}.
In this section we demonstrate a couple of ways in which the latter assumption is 
stronger than the former.
The first shows that the \eqref{old.scaling1} case of Theorem \ref{main} 
implies the  \eqref{scaling1} -- \eqref{scaling1aa} case, and the second 
indicates some ways in which our arguments can be simplified if we assume
\eqref{old.scaling1}.

Following this section we will focus on hypotheses  \eqref{scaling1} -- \eqref{scaling1aa} which, in addition to being more subtle, are also the hypotheses we need for our applications in Theorem \ref{minimizers}.

\begin{lemma}
If $(u_\ep)\subset H^1(\Omega;\C)$ is a sequence satisfying \eqref{scaling2} and \eqref{old.scaling1},
then there is a subsequence along which 
it also satisfies \eqref{scaling1} -- \eqref{scaling1aa}.
\label{old.new}\end{lemma}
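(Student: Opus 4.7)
The condition \eqref{scaling1} follows at once from \eqref{old.scaling1} since $h_\ep \to 0$. My plan for \eqref{scaling1a} and \eqref{scaling1aa} is to combine \eqref{old.scaling1} with the good-height analysis of Lemmas \ref{L3b} and \ref{L.BadE.est}, and then to extract a single $z_0$ valid for every $\ep$ in a subsequence via a reverse-Fatou argument on a shrinking family of ``doubly good'' sets of heights.

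Concretely, I would fix a large constant $M$ and set
\[
A_\ep := \bigl\{\, z \in \calG^\ep_2 \ :\ \|J_x u_\ep(\cdot, z) - \pi n \delta_0\|_{F(\omega)} \le M h_\ep \,\bigr\}.
\]
Chebyshev applied to \eqref{old.scaling1} gives $|\{z : \|J_x u_\ep(\cdot, z) - \pi n \delta_0\|_{F(\omega)} > M h_\ep\}| \le C/M$, while Lemma \ref{L.BadE.est} gives $|\calG^\ep_2| \to L$, so $|A_\ep| \ge L/2$ for $M$ large enough and $\ep$ small. Given any sequence $\ep_k \to 0$, the elementary inequality $|\limsup_k A_{\ep_k}| \ge \limsup_k |A_{\ep_k}| \ge L/2$ (continuity of measure from above applied to the decreasing sets $B_N := \bigcup_{k\ge N} A_{\ep_k}$) lets me pick $z_0 \in \limsup_k A_{\ep_k}$; after passing to a subsequence, $z_0 \in A_\ep$ for every remaining $\ep$. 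Since $z_0 \in \calG^\ep_2$, the definition of $\calG^\ep_2$ delivers \eqref{scaling1aa} immediately with $M = \pi(n+\theta)$.

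For \eqref{scaling1a}, I would then invoke Lemma \ref{L3b} at this $z_0$ (both its hypotheses hold because $z_0 \in \calG^\ep_2$) to obtain points $p_1^\ep, \ldots, p_n^\ep \in \omega$ with $\|J_x u_\ep(\cdot, z_0) - \pi \sum_i \delta_{p_i^\ep}\|_{F(\omega)} \le \ep^a$. The triangle inequality combined with the $A_\ep$ bound gives $\|\pi \sum_i (\delta_{p_i^\ep} - \delta_0)\|_{F(\omega)} \le \ep^a + M h_\ep$, and testing against $\phi(x) = (r^* - |x|)_+$ exactly as in the proof of Lemma \ref{L3a} yields $\pi \sum_i |p_i^\ep| \le \ep^a + M h_\ep \le C h_\ep$ for $\ep$ small (a bootstrap is needed to justify that all $|p_i^\ep| \le r^*$, but this is automatic once the RHS is smaller than $\pi r^*$). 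Setting $q_i^\ep := h_\ep^{-1} p_i^\ep$, the vector $(q_1^\ep, \ldots, q_n^\ep)$ is bounded in $(\R^2)^n$, so a further diagonal subsequence produces limits $q_i^\ep \to q_i^0$. Using $\|\delta_{h_\ep q_i^\ep} - \delta_{h_\ep q_i^0}\|_{F(\omega)} \le h_\ep |q_i^\ep - q_i^0|$ and one more triangle inequality,
\[
\|J_x u_\ep(\cdot, z_0) - \pi \sum_i \delta_{h_\ep q_i^0}\|_{F(\omega)} \le \ep^a + \pi h_\ep \sum_i |q_i^\ep - q_i^0| = o(h_\ep),
\]
since $\ep^a = o(h_\ep)$ because $h_\ep = |\log\ep|^{-1/2}$. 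This is \eqref{scaling1a}.

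The step I expect to be the main subtle point is the reverse-Fatou selection of $z_0$: each $A_\ep$ depends on $\ep$, so one cannot simply choose a good height at a single scale, and one must instead invoke the fact that a positive-measure set of heights lies in $A_\ep$ for infinitely many $\ep_k$. After this selection everything else reduces to triangle inequalities, the already-established Lemmas \ref{L3b} and \ref{L.BadE.est}, and a routine compactness extraction in $(\R^2)^n$; in particular no new estimates on $u_\ep$ are required.
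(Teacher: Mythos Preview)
Your proof is correct and follows essentially the same route as the paper's: define a doubly-good set of heights (the paper calls it $\calG^\ep_4$, with threshold $3Ch_\ep$ in place of your $Mh_\ep$), show it has measure $\ge L/2$ via Chebyshev and Lemma~\ref{L.BadE.est}, select $z_0$ in the $\limsup$ of these sets (the paper invokes this as ``Borel--Cantelli'' rather than reverse Fatou, but the content is identical), and then run Lemma~\ref{L3b} plus the flat-norm test against a tent function to conclude. Your remark about the bootstrap is harmless: testing against $(r^*-|x|)^+$ actually yields $\pi\sum_i \min(|p_i^\ep|, r^*) \le Ch_\ep$, which forces $|p_i^\ep| < r^*$ directly once $\ep$ is small, so no iteration is needed.
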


\begin{proof}
Since \eqref{scaling1} follows immediately from \eqref{old.scaling1}, we only
need to find a point $z\in (0,L)$
and a subsequence along which assumptions \eqref{scaling1a} -- \eqref{scaling1aa}
hold.
 
To do this,
we define yet another ``good set",
 \[
\calG^\ep_4 :=
\{ z\in \calG^\ep_2: \| J_x u_\e(\cdot, z) - \pi n \delta_0 \|_{F(\omega)} \le 3 C h_\ep \},
\]
where $C$ is the same constant appearing in \eqref{old.scaling1}.
It follows from \eqref{old.scaling1}, \eqref{Fed2} and  Lemma \ref{L.BadE.est} that
$|\calG^\ep_4| \ge \frac L2$.
By the Borel-Cantelli Lemma, we can therefore 
find some $z_0\in (0,L)$ and a subsequence $\e_k$
such that $z_0\in \calG^{\e_k}_4$ for all $k$.

The fact that $z_0\in \calG^{\ep_k}_2$ for all $k$ implies that \eqref{scaling1aa} holds.
To prove \eqref{scaling1a},
Lemma \ref{L3b} implies that 
for all sufficiently large $k$ there
exist points
$\{ p^\ep_i\}_{i=1}^n$ such that \eqref{L3b.c1} holds, {\em i.e.}
\[
\| J_xu_\ep(\cdot, z_0) - \pi\sum_{i=1}^n\delta_{p^\ep_i}\|_{F(w)} \le \e^a.
\]
(Here and below, we often write $\e$ instead of $\e_k$, to reduce clutter.)
Since $z_0\in \calG^{\e_k}_4$, 
\[
\| n\pi \delta_0 -  \pi\sum_{i=1}^n\delta_{p^\ep_i} \|_{W^{-1,1}(\omega)} \le \ep^a+ 3 Ch_\e \le C h_\ep.
\]
It follows from the definition \eqref{FvsW11} of the flat norm (for example by testing
with $\varphi = (1-|x|)^+$ or a regularization thereof) that $\sum|p^\ep_i| \le C h_\ep$.
If we let $q^\ep_i = p^\ep_i/h_\ep$, then $\sum |q^\ep_i|\le C$, and we may pass to a further subsequence such that $q^\ep_i \rightarrow q^0_i$ for $i=1,\ldots, n$.
Then for this subsequence, by the triangle inequality
\[ 
\| J_xu_\ep(\cdot, z_0) - \pi\sum_{i=1}^n\delta_{h_\ep q^0_i } \|_{F(w)}
 \le \e^a + 
 \pi \sum_{i=1}^n \|\delta_{p^\ep_i} - \delta_{h_\ep q^0_i } \|_{F(\omega)}.
\]
It follows again from the definition \eqref{FvsW11} of the flat norm that the right-hand side
is bounded by $\e^a+ \sum|p^\ep_i - h_\ep q^0_i| = \e^a + h_\ep |q^\ep_i - q^0_i|$,
and this is clearly $o(h_\ep)$ as $\ep\to 0$.

\end{proof}

Our next result is not used anywhere in this paper. Its significance is that
it shows that, if we allow ourselves the stronger assumption \eqref{old.scaling1},
then certain difficulties in the proof of Theorem \ref{main} can be avoided.

\begin{lemma}
\label{extractingdivergentpart}
If $(u_\e)\subseteq H^1(\Om;\C)$ is a family satisfying \eqref{scaling2}
and \eqref{old.scaling1},
then
\beq \label{uptoconstant}\int_\Om e^{2d}_\e(u_\e)\geq n\pi L\abs{\log \e}+\pi n (n-1)L\abs{\log h_\e}- C\eeq
and
\begin{equation}\label{dzu2}
\int_\Om\abs{\p_z u_\e}^2 dx\;dz\leq C
.\end{equation}
\end{lemma}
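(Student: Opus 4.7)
My plan is to establish the lower bound \eqref{uptoconstant} first; the second conclusion \eqref{dzu2} then falls out immediately by writing $F_\e(u_\e) = \int_\Om e^{2d}_\e + \tfrac12\int_\Om |\p_z u_\e|^2$, using the matching upper bound $F_\e(u_\e) \le n\pi L\abs{\log\e} + \pi n(n-1)L\abs{\log h_\e} + C$ supplied by \eqref{scaling2} and the definition \eqref{Gep.def}, and subtracting \eqref{uptoconstant}. So all the work goes into \eqref{uptoconstant}.

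I would decompose $(0,L) = \calG^\e_2 \cup \widetilde\calB^\e_2 \cup \calB^\e_1$ (with $\widetilde\calB^\e_2 := \calG^\e_1 \setminus \calG^\e_2$) and use the sharpest available pointwise lower bound on each piece. On $\calG^\e_2$ the bound \eqref{L3a.c1} of Lemma \ref{L3a} gives
\[
\int_\om e^{2d}_\e(u_\e(x,z))\,dx \;\ge\; n\pi\logeps - C - \pi n(n-1)\log s_\e(z), \qquad s_\e(z):=\norm{J_xu_\e(\cdot,z) - n\pi\delta_0}_{F(\om)}.
\]
The key step is to integrate over $\calG^\e_2$ and apply Jensen's inequality to the convex function $-\log$, using the global $L^1$ bound $\int_0^L s_\e(z)\,dz \le Ch_\e$ that \eqref{old.scaling1} provides; since $|\calG^\e_2|\to L$ by Lemma \ref{L.BadE.est}, this gives $\int_{\calG^\e_2}(-\log s_\e)\,dz \ge |\calG^\e_2|\bigl(\abs{\log h_\e}-C\bigr)$, hence
\[
\int_{\calG^\e_2}\!\int_\om e^{2d}_\e \;\ge\; |\calG^\e_2|\bigl(n\pi\logeps + \pi n(n-1)\abs{\log h_\e}\bigr) - C.
\]

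On the bad set, I would use $\int_\om e^{2d}_\e \ge \pi(n+\theta)\logeps$ on $\widetilde\calB^\e_2$ (by its definition) and $\int_\om e^{2d}_\e \ge \e^{-\alpha}$ on $\calB^\e_1$ from Proposition \ref{P1} (replacing $e_\e$ by $e^{2d}_\e$ via the remark after its proof). Summing the three contributions and using $|\calG^\e_2| + |\widetilde\calB^\e_2| + |\calB^\e_1| = L$, the remaining task is to check that the net error
\[
|\widetilde\calB^\e_2|\pi\theta\logeps + |\calB^\e_1|(\e^{-\alpha}-n\pi\logeps) - |\calB^\e_2|\pi n(n-1)\abs{\log h_\e}
\]
is bounded below by $-C$. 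The crucial observation here is that $\abs{\log h_\e} = \tfrac12\log\logeps = o(\logeps)$: from \eqref{old.scaling1} and the Chebyshev-type estimate \eqref{Fed2} we get $|\calB^\e_1| \le Ch_\e$, and Lemma \ref{L.BadE.est} gives $|\widetilde\calB^\e_2| = o(1)$, so the negative contribution $|\calB^\e_2|\pi n(n-1)\abs{\log h_\e} = O\bigl((|\calB^\e_1|+|\widetilde\calB^\e_2|)\log\logeps\bigr) = o(1)$, while the two positive terms are nonnegative (the $\e^{-\alpha}$ term dwarfs $n\pi\logeps$ for small $\e$).

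The main obstacle, such as it is, is purely bookkeeping the three regions; there is no deep analytic difficulty because Jensen's inequality directly converts the integrated hypothesis \eqref{old.scaling1} into the correct $|\log h_\e|$ contribution. This is precisely why the authors note that \eqref{old.scaling1} permits certain simplifications relative to the general setting of Theorem \ref{main}: without it, one must replace Jensen with the more delicate intermediate-scale $\ell_\e$ argument and a Fatou-type lower bound, and establish $\ell_\e = h_\e$ only a posteriori.
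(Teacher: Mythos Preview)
Your approach is essentially the paper's: split $(0,L)$ into $\calG^\ep_2$ and its complement, use the sharp bound \eqref{L3a.c1} on the good set, integrate, apply Jensen with the $L^1$ control from \eqref{old.scaling1}, and use the excess lower bound on the bad set. The paper in fact does not bother splitting $\calB^\ep_2$ into $\calB^\ep_1$ and $\widetilde\calB^\ep_2$; it simply notes that Proposition~\ref{P1} together with the definition of $\calG^\ep_2$ gives $\int_\om e^{2d}_\ep \ge \pi(n+\theta)\logeps$ on all of $\calB^\ep_2$, so your invocation of the stronger bound $\e^{-\alpha}$ on $\calB^\ep_1$ is harmless but unnecessary.

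There is one genuine slip in your final bookkeeping. You assert that the negative contribution $|\calB^\ep_2|\,\pi n(n-1)|\log h_\e| = O\bigl((|\calB^\ep_1|+|\widetilde\calB^\ep_2|)\log\logeps\bigr)$ is $o(1)$. For the $|\calB^\ep_1|$ part this is fine (you have $|\calB^\ep_1|\le Ch_\e$, so $|\calB^\ep_1|\log\logeps\to 0$), but for $|\widetilde\calB^\ep_2|$ you have only invoked $|\widetilde\calB^\ep_2|=o(1)$ from Lemma~\ref{L.BadE.est}, and $o(1)\cdot\log\logeps$ need not be $o(1)$. The fix is not to make this term small on its own but to absorb it into the positive excess you already wrote down: since $\theta\logeps \gg n(n-1)|\log h_\e|$ and $\e^{-\alpha} \gg n\pi\logeps + n(n-1)\pi|\log h_\e|$ for small $\e$, regrouping gives
\[
|\widetilde\calB^\ep_2|\bigl(\pi\theta\logeps - \pi n(n-1)|\log h_\e|\bigr) + |\calB^\ep_1|\bigl(\e^{-\alpha} - n\pi\logeps - \pi n(n-1)|\log h_\e|\bigr) \ge 0,
\]
which is exactly what you need. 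With this correction your argument is complete and matches the paper's.
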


In the next section, when proving Theorem \ref{main} under the weaker hypotheses
\eqref{scaling1} -- \eqref{scaling1aa}, we only know at the outset
that $\int_\Omega|\partial_z u_\ep|^2 dx\,dz = o(\logeps)$, see \eqref{uzolog}. This is considerably
weaker than \eqref{dzu2}, and this weakness introduces
some complications into our arguments.

\begin{proof}
According to Lemma \ref{L3a}, if $z$ belongs to the set  $\calG^\ep_2$, defined in  \eqref{GBep2.def}, 
then the lower energy bound  \eqref{L3a.c1} holds.
In addition, it follows from Proposition \ref{P1}
and the definition of $\calG^\ep_2$ that
\[
\int_\omega e_\ep^{2d}(u_\ep(x,z))dx \ \ge \pi(n+\theta)\logeps 
\qquad \mbox{  for $z\not\in \calG^\ep_2$.}
\]
From this and  \eqref{L3a.c1},
\begin{align*}
\int_\Omega e_\ep^{2d}(u_\ep)dx\,dz 
&\ \ge \ (L  -  |\calG^\ep_2|) \pi(n+\theta)\logeps \\
& \ + \ \int_{z\in\calG^\ep_2}\left(n\pi|\log\ep|-\pi n(n-1)\log\|J_x u_\ep(\cdot, z)-n\pi\delta_0\|_{F(\om)}-C\right)\,dz \\
&\ge  n \pi L \logeps -C - \pi n(n-1) 
 \ \int_{z\in\calG^\ep_2}\log\|J_x u_\ep(\cdot, z)-n\pi\delta_0\|_{F(\om)}\,dz .
\end{align*}
To estimate the integral, note that by Jensen's inequality
and \eqref{old.scaling1},
\begin{multline*}
-\int_{\calG^\ep_2} \log\|J_x u_\ep(\cdot,z)- n\pi\delta_0\|_{F(\om)}dz \\
\ge 
-|\calG^\ep_2| \log \left(\frac 1{|\calG^\ep_2|}\int_{\calG^\ep_2} \|J_x u_\ep(\cdot,z)-n\pi\delta_0\|_{F(\om)}dz \right)
\ge - L \log h_\e - C.
 \end{multline*}
This proves \eqref{uptoconstant}. 
Finally, \eqref{dzu2}  is immediate from \eqref{uptoconstant} and \eqref{scaling2},
since $e_\e(u) = e_\e^{2d}(u) + \frac 12 |\frac{\partial u}{\partial z}|^2$.
\end{proof}

\section{Compactness and lower bound}\label{section3}

In this section we prove part $(a)$ of Theorem \ref{main}, consisting of the
compactness and lower bound assertions.
In view of Lemma \ref{old.new}, it suffices
to consider assumptions \eqref{scaling1} -- \eqref{scaling1aa}, together with \eqref{scaling2}.

Our strategy will be to rescale on a scale $\ell_\ep$ 
chosen to facilitate the proof of the
compactness assertions. We will also
obtain  energy lower bounds that depend on $\ell_\ep$ in
a way that will allow us to conclude, 
only in the final step of the proof, that in fact $\ell_\ep = h_\ep$.
This will require us to be rather careful about how some of our
estimates depend on certain constants, such as $c_3$, defined below.

Thus the rescaling will turn out to be the same as that
in the statement of the theorem, and the lower energy bounds 
we prove will reduce to \eqref{uniflowse}.

Thus, we fix a sequence $(u_\ep)$ satisfying \eqref{scaling1} and \eqref{scaling2},
and we define
\begin{equation}
\ell_\ep :=  \max \left\{ h_\e\, , \,  \left ( \frac 1{c_3\logeps} \int_\Omega |\partial_z u_\ep|^2\, dx \, dz 
\right)^{1/2}\right\} \ 
\label{ellep.def}\end{equation}
for a constant $c_3$ that will be specified later.
(We will invoke assumptions \eqref{scaling1a} and \eqref{scaling1aa} only when they are needed). It follows from the definition of $\ell_\ep$ and from \eqref{uzolog} that
\begin{equation}\label{hell}
h_\ep \le \ell_\ep = o(1)\qquad\mbox{ as }\ep\to 0.
\end{equation}
Throughout the rest of this section, we will use the notation
\begin{equation}
\omega_\ep = \ell_\ep^{-1} \omega,
\qquad v_\ep(x,z) = u_\ep(\ell_\ep x, z) \quad \mbox{ for }(x,z)\in \Omega_\ep :=\omega_\ep\times (0,L).
\label{ell.rescale}\end{equation}
A change of variables shows that
\begin{equation}
\frac 1\logeps \int_{\Omega_\ep}|\partial_z v_\ep|^2 \, dx\, dz = 
\frac 1{\ell_\ep^2 \logeps} \int_\Omega |\partial_z u_\ep|^2 \, dx \, dz \le  c_3
\label{c5new}\end{equation}
and that, for $\ep' = \e/\ell_\ep$, 
\[
\int_{\Omega_\ep} e^{2d}_{\ep'}(v_\e) \,dx\, dz = 
\int_{\Omega} e^{2d}_\ep(u_\ep) \, dx\,dz \le C \logeps.
\]
From \eqref{hell} and \eqref{hep.def},  
\begin{equation}\label{ep.epprime}
\frac{\abs{\log\ep'}}{\abs{\log\ep}}\to 1\mbox{ as }\ep\to 0,
\end{equation}
and recalling that $e_{\ep'}(v_\ep) = e^{2d}_{\ep'}(v_\ep) + \frac 12 |\partial_z v_\ep|^2$, 
it follows that 
\begin{equation}
\frac 1{|\log\e'|} \int_{\Omega_\ep} e_{\ep'}(v_\ep)\,dx\,dz \le C \ .
\label{c5primenew}\end{equation}

At different stages in the proof of \eqref{comp} we will need to invoke a general compactness result for Ginzburg-Landau functionals of \cite{JSo, abo}.

\begin{theorem}
Let $U$ be a bounded, open subset of $\R^3$.
Assume $(u_\e)\subseteq W^{1,2}(U;\C)$ is such that 
\[\limsup_{\ep\to 0}\frac{1}{\abs{\log\ep}}\int_U e_\ep(u_\ep)dx<\infty.\]
Then, there exists a subsequence $\ep_k\to 0$ and an integer multiplicity rectifiable $1$-current $J$ such that $\frac 1\pi Ju_{\ep_k}$ converges to  $J$ in $W^{-1,1}(U)$. 
In addition, one has the following uniform lower semicontinuity:
\[\liminf_{k\to\infty}\frac{1}{\abs{\log\ep_k}}\int_U e_{\ep_k}(u_{\ep_k})dx\geq \pi M_U(J).\]
\label{UKC}
\end{theorem}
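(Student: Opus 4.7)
The plan is to prove this by combining a two-dimensional Jacobian estimate with a slicing argument, then upgrading the resulting structure to a three-dimensional rectifiable current via Federer--Fleming compactness. The starting point is the observation that $Ju$ is a closed $2$-form (since $Ju=\frac12 d(j(u))$ is exact), so $\star Ju$ is a $1$-current with $\partial(\star Ju)=0$ in $U$.

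First I would establish the sharp two-dimensional Jacobian estimate: for any $w \in W^{1,2}(B;\C)$ on a disk $B \subset \R^2$ and any Lipschitz test function $\phi$,
\[
\Bigl| \int_B \phi \,\bigl(Jw - \pi {\textstyle\sum} d_i \delta_{p_i}\bigr) \Bigr| \le C \|\nabla \phi\|_\infty \cdot \frac{1}{|\log\ep|} \int_B e_\ep^{2d}(w),
\]
where $\{p_i, d_i\}$ come from the standard vortex ball construction applied to $w$. The proof uses the fact, already exploited in Lemma~\ref{L8}, that on each vortex ball $B_i$ of radius $r_i$ carrying degree $d_i$, the energy is at least $\pi|d_i|\log(r_i/\ep)-C$, while the Jacobian outside the vortex balls is small in flat norm. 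I would then slice: for each direction $\nu\in S^2$, Fubini applied to the coarea identity gives that, for a.e.~$t$, the restriction $u_\ep\!\restriction_{U\cap P_{\nu,t}}$ has finite 2d energy and the above estimate applies. Integrating in $t$, and using the elementary bound \eqref{slice.est}, shows that $\star Ju_\ep$ is close in $W^{-1,1}(U)$ to a polyhedral integer-multiplicity rectifiable 1-current $T_\ep$ with mass controlled by $\frac{C}{|\log\ep|}\int_U e_\ep(u_\ep)$.

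Given this, compactness follows from Federer--Fleming: each $T_\ep$ has uniformly bounded mass and $\partial T_\ep$ has mass bounded by terms that vanish as $\ep\to 0$ (coming from the closedness of $Ju_\ep$ and the approximation error), so a subsequence converges in flat norm to an integer multiplicity rectifiable 1-current $J$, and $\star Ju_{\ep_k}\to J$ in $W^{-1,1}(U)$. For the lower semicontinuity, on each slice $P_{\nu,t}$ where the 2d Jacobian converges to $\pi\sum d_i(t)\delta_{p_i(t)}$, the standard 2d lower bound gives
\[
\liminf_k \frac{1}{|\log\ep_k|}\int_{U\cap P_{\nu,t}} e_{\ep_k}^{2d}(u_{\ep_k}\!\restriction P_{\nu,t}) \ge \pi {\textstyle\sum} |d_i(t)|.
\]
Recognising the right-hand side as $\pi\, M(\langle J,\zeta_\nu,t\rangle)$, applying Fatou in $t$, and then the Cauchy--Binet formula (or integrating over $\nu\in S^2$ with the area formula for rectifiable sets) recovers $\pi M_U(J)$ on the right.

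The main obstacle is justifying that the slices of the three-dimensional limit $J$ coincide with the limits of the two-dimensional Jacobians on the corresponding slices, i.e., interchanging the slicing with the limit in $W^{-1,1}$. This is not automatic for general currents, but holds under the uniform mass and bounded-boundary-mass bounds that we established, via the continuity of the slicing operator on such subsets of the space of currents (see Federer 4.3.2). A secondary technical nuisance is that a single slicing direction $\nu$ only sees the component of the tangent of $J$ orthogonal to $\nu$, so a sharp mass bound requires either averaging over $\nu\in S^2$ or using a Besicovitch-type covering by cylinders oriented along the approximate tangent of $J$ at $\calH^1$-a.e.~point, which is where the full strength of the rectifiability of $J$ enters.
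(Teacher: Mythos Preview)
The paper does not prove Theorem~\ref{UKC}; it is quoted from \cite{JSo} and \cite{abo}. Your sketch is broadly in the spirit of \cite{JSo}, but it has a genuine gap at the compactness step.

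You write that integrating the 2D Jacobian estimate over slices ``shows that $\star Ju_\ep$ is close in $W^{-1,1}(U)$ to a polyhedral integer-multiplicity rectifiable 1-current $T_\ep$''. This is exactly the crucial step, and slicing alone does not furnish it. What slicing gives you is that for a.e.\ $t$ the slice $\langle \star Ju_\ep, \pi_\nu, t\rangle$ is flat-close to an integer combination of Diracs $\sum d_i(t)\delta_{p_i(t)}$; but a family of $0$-currents indexed by $t$ is not a $1$-current. Nothing in your argument controls how the points $p_i(t)$ vary with $t$, and without such control you cannot assemble them into a current of finite mass, let alone one with small boundary to which Federer--Fleming applies. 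The proof in \cite{JSo} establishes a 3D Jacobian estimate directly via a covering by small cubes, and \cite{abo} bypasses the issue via a BV-type argument; neither proceeds by stacking slices.

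Your lower-semicontinuity plan has a related weakness. You correctly note that slicing in a single direction $\nu$ only detects the $|\tau\cdot\nu|$-weighted portion of $M_U(J)$. Averaging over $\nu\in S^2$ cannot repair this: the energy side is $\nu$-independent while the mass side picks up the factor $\tfrac{1}{4\pi}\int_{S^2}|\tau\cdot\nu|\,d\sigma=\tfrac12$, so you lose the sharp constant $\pi$. The Besicovitch-style covering by cylinders aligned with the approximate tangent of $J$ does work and is essentially how \cite{JSo} proceeds, but that is the heart of the proof, not a technical afterthought. Finally, your proposed appeal to Federer 4.3.2 to interchange slicing with limits presupposes uniform mass bounds on the currents being sliced; the $\star Ju_\ep$ themselves have mass of order $\logeps$, and the putative $T_\ep$ have not been constructed, so the argument is circular at that point.
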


This was first proved in \cite{JSo}, which in fact established compactness in the
$(C^{0,\alpha}_c(U))^*$ norm for all $0<\alpha\le 1$. 
A different proof, which established compactness in the flat norm
$F(U)$, was subsequently given in \cite{abo}, which also proved a corresponding upper bound.

We immediately conclude from \eqref{c5primenew} and Theorem \ref{UKC} (the statement considers a family of maps indexed by $\e '$ but it can be easily seen to also apply to $(v_\e)$) that
there exists an integer multiplicity $1$-current $J$ in $\R^2\times (0,L)$ such that
\beq\label{defJrescaled}
\frac 1\pi \star Jv_\e\rightarrow J\mbox{ in }W^{-1,1}(B(R)\times(0,L)) \quad\mbox{ for all }R >0.
\eeq
Our goal is to show that, roughly speaking, $J$ consists of $n$ graphs  of $H^1$ functions 
over the vertical 
segment $(0,L)$, possibly together with other
pieces that the vertical component $J_xv_\e$ of the vorticity fails to record.

In view of \eqref{c5new}, we may also assume, after passing to a further subsequence, that there exists a 
measure $\mu$ on $[0,L]$ such that 
\begin{equation}
\mu_\ep \rightharpoonup \mu\ \ \mbox{ weakly as measures, \ where }
\quad
\mu_\ep(A) :=  \int_{\om_{\e}\times A }\frac{\abs{\p_z v_{\e}(x,z)}^2}{\abs{\log\e}}dx \, dz.
\label{mu.def}\end{equation}
For this subsequence, general properties of weak convergence of measures imply that
\[
\mu((0,L)) 
\le \liminf_{\e\to 0} \int_{\Omega_\ep}
 \frac{\abs{\p_z v_{\e}(x,z)}^2}{\abs{\log\e}}dx \, dz 
=
\liminf_{\e\to 0} \int_{\Omega}
 \frac{\abs{\p_z u_{\e}(x,z)}^2}{\ell_\ep^2 \logeps} dx \, dz.
\]

The objective now is to identify the filaments.
To that end we will  find a countable dense subset of heights such that, among other things, the slices of the Jacobians at these heights converge to $\pi$ times the sum of $n$ Dirac masses; these are the candidates for the values of $f$ at these heights.
We then establish the existence of a unique $H^1((0,L))$ extension of $f$. This will require control
on the modulus of continuity of $f$; obtaining this control is our first task.

\subsection{Modulus of continuity}

We first establish the basic estimate, see \eqref{basicest} below, that lets us control $\| f'\|_{L^2}^2$
by $\int_\Omega |\partial_z u_\ep|^2$. 
At this stage we do not yet need all the hypotheses of Theorem \ref{main}.

\begin{lemma}\label{labels}
Assume that $(u_\ep)$ satisfies  \eqref{scaling1}, \eqref{scaling2}, 
define $(v_\e)$ by rescaling as in \eqref{ellep.def}, \eqref{ell.rescale},
and define $\mu$ by \eqref{mu.def}.

Assume that  $\{z^\ep_1\}$ and $\{z^\ep_2\}$ are sequences  in $[0,L]$
such that $z^\ep_j\rightarrow z_j$ for $j=1,2$, with $0\le z_1 < z_2 \le L$,
and that the following
conditions hold for $j=1,2$ (perhaps after passing to a subsequence):
\begin{eqnarray}
&&J_x v_\ep(\cdot, z^\ep_j)  \to \pi \sum_{i=1}^{n(z_j)} \delta_{p_i(z_j)} 
\qquad
\mbox{ in }W^{-1,1}(B(R)), \ \ \mbox{ for all }R>0,
\label{zzz1}\end{eqnarray}
(for certain  points $\{ p_i(z_j)\}_{i=1}^{n(z_j)}$, not necessarily distinct) and
\begin{equation}
\limsup_{\e\to 0} \abs{\log\e}^{-1}
\int_{\om} e^{2d}_\e(u_\ep(x,z^\ep_j))dx \le M
\label{zzz2}\end{equation}
for some $M >0$. 
Finally, assume that $n(z_i)= n$
for either $i=1$ or $2$. 
Then $n(z_1) = n(z_2) = n$, and 
\begin{equation}
\frac{\pi}{2}\min_{\sigma\in S^n}\sum_{i=1}^n\frac{\abs{p_i(z_1)-p_{\sigma(i)}(z_2)}^2}{z_2-z_1}
\ \le \  \frac 12 \mu( [z_1,z_2]) \, .
\label{basicest}\end{equation}
\end{lemma}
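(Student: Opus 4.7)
The plan is to derive the displacement estimate \eqref{basicest} from the closedness $d(Ju_\ep)=0$ combined with the $L^2$ control \eqref{c5new} on $\partial_z v_\ep$, following the template of \cite{waveJerrard} with $z$ playing the role of the ``time'' variable there. Throughout, I would work in the rescaled variables \eqref{ell.rescale}, noting that $|\mathcal J_x v_\ep|\le |\nabla_x v_\ep|\,|\partial_z v_\ep|$ and $\partial_z J_x v_\ep = -\operatorname{div}_x \mathcal J_x v_\ep$ are the two main structural identities.

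\textbf{Step 1 (Matching of multiplicities).} First I would verify that $n(z_1)=n(z_2)=n$. Stokes' theorem applied on the cylinder $B(R)\times (z_1^\ep,z_2^\ep)$ gives
\[
\int_{B(R)}J_xv_\ep(\cdot,z_2^\ep)\,dx-\int_{B(R)}J_xv_\ep(\cdot,z_1^\ep)\,dx=-\int_{z_1^\ep}^{z_2^\ep}\int_{\partial B(R)}\mathcal J_xv_\ep\cdot\nu\,d\mathcal H^1\,dz.
\]
The right-hand side is controlled by Cauchy-Schwarz via $|\mathcal J_xv_\ep|\le|\nabla_xv_\ep|\,|\partial_zv_\ep|$ together with \eqref{c5primenew} and \eqref{c5new}; a standard mean-value argument choosing appropriate $R\to\infty$ drives the flux to $o(1)$. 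Combined with \eqref{zzz1} at both endpoints and the fact that $n(z_i)=n$ for at least one $i$, this forces $n(z_1)=n(z_2)=n$.

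\textbf{Step 2 (Slicewise compactness on $(z_1,z_2)$).} Next, using Fubini together with the bound $\int_{\om_\ep}e^{2d}_{\ep'}(v_\ep(\cdot,z))\,dx\le C|\log\ep|$ implied by \eqref{c5primenew}, and applying Theorem \ref{UKC} on thin slabs $(\om_\ep\cap B(R))\times(z-\delta,z+\delta)$ followed by a diagonal argument, I would extract a further subsequence and a dense set $H\subset(z_1,z_2)$ such that for each $z\in H$
\[
\tfrac{1}{\pi}J_xv_\ep(\cdot,z)\to\sum_{i=1}^n\delta_{q_i(z)}\quad\text{in}\ W^{-1,1}(B(R)),\ R>0,
\]
for some $q_1(z),\dots,q_n(z)\in\R^2$ that agree with the endpoint configurations as $z\to z_1^+$ or $z\to z_2^-$.

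\textbf{Step 3 (Sharp product inequality).} The key ingredient is a product estimate. For any $\phi\in C^\infty_c(\om_\ep)$ and $z_1\le a<b\le z_2$, the divergence identity together with Cauchy-Schwarz yields
\[
\Big|\int_{\om_\ep}\phi\big(J_xv_\ep(\cdot,b)-J_xv_\ep(\cdot,a)\big)dx\Big|\le\Big(\int_a^b\!\!\int_{\om_\ep}|\nabla\phi|^2|\nabla_xv_\ep|^2\Big)^{1/2}\Big(\int_a^b\!\!\int_{\om_\ep}|\partial_zv_\ep|^2\Big)^{1/2}.
\]
By localizing $\phi$ to narrow tubes around an individual candidate path $q_i$ and invoking the sharp lower bound $\int_{B(r,q_i(z))}e^{2d}_\ep(v_\ep(\cdot,z))\,dx\ge\pi|\log\ep|-o(|\log\ep|)$ from Lemma \ref{L3a}, one can absorb the factor $|\nabla_xv_\ep|^2$ against its concentration weight $\pi|\log\ep|$ at each vortex. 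This upgrades the Cauchy-Schwarz bound into the slicewise velocity bound
\[
\pi\sum_{i=1}^n|q_i'(z)|^2\le \frac{d\mu}{dz}(z)\qquad\text{for a.e.\ }z\in(z_1,z_2),
\]
after fixing a locally consistent labeling of the paths.

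\textbf{Step 4 (Conclusion).} Integrating the pointwise bound of Step 3 over $(z_1,z_2)$ and applying Cauchy-Schwarz to $q_i(z_2)-q_{\sigma(i)}(z_1)=\int_{z_1}^{z_2} q_i'(z)\,dz$, with $\sigma\in S^n$ the permutation reconciling the labeling of $q_i$ at the two endpoints, one obtains
\[
\frac{\pi}{2}\sum_{i=1}^n\frac{|p_i(z_1)-p_{\sigma(i)}(z_2)|^2}{z_2-z_1}\le\frac{\pi}{2}\int_{z_1}^{z_2}\sum_{i=1}^n|q_i'(z)|^2\,dz\le\frac{1}{2}\mu([z_1,z_2]),
\]
which is precisely \eqref{basicest}.

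\textbf{Main obstacle.} The delicate point will be Step 3, recovering the sharp constant $\pi$ in the product inequality. A plain Cauchy-Schwarz gives a bound off by a factor that would ruin the $\Gamma$-convergence; one must genuinely exploit the fact that the $2$D energy saturates the $\pi|\log\ep|$ lower bound on a tube around each vortex path, so that the extra cost of moving vortices is charged entirely to $|\partial_zv_\ep|^2$. A secondary technical issue is posed by collisions or near-collisions among the $q_i$, which I would handle by subdividing $(z_1,z_2)$ and choosing the localizing $\phi$ compatibly with the local cluster structure, in the manner of \cite{waveJerrard}; alternatively one could invoke the abstract product estimate of \cite{SSprod} directly on the space-time cylinder.
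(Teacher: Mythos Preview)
Your route is genuinely different from the paper's, which does not attempt a direct product inequality. Instead, for a small parameter $\tau>0$ the paper defines $\psi_\ep^\tau(x,z)=v_\ep(x,\tau z)$ on $(z_1/\tau,z_2/\tau)$, extended by the constant slices $v_\ep(\cdot,z_j)$ above and below, and applies the 3D compactness Theorem~\ref{UKC} to $\psi_\ep^\tau$ on a slightly larger slab. The limiting integer-multiplicity current $\hat J$ decomposes into boundaryless Lipschitz curves; because $\hat J$ coincides with $n(z_j)$ parallel vertical segments in the extension regions, each curve must connect an incoming strand to an outgoing one, which simultaneously forces $n(z_1)=n(z_2)=n$ and yields $M(\hat J)\ge\sum_i\sqrt{\tau^{-2}(z_2-z_1)^2+|p_i(z_1)-p_{\sigma(i)}(z_2)|^2}$. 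Subtracting the $e^{2d}$ contribution (computed exactly via \eqref{Uon2D}) isolates $\int|\partial_z v_\ep|^2/|\log\ep|$, and sending $\tau\to 0$ linearizes arclength into \eqref{basicest} with the sharp constant for free.

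Your sketch has two concrete gaps. In Step~1, the lateral flux on $\partial B(R)\times(z_1^\ep,z_2^\ep)$ is not $o(1)$: both $\int_{\Omega_\ep}|\nabla_xv_\ep|^2$ and $\int_{\Omega_\ep}|\partial_zv_\ep|^2$ are of order $|\log\ep|$ by \eqref{c5new}--\eqref{c5primenew}, so a mean-value choice of $R\in(R_0,2R_0)$ only gives flux $\lesssim|\log\ep|/R_0$, which diverges as $\ep\to0$ for fixed $R_0$; and you cannot take $R_0\to\infty$ first, since \eqref{zzz1} is asserted only on fixed balls. In Step~3, the displayed Cauchy--Schwarz is off by a full factor of $|\log\ep|$: with $|\nabla\phi|\le 1$ one has $\int|\nabla\phi|^2|\nabla_xv_\ep|^2\lesssim|\log\ep|(b-a)$ while $\int|\partial_zv_\ep|^2=|\log\ep|\,\mu_\ep((a,b))$, so the bound reads $|q_i(b)-q_i(a)|\lesssim|\log\ep|\sqrt{(b-a)\mu_\ep}$. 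The appeal to Lemma~\ref{L3a} does not repair this (that lemma is about $u_\ep$ on $\omega$, not $v_\ep$ on tubes, and dividing one factor by $|\log\ep|$ still leaves the other). Invoking \cite{SSprod} as a black box could in principle patch Step~3, since the tight energy bound \eqref{Uon2D} is available, but that is a substantial independent input and does not address Step~1. The paper's stretching argument handles both issues at once without extra machinery.
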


Here we follow the idea in the proof of Proposition 3 in \cite{waveJerrard},
where however one has somewhat more information, such as bounds on
$\int_{\omega_\ep}  e_\ep^{2d}(v_\ep)$ that are uniform for $z \in (z_1,z_2)$,
as well as distinct limiting vortex curves that are known not to intersect.

\begin{proof}
We first present the proof in the basic case when 
$z^\ep_j = z_j$ for all $\ep$, for $j=1,2$. 
Given a small number $0< \tau$,
we define $\psi_\ep^{\tau}: \om_\ep \times\R \to \R^2$
as follows:
\beq\label{psidt.def}
\psi_\ep^{\tau}(x,z)=
\begin{cases}
v_\ep(x,z_1)\quad &\mbox{ if } z\leq z_1/\tau\\
v_\ep(x,\tau z) &\mbox{ if }z_1/\tau<z< z_2/\tau\\
v_\ep(x,z_2)&\mbox{ if } z_2/\tau\leq z.
\end{cases}
\eeq

{\bf Step 1.}
Let us write $n_i = n(z_i)$ for $i=1,2$, and (as above)
$\e' = \e/\ell_\ep$.
We first prove that $n_1 = n_2 = n$, and that
\begin{multline}
\liminf_{\ep\to 0}\frac{1}{\abs{\log\ep'}}
\int_{z_1/\tau}^{z_2/\tau}\int_{\om_\ep}e_{\ep'}(\psi_\ep^{\tau})dx\,dz\\
\geq \pi\min_{\sigma\in s^n}\sum_{i=1}^n\left(\tau^{-2}(z_1-z_2)^2+ |p_i(z_1)-p_{\sigma(i)}(z_2)|^2\right)^{\frac12} \ .
\label{Massbound}
\end{multline}
For $\delta\ge 0$ we will use the notation
\[
I_\delta = (\frac{z_1}\tau-\delta,  \frac{z_2}\tau+\delta), \qquad U_\delta = \R^2\times I_\delta.
\]

First, some changes of variable show that
\begin{multline}\label{pdt.changevar}
\int_{I_\delta}\int_{\om_\ep}e_{\ep'}(\psi_\ep^{\tau})dx\,dz 
=
\delta \sum_{i=1,2}\int_{\om }e^{2d}_\ep(u_\e(x, z_i)) dx\\
+
\frac 1 \tau
\int_{z_1}^{z_2}\int_{\om_\e} e_{\ep'}^{2d}(v_\ep) dx\, dz 
+
\frac \tau {2} \int_{z_1}^{z_2}\int_{\om_\e} |\partial_z v_\ep|^2 dx\, dz \, .
\end{multline}
In light of \eqref{c5new},  \eqref{ep.epprime}, \eqref{c5primenew},  and \eqref{zzz2},
there is thus a constant $C = C(\tau, \delta)$ such that
\[
\frac{1}{\abs{\log\ep'}}\int_{I_\delta}\int_{\om_\ep}e_{\ep'}(\psi_\ep^{\tau})dx\,dz\leq C
\qquad
\mbox{ for all }\e\in (0,1].
\]
It therefore follows from Theorem \ref{UKC} that there exists a $1$-current $\hat J$  in $U_\delta$
such that,
after passing to a subsequence if necessary,
\[
\frac 1 \pi J\psi_\ep^{\tau}\to \hat{J}\qquad\mbox{ in }W^{-1,1}(B(R)\times I_\delta), \quad\mbox{ for all }R>0.
\]
Moreover, $\hat{J}$ is integer multiplicity rectifiable, with $\p\hat{J}=0$ in $U_\delta$, and
\beq
\liminf_{\ep\to 0}\frac{1}{\abs{\log\ep'}}\int_{I_\delta}\int_{\om_\ep}e_{\ep'}(\psi_\ep^{\tau})dx\,dz
\geq \pi M_{U_\delta}(\hat{J}) .
\label{MJhat}
\eeq
We want to estimate $M_{U_\delta}(\hat J)$.
Note  first from \eqref{zzz1} and definition of $\psi_\ep^{\tau}$ that,
using the notation \eqref{Gammaf.def}, we have
\begin{align}
\hat{J}=\pi \sum_{i=1}^{n_1}  T_{l^i_-}\mbox{  in  } \R^2\times (\tau^{-1}z_1-\delta,\tau^{-1}z_1),
\quad\mbox{ for $l^i_-(z) = (p_i(z_1),z)$.}
\label{JhatTlower}
\\
\hat{J}=\pi \sum_{i=1}^{n_2}  T_{l^i_+}\mbox{  in  }\R^2\times (\tau^{-1}z_2,\tau^{-1}z_2+\delta),
\quad\mbox{ for $l^i_+(z) = (p_i(z_2),z)$.}
\label{JhatTupper}
\end{align}

On the other hand, recalling \eqref{characofonecurrents}, there exist
Lipschitz curves $\{g_i\}_{i\in I}$ such that
\[\hat{J}=\sum_{i\in I}T_{g_i},\quad 
\qquad \p T_{g_i}=0
\mbox{ in  }U_\delta, \ \  \mbox{ for all }i.
\]
and $M(\hat J) = \sum_{i\in I}\mbox{length}(g_i)$.
In particular, certain of these curves must coincide (after reparametrization) with $l^i_-(z)$
for $z\in (\tau^{-1}z_1-\delta,\tau^{-1}z_1)$. We may thus choose to label and parametrize
these $\{ g_i\}$
so that $g_i(z) = l^i_-(z)$ for $z\in  (\tau^{-1}z_1-\delta,\tau^{-1}z_1)$,
for $i=1,\ldots, n_1$ (see figure \ref{f3} below).


\begin{figure}[!ht]
\begin{minipage}{0.8\linewidth}
\includegraphics[trim = 0mm 0mm 0mm 0mm, clip, width=10cm,
  height=10cm,
  keepaspectratio,]{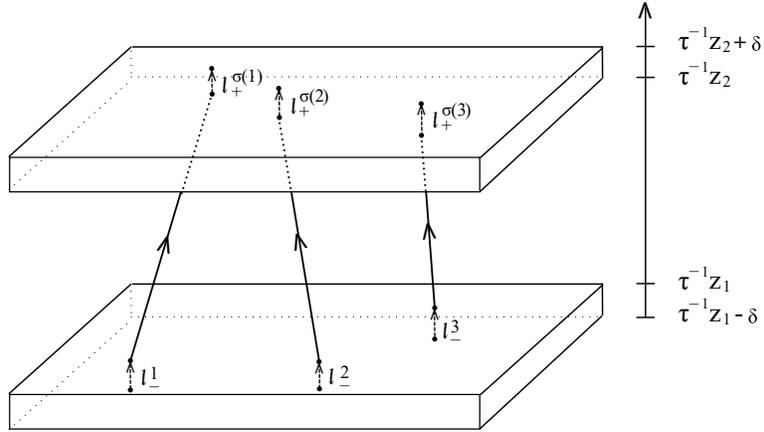}
\end{minipage}
\caption{ Depiction of a $\hat{J}$ whose associated $g_i$'s are of minimal length.}
\label{f3} \end{figure}


Furthermore, because $\p T_{g_i}=0$ for all $i$, and 
all $T_{l^i_\pm} $ are oriented in the same way, we conclude 
each $g_i$, for $i=1,\ldots,n_1$, must connect to one of the curves $l^j_+$,
$j=1,\ldots, n_2$, and each $l^j_+$  must connect to one $g_i$. 
It follows that $n_1=n_2 = n$ (since $n_i = n$ for one of $i=1,2$,
by assumption), and that
there is some $\sigma\in S_m$ such that
$g_i$ connects to $l^{\sigma(i)}_+$. 
Then elementary geometry implies that
\[
\mbox{length}(g_i) \ge2 \delta + | l^i_-(\frac{z_1}\tau) -  l^{\sigma(i)}_+(\frac{z_2}\tau) |
= 2\delta +  \left( \frac{(z_2-z_1)^2}{\tau^2} + |p_i(z_1) - p_{\sigma(i)}(z_2)|^2\right)^\frac 12.
\]
Adding over $i=1,\ldots, n$, we deduce that
\begin{multline*}
\liminf_{\ep\to 0}\frac{1}{\abs{\log\ep'}}
\int_{I_\delta}\int_{\om_\ep}e_{\ep'}(\psi_\ep^{\tau})dx\,dz\\
\geq \pi\min_{\sigma\in S^n}\sum_{i=1}^n\left(\tau^{-2}(z_1-z_2)^2+ |p_i(z_1)-p_{\sigma(i)}(z_2)|^2\right)^{\frac12} + 2 \pi n \delta.
\end{multline*}
Then \eqref{Massbound} follows by sending $\delta\searrow 0$. This involves an interchange
of limits on the left-hand side, which is justified since
\[
\frac 1{|\log\ep'|}\int_{I_\delta\setminus I_0}\int_{\om_\ep}e_{\ep'}(\psi_\ep^{\tau})dx\,dz
\overset{\eqref{zzz2}}
\le \frac{|\log\ep|}{|\log\ep'|} M \pi \delta \overset{\eqref{ep.epprime}}\le 2M\pi\delta
\]
for all positive $\delta$ and all sufficiently small $\ep>0$.

\medskip
{\bf Step 2.}
We next show that
\begin{eqnarray}
&&\limsup_{\ep\to 0}\frac{1}{\abs{\log\ep'}}\int_{z_1/\tau}^{z_2/\tau}\int_{\om_\ep}e_{\ep'}^{2d}(\psi_\ep^{\tau})dx\,dz
= \frac{n\pi}{\tau}(z_2-z_1)
\label{psi2dup}
\end{eqnarray}
Indeed, changing variables as in
\eqref{pdt.changevar}
we have
\[
\int_{z_1/\tau}^{z_2/\tau}\int_{\om_\ep}e_{\ep'}^{2d}(\psi_\ep^{\tau})dx\,dz
=
\frac 1 \tau\int_{z_1}^{z_2}\int_{\om_\e} e_{\ep'}^{2d}(v_\ep) dx\, dz 
=
\frac 1 \tau\int_{z_1}^{z_2}\int_\om e_\ep^{2d}(u_\ep) dx\, dz .
\]
Then the claim  follows by dividing by $|\log\ep'|$, recalling \eqref{ep.epprime}, 
using \eqref{Uon2D}, with $S =(z_1,z_2)$,
and sending $\ep\to 0$.

\medskip
{\bf Step 3.}
Standard properties of weak convergence imply that
\[
\mu([z_1,z_2])
\ge
\liminf_{\ep\to 0} \mu_\ep( [z_1,z_2]) 
=
\liminf_{\ep\to 0}\frac{1}{2}\int_{z_1}^{z_2}
\int_{\om_\ep}\frac{\abs{\p_z v_\ep}^2}{\abs{\log\ep}}dx\,dz .
\]
And from the previous steps we deduce that
\begin{align*}
\liminf_{\ep\to 0}\frac{1}{2}\int_{z_1}^{z_2}
&\int_{\om_\ep}\frac{\abs{\p_z v_\ep}^2}{\abs{\log\ep}}dx\,dz 
=
\liminf_{\ep\to 0}\frac{1}{\tau}\int_{\tau^{-1}z_1}^{\tau^{-1}z_2}\int_{\om_\ep}
\frac{\abs{\p_z \psi_\ep^{\tau}}^2}{2\abs{\log\ep}}dx\,dz
\\
&=\liminf_{\ep\to 0}\frac{1}{\tau}\int_{\tau^{-1}z_1}^{\tau^{-1}z_2}\int_{\om_\ep}
\frac{e_\ep( \psi_\ep^{\tau})-e_\ep^{2d}( \psi_\ep^{\tau})}{\abs{\log\ep}}dx\,dz\\
&\ge \frac{\pi}{\tau}\min_{\sigma\in S^m}\sum_{i=1}^n
\left(\left(\frac{(z_1-z_2)^2}{\tau^2}+ |p_i(z_1)-p_{\sigma(i)}(z_2)|^2\right)^{\frac12}-\frac{(z_2-z_1)}\tau\right).
\end{align*}
Since the left-hand side is independent of $\tau$, we can take the $\tau\to 0$
limit of the right-hand side to 
deduce \eqref{basicest}. 

{\bf Step 4}. Now assume that $z^\ep_1,z^\ep_2$ depend nontrivially on $\ep$.
For each $\ep$, define
\[
V_\ep(x,z)=
\begin{cases}
v_\ep(x,z_1^\ep)\quad &\mbox{ if } z\leq z_1^\ep\\
v_\ep(x, z) &\mbox{ if }z^\ep_1<z< z^\ep_2\\
v_\ep(x,z^\ep_2)&\mbox{ if } z^\ep_2 \le z.
\end{cases}
\]
Then for any $Z_1< z_1<z_2<Z_2$, we may apply the previous case
on the (fixed) interval $(Z_1,Z_2)$, since $v_\e(x,z^\ep_j) = V_\ep(x, Z_j)$
for $j=1,2$ and all sufficiently small $\ep$. Then \eqref{basicest}
implies that 
\[
\frac{\pi}{2}\min_{\sigma\in S^m}\sum_{i=1}^n\frac{\abs{p_i(z_1)-p_{\sigma(i)}(z_2)}^2}{Z_2-Z_1}
\ \le \  \frac 12  \tilde \mu( [Z_1,Z_2]) \ \le \frac 12 \mu([Z_1,Z_2]),
\]
where $\tilde \mu$ is the measure generated as in \eqref{mu.def}, but by $|\partial_z V_\ep|^2$ rather than $|\partial_z v_\ep|^2$.
We conclude the proof by letting $Z_1\nearrow z_1$ and $Z_2\searrow z_2$.\end{proof}

\subsection{Compactness and  lower bounds at {\em a.e} height}

We will  use the notation
\begin{equation}\label{xiep.def}
\xi_\ep(z) :=
 \int_\omega e^{2d}_\e(u_\ep(x,z) \, dx -  \big[
  n(\pi \logeps+\gamma)   + n(n-1)\pi  |\log\ell_\ep |
- n^2 \pi H_\omega(0,0) \big].
\end{equation}

\begin{lemma}\label{lemma11}
Assume that $(u_\ep)\subset H^1(\Omega;\C)$ satisfies \eqref{scaling1} -- \eqref{scaling1aa}
and \eqref{scaling2}.

$(a)$ There exist $\ep_0>0$ and  $C>0$ 
\begin{equation}
\xi_\ep(z) \ge - C \qquad \mbox{ for all $z\in \calG^\ep_2$
and $0<\e<\ep_0$}.
\label{qep.uniform}\end{equation}
where $\calG^\ep_2$ was defined in \eqref{GBep2.def}.

$(b)$ In addition, suppose that for some $z$ and some sequence $\e_k \searrow 0$, 
\begin{equation}\label{zinGep}
z\in \calG^{\e_k}_2\quad \mbox{ for all sufficiently large $k$}.
\end{equation}
Then, after possibly passing to a further subsequence, there
exist points $q_i(z), i=1,\ldots, n$ such that 
\begin{equation}\label{n.positive}
J_x v_\ep(\cdot, z)\rightarrow \pi \sum_{i=1}^n \delta_{q_i(z)} \quad\mbox{ in }
W^{-1,1}(B(R)),\mbox{ for all }R>0,
\end{equation}
\begin{equation}\label{log.lbd1}
\liminf_{k\to \infty}\xi_{\ep_k}(z) \ge -\pi \sum_{i\ne j} \log|q_i(z) - q_j(z)|,
\end{equation}
and, setting $c_4 := \max_i |q^0_i|$, where $q^0_1,\ldots , q^0_n$
appear in \eqref{scaling1a}, 
\begin{equation}\label{qi.bound}
|q_i(z)| \le c_4 + ( \frac{c_3\,L}{\pi})^{1/2} \, \qquad\mbox{ for all }i.
\end{equation}
\end{lemma}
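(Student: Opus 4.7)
The plan is to combine the sharp vortex structure provided by Lemma \ref{L3b} at a single height $z$ with the ``transport of compactness'' estimate of Lemma \ref{labels}, which will import control from the reference height $z_0$ (where \eqref{scaling1a}--\eqref{scaling1aa} supply information) to a generic $z\in\calG^\ep_2$. The steps are: apply Lemma \ref{L3b} to extract vortex positions $p^\ep_i(z)$; pass to rescaled positions $q^\ep_i(z):=p^\ep_i(z)/\ell_\ep$; use Lemma \ref{labels} to show these remain uniformly bounded; then the three conclusions of part (b) all come out of a short algebraic reshuffling of \eqref{L3a.sharper}, and part (a) follows by contradiction.

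To begin part (b), fix $z$ satisfying \eqref{zinGep}. Since $z\in\calG^{\ep_k}_2\subset\calG^{\ep_k}_1$ and the defining energy bound of $\calG^{\ep_k}_2$ is precisely \eqref{L3b.h2}, Lemma \ref{L3b} furnishes points $p^{\ep_k}_1(z),\ldots,p^{\ep_k}_n(z)\in\omega$ obeying \eqref{L3b.c1}, \eqref{L3b.c2} and the sharp lower bound \eqref{L3a.sharper}. Setting $q^{\ep_k}_i(z):=p^{\ep_k}_i(z)/\ell_{\ep_k}$ and exploiting the scaling identity
\[
-\pi\sum_{i\ne j}\log|p^{\ep_k}_i-p^{\ep_k}_j| \ = \ \pi n(n-1)|\log\ell_{\ep_k}| \,-\, \pi\sum_{i\ne j}\log|q^{\ep_k}_i-q^{\ep_k}_j|,
\]
the subtraction $n(n-1)\pi|\log\ell_{\ep_k}|$ built into the definition \eqref{xiep.def} of $\xi_{\ep_k}$ cancels exactly, and rewriting $\sum_{i,j} H_\omega(0,0) = n^2 H_\omega(0,0)$ yields
\[
\xi_{\ep_k}(z) \ \ge \ -\pi\sum_{i\ne j}\log|q^{\ep_k}_i(z)-q^{\ep_k}_j(z)| \ + \ \pi\sum_{i,j}\bigl(H_\omega(0,0)-H_\omega(p^{\ep_k}_i,p^{\ep_k}_j)\bigr) \ - \ C\ep_k^{(a-b)/2}.
\]
The main task is then to show that the $q^{\ep_k}_i(z)$ remain bounded; this will render the $H_\omega$-remainder $o(1)$ (via continuity and $p^{\ep_k}_i=\ell_{\ep_k}q^{\ep_k}_i\to 0$) and make the logarithmic sum bounded below.

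To bound $|q^{\ep_k}_i(z)|$, apply Lemma \ref{labels} with $z^\ep_1=z_0$ and $z^\ep_2=z$. The energy hypothesis \eqref{zzz2} holds at both heights, by \eqref{scaling1aa} for $z_0$ and by membership of $z$ in $\calG^{\ep_k}_2$. After extracting a subsequence on which $h_{\ep_k}/\ell_{\ep_k}\to\alpha\in[0,1]$ (possible since $h_\ep\le\ell_\ep$ by \eqref{hell}), the Jacobian rescaling formula together with \eqref{scaling1a} gives $J_xv_{\ep_k}(\cdot,z_0)\to\pi\sum_i\delta_{\alpha q^0_i}$ in $W^{-1,1}(B(R))$ for every $R>0$; at $z$, \eqref{L3b.c1} makes $J_xv_{\ep_k}(\cdot,z)$ close to $\pi\sum_i\delta_{q^{\ep_k}_i(z)}$, so along a further subsequence it converges in $W^{-1,1}(B(R))$ to $\pi$ times a sum of at most $n$ Dirac masses. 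Since $n(z_0)=n$, Lemma \ref{labels} forces $n(z)=n$ (no vortex may escape to infinity) and returns
\[
\min_\sigma\sum_{i=1}^n|q_i(z)-\alpha q^0_{\sigma(i)}|^2 \ \le \ \frac{1}{\pi}\,\mu([z_0\wedge z,\,z_0\vee z])\,|z-z_0| \ \le \ \frac{c_3 L}{\pi},
\]
upon using \eqref{c5new}, \eqref{mu.def} to bound $\mu((0,L))\le c_3$. Combined with $|\alpha q^0_{\sigma(i)}|\le c_4$, this delivers \eqref{qi.bound} and completes \eqref{n.positive}; passing to the limit in the displayed lower bound for $\xi_{\ep_k}(z)$ then gives \eqref{log.lbd1}.

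For part (a), the same chain of estimates applies to every $z\in\calG^\ep_2$ and shows $|q^\ep_i(z)-q^\ep_j(z)|$ is bounded above by a constant $C_*$ independent of $z$ and $\ep$; hence $-\log|q^\ep_i-q^\ep_j|\ge -\log C_*$ and the previous display yields $\xi_\ep(z)\ge -C$. Since Lemma \ref{labels} is phrased subsequentially, the cleanest way to upgrade this to a uniform bound is a contradiction argument: if \eqref{qep.uniform} failed one would find $\ep_k\to 0$ and $z_k\in\calG^{\ep_k}_2$ with $\xi_{\ep_k}(z_k)\to-\infty$; after extracting $z_k\to z^*$, the argument of part (b)---which admits $\ep$-dependent choices of $z$---produces a finite lower bound, a contradiction. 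The principal obstacle throughout is excluding vortex escape to infinity upon the rescaling by $\ell_\ep$; this is ruled out by Lemma \ref{labels} combined with the uniform mass bound $\mu_\ep((0,L))\le c_3$ that is built into the defining maximum \eqref{ellep.def} of $\ell_\ep$.
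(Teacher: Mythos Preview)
Your proposal is correct and follows essentially the same approach as the paper: extract vortex positions from Lemma~\ref{L3b}, rescale by $\ell_\ep$, and use Lemma~\ref{labels} together with the reference height $z_0$ to rule out escape to infinity, after which \eqref{L3a.sharper} yields both the pointwise lower bound and the uniform one. The paper organizes things slightly differently---it first proves the uniform bound $\max_i|q^\ep_i(z)|<c_4+2\sqrt{c_3L/\pi}$ for \emph{all} $z\in\calG^\ep_2$ by contradiction (your Step~3 analogue), then reads off both parts (a) and (b) from the resulting inequality \eqref{log.lbd1a}; you instead establish part (b) first and run the contradiction directly on $\xi_{\ep_k}(z_k)\to-\infty$, which is equivalent. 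One small omission: when $z=z_0$ your invocation of Lemma~\ref{labels} with $z^\ep_1=z_0$, $z^\ep_2=z$ degenerates; the paper handles this by pairing $z_0$ with a nearby auxiliary height, though of course at $z_0$ the conclusions are immediate from \eqref{scaling1a}--\eqref{scaling1aa} anyway.
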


In our notation, as with $q_i(z)$ above, we will consistenly fail
to indicate the dependence of various limiting quantities on the
subsequence that generates them.

The uniform lower bound \eqref{qep.uniform}, needed for our $\Gamma$-limit lower bound, is proved using a compactness argument, and the proof of the lemma begins by assembling the necessary compactness assertions.


\begin{proof}
{\bf Step 1.} Recall that if  $z\in \calG^\ep_2$, then
$u_\ep(\cdot, z)$ satisfies the hypotheses of Lemma \ref{L3b}.
There thus exist points $\{ p^\ep_i(z) \}_{i=1}^n$
that satisfy
\eqref{L3b.c1}, \eqref{L3b.c2}, \eqref{L3a.sharper}.
To express these conclusions in terms of $v_\e$, note that 
by rescaling, one has 
\[
\| J_x v_\ep(\cdot,z) - \pi \sum_{i=1}^n \delta_{\ell_\e^{-1}p^\ep_i(z)} \|_{F(\omega_\ep)} 
\le
\ell_\ep^{-1}\| J_x u_\ep(\cdot,z) - \pi\sum_{i=1}^n \delta_{p^\ep_i(z)} \|_{F(\omega)} . 
\]
This is straightforward to check from the definition \eqref{FvsW11} of the flat norm.
Thus \eqref{L3b.c1}, \eqref{L3b.c2} imply
that 
for $q^\ep_i(z) := p^\ep_i(z)/\ell_\ep$,
\begin{equation}\label{qep1}
\| J_x v_\ep(\cdot,z) - \pi \sum_{i=1}^n\delta_{q^\ep_i(z)} \|_{F(\omega_\ep)} 
\le \ell_\ep^{-1}\ep^a ,
\end{equation}
\begin{equation}\label{qep2}
|q_i^\ep(z) - q_j^\ep(z)| \ge \ell_\e^{-1}\ep^b \qquad\mbox{ for }i\ne j \, .
\end{equation}

Now consider any sequence $(\ep_k)$ tending to $0$ and a sequence of points
$(z^{\ep_k})\subset \calG^{\ep_k}_2$ and $z^{\ep_k}\rightarrow z$,
for some $z\in [0,L]$.
We may pass to a further subsequence and relabel if necessary to
find some integer $n(z)\le n$, and points
$q_i(z)\in \R^2$ for $i=1,\ldots, n(z)$, such that 
\begin{equation}\label{q.converge}
q^{\e_k}_i(z^{\ep_k})\rightarrow q_i(z) \mbox{ for }1 \le i\le n(z), 
\qquad
|q^{\e_k}_i|\rightarrow \infty\mbox{ for }  n(z)< i \le n.
\end{equation}
Then it follows easily from \eqref{qep1} and  standard properties of the $W^{-1,1}$ norm
(and is easily checked from the definition \eqref{FvsW11}) that 
\begin{equation}\label{vep.comp1}
J_xv_{\ep_k}(\cdot, z^{\ep_k}) \rightarrow \pi \sum_{i=1}^{n(z)} \delta_{q_i(z)} \ 
\mbox{ in }W^{-1,1}(B(R)), \  \ \mbox{ for every }R>0.
\end{equation}

{\bf Step 2}. We  claim that
under these conditions, 
$n(z)=n$, and \eqref{qi.bound} holds.

We first assume that $z\ne z_0$, where
$z_0$ is the height
appearing in \eqref{scaling1a}, \eqref{scaling1aa}.
For concreteness we assume that $z<z_0$; the other case is identical.
We want to apply Lemma \ref{labels} along the subsequence fixed above,
with $z^{\ep_k}_1 = z^{\ep_k}$  and $z^{\ep_k}_2 = z_0$.
To verify the hypotheses of the lemma, we first  rescale
\eqref{scaling1a} to find that
\[
\| J_x v_\ep(\cdot, z_0)  - \pi \sum_{i=1}^n \delta_{h_\ep q^0_i/\ell_\ep} \|_{W^{-1,1}(\omega_\ep)} = o(\frac {h_\e}{\ell_\ep}) = o(1)
\]
as $\ep\to 0$. Since $\ell_\ep \ge h_\ep$ by construction,
we may assume after passing to a subsequence that $h_\ep/\ell_\ep \to \alpha$
as $\ep \to 0$, for some $\alpha\in [0,1]$.
Then
\begin{equation}\label{zzero}
J_x v_\ep(\cdot, z_0)  \to \pi \sum_{i=1}^n \delta_{\alpha  q^0_i }
\quad\mbox{ in }W^{-1,1}(B(R)) \ \mbox{ for all }R>0.
\end{equation}
This is one of the hypotheses on $(z^{\ep_k}_2)$ in Lemma \ref{labels}. The other hypothesis follows directly from \eqref{scaling1aa}.  The same hypotheses are satisfied
by $(z^{\ep_k}_1)$, by \eqref{vep.comp1} and the fact that $z^{\ep_k}_1 \in \mathcal G^{\ep_k}_2$.
We may therefore apply this lemma to find that $n(z) = n$ if $z\ne z_0$. 
In addition, since $\mu([0,L]) \le c_3$ due to \eqref{c5new} and \eqref{mu.def},
we deduce from \eqref{basicest} that for some $\sigma\in S^n$, 
\[
\sum_{i=1}^n |q_i(z) - \alpha q^0_{\sigma(i)}|^2 \le  \frac {c_3}\pi |z-z_0| \le c_3\frac L\pi
\]
which implies that  $\max |q_i(z)|  \le c_4 + \sqrt{c_3 L/\pi}$ if $z\ne z_0$. 

If $z = z_0$, we apply Lemma \ref{labels} with $z^\ep = z^1_\ep$, and
$z^2_\ep$ any sequence  in $\calG^\ep_k$ such that $z^\ep_2\rightarrow z_2\ne z_0$,
along some subsequence such that $J_x v_{\ep} (\cdot, z^\ep_z)$ converges
to a limit as in \eqref{zzz1}. Since we may take $z_2$ as close as we like to
$z_0$, we may repeat the arguments from above to easily conclude
that \eqref{qi.bound} holds
in this case as well.

{\bf Step 3}. 
We claim that 
there exists $\ep_0>0$ 
such that 
\begin{equation}
\max_i |q^\ep_i(z)| <  c_4 +2 \sqrt{ c_3 L/\pi}
\qquad \mbox{ for all $0<\e<\e_0$ and $z\in \calG^\ep_2$.}
\label{qep.uniform1}\end{equation}
Assume toward a contradiction that \eqref{qep.uniform1} fails. Then we may find
a sequence $(\ep_k)$ tending to $0$ and a sequence of points
$(z^{\ep_k})\subset \calG^{\ep_k}_2$ such that
\begin{equation}\label{q.toofar}
\max_i  |q^{\ep_k}_i(z^{\ep_k})|  \ge c_4 + 2\sqrt{c_3 L/\pi} \qquad\mbox{ for all $k$}.
\end{equation}
We may pass to a subsequence such that $z_{\ep_k}$ converges to
a limit $z$, and in addition \eqref{q.converge} and \eqref{vep.comp1} hold,
with $n(z)=n$. Clearly, \eqref{q.toofar} implies that $\max_i |q_i(z)|\ge 
c_4+ 2\sqrt{c_3L/\pi} $, which is impossible in view of
 \eqref{qi.bound}. This contradiction completes the proof.

{\bf Step 4}. 
For $z\in \calG^\ep_2$, 
estimate \eqref{L3a.sharper} states that
\[
\int_{\omega}e^{2d}_\e(u_\e(x,z)) dx \ge n(\pi \logeps+\gamma) + W_\omega(p^\ep_1(z),\ldots, p^\ep_n(z))
- C \e^{(a-b)/2},
\]
where according to   \eqref{expforren},
\[
W_\omega(p^\ep_1(z),\ldots, p^\ep_n(z)) =
-\pi\left( \sum_{i\ne j} \log|p^\ep_i(z)-p^\ep_j(z)| + \sum_{i,j}H_\omega(p^\ep_i(z), p^\ep_j(z)) \right).
\]
We also know from \eqref{qep.uniform1} that
$|p^\ep_i(z)|\le C \ell_\ep$
 for $0<\ep<\ep_0$, with a constant independent of $z$. 
Since $H_\omega$ is smooth in the interior of $\omega\times \omega$,
it follows that $H_\omega(p^\ep_i(z), p^\ep_j(z)) =
H_\omega(0,0) + O(\ell_\ep)$ for every $i,j$. 
Writing $p^\ep_i = \ell_\ep q^\ep_i$ as in Step 1,
we deduce that 
\begin{equation}\label{log.lbd1a}
\xi_\ep(z) \ge -\pi \sum_{i\ne j} \log|q^\ep_i(z) - q^\ep_j(z)|  - C \ell_\ep
- C\ep^{(a-b)/2} \ .
\end{equation}
Conclusion \eqref{qep.uniform} follows
immediately from this estimate together with \eqref{qep.uniform1}.

Now assume that $z$ satisfies \eqref{zinGep}.
From Steps 1 and 2 we know that 
we may find a subsequence
such that \eqref{vep.comp1} holds,
with $n(z)=n$. 
This is \eqref{n.positive}. In addition, it follows immediately from \eqref{log.lbd1a} that \eqref{log.lbd1}
is satisfied as well. We have already verified in Step 2 above that 
\eqref{qi.bound} holds, so this completes the proof.

\end{proof}

\subsection{Identifying the filaments}\label{identfil}

We next use the basic estimate \eqref{basicest} 
to choose a subsequence for which the vorticities converge at {\em a.e.} height.
To this end, it is convenient to  introduce some notation.
Let $X$ denote the quotient space $(\R^2)^n/S^n$. 
Thus, points in $X$ consist of equivalence classes 
in $(\R^2)^n$, where $p \sim p'$ if
there exists some permutation $\sigma$ such that
$p_i = p'_{\sigma(i)}$ for all $i=1,\ldots, n$.
The equivalence class containing $p$ will be denoted $[p]$.
The natural notion of distance in $X$ is
\begin{equation}\label{dX.def}
d_X([p],[p'])^2 = \min_{\sigma \in S^n}
 \sum_{i=1}^n |p_i - p'_{\sigma(i)}|^2, \qquad
\mbox{ for }p, p' \in (\R^2)^n.
\end{equation}
We will write 
\begin{equation}
\delta_{[p]} := \sum_{i=1}^n \delta_{p_i}.
\label{delta[p]}\end{equation}
Note that this is well-defined in the sense that the measure on the right-hand side
depends only on the equivalence class.
Similarly, the function
\[
[q]\in X\mapsto - \pi \sum_{i\ne j} \log|q_i(z)-q_j(z)|
\]
is also well-defined, since the right-hand side is invariant under permutations of the indices.
We remark that we adopt the convention that $-\log(0)=+\infty$.

In the following lemma, we identify the limiting vorticity by a map
$(0,L)\to X$, which in effect means that at this stage we do not worry about labelling the
points.

\begin{lemma}\label{L.X}
Assume that $(u_\ep)\subset H^1(\Omega;\C)$ satisfies \eqref{scaling1} - \eqref{scaling1aa}
and  \eqref{scaling2}.

Then there exists a set $H_G\subset (0,L)$ of full measure, a subsequence $(\e_k)$,
and  a function $ [q]: (0,L)\to X$, 
such that for every $z\in H_G$ (as well as for $z=z_0$, from \eqref{scaling1a}), 
\begin{equation}\label{Xcomp1}
J_x v_{\ep_k}(\cdot, z) \rightarrow \pi \delta_{[q](z)} \ \mbox{ in }W^{-1,1}(B(R))\quad\mbox{ for every }R>0
\end{equation}
and
\begin{equation}
\liminf_{\ep\to 0} \
\xi_\ep(z) \ge - \pi \sum_{i\ne j}\log|q_i(z) - q_j(z)| .
\label{Xcomp1a}
\end{equation}
Moreover, for every $z<z'$, 
\begin{equation}\label{Xcomp2}
\pi \frac {d_X([q](z), [q](z')\,)^2}{|z-z'|} \le  \mu( (z,z')).
\end{equation}
\end{lemma}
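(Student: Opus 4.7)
The plan is to combine Lemmas \ref{L.BadE.est} and \ref{lemma11} with the quantitative estimate \eqref{basicest} from Lemma \ref{labels} via a Cantor diagonal extraction.

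First, because $|\calB^\ep_2| \to 0$ by Lemma \ref{L.BadE.est}, $\mathbf{1}_{\calB^\ep_2} \to 0$ in $L^1(0,L)$, so along a subsequence (still denoted $\ep$) we have $\mathbf{1}_{\calB^\ep_2}(z) \to 0$ for a.e.\ $z$. On the resulting full-measure set $H_0 \subset (0,L)$, every $z \in H_0$ satisfies $z \in \calG^\ep_2$ for all sufficiently small $\ep$, so Lemma \ref{lemma11}(b) supplies a further subsequence along which \eqref{n.positive}--\eqref{qi.bound} hold. The analogous convergence at $z_0$ comes from \eqref{scaling1a}--\eqref{scaling1aa} via \eqref{zzero}. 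Fix a countable dense $D \subset H_0$ with $z_0 \in D$, and by a Cantor diagonal argument extract a single subsequence $\ep_k$ along which \eqref{Xcomp1} and \eqref{Xcomp1a} hold at every $z \in D$.

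Applying Lemma \ref{labels} with the constant sequences $z_j^\ep \equiv z_j \in D$ then gives
\[
\pi\,\frac{d_X([q](z_1), [q](z_2))^2}{z_2 - z_1} \le \mu([z_1, z_2])
\]
for every $z_1 < z_2$ in $D$. Since $\mu$ is a finite measure whose set of atoms $A$ is at most countable, this estimate forces $[q]|_D$ to be Cauchy along any $D$-sequence converging to a point of $(0,L)\setminus A$. Set $H_G := H_0 \setminus A$, still of full measure. For $z \in H_G$, pick $z_n \in D\setminus A$ with $z_n \to z$ and define $[q](z) := \lim_n [q](z_n) \in X$.

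The next step is to promote this into convergence of $J_x v_{\ep_k}(\cdot, z)$ along the fixed subsequence $\ep_k$ itself, not just along a further subsequence depending on $z$. Given $z \in H_G$, Lemma \ref{lemma11}(b) applied along any sub-subsequence of $\ep_k$ extracts yet another sub-subsequence along which $J_x v_\ep(\cdot, z) \to \pi\delta_{[q^\sharp](z)}$ for some $[q^\sharp](z) \in X$; Lemma \ref{labels} applied along this sub-subsequence with $z_1^\ep \equiv z$, $z_2^\ep \equiv z_n$ yields $\pi\, d_X([q^\sharp](z), [q](z_n))^2 \le (z_n - z)\mu([z, z_n])$, and letting $n\to\infty$ (using $z \notin A$) forces $[q^\sharp](z) = [q](z)$. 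Hence the whole subsequence $\ep_k$ gives \eqref{Xcomp1} at $z$, and \eqref{Xcomp1a} transfers directly from \eqref{log.lbd1}. Finally, \eqref{Xcomp2} for arbitrary $z < z'$ in $(0,L)$ follows by approximating from within: choose $\tilde z, \tilde z' \in D \setminus A$ with $z < \tilde z < \tilde z' < z'$, $\tilde z \to z$, $\tilde z' \to z'$, apply the estimate already proved on $D$, and pass to the limit using continuity of $[q]$ at non-atoms and the monotonicity $\mu([\tilde z, \tilde z']) \le \mu((z, z'))$. The main technical obstacle is the identification step: the quantitative bound \eqref{basicest} is precisely what forces every subsequential limit of $J_x v_{\ep_k}(\cdot, z)$ to coincide with $[q](z)$, upgrading pointwise a.e.\ convergence along sub-subsequences into convergence along the single fixed subsequence on the full-measure set $H_G$.
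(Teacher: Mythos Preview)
Your argument is essentially correct and follows the same route as the paper: pass to a subsequence so that a.e.\ $z$ eventually lies in $\calG^\ep_2$, diagonalize over a countable dense set $D$ using Lemma~\ref{lemma11}(b), invoke Lemma~\ref{labels} on pairs in $D$ to get a H\"older-type estimate, extend $[q]$, and then use Lemma~\ref{labels} once more to show that every subsequential limit of $J_x v_{\ep_k}(\cdot,z)$ at $z\in H_G$ must coincide with $[q](z)$.

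The one unnecessary complication is your removal of the atoms $A$ of $\mu$. You worry that $[q]|_D$ may fail to be Cauchy near an atom, but it does not: from $\pi\,d_X([q](z_1),[q](z_2))^2\le (z_2-z_1)\,\mu([z_1,z_2])\le (z_2-z_1)\,\mu([0,L])$ you get a genuine $C^{0,1/2}$ modulus, so $[q]$ extends continuously to \emph{all} of $(0,L)$, which is what the paper does. As written, your $[q]$ is undefined on $A\cup\big((0,L)\setminus H_0\big)$, yet \eqref{Xcomp2} is asserted for every $z<z'$; your final paragraph (``approximating from within'') therefore has a small gap at atoms. The fix is exactly the global H\"older bound above; once $[q]$ is defined everywhere, your approximation argument for \eqref{Xcomp2} goes through, and your identification step works for every $z\in H_G$ regardless of atoms (since $(z_n-z)\,\mu([z,z_n])\to 0$ anyway). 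Incidentally, if you still want $D\setminus A$ dense you must choose $D\subset H_0\setminus A$ from the start; with the simplification above this becomes moot.
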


\begin{proof}
Recall from Lemma \ref{L.BadE.est} 
that under our hypotheses,  $|\calB^\ep_2| = o(1)$ as $\ep\to 0$.
We may therefore choose a subsequence $(\ep_k)$ 
such that 
$\sum |\calB^{\ep_k}_2| <\infty$.
(Note, we will shortly pass to further subsequences.)
Then by the Borel-Cantelli Lemma,
the set
\[
H_G := \bigcup_{\ell=1}^\infty \bigcap_{k=\ell}^\infty \calG^{\e_k}_2
\] 
has full measure in $(0,L)$.

In view of Lemma \ref{lemma11}, we
may  now choose subsequences and invoke a diagonal argument to find a 
set $H_G^0 = \{ z_i\}_{i=0}^\infty \subset H_G$, dense in $(0,L)$, such that
\eqref{n.positive} and \eqref{log.lbd1} hold for every $z_i$. 
Applying Lemma \ref{labels} along this subsequence with $z^\ep_1 = z_i, z^\ep_2=z_j$
for pairs of points $z_i,z_j$ in $H_G^0$,
we find that 
\begin{equation}\label{pre-Holder}
\pi \frac  { d_X( [q(z_i)] ,[ q(z_j)] )^2}{|z_j-z_i|} \le \mu([z_i,z_j]) \le \mu([0,L])
\qquad\mbox{ whenever } z_i< z_j.
\end{equation}
In particular, this states that the map
\[
z_i\in H_G^0 \mapsto [q(z_i)]\in X
\]
is H\"older continuous. It thus has a unique extension to a continuous
map, say $[q]:(0,L)\to X$, such that $[q](z_i) = [q(z_i)]$ for all $i$.
For any $z<z'$, we may fix sequences
$(z_j), (z_j')\in H_G$
such that $z_j\searrow z, z'_j\nearrow z'$. Then
\[
\pi\, d_X ([q](z), [q](z') )^2 
=  \pi \lim_{j\to \infty}
 d_X ([q](z_j), [q](z'_j)) ^2
\le |z-z'| \mu((z,z')) \, .
\]
This proves \eqref{Xcomp2}.

We finally claim that {\em without passing to any further subsequences},
\eqref{Xcomp1} holds for {\em every} $z\in H_G$.
If not, then we could find some $z\in H_G$ and a further subsequence
of the chosen subsequence $(\e_k)$ such that 
\eqref{n.positive} holds but \eqref{Xcomp1} fails.
Then we see from \eqref{basicest} that
\[
\frac \pi 2 d_X( [q(z_i)] ,[ q(z)] )^2 \ \le \  |z_i - z|\ \mu([0,L])
\]
for every $z_i\in H_G^0$. Since $H^0_G$ is dense and $[q(z_i)] =
[q](z_i) \to q[z]$ as $z_i\to z$, it follows that $[q(z)] = [q](z)$,
a contradiction. This proves the claim.

\end{proof}

We next show that $[q]:(0,L)\to X = (\R^2)^ n/S^n$
admits a suitable lifting to a map $(0,L)\to (\R^2)^n$.
This will complete the identification of the limiting vortex filaments.

\begin{lemma}\label{L.choosef}
Let $[q]:(0,L)\to X$ satisfy  \eqref{Xcomp2} for some measure $\mu$ on (0,L).

Then there exists a  function $f \in H^1((0,L),(\R^2)^n)$
such that $[f(z)] = [q](z)$ for all $z\in (0,L)$.
Moreover, whenever $0\le z<z'\le L$,
\begin{align}\label{f.choice}
\pi \sum_{j=1}^n \frac { |f_j(z) - f_j(z')|^2}{|z-z'| } 
&\le \mu((z,z')) \, ,  \qquad\qquad\mbox{ and }
\\
\label{f.choice2}
\pi  \int_z^{z'}\sum_i |f_i'|^2\,dz &\le \mu((z, z'))\, .
\end{align}

\end{lemma}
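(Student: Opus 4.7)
The strategy is to approximate $f$ by piecewise-linear functions built over a refining sequence of partitions of $(0,L)$, re-labeling representatives of $[q]$ at successive partition points so as to minimize the consecutive jumps, and then extract a weak $H^1$ limit. The key preliminary observation is that, since $\mu$ is a finite measure (in fact $\mu([0,L]) \le c_3$), \eqref{Xcomp2} forces $[q]$ to be $\tfrac12$-H\"older continuous with respect to $d_X$, and in particular $[q]$ extends continuously to $[0,L]$.

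Fix nested partitions $P_k = \{0 = z^k_0 < z^k_1 < \cdots < z^k_{N_k} = L\}$ with $P_k \subset P_{k+1}$ and mesh tending to zero. Select any ordered representative $q^k_0 \in (\R^2)^n$ of $[q](0)$, and inductively let $q^k_{j+1}$ be an ordered representative of $[q](z^k_{j+1})$ achieving the minimum in the definition \eqref{dX.def} of $d_X([q](z^k_j),[q](z^k_{j+1}))$. Define $f^k \in C([0,L];(\R^2)^n)$ to be linear on each $[z^k_j,z^k_{j+1}]$ with $f^k(z^k_j) = q^k_j$. A direct computation on each subinterval gives
$$\pi \int_{z^k_j}^{z^k_{j+1}} \sum_i |(f^k_i)'|^2 \,dz \;=\; \pi\, \frac{d_X([q](z^k_j),[q](z^k_{j+1}))^2}{z^k_{j+1}-z^k_j} \;\le\; \mu\bigl((z^k_j, z^k_{j+1})\bigr),$$
so summing yields $\pi \int_0^L \sum_i |(f^k_i)'|^2 \le \mu((0,L)) < \infty$. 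Together with the obvious bound on $|f^k(0)| = |q^k_0|$, this furnishes a uniform $H^1$ bound, so along a subsequence $f^k \rightharpoonup f$ weakly in $H^1((0,L);(\R^2)^n)$ and uniformly on $[0,L]$.

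To conclude, for every $z$ in the dense set $P := \bigcup_k P_k$ we have $[f^k(z)] = [q](z)$ for all sufficiently large $k$, so uniform convergence yields $[f(z)] = [q](z)$ on $P$, and continuity of both $z \mapsto [f(z)]$ and $[q]$ extends this equality to all of $[0,L]$. For \eqref{f.choice2} with $z,z' \in P$, the same subinterval estimate applied to the portion of $P_k$ lying in $(z,z')$ gives $\pi \int_z^{z'} \sum_i |(f^k_i)'|^2 \le \mu((z,z'))$ for large $k$, and weak lower semicontinuity passes this to $f$; general $z<z'$ follow by approximation using monotonicity and inner regularity of $\mu$. Finally, \eqref{f.choice} is a direct consequence of \eqref{f.choice2} and Cauchy-Schwarz: $|f_i(z)-f_i(z')|^2 \le (z'-z) \int_z^{z'} |f_i'|^2$. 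The main delicate point is that the quotient map $(\R^2)^n \to X$ is branched along the collision set $\Delta$, so no canonical continuous global lift of $[q]$ need exist; the discrete minimization at each partition point is precisely what allows $f$ to switch labels through collisions without any energy penalty, which is why we pass through the piecewise-linear approximations rather than attempting to construct a lift directly.
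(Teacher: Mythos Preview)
Your proof is correct and follows essentially the same blueprint as the paper: discretize $(0,L)$, at each partition point choose an ordered representative of $[q]$ minimizing the jump from the previous point, interpolate piecewise-linearly, and pass to a limit. The difference lies in the extraction step. The paper uses a pigeonhole argument: since each $[q](z)$ has at most $n!$ lifts, one can pass to a subsequence along which the discrete labelings are eventually \emph{consistent} across nested dyadic levels; this defines $f$ directly on the dyadics, and \eqref{f.choice} is obtained there by a Jensen-type telescoping estimate before invoking any functional-analytic limit. You instead bypass the pigeonhole step entirely by using the uniform $H^1$ bound to extract a weak limit, then recover \eqref{f.choice2} by lower semicontinuity and deduce \eqref{f.choice} from it via Cauchy--Schwarz. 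Your route is somewhat more streamlined and makes the $H^1$ regularity of $f$ immediate, whereas the paper's route makes the pointwise lifting property $[f(z)]=[q](z)$ more transparent (no need to argue that a convergent sequence in the finite fiber $\pi^{-1}([q](z))$ has limit in that fiber). Both are perfectly valid.
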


\begin{proof}
We will write
\[
\calD^i := \{ kL/2^i \ : k=1,\ldots, 2^i-1\}, \qquad \calD := \cup_i \calD^i.
\]
We refer to elements of $\calD$ as dyadic heights.
For every $i$, we can choose $f^i :\calD^i\to (\R^2)^n$
such that 
\begin{equation}\label{lifting1}
[f^i(z)] = [q](z) \quad\mbox{ for all }z\in \calD^i,
\end{equation}
and
\begin{equation}\label{lifting2}
\sum_{j=1}^n |f^i_j(z) - f^i_j(z')|^2 = d_X( [q](z), [q](z'))^2,\qquad
\mbox{ if }z,z'\in \calD^i, \ |z-z'|=2^{-i}L.
\end{equation}

Now fix $i_0$, and for $l>i_0$, and let $f^{i_0,l}$ denote
the restriction of $f^l$ to $\calD^{i_0}$.
For every $i$, it is clear that $f^{i_0,l}$ satisfies \eqref{lifting1}.
For every $[q]\in X$, there are at most $n!$ points $p\in (\R^2)^n$
such that $[p] = [q]$, and hence there are only finitely many maps
$\calD^i\to (\R^2)^n$ that satisfy \eqref{lifting1}. By
the pigeonhole principle, we can thus 
find a subsequence $l_m\to\infty$ along which
$f^{i_0,l_m}(z)$ is independent of $l_m$, for all large enough $m$,
for every $z\in \calD^{i_0}$.

We then define $f(z)$ for $z\in \calD^{i_0}$ by  requiring that
\begin{equation}
f(z) =  f^{i_0, l_m}(z) 
\quad\mbox{ for all sufficiently large $m$}
\label{f.on.Di}\end{equation}

Now fix $z,z' \in \calD^{i_0}$ and some $l$ from the subsequence $(l_m)$ such that \eqref{f.on.Di} holds.
We assume that $z<z'$, and we 
let $z_s := z+ 2^{-l}s$.
Then Jensen's inequality implies that
\begin{align}
\pi \sum_{j=1}^n  \frac{ |f_j(z) - f_j(z')|^2}{z'-z}
&=
\pi \sum_{j=1}^n \frac 1{z'-z}
\left| \sum_{s=1}^{2^l(z'-z)} f^{l}_j(z_s) - f_j^{l}(z_{s-1}) \right|^2
\nonumber\\
&\le
\pi \sum_{j=1}^n
\sum_{s=1}^{2^l(z'-z)}  2^{l}\left| f^{l}_j(z_s) - f_j^{l}(z_{s-1}) \right|^2
\nonumber\\
&
\overset{\eqref{lifting2}}=
\pi\sum_{s=1}^{2^l(z'-z)}  2^{l} \, d_X(\, [q](z_s) , [q](z_{s-1})\, )^2 
\nonumber\\
&
\overset{\eqref{Xcomp2}}\le
\sum_{s=1}^{2^l(z'-z)}  \mu( (z_{s-1}, z_s) \,)  \le \mu( \, (z,z')\, ) \ .
\label{lifting.est}
\end{align}
When invoking \eqref{lifting2}, we have used the fact that every $z_s$ belongs to $D^{l}$.

Now let $i_1>i_0$ be some element of the chosen
subsequence, for example $l_1$, and let ${f}^{i_1,l_m}$ be
the restriction of $f^{l_m}$ to $\calD^{i_1}$. Arguing as above, 
we  find a further subsequence, still denoted $(l_m)$, 
along which $f^{i_1,l_m}$ is eventually independent of $m$,
and we define ${f}(z)$ for $z\in \calD^{i_1}$ as in
\eqref{f.on.Di}. Note that this is consistent with 
the earlier definition of points in $\calD^{i_0}\subset \calD^{i_1}$,
and that \eqref{lifting.est} holds exactly as before.

Continuing in this way, we define ${f}(z)$ for every $z\in \calD$,
such that $[ f(z)] = [q](z)$, and
\eqref{f.choice} holds for every pair of points in $\calD$.
In particular, it follows that ${f}$ is continuous, as a map 
$\calD \to(\R^2)^n$, and hence has a unique continuous
extension to a function, still denoted ${f}$, mapping $(0,L)\to (\R^2)^n$.
It then easily follows that \eqref{f.choice} holds
and  that $[{f}(z)] = [q](z)$ for all $z\in (0,L)$.

\vskip.1in
\begin{center}
\begin{figure}[!ht]
\begin{minipage}{0.9\linewidth}
\includegraphics[trim = 0mm 0mm 0mm 0mm, clip, width=11cm,
  height=11cm,
  keepaspectratio,]{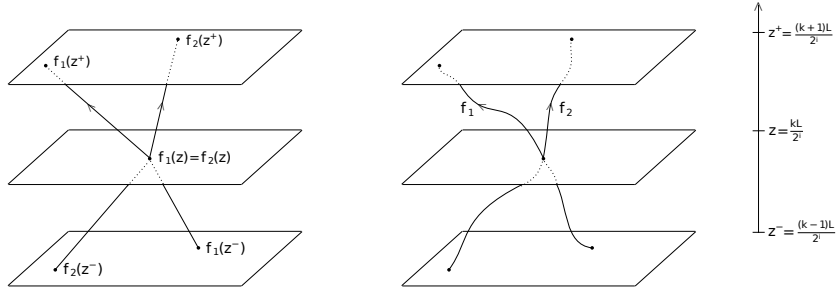}
\end{minipage}
\caption{$2^{-i}$-scale approx. of $f$ with labels and its Lipschitz continuous limit (right). This example of $f$ admits two consistent labellings. }
\end{figure}
\end{center}

Finally, 
we may approximate ${f}$ by a sequence of maps ${f}^\ell:(0,L)\to (\R^2)^n$ that agree with 
${f}$ at a finite number of points and interpolate linearly between these. It
follows from \eqref{f.choice} that such
maps belong to $H^1((0,L);(\R^2)^n)$, with  $\pi \sum\| (f^\ell_i) '\|_{L^2}^2 \le \mu((0,L))$.
Then standard arguments imply that 
\[
(f^\ell_i)' \rightharpoonup f_i' \mbox{ weakly in $L^2((0,L))$, for every $i$,}
\]
and as a result, further standard arguments imply that
\[
\pi \sum_{i=1}^n \int_0^L |f_i'(z)|^2 \ dz \le
\liminf_\ell 
\pi \sum_{i=1}^n \int_0^L |f^\ell_i{}'(z)|^2 \ dz \le  \mu((0,L)).
\]
This proves that $f\in H^1((0,L);(\R^2)^n)$.
The same argument may be carried out on any given subinterval $(z,z')\subset (0,L)$, which
proves that \eqref{f.choice2} holds.
\end{proof}

\subsection{Conclusion of the proof of compactness and lower bound}\label{lyhcoinciden}
\begin{lemma}\label{lb.by.Fatou}
Along the subsequence $(\ep_k)$ found in Lemma \ref{L.X},
\begin{equation} 
\label{e2d.lbd}
\liminf \int_0^L \xi_\ep(z)\, dz
\\
\ge 
-\pi \int_0^L \sum_{i\ne j} \log|f_i(z)- f_j(z)| \, dz.
\end{equation} 
\end{lemma}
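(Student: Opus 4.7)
The plan is to decompose $\int_0^L \xi_\ep(z)\,dz$ according to the dichotomy $(0,L) = \calG^\ep_2 \cup \calB^\ep_2$, verify that the contribution of the bad set is eventually nonnegative, and then apply Fatou's lemma on the good set, combining the uniform lower bound from Lemma \ref{lemma11}(a) with the pointwise liminf from Lemma \ref{lemma11}(b).

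First I would show that $\xi_\ep(z) \ge 0$ for every $z \in \calB^\ep_2$, provided $\ep$ is small. Recall $\calB^\ep_2 = \calB^\ep_1 \cup \widetilde \calB^\ep_2$ (disjoint union, by the definitions in Lemmas \ref{L.B1small} and \ref{L.BadE.est}). On $\widetilde \calB^\ep_2$ the very definition \eqref{GBep2.def} gives $\int_\omega e^{2d}_\ep(u_\ep(\cdot,z))\,dx > \pi(n+\theta)\logeps$, while on $\calB^\ep_1$ Proposition \ref{P1}, together with the remark immediately after its proof that allows $e_\ep$ to be replaced by $e^{2d}_\ep$, gives $\int_\omega e^{2d}_\ep(u_\ep(\cdot,z))\,dx \ge \ep^{-\alpha}$. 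In both cases the subtracted quantity in \eqref{xiep.def} is dominated: since $\ell_\ep \ge h_\ep = \logeps^{-1/2}$ by \eqref{ellep.def}, one has $|\log\ell_\ep| \le \tfrac12 \log\logeps = o(\logeps)$, so $n\pi\logeps + n(n-1)\pi|\log\ell_\ep| + O(1)$ is eventually smaller than both $\pi(n+\theta)\logeps$ and $\ep^{-\alpha}$. Consequently
\[
\int_0^L \xi_\ep(z)\,dz \ \ge \ \int_{\calG^\ep_2} \xi_\ep(z)\,dz \qquad \text{for all small } \ep.
\]

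Next, Lemma \ref{lemma11}(a) supplies a constant $C > 0$, independent of $\ep$, with $\xi_\ep(z) + C \ge 0$ for every $z \in \calG^\ep_2$ and every sufficiently small $\ep$. Set $g_\ep(z) := (\xi_\ep(z) + C)\,\mathbf{1}_{\calG^\ep_2}(z) \ge 0$. Along the subsequence $(\ep_k)$ supplied by Lemma \ref{L.X}, the set $H_G$ has full measure in $(0,L)$ and every $z \in H_G$ lies in $\calG^{\ep_k}_2$ for all $k$ sufficiently large (this is the Borel--Cantelli step in the construction of $H_G$). Hence Lemma \ref{lemma11}(b) applies at every $z \in H_G$, yielding
\[
\liminf_{k\to\infty} g_{\ep_k}(z) \ \ge \ -\pi\sum_{i\ne j}\log|q_i(z) - q_j(z)| + C \qquad \text{for a.e. } z \in (0,L);
\]
and since $[q](z) = [f(z)]$ by Lemma \ref{L.choosef}, the permutation-invariance of the double sum allows us to replace $q_i, q_j$ by $f_i, f_j$ on the right (with the convention $-\log 0 = +\infty$ causing no harm).

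Finally, applying Fatou's lemma to the nonnegative sequence $(g_{\ep_k})$ and using $|\calG^{\ep_k}_2| \to L$ from Lemma \ref{L.BadE.est} to cancel the additive constant,
\[
\liminf_{k\to\infty} \int_{\calG^{\ep_k}_2}\xi_{\ep_k}(z)\,dz \ = \ \liminf_{k\to\infty}\int_0^L g_{\ep_k}(z)\,dz \ - \ CL \ \ge \ -\pi\int_0^L\sum_{i\ne j}\log|f_i(z) - f_j(z)|\,dz,
\]
which combined with the first step gives \eqref{e2d.lbd}. The main subtlety will be the nonnegativity of $\xi_\ep$ on $\calB^\ep_2$: the mere measure bound $|\calB^\ep_2| = o(1)$ is not strong enough to control $\int_{\calB^\ep_2}\xi_\ep$, so it is essential that both $\calB^\ep_1$ (via Proposition \ref{P1}) and $\widetilde\calB^\ep_2$ (by definition) carry \emph{pointwise} lower energy bounds that dominate the normalization, which in turn relies on $|\log\ell_\ep| = o(\logeps)$. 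Everything else is essentially Fatou applied to the pointwise information already packaged in Lemma \ref{lemma11}(b) and the lifting in Lemma \ref{L.choosef}.
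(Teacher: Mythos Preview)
Your proof is correct and follows essentially the same approach as the paper's. The paper organizes the argument slightly more simply: it observes that $\xi_\ep \ge 0$ on $\calB^\ep_1$ (Proposition \ref{P1}) and $\xi_\ep \ge -C$ on all of $\calG^\ep_1$ (combining Lemma \ref{lemma11}(a) with the trivial bound on $\widetilde\calB^\ep_2$), so that $\xi_\ep + C \ge 0$ globally on $(0,L)$, and then applies Fatou directly on $(0,L)$ without the indicator $\mathbf{1}_{\calG^\ep_2}$; your version with the finer split $\calG^\ep_2 \cup \calB^\ep_2$ and the indicator is a harmless variant of the same idea.
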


\begin{proof}
We have already proved in \eqref{Xcomp1a} that 
\[
\liminf_{\ep\to 0}\xi_\ep(z) \ge   - \pi \sum_{i\ne j} \log|f_i(z)- f_j(z)| 
\]
for {\em a.e.} $z$. Moreover, it follows from Proposition
\ref{P1} that there exists some $\e_0>0$ such that 
$\xi_\ep(z)\ge 0$ for all $z\in \calB^\ep_1$, when $0<\e<\e_0$.
And we have shown in Lemma \ref{lemma11} that, taking $\e_0$ smaller
if necessary, $\xi_\ep(z)\ge -C$ for all $z\in \calG^\e_1$, for $\e<\e_0$.
Thus the conclusion follows from Fatou's Lemma.
\end{proof}

We can now present the

\medskip

\begin{proof}[ Conclusion of the proof of part (a) of Theorem \ref{main} .]
{\bf Step 1}.
We first claim that  if the parameter $c_3$ in the definition of $\ell_\ep$
is taken to be large enough, then
\begin{equation}
\ell_\ep  =  h_\ep 
\qquad\mbox{ for all small $\ep$.}
\label{ellep.bound}\end{equation}
If this fails, then 
we see from the definition  \eqref{ellep.def} of $\ell_\ep$ that 
\[ 
\int_\Omega |\partial_z u_\ep|^2dx\,dz =  c_3(\frac{\ell_\ep}{h_\ep})^2
\] 
Then the definitions of  $G_\ep$ and  $\xi_\ep$
(see \eqref{Gep.def},  \eqref{xiep.def}) imply that
\begin{equation}\label{Gep.split}
G_\ep(u_\ep) = \int_0^L\xi_\ep(z) \, dz - n(n-1)\pi  L\log  (\frac {\ell_\ep}{h_\ep})
+ \frac {c_3}2 (\frac {\ell_\ep}{h_\ep})^2 . 
\end{equation}
Recalling from \eqref{scaling2} that $G_\ep(u_\ep) \le c_1<\infty$ for all $\ep$,
we deduce from  \eqref{e2d.lbd}
that
\[
\limsup_{\ep\to 0} \left[  -n(n-1)\pi L \log(\frac{\ell_\ep}{h_\ep}) + \frac {c_3}2 (\frac {\ell_\ep}{h_\ep})^2 \right]
\le c_1 + \pi  \int_0^L\sum_{i\ne j} \log|f_i(z) - f_j(z)|\, dz  .
\]
If $c_3\ge n(n-1)\pi L$, then the function $s\mapsto -n(n-1)\pi L \log s + \frac {c_3}2 s^2$ is
increasing on $[1,\infty)$.
 Since $\frac{\ell_\ep}{ h_\ep}\ge 1$, it follows that the left-hand side of the above inequality is
 greater than or equal to
 $\frac {c_3}2$. On the other hand,
it follows from \eqref{qi.bound} that $|f_i(z)| \le c_4 + \sqrt{L c_3/\pi}$ for
all $i$ and $z$.
 Putting these together,
we obtain
\[
\frac 12 c_3 \le c_1+ n(n-1)L\pi \log(2c_4+ 2\sqrt{Lc_3/\pi}).
\]
We now fix $c_3$ large enough that this yields a contradiction; then \eqref{ellep.bound} follows.

{\bf Step 2}. Since $\ell_\ep = h_\ep$, and recalling \eqref{c5new}
and \eqref{mu.def},
we can rewrite
\[
G_\ep = \int_0^L \xi_\ep(z)\, dz +  \frac 12 \mu_\ep([0,L]).
\]
Then the $\Gamma$-limit lower bound \eqref{uniflowse}
follows immediately from \eqref{e2d.lbd} and \eqref{f.choice2}.

{\bf Step 3}.
The claim that $[f(z_0)] = [q^0]$ is a consequence of the proof of Lemma \ref{L.X}
(which in particular shows that \eqref{Xcomp1} holds for $z=z_0$),
assumption \eqref{scaling1a}, and Lemma \ref{L.choosef}.

It therefore only remains to improve the convergence already
established in Lemmas \ref{L.X}, \ref{L.choosef} above,
by showing that $J_xv_\e \to \delta_{[f(\cdot)]}$ in 
$W^{-1,1}(B(R)\times (0,L))$ for every $R>0$.
First, recall from \eqref{defJrescaled} that the family of $1$-currents $(\star Jv_\ep)$ is precompact in 
$\cup_RW^{-1,1}(B(R)\times (0,L))$. Then the relationship \eqref{starJ.Jx} between
$\star Jv_\ep$ and $J_x v_\ep$ implies that
$( J_x v_\e)$ is also precompact in the same topology.
So we only need to identify the limit. 
To do this, fix
$\phi\in W^{1,\infty}_c(\R^2\times (0,L))$, and let
$\phi^\ep(x,z) :=\phi(\frac x{h_\ep},z)$ and 
\[
\Phi_{\ep}(z) 
:=\int_{\omega_\ep} \phi(x,z) J_x v_{\ep}(x, z)\, dx
=\int_{\omega} \phi^\ep( x ,z) J_x u_\ep(x,z)\, dx \, .
\]
It follows from  Lemma \ref{L.X} that $\Phi_\ep(z) \to \Phi(z) = \pi \int \phi(x,z) \delta_{[f(z)]}$ for {\em a.e. $z$}, along the chosen  subsequence.
In addition, Theorem 2.1 \cite{JSo}  implies that
for every $z\in (0,L)$,
\[
 |\Phi_\ep(z)| \le   C(\omega) 
(\| \phi^\ep\|_{L^\infty} + \e^\alpha \|\phi^\ep\|_{W^{1,\infty}})
\left (1 + \int_\omega \frac{e^{2d}_\e(u_\ep(\cdot, z))}{\logeps} dx \right)
\]
for some $\alpha>0$.
We then see from Lemma \ref{c.subset} that for any measurable $S\subset (0,L)$,
\[
\int_S |\Phi_\ep(z)|\,dz \le C \| \phi\|_{W^{1,\infty}} ( |S| + o(1)) \ \quad\mbox{ as }\e\to 0,
\]
where the $o(1)$ term is independent of $S$.
Thus the sequence  $(\Phi_\ep)$ is asymptotically uniformly integrable,  so the Vitali Convergence
Theorem implies that in fact $\Phi_\ep \rightarrow \Phi$ in $L^1$.
This implies that
\[
\int_{\Omega_\ep} \phi \ J_xv_\e \ dx \, dz  = 
\int_0^L \Phi_\ep(z)\, dz  \rightarrow \int_0^L \Phi(z)\,dz =
\int \phi \delta_{[f(\cdot)]}.
\]
\end{proof}

\section{Upper bound and improved compactness}\label{section4}

In this section we prove the remaining results of Theorem \ref{main}.

\subsection{Improved compactness for ``tight'' sequences}\label{icfts}

We first prove that sequences which attain the $\Gamma$-limit lower bound
are compact in a stronger sense than previously established in  \eqref{comp}.
The results of the previous section are all available, 
as we continue to assume  hypotheses
\eqref{scaling2}, together with either
\eqref{scaling1} -- \eqref{scaling1aa}  or \eqref{old.scaling1},
although some of these are by now redundant.

The proof requires a measure-theoretic lemma that is proved at the end of this subsection.

\begin{proof}[Proof of part $(c)$ of Theorem \ref{main}]
Let $(u_\e)$ be a sequence satisfying \eqref{comp} and \eqref{recseqbd}.
Recall (see \eqref{defJrescaled}, \eqref{mu.def}) that, up to subsequence, there exists an integer multiplicity rectifiable  $1$-current $J$ such that $\frac 1 \pi \star Jv_\ep\to J$ in $W^{-1,1}(B(R)\times(0,L))$ for every $R>0$.

We have also shown (see \eqref{xiep.def},  \eqref{e2d.lbd}, and recall that in fact $\ell_\ep = h_\ep$) that
\begin{eqnarray}\int_0^L\int_{\om}e_\e^{2d}(u_\e)dx\,dz&\geq&
n\pi L\abs{\log\e}+\pi n(n-1)L\abs{\log h_\e}\nonumber\\
&&-\pi\sum_{i\neq j}\int_0^L\log\abs{f^i(z)-f^j(z)}dz+\kappa_n(\Om)+o_\ep(1) ,
\end{eqnarray}
where $\kappa_n(\Omega)$ was defined in \eqref{kappan.def}.
Finally, recall that there 
exists a measure $\mu$ on $[0,L]$ such that $\mu_\ep\rightharpoonup\mu$ weakly as measures, where
\[
\mu_\ep(A):=\int_{\om_\ep\times A}\frac{\abs{\p_zv_\ep(x,z)}^2}{\abs{\log\ep}}dx\,dz.
\]
and that (see \eqref{f.choice2}) for all measurable $A\subseteq (0,L)$
\beq
\mu(A)\geq \pi \int_A\abs{f'(z)}^2 dz, \qquad\mbox{ where }|f'|^2 := \sum_j |f_j'|^2.
\label{mu.f.A}
\eeq

But then \eqref{recseqbd} implies that
\begin{eqnarray*}
\begin{matrix}
\int_0^L\int_{\om}e^{2d}(u_\e)dx\,dz-n\pi L\abs{\log\e}
\\-\pi n(n-1)L\abs{\log h_\e}
\end{matrix}
\longrightarrow
-\pi\sum_{i\neq j}\int_0^L\log\abs{f^i(z)-f^j(z)}dz+\kappa_n(\Om),\nonumber
\end{eqnarray*}
as $\ep\to 0;$ and
\beq
\mu((0,L))= \pi \int_0^L\abs{{f}'}^2 dz.
\label{mu.f.OL}
\eeq
Since $\abs{{f}'}^2$ is nonnegative, we deduce from \eqref{mu.f.A} and \eqref{mu.f.OL} that $\mu$ is absolutely continuous with respect to Lebesgue measure with density
\[
d\mu= \pi\abs{{f}'}^2 dz.
\]
In particular $\mu$ has no atoms.

Our goal is to show that 
\[
S := J - \sum_{i=1}^n \Gamma_{f_i} = 0.
\]
We  know that for every $\phi\in C^\infty_c(\R^2\times (0,L))$,
\[
\frac 1\pi \star Jv_\ep(\phi \,dz) = \int \phi J_x v_\ep \, dx\,dz \rightarrow \int \phi \delta_{[f(\cdot)]} 
= \sum_{i=1}^n \Gamma_{f_i} (\phi\,dz),
\]
and it follows that $S(\phi\,dz) = 0$ for all such $\phi$. Then it follows from 
Lemma \ref{L.levelset} below that
$S$ may be written as a linear combination of currents
\[
S = \sum_{j\in I} T_{\gamma_j}
\]
where each $T_{\gamma_j}$ is supported in a horizontal plane $\{ z=  z_j\}$.
Assume toward a contradiction that  $T_{\gamma_j}$ is nonzero for $j=1$, say,
and 
let $\ep_0,\delta_0>0$ be two small numbers to be chosen later.
We know from \eqref{Uon2D}, \eqref{ep.epprime} and a change of variables that 
\[ 
\lim_{\ep\to 0} \frac{1}{\abs{\log\ep}}\int_{z_1-\delta_0}^{z_1+\delta_0}\int_{\om_\ep}e_{\ep'}^{2d}(v_\ep)dx\,dz = 
2n\pi\delta_0 
\] 
for $\ep' = \ep/h_\ep$. At the same time we know that
\[
\frac 12
\mu_\ep((z_1-\delta_0,z_1+\delta_0))\to\frac{\pi}{2}\int_{z_1-\delta_0}^{z_1+\delta_0}\abs{{f}'}^2 dz.\]
We can choose $\delta_0$ small enough so that
\[\max\left\{2n\pi\delta_0, \frac{\pi}{2}\int_{z_1-\delta_0}^{z_1+\delta_0}\abs{{f}'}^2 dz \right\}<\frac{\ep_0}{2}.\]
In light of this we have
\beq
\liminf_{\ep\to 0}\frac{1}{\abs{\log\ep}}\int_{z_1-\delta_0}^{z_1+\delta_0}\int_{\om_\ep}e_{\ep'}(v_\ep)dx\,dz<\ep_0.
\label{CTRD1}
\eeq
On the other hand, for every $\delta_0>0$, the mass of the limiting current $J$
in $B(R) \times (z_1-\delta_0, z_1+\delta_0)$ is at least $M(T_{\gamma_1})$,
for $R$ large enough, depending on the support of $T_{\gamma_1}$.
Thus, by applying Theorem \ref{UKC} in $B(R) \times (z_1 -\delta_0, z_1+\delta_0)$
for this choice of $R$, we find that 
\beq
\liminf_{\ep\to 0}\frac{1}{\abs{\log\ep}}\int_{z_1-\delta_0}^{z_1+\delta_0}\int_{\om_\ep}e_{\ep'}(v_\ep)dx\,dz\geq \pi M(T_{\gamma_1}).
\label{CTRD2}
\eeq
Choosing $\ep_0< \pi M(T_{\gamma_1})$ makes \eqref{CTRD1} and \eqref{CTRD2} incompatible.
Hence $S=0$, which completes the proof of \eqref{ICITCOT}.
\end{proof}

\begin{lemma}\label{L.levelset}
Let $S$ be an integer multiplicity rectifiable $1$-current in $\R^2\times (0,L)$
such that $M(S)<\infty$ and $\partial S = 0$ in $\R^2\times (0,L)$.
Assume in addition that $S(\phi \,dz ) = 0$ for all $\phi\in C^\infty_c((\R^2\times (0,L))$.

Then $S$ may be written as a sum of  $1$-currents
\[
S = \sum_{i\in I} T_{\gamma_i}
\]
where each $T_{\gamma_i}$ is supported in a  hyperplane $\{ z = a_i\}$
for some $a_i\in (0,L)$.
\end{lemma}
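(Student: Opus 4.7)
The hypothesis $S(\phi\,dz)=0$ for every $\phi\in C^\infty_c(\R^2\times(0,L))$ expresses precisely that the ``vertical part'' of $S$ vanishes. My plan is to first translate this into a statement about the tangent vector of the underlying rectifiable set, then use a coarea argument to reduce the support of $S$ to countably many horizontal planes, and finally decompose within each plane using the $2$D version of the structure theorem recalled in \eqref{characofonecurrents}.

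Writing $S$ in its standard representation as an integer multiplicity rectifiable current, $S(\omega)=\int_M \theta\langle\omega,\tau\rangle\,d\calH^1$, for a countably $1$-rectifiable set $M$ with $\calH^1(M)<\infty$, an integer multiplicity $\theta\geq 1$, and a measurable unit tangent $\tau$, the hypothesis becomes $\theta\,(\tau\cdot e_z)=0$ $\calH^1$-a.e.\ on $M$; hence $\tau$ is horizontal $\calH^1$-a.e. Applying the coarea formula for rectifiable sets to the Lipschitz map $\zeta(x,z)=z$ restricted to $M$, and using $|\nabla^M\zeta|=|\tau\cdot e_z|=0$ $\calH^1$-a.e., one obtains
\[
0 \ = \ \int_M |\nabla^M\zeta|\,d\calH^1 \ = \ \int_0^L \calH^0(M\cap\{z=a\})\,da,
\]
so $M\cap\{z=a\}=\emptyset$ for a.e.\ $a\in(0,L)$. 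Combined with $\calH^1(M)<\infty$, the set $A=\{a\in(0,L):\calH^1(M\cap\{z=a\})>0\}$ is at most countable and carries $M$ up to an $\calH^1$-null set. Consequently $S=\sum_{a\in A}S_a$, where $S_a:=S\rest\{z=a\}$ is supported in the horizontal plane $\{z=a\}$.

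Because the planes $\{z=a\}$ for $a\in A$ are pairwise disjoint and $\partial S=0$ in $\R^2\times(0,L)$, the supports of the $\partial S_a$ are pairwise disjoint and sum to zero, forcing $\partial S_a=0$ for each $a\in A$. Each $S_a$ is then an integer multiplicity rectifiable $1$-current of finite mass with zero boundary supported inside a $2$-plane, so \eqref{characofonecurrents} (applied inside that plane) yields $S_a=\sum_j T_{\gamma_{a,j}}$ with each $\gamma_{a,j}$ a closed Lipschitz loop supported in $\{z=a\}$. Relabeling these across $a\in A$ and $j$ produces the desired decomposition of $S$. The main obstacle is the measure-theoretic coarea step that confines $M$ to countably many horizontal planes; the required coarea identity and the characterization of the approximate tangent of $\|S\|$ in terms of $\tau$ are standard (cf.~\cite{Federer}), and once in place the rest is a direct application of \eqref{characofonecurrents} within each slice.
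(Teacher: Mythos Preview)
Your argument has a genuine gap at the step where you claim that ``$A=\{a\in(0,L):\calH^1(M\cap\{z=a\})>0\}$ is at most countable and carries $M$ up to an $\calH^1$-null set.'' The countability of $A$ is fine, but the assertion that $\calH^1\big(M\setminus\bigcup_{a\in A}\{z=a\}\big)=0$ does \emph{not} follow from $\calH^1(M)<\infty$ together with the vanishing of almost every slice. Here is a counterexample at the level of rectifiable sets: let $K\subset[0,1]$ be a fat Cantor set with $|K|>0$, choose $g\in C^1([0,1])$ strictly increasing with $g'=0$ on $K$ (take $g(t)=\int_0^t\psi$ for a continuous $\psi\ge 0$ vanishing exactly on $K$), and set $f(t)=(t,0,g(t))$. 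Then $M:=f(K)$ is $1$-rectifiable with $\calH^1(M)=|K|>0$, the approximate tangent is $(1,0,0)$ at every point (so $\tau\cdot e_z=0$ everywhere), and your coarea computation correctly gives that $\calH^0(M\cap\{z=a\})\le 1$ for all $a$ and vanishes for a.e.\ $a$. But then $A=\emptyset$, so your claim would force $\calH^1(M)=0$, a contradiction. This $M$ does underlie a rectifiable $1$-current $S$ with $S(\phi\,dz)=0$ and $M(S)<\infty$; what fails is $\partial S=0$ (indeed $M(\partial S)=\infty$), which is precisely the hypothesis you have not yet used at this point in the argument.

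The paper's proof avoids this by bringing the boundary condition in \emph{before} localizing to horizontal planes: one first checks $\partial S=0$ in all of $\R^3$, then invokes Federer's decomposition theorem to write $S=\sum T_{\gamma_i}$ as a mass-preserving sum of indecomposable pieces, each a single Lipschitz curve with $\partial T_{\gamma_i}=0$. Because the decomposition preserves mass, the tangent of each piece agrees with that of $S$, so each $T_{\gamma_i}$ also satisfies $T_{\gamma_i}(\phi\,dz)=0$; equivalently $(\gamma_i)_3'=0$ a.e., and since $\gamma_i$ is Lipschitz this forces $(\gamma_i)_3$ to be constant. (The paper phrases this last step via slices and Solomon's Separation Lemma.) The key point you are missing is that the passage from ``horizontal tangent a.e.'' to ``confined to a plane'' is valid for a \emph{connected Lipschitz curve} but not for a general rectifiable set; the boundary hypothesis enters exactly to produce such curves. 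You can salvage your approach by inserting the decomposition into Lipschitz loops before the coarea/locality step, after which each piece lies in a single plane for the elementary reason above, and then your slicewise use of \eqref{characofonecurrents} becomes unnecessary.
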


\begin{center}
\begin{figure}[!ht]
\begin{minipage}{0.5\linewidth}
\includegraphics[trim = 5mm 0mm 0mm 10mm, clip, width=7cm,
  height=7cm,
  keepaspectratio,]{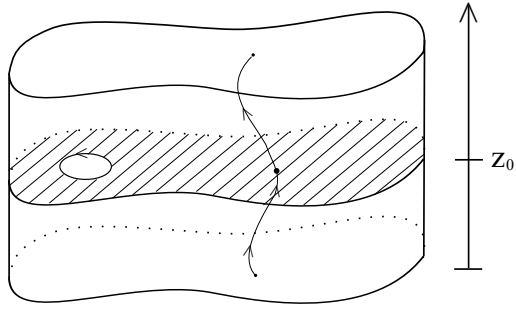}
\end{minipage}
\caption{A current with an indecomposable component supported at $\{z=z_0\}.$}
\end{figure}
\end{center}

\begin{proof}
We first claim that $\partial S = 0$ in $\R^3$.
To see this, fix any $\phi \in C^\infty_c(\R^3)$. For any $\chi\in C^\infty_c((0,L))$
(which we identify in the natural way with a function on $\R^2\times (0,L)$
depending only on the $z$ variable) we have
\[
S(\chi d\phi)  = S( d(\chi \phi)) - S( \phi d \chi) = \partial S (\chi \phi ) - S(\phi \chi' \ dz) = 0
\]
We can thus take a sequence of functions $\chi_k$ such that $\chi_k\nearrow {\bf 1}_{(0,L)}$
to conclude that $S(d\phi)=0$. Since $\phi$ was arbitrary, it follows that $\partial S = 0$
in $\R^3$ as claimed. 

The remainder of the proof is classical; we recall the arguments for the convenience of the reader.
Using the decomposition theorem for $1$-dimensional integer multiplicity currents,
see \cite{Federer} 4.2.25 (to which we refer for the definition of indecomposable)
we may write  $S$ as a sum of indecomposable $1$-currents, say $S = \sum_{i\in I}T_{\gamma_i}$, with $\sum_I M(T_{\gamma_i}) 
= M(S)$ and $\sum_I M(\partial T_{\gamma_i}) = M(\partial S) = 0$.
It follows that $\partial T_{\gamma_i}=0$ and that 
$T_{\gamma_i}(\phi \,dz)= 0$ for every $i$ and  all $\phi \in C^\infty_c$. 
By basic properties of slicing of currents (see for example \cite{Federer} 4.3.2)
this implies that $\langle T_{\gamma_i}, \zeta, z\rangle = 0$ for {\em a.e.} $z\in (0,L)$, where $\zeta:\R^2\times (0,L)\to (0,L)$ is the projection onto the vertical axis, $\zeta(x,z) = z$.
Then Solomon's Separation Lemma  \cite{Solomon}
implies that every $T_{\gamma_i}$ is supported in a level set of
$\zeta$.   

We remark that Solomon's Lemma applies to indecomposable currents $T$ in $\R^n$ such that
$M(T) + M(\partial T)<\infty$; it is for this reason that we verified that $\partial S = 0<\infty$ in $\R^3$.
\end{proof}

\subsection{Constructing a recovery sequence}\label{section5}

In this part, given $ f \in H^1((0,L); (\mathbb{R}^2)^n)$ we build  sequences of maps $(u_\e)\subset H^1(\Om;\C)$ whose Ginzburg-Landau energy recovers the nearly parallel vortex filaments energy $G_0( f) .$
Part $(b)$ of Theorem \ref{main} follows from

\begin{proposition}(Recovery Sequence)
Let ${f}\in H^1\left((0,L);(\R^2)^n\right).$ Then there exists a sequence $(u_\e)\subset H^1(\Om;\C)$ such that

\begin{itemize}
\item[(a)] $\norm{\star J v_\ep-\pi \sum_{i=1}^n\Gamma_{f_i}}_{F(B(R)\times (0,L))}\to 0$ for every $R>0$, as $\e\to 0,$

\item[(b)] $\lim_{\e\to 0}G_{\e}(u_e) \leq G_0( f).$
\end{itemize}
\label{PRSeq}
\end{proposition}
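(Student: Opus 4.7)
The plan is to take $u_\ep$ to be, essentially, a canonical harmonic map on each horizontal slice with vortex singularities at the rescaled points $p_{\ep,i}(z) := h_\ep f_i(z)$, and to verify (a) by construction and (b) by a careful slice-by-slice energy expansion. First I would reduce to the case where $f \in C^\infty$, is constant in $z$ near the endpoints, and is collision-free, i.e.\ $f_i(z) \neq f_j(z)$ for all $i\neq j$ and all $z\in [0,L]$. For such $f$ I set
\[
u_\ep(x,z) := \prod_{i=1}^n \exp\bigl(i\beta(x, p_{\ep,i}(z))\bigr)\,\zeta_\ep\bigl(x - p_{\ep,i}(z)\bigr),
\]
exactly as in \eqref{formofw}, so that on each horizontal slice $u_\ep/|u_\ep|$ is the Bethuel--Brezis--H\'elein canonical harmonic map on $\omega$ with singularities $\{p_{\ep,i}(z)\}$ and natural boundary conditions on $\partial\omega$. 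Property (a) then follows from the slice-wise identity $\frac{1}{\pi}J_x u_\ep(\cdot,z) \to \sum_i\delta_{p_{\ep,i}(z)}$ and the $h_\ep$-rescaling, combined with an integration in $z$ and the uniform energy bounds, to give flat convergence of $\star Jv_\ep$ to $\pi\sum_i\Gamma_{f_i}$.

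The first substantive step is to estimate the $2d$ part of the energy. Since the vortices are separated by $\sim h_\ep \gg \ep$ from one another and at distance bounded below from $\partial\omega$, Theorem 2 of \cite{JSp2} applies uniformly in $z$ to give
\[
\int_\omega e^{2d}_\ep(u_\ep(\cdot,z))\,dx = n(\pi\logeps + \gamma) + W_\omega\bigl(p_{\ep,1}(z),\ldots,p_{\ep,n}(z)\bigr) + o(1).
\]
Writing $\log|p_{\ep,i}(z) - p_{\ep,j}(z)| = \log h_\ep + \log|f_i(z) - f_j(z)|$ and using smoothness of $H_\omega$ near $(0,0)$,
\[
W_\omega(p_\ep(z)) = -\pi n(n-1)\log h_\ep - \pi\!\sum_{i\neq j}\!\log|f_i(z) - f_j(z)| - \pi n^2 H_\omega(0,0) + O(h_\ep).
\]
Integration in $z$ reproduces the divergent quantities $n\pi L\logeps$ and $\pi n(n-1)L|\log h_\ep|$, together with the constant $\kappa_n(\Omega)$ defined in \eqref{kappan.def}, leaving precisely $-\pi\int_0^L\sum_{i\neq j}\log|f_i - f_j|\,dz + o(1)$.

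Next I handle the kinetic part $\tfrac12\int_\Omega|\partial_z u_\ep|^2$. The $z$-dependence enters only through $p_{\ep,i}(z)$, so the chain rule produces factors $p'_{\ep,i}(z) = h_\ep f_i'(z)$. Near a single vortex $p_{\ep,k}(z)$ the dominant term is $|\nabla\zeta_\ep(x - p_{\ep,k}(z))\cdot p'_{\ep,k}(z)|^2$; the explicit profile $\zeta_\ep(y) \approx y/|y|$ for $|y|\gg \ep$ together with angular integration give
\[
\int_{B(\rho, p_{\ep,k}(z))} |\nabla\zeta_\ep\cdot p'_{\ep,k}|^2\,dx = \pi|p'_{\ep,k}|^2\log(\rho/\ep)(1+o(1))
\]
for $\rho \sim h_\ep$, which equals $\pi h_\ep^2 |f'_k(z)|^2\logeps(1+o(1)) = \pi|f'_k(z)|^2(1+o(1))$ using the defining identity $h_\ep^2\logeps = 1$. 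Contributions from the smooth phase factor $\beta$, from cross-terms between distinct vortices (separated by $\gtrsim h_\ep$), and from the bulk region outside vortex cores are $o(1)$ uniformly in $z$. Multiplying by $\tfrac12$, summing over $k$, and integrating in $z$ gives $\frac{\pi}{2}\int_0^L\sum_i|f'_i(z)|^2\,dz + o(1)$, which together with the $2d$ estimate yields $\limsup_{\ep\to 0}G_\ep(u_\ep)\le G_0(f)$ in this smooth regime.

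For a general $f \in H^1((0,L);(\R^2)^n)$, I would approximate $f$ in $H^1$ by collision-free smooth maps $f^{(m)}$ satisfying $G_0(f^{(m)}) \to G_0(f)$; this requires only small perturbations to separate coincident components and dominated convergence on $-\log|f_i - f_j|$ when $G_0(f) < \infty$ (the statement being vacuous when $G_0(f) = +\infty$). A standard diagonal extraction from the associated sequences $(u_\ep^{(m)})$ then produces the recovery sequence for $f$. The main technical obstacle is the $|\partial_z u_\ep|^2$ expansion: verifying uniformly in $z$ that the translated-vortex ansatz contributes exactly $\pi|f'_k|^2$ per vortex with no anomalous correction, which depends essentially on choosing the phase factor $\beta$ so that $u_\ep/|u_\ep|$ is the \emph{genuine} canonical harmonic map on each slice, as well as on the collision-free smooth approximation to keep the vortex cores well-separated on the scale $h_\ep$.
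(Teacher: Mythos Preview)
Your approach is essentially the paper's: reduce to smooth collision-free $f$, build $u_\ep$ from the canonical harmonic map with singularities at $h_\ep f_i(z)$, expand the $2d$ energy via the renormalized energy, and compute the kinetic contribution by exploiting the radial structure of the profile. Two points deserve attention.

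First, the profile. You use the generic $\zeta_\ep$ from \eqref{zetaep}, but those conditions do not pin down the core energy, so the slice-wise expansion need not produce the specific constant $\gamma$ appearing in $\kappa_n(\Omega)$; you could end up with $G_\ep(u_\ep)\to G_0(f)+nL(\gamma'-\gamma)$ for some $\gamma'>\gamma$. The paper instead takes $\hat U_\ep$ to be the actual minimizer of $\int_{B(\sqrt\ep)}e^{2d}_\ep$ with $e^{i\theta}$ boundary data, and the upper bound \eqref{uhatren} comes from a direct construction (Lemma~14 of \cite{JSp2}), not from Theorem~2 of \cite{JSp2}, which is a lower bound.

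Second, and more substantively, your route to part (a) is incomplete. Slice-wise convergence of $J_x v_\ep$ together with energy bounds yields $J_xv_\ep\to\pi\delta_{[f(\cdot)]}$ in $W^{-1,1}$, i.e.\ only the $dz$-component of $\star Jv_\ep$; it does not by itself control the horizontal components needed for flat-norm convergence of the full $1$-current. The paper does not attempt a direct computation here. Instead it first proves (b) via the $2d$ estimate \eqref{RS2Df} and Lemma~\ref{LFTPoRS}, observes that the resulting sequence satisfies \eqref{comp} and \eqref{recseqbd}, and then invokes the improved-compactness statement Theorem~\ref{main}(c), already established in Section~\ref{icfts}, to conclude $\star Jv_\ep\to\pi\sum_i\Gamma_{f_i}$ in $F(B(R)\times(0,L))$; see Remark~\ref{remarkRS}. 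This is where the structural work of ruling out horizontal current components (Lemma~\ref{L.levelset}) is spent, and you should route through it rather than try to argue (a) from scratch.
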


As it is usual in this kind of construction, we define a sequence of trial maps $( u_\ep)$
based on canonical harmonic maps with prescribed singularities in $\om$, introduced in \cite{BBH}, section I.3.
This provides us with the right estimates for $e_\ep^{2d}.$
Then, we show that the $L^2$-norm of $ f'$ is well approximated by $\pi\int_\Om\abs{\p_z  u_\ep}^2 dx\,dz.$
Finally, the task is to show that a sequence defined in this way satisfies $(a)$.

First note that to prove Proposition \ref{PRSeq}, it suffices to prove it for smooth ${f}$ satisfying
\beq\label{noncollision}
\inf_{\begin{matrix}z\in(0,L)\\ i\neq j\in\{1,\ldots,n\}\end{matrix}}
\abs{f^i(z)-f^j(z)}  >0.
\eeq
In fact, once we prove Proposition \ref{PRSeq} for such configurations, a density (and diagonal) argument yields the result.

We will write $f_\e = (f_{\e,1},\ldots, f_{\e,n}) = h_\ep f$.
We will take our trial functions to have the form
\begin{equation}
 u_\ep(x,z) :=  \prod_{j=1}^n \left[   e^{i\beta (x, f_{\ep,j}(z)) }  U_\ep(x-  f_{\e,j}(z)) \right]
\label{hat.u.ep}\end{equation}
for certain functions $\beta,  U_\e$ that we now describe.
First,  if we write $x\in \R^2$ in polar coordinates $(r\cos \theta, r\sin \theta)$, then
we define
\begin{equation}\label{Uhat.outer}
 U_\ep(x) =  e^{i\theta} \qquad \mbox{ if }|x|\ge \sqrt \ep,
\end{equation}
and in $B(\sqrt\ep)$, we choose $ U_\ep$ to minimize
$\int_{B(\sqrt\ep)}e_\ep^{2d}(u)$ among all functions $u: B(\sqrt\ep)\to \C$
such that $u = e^{i\theta}$ on $\partial B(\sqrt \ep)$. Thus,
using notation introduced in \eqref{BBHconstant2},
\begin{equation}\label{inner.energy}
\int_{B(\sqrt\ep)} e^{2d}_\e( U_\ep) \, dx \ = I(\sqrt \e, \e) \ .
\end{equation}
The intermediate length scale $\sqrt \ep$ is chosen
rather arbitrarily -- we just need some radius $r_\ep$ such that $\ep \ll r_\ep \ll h_\ep$.
Theorem 11.2  in \cite{PR} implies that $ U_\ep$ has the form
\[
 U_\ep=  \rho_\ep(r) e^{i\theta} 
\qquad\mbox{with }0 \le \rho_\ep \le 1 \mbox{ for all }r >0
\]
for all sufficiently small $\ep>0$.
Next, $\beta(x, y)$ is chosen so that for every $y\in \omega$,
\begin{equation}
\begin{cases} \Delta_x \beta(x,y) = 0\  &\mbox{for $x$ in }\omega, \\
\nu(x) \cdot \nabla_x \left(e^{i\beta(x,y)}  U_\ep(x - y) \right) = 0 \quad &\mbox{ for $x\in \partial \omega$},\end{cases}
\label{beta.def}\end{equation}
for all $\ep$ small enough such that $| U_\ep(x - y)|=1$ for $x\in \partial \omega$.
This defines $\beta$ up to a constant, which we fix by requiring that $\int_\omega \beta(x,y)dx =0$
for all $y$.

We will only need a couple of facts about $\beta$.
First,
\begin{equation}\label{beta.lip}
|\nabla_y \beta(x,y)| \le C(r) \mbox{ for all }(x,y)\in \omega\times B(r), \qquad\mbox{ if }B(r) \subset \omega .
\end{equation}
This is a straightforward consequence of standard elliptic regularity results.
Second, we have the estimates
\begin{equation}\label{uhatren}
\int_\omega e^{2d}( u_\ep(x,z) \,)\,dx \le
n(\pi\abs{\log\ep}+\gamma)
+W_\om(f_{\e,1}(z), \ldots, f_{\e,n}(z))+C \ep
\end{equation}
and 
\beq
\norm{J_x{u}_{\ep}(\cdot, z) -\pi \delta_{[f_{\ep} (z)]} }_{W^{-1,1}(\om)}\leq C\ep
\label{uhatJac}
\eeq
with constants independent of $z$.
These are proved\footnote{Although the descriptions are a little
different,
our function $ u_{\ep}(\cdot,z)$ is {\em  exactly} the same as 
$u^{r,\e}_\star(\cdot, a,d)$ from \cite{JSp2}, once we set  $r = \sqrt \ep$, $a = (f_{\e,1}(z), \ldots, f_{\e,n}(z))$ and $d = (1,\ldots, 1)$.
If one wants to check this, it is helpful to note that 
$\nabla_x\beta(x, y) = \nabla_x^\perp H(x, y)$, for $H$
solving \eqref{H.def}. 

Experts will recognize that the definition of $\beta$ is such that  
$\frac{ u_\ep(\cdot, z)}{| u_\ep(\cdot, z)|}$ is exactly the canonical harmonic
map with singularities at $(f_{\e,1}(z),\ldots, f_{\ep,n}(z))$
and natural boundary conditions, as  introduced in \cite{BBH} section I.3,
and \eqref{uhatren} expresses this fact.
 }  in  Lemma 14 in \cite{JSp2}.

Rewriting the right-hand side of \eqref{uhatren} using  \eqref{expforren} and \eqref{kappan.def}, and integrating in $z$, 
we have
\begin{eqnarray}
\int_\Om e_\ep^{2d}(u_\ep(x,z)\,)dx\,dz&\leq&
 \pi L n \abs{\log \ep} + \pi L n(n-1)|\log h_\ep|\nonumber\\
&&-\pi  \sum_{i\ne j} \int_0^L\log\abs{f^i(z)-f^j(z)}+\kappa_n(\Om)+\calO\left(h_\ep\right).
\label{RS2Df}
\end{eqnarray}
The dominant contribution
to  the $\calO(h_\ep)$  errors term
comes from the integral of $\sum_{i,j} (H(f_{\ep,i}(z), f_{\ep,j}(z)) - H(0,0))$. 

Also, by setting $ v_\ep(x,z) =  u_\ep(h_\ep x, z)$ and rescaling, we deduce from
\eqref{uhatJac} that
\beq
\sup_{z\in(0,L)}\norm{J_x  v_\ep (\cdot,z)-\pi \delta_{[f(z)] }  }_{W^{-1,1}(\omega_\ep)}
\leq C\ep h_\ep^{-1}\to 0, \quad \mbox{ as }\ep\to 0\, . \label{RS2DJf}
\eeq

In light of this, to prove part $(b)$ of Proposition \ref{PRSeq}, it remains to show
\begin{lemma}
\beq
\limsup_{\ep\to 0}\frac 12\int_\Om\abs{\p_z  u_\ep}^2 dx\,dz\leq
\frac{\pi}{2}\int_0^L\abs{ f'}^2 dz.
\label{FTPoRS}
\eeq
\label{LFTPoRS}
\end{lemma}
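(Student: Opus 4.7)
\textbf{Proof plan for Lemma \ref{LFTPoRS}.}  The plan is to decompose $\Om$ into a ``good region'' $\Om^g_\ep := \{(x,z)\in\Om : |x-f_{\ep,k}(z)|\ge\sqrt\ep \text{ for all }k\}$ and disjoint ``core tubes'' $T^k_\ep := \{(x,z)\in\Om : |x-f_{\ep,k}(z)|<\sqrt\ep\}$, and to bound $\int_\Om|\p_z u_\ep|^2$ on each piece separately. After reducing by density to smooth $f$ satisfying \eqref{noncollision}, the tubes $T^k_\ep$ are pairwise disjoint and separated by distance $\ge c h_\ep\gg \sqrt\ep$ for all small $\ep$, and on both regions the factored form \eqref{hat.u.ep} makes $\p_z u_\ep$ directly computable. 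The goal is to show that the good region and the core tubes each contribute $\tfrac{\pi}{2}\int_0^L|f'|^2\,dz$ to $\int_\Om|\p_z u_\ep|^2$, so that together with the $\tfrac12$ in front of $|\p_z u_\ep|^2$ we recover exactly the right-hand side of \eqref{FTPoRS}.

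On $\Om^g_\ep$, each $U_\ep(x-f_{\ep,j}(z))$ equals $e^{i\theta_j(x,z)}$ with $\theta_j=\arg(x-f_{\ep,j}(z))$, so $u_\ep = e^{i\Phi}$ is unimodular with $\Phi(x,z)=\sum_j[\beta(x,f_{\ep,j}(z))+\theta_j(x,z)]$ and $|\p_z u_\ep|^2 = |\p_z\Phi|^2$. Expanding $\p_z\Phi=\sum_j[\nabla_y\beta(x,f_{\ep,j})\cdot f_{\ep,j}' - \nabla\theta(x-f_{\ep,j})\cdot f_{\ep,j}']$, the $\beta$-terms are $o(1)$ in $L^2(\Om)$ by \eqref{beta.lip} and $|f_{\ep,j}'|^2=h_\ep^2|f_j'|^2\to 0$; the cross terms $\p_z\theta_j\,\p_z\theta_l$ with $j\ne l$ integrate to $o(1)$ because the points $f_{\ep,j}$, $f_{\ep,l}$ are separated by $\ge c h_\ep$ while each factor has a Coulomb-type singularity only at its own basepoint. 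For the diagonal pieces, passing to polar coordinates centered at $f_{\ep,j}$ and using $\int_0^{2\pi}(v_1\sin\phi - v_2\cos\phi)^2\,d\phi = \pi|v|^2$ yields
\begin{equation*}
\int_{\omega\setminus B(\sqrt\ep,f_{\ep,j}(z))}|\p_z\theta_j|^2\,dx = \tfrac{\pi}{2}|f_{\ep,j}'(z)|^2|\log\ep| + O(|f_{\ep,j}'(z)|^2) = \tfrac{\pi}{2}|f_j'(z)|^2(1+o(1)),
\end{equation*}
using $h_\ep^2|\log\ep|=1$. Summing in $j$, integrating in $z$, and observing that $\Om^g_\ep\cap(\omega\times\{z\}) \subset \omega\setminus B(\sqrt\ep,f_{\ep,j}(z))$ for each $j$, I obtain $\int_{\Om^g_\ep}|\p_z u_\ep|^2\,dx\,dz \le \tfrac{\pi}{2}\int_0^L|f'|^2\,dz + o(1)$.

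On each core tube $T^k_\ep$ I write $u_\ep = e^{i\tilde\Phi_k}U_\ep(x-f_{\ep,k})$ with $\tilde\Phi_k := \beta(x,f_{\ep,k}) + \sum_{j\ne k}[\beta(x,f_{\ep,j}) + \theta_j]$, which is smooth on $T^k_\ep$ thanks to \eqref{noncollision}. Differentiating gives
\begin{equation*}
\p_z u_\ep = ie^{i\tilde\Phi_k}(\p_z\tilde\Phi_k)U_\ep - e^{i\tilde\Phi_k}\nabla U_\ep(x-f_{\ep,k})\cdot f_{\ep,k}'.
\end{equation*}
Using $|a+b|^2\le(1+\sqrt\ep)|b|^2+(1+\ep^{-1/2})|a|^2$ together with the bound $|\p_z\tilde\Phi_k|\le C\sum_j|f_j'|$ on $T^k_\ep$ (valid since all remaining singularities lie at distance $\ge c h_\ep\gg\sqrt\ep$ from $T^k_\ep$), the first term contributes at most $C\ep^{-1/2}\cdot\calL^2(T^k_\ep)\cdot L\cdot\|f'\|_{L^\infty}^2 = O(\sqrt\ep)$. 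The main contribution is controlled by the radial identity
\begin{equation*}
\int_{B(\sqrt\ep)}|\nabla U_\ep(y)\cdot v|^2\,dy = \tfrac{|v|^2}{2}\int_{B(\sqrt\ep)}|\nabla U_\ep|^2\,dy \le |v|^2\,I(\sqrt\ep,\ep),
\end{equation*}
which I derive from the ansatz $U_\ep=\rho_\ep(r)e^{i\theta}$ by noting that after angular averaging the radial part $(\rho_\ep')^2\cos^2(\phi-\phi_v)$ and the angular part $(\rho_\ep/r)^2\sin^2(\phi-\phi_v)$ each carry exactly $|v|^2/2$. Since $I(\sqrt\ep,\ep) = \tfrac{\pi}{2}|\log\ep|+O(1)$, integration in $z$ with $v=f_{\ep,k}'=h_\ep f_k'$ yields $\int_{T^k_\ep}|\p_z u_\ep|^2\,dx\,dz \le \tfrac{\pi}{2}\int_0^L|f_k'|^2\,dz(1+o(1))$; summing in $k$ and adding to the good-region bound gives $\int_\Om|\p_z u_\ep|^2 \le \pi\int_0^L|f'|^2\,dz+o(1)$, which is \eqref{FTPoRS}. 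The main obstacle is producing the factor $\tfrac{1}{2}$ in the core identity: it hinges on the rotational symmetry of $U_\ep$ and must be coordinated with the matching $\tfrac{1}{2}$ arising from angular averaging in the good region, so that the two halves combine precisely to $\tfrac{\pi}{2}\int_0^L|f'|^2\,dz$ without over- or under-counting.
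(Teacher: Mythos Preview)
Your proof is correct. It shares its essential ingredient with the paper's argument---the rotational symmetry of $U_\ep$, which yields $\int_D|\nabla U_\ep\cdot v|^2=\tfrac{|v|^2}{2}\int_D|\nabla U_\ep|^2$ on any disk $D$ centered at the origin and supplies the decisive factor $\tfrac12$---but the organization is different. The paper does not split $\Om$ into core tubes and an exterior region: it simply bounds each vortex term $\int_\omega|\partial_z(U_\ep(x-f_{\ep,i}))|^2$ by enlarging the integration domain to a single ball $B(R)\supset\omega$, applying the symmetry identity once on all of $B(R)$, and using $\tfrac12\int_{B(R)}|\nabla U_\ep|^2\le\pi|\log\ep|+O(1)$. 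This delivers $\pi|f_i'|^2(1+o(1))$ per vortex in one step and avoids your bookkeeping of matching two $\tfrac{\pi}{2}$ contributions from core and exterior. In return, your decomposition is more explicit about the off-diagonal interactions: the paper's displayed pointwise bound on $|\partial_z u_\ep|^2$ tacitly relies on the integrated cross terms between distinct factors $U_\ep(\cdot-f_{\ep,j})$ being $o(1)$, which is precisely what you verify directly for $\int\partial_z\theta_j\,\partial_z\theta_l$ via the $h_\ep$-separation.
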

\begin{remark}
\label{remarkRS}
We note that, in light of \eqref{RS2DJf}, once \eqref{FTPoRS} is established, we can appeal to Theorem \ref{main} $(c)$ to conclude $\star Jv_\ep\to\pi  \sum \Gamma_{f_i}$, thereby
proving part $(a)$ of Proposition \ref{PRSeq}.
\end{remark}

\begin{proof}[Proof of Lemma \ref{LFTPoRS}]
Let us write 
$\beta_\ep(x,z) := \sum_{i=1}^n \beta(x ; f_{\e,i}(z))$,
so that 
\[
 u_\ep(x,z) = e^{i\beta_\ep(x,z)}\prod_{j=1}^n  U_\e(x-f_{\e,j}(z)).
\]
Then since $\| U_\ep\|_{L^\infty} = 1$,
\[
|\partial_z  u_\ep(x,z)|^2
\le
|\partial_z \beta_\ep(x,z)|^2 + \sum_{i-1}^n 
\left|\partial_z \left(  U_\ep(x - f_{\ep,i}(z))\right) \right|^2
\]
To estimate the first term, note that
\begin{align*}
\partial_z \beta_\ep(x,z) = \sum_i \nabla_y \beta(x; f_{\ep,i}(z))\cdot  f_{\ep,i}'(z).
\end{align*}
Since $ f_{\e,i}'(z) = h_\ep f_i'(z)$, it follows from \eqref{beta.lip} that
\[
| \partial_z \beta_\ep(x,z)|^2 \le C h_\ep^2 \sum_{i=1}^n |f_i'(z)|^2 = C h_\ep^2 |f'(z)|^2.
\]
Next, for every $i=1,\ldots, n$, 
\[
\left|\partial_z \left(  U_\ep(x - f_{\ep,i}(z))\right) \right|^2
= 
h_\ep^2 \left| \nabla_x  U_\ep(x - f_{\ep,i}(z)) \cdot f_i'(z)  \right|^2 .
\]
Thus, fixing $R > \mbox{diam}(\omega)$, 
\begin{align*}
\int_\omega 
\left|\partial_z \left(  U_\ep(x - f_{\ep,i}(z))\right) \right|^2 \, dx
&\le
\int_{B(R, f_{\ep,i}(z)) }
h_\ep^2 \left|  f_i'(z)\cdot \nabla_x  U_\ep(x - f_{\ep,i}(z))  \right|^2 \\
&=
\int_{B(R,0)}
h_\ep^2 \left|   f_i'(z)\cdot \nabla  U_\ep(x)   \right|^2 .
\end{align*}
But the rotational symmetry of $ U_\ep$ implies that
for any vector $v\in \R^2$,
\[
\int_{B(R,0)}
h_\ep^2 \left|  v\cdot \nabla  U_\ep(x)    \right|^2 
=
\int_{B(R,0)}
h_\ep^2 \left|  v^\perp \cdot\nabla  U_\ep(x)  \right|^2 
\]
And since
\[
\left| v\cdot \nabla  U_\ep(x)  \right|^2  +
\left| v^\perp \cdot \nabla  U_\ep(x)   \right|^2  = 
|v|^2 \left| \nabla  U_\ep(x)  \right|^2 
\]
we conclude that
\[
\int_\omega 
\left|\partial_z \left(  U_\ep(x - f_{\ep,i}(z))\right) \right|^2 \, dx
\ \le 
\frac 12 |f_i'(z)|^2 \int_{B(R,0)} \frac{ |\nabla  U_\e(x)|^2}\logeps \, dx
\]
To estimate the right-hand side, we first remark that the constant
$I(r, \e)$ appearing on the right-hand side of \eqref{inner.energy}
is known to satisfy
\[
I(r, \ep ) = \pi \log \left( \frac r \e \right) + \gamma + \calO\left( \left(\frac  \e r \right)^2\right),
\]
see \cite{JSp1}, Lemma  6.8. It follows from this and \eqref{Uhat.outer} that
\[
\frac 12 \int_{B(R,0)} \frac{ |\nabla  U_\e(x)|^2}\logeps \, dx
\le \frac 1{\logeps} \left(\pi \log \frac R \e + \gamma + \calO(\e)\right)
= \pi + \calO \left( \frac 1\logeps \right).
\]
The above estimates combine to show that
\[
\int_\omega |\partial_z u_\ep(x,z)|^2 \,dx \le  \pi \sum_{i=1}^n |f_i'(z)|^2 + \calO \left( \frac{1+ |f_i'(z)|^2}
\logeps \right)
\]
where the error terms are uniform with respect to $z$. The proof is concluded by integrating in $z$.
\end{proof}

\section{The proof of Theorem \ref{minimizers}}\label{section6}

To deduce Theorem \ref{minimizers} from Theorem \ref{main},
we must first verify that it is possible to construct recovery sequences ---
that is, sequences verifying the conclusions of part $(b)$ of
Theorem \ref{main} --- that in addition satisfy the boundary 
conditions \eqref{dirichlet}. Our first lemma
addresses difficulties that arise when the boundary
vortex locations 
$( q^0_1(z), \ldots, q^0_n(z))$, for $z \in \{0,L\}$, are not distinct.

\begin{lemma}\label{L.fep}
Let $f = (f_1,\ldots, f_n)\in H^1([0,L]; (\R^2)^n)$.
Then for all sufficiently small $\delta>0$, where the smallness condition may depend on $f$,
there exists $f^\delta \in H^1([0,L]; (\R^2)^n)$
of the form
\begin{equation}
f^\delta_i = \begin{cases}
f_i &\mbox{ in }\{0, L\}\\
f_i+ a^\delta_i &\mbox{ in }[\delta^{1/2}, L-\delta^{1/2}]\\
\mbox{affine}&\mbox{ in }[0, \delta^{1/2}]\mbox{ and in }[L-\delta^{1/2}, L]
\end{cases}
\label{fep.def1}\end{equation}
for certain  $a^\delta_i\in \R^2$ such that $|a^\delta_i|\le \delta^{1/3}$ and
\begin{align}
\label{fep2}
\max_{z\in [0,L]}|f^\delta_i(z) - f_i(z)|& \le  o(\delta^{1/4})\qquad\mbox{ for all }
i=1,\ldots n,\\
\label{fep3}
|f^\delta_i(z) - f^\delta_j(z)|&\ge \delta^{1/2} \min \{z, L-z,\delta^{1/2} \}  ,\qquad
\mbox{ for }i\ne j.
\\
\label{fep4}
 \int_0^{\sqrt \delta} |(f^\delta)'|^2 + \int_{L-{\sqrt \delta}}^L |(f^\delta)'|^2&= o(1) \quad\mbox{ as }\delta \searrow 0,
\end{align}
and
\begin{equation}
\label{fep5}
\int_{\sqrt \delta}^{L-\sqrt\delta}\left( \frac \pi 2 |(f^\delta)'|^2- \pi \sum_{i\ne j}\log |f^\delta_i - f^\delta_j|\right) \, dz
\to G_0(f) \mbox{ as }\delta\to 0\ .
\end{equation}

\end{lemma}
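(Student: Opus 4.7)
The plan is to construct $f^\delta$ by translating each filament by a small constant vector $a^\delta_i\in\R^2$, with $|a^\delta_i|\le\delta^{1/3}$, on the bulk $[\sqrt\delta,L-\sqrt\delta]$, chosen so that the shifted filaments are mutually separated by at least $\delta$, and interpolating linearly on each boundary strip to preserve the prescribed endpoint values $f_i(0)$ and $f_i(L)$. The shifts are produced by a Sard-type (volume) selection argument that handles collisions of $f$, both at the endpoints and in the interior, uniformly across the interval.

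To select $a^\delta$, for each pair $i\ne j$ set $C_{ij}:=\{f_j(z)-f_i(z):z\in[0,L]\}$, a continuous curve in $\R^2$ of finite length (since $f_j-f_i\in H^1$), with open $\delta$-neighborhood $C_{ij}^\delta$ of planar measure $O(\delta)$. Inside the polydisk $\prod_{i=1}^n\overline{B(\delta^{1/3})}\subset(\R^2)^n$, the configurations $(a_1,\ldots,a_n)$ with $a_j-a_i\in C_{ij}^\delta$ for at least one pair $i\ne j$ occupy a set of measure $O(\delta^{(2n+1)/3})$, a fraction $O(\delta^{1/3})$ of the polydisk. For all sufficiently small $\delta$ we may therefore select $a^\delta=(a^\delta_1,\ldots,a^\delta_n)$ in the polydisk with $\dist(a^\delta_j-a^\delta_i,C_{ij})\ge\delta$ for every $i\ne j$, and define $f^\delta$ by \eqref{fep.def1}.

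Properties \eqref{fep2}--\eqref{fep4} then follow from direct computations combined with the absolute-continuity estimate $|f_i(z)-f_i(0)|\le\sqrt{z}\,\|f_i'\|_{L^2(0,z)}=o(\delta^{1/4})$ on $[0,\sqrt\delta]$ (and analogously at $L$). Estimate \eqref{fep2} is immediate from $|a^\delta_i|\le\delta^{1/3}$ together with this estimate; \eqref{fep4} reduces to $\int_0^{\sqrt\delta}|(f^\delta)'|^2=\sum_i|f_i(\sqrt\delta)-f_i(0)+a^\delta_i|^2/\sqrt\delta=o(1)$ by the same inputs. For \eqref{fep3}, on the bulk the distance property of $a^\delta$ yields $|f^\delta_i-f^\delta_j|\ge\delta$ directly; on a boundary strip, if $f_i(0)=f_j(0)$ the linear interpolation simplifies to $f^\delta_i(z)-f^\delta_j(z)=(z/\sqrt\delta)(f_i(\sqrt\delta)-f_j(\sqrt\delta)+a^\delta_i-a^\delta_j)$, whose magnitude is $\ge(z/\sqrt\delta)\cdot\delta=\sqrt\delta\cdot z$ by the $z=\sqrt\delta$ instance of the distance condition, while if $f_i(0)\ne f_j(0)$ the required bound follows from continuity of $f$ for small $\delta$.

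The main obstacle is \eqref{fep5}. In the case $G_0(f)=+\infty$, the integrands $-\log|f^\delta_i-f^\delta_j|$ are uniformly bounded below (as $|f^\delta_i-f^\delta_j|$ is uniformly bounded above), and their pointwise $\liminf$ equals $-\log|f_i-f_j|$ on $\{f_i\ne f_j\}$ and $+\infty$ on $\{f_i=f_j\}$, so Fatou's lemma yields divergence to $+\infty$. Assume now $G_0(f)<\infty$; each $-\log|f_i-f_j|\in L^1(0,L)$ and $\{f_i=f_j\}$ is null. The kinetic term converges trivially; for each pair we split $[\sqrt\delta,L-\sqrt\delta]=\Omega_\delta\cup\Omega_\delta^c$ with $\Omega_\delta:=\{|f_i-f_j|>4\delta^{1/3}\}$. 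On $\Omega_\delta$ the ratio $|f^\delta_i-f^\delta_j|/|f_i-f_j|\in[\frac12,\frac32]$, giving $|\log|f^\delta_i-f^\delta_j|-\log|f_i-f_j||\le\log 2$, and dominated convergence (with dominator $|\log|f_i-f_j||+\log 2$) gives $\int_{\Omega_\delta}-\log|f^\delta_i-f^\delta_j|\to\int_0^L-\log|f_i-f_j|$. On $\Omega_\delta^c$, $\int-\log|f_i-f_j|\to 0$ by absolute continuity of the integral since $|\Omega_\delta^c|\to 0$, while the chosen separation gives $\int_{\Omega_\delta^c}-\log|f^\delta_i-f^\delta_j|\le|\log\delta|\cdot|\Omega_\delta^c|\to 0$, from the general fact $t\cdot|\{\phi>t\}|\to 0$ as $t\to\infty$ for $\phi\in L^1_+$, applied to $\phi=\max(-\log|f_i-f_j|,0)$ with $t=|\log(4\delta^{1/3})|$.
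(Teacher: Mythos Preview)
Your proof is correct and follows essentially the same approach as the paper: the same volume/coarea selection of $a^\delta$ in the polydisk of radius $\delta^{1/3}$ avoiding the $\delta$-neighborhood of each difference curve, and the same direct verifications of \eqref{fep2}--\eqref{fep4}. The only notable difference is in \eqref{fep5}: where you split into the ``far'' set $\Omega_\delta=\{|f_i-f_j|>4\delta^{1/3}\}$ and its complement and use the tail estimate $t\,|\{\phi>t\}|\to 0$, the paper instead observes the single pointwise inequality $|f^\delta_i-f^\delta_j|\ge C^{-1}|f_i-f_j|^4$ on the bulk (combining \eqref{fep3} when $|f_i-f_j|\le\delta^{1/4}$ with $|a^\delta_i|\le\delta^{1/3}$ otherwise), giving the integrable dominant $-4\log|f_i-f_j|+C$ and a one-line application of dominated convergence.
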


\begin{proof}
{\bf Step 1}.
We only need to explain how to choose the
points $a^\delta_i$ such that the stated properties hold.
For this, we introduce some notation:
let
\begin{align*}
f_{ij}(z) 
&:= f_i(z)-f_j(z), \\
N^\delta_{ij} &:= \{ x\in \R^2 : \dist(x, Image(f_{ij}))  < \delta \}\\
\mathcal N^\delta_{ij} &:= \{ a\in (\R^2)^n : a_i - a_j \in N^\delta_{ij} \}.
\end{align*}

We will show below that we can find
\begin{equation}\label{chooseaep}
a^\delta\in
 \{ a\in (\R^2)^n : |a_i| \le \delta^{1/3}\mbox{ for all $i$, } \ a\not\in \cup_{i,j} \mathcal N^\delta_{ij} \}.
\end{equation}
First, we assume that we have found such a point $a^\delta$,
and we check that $f^\delta$ then has the desired properties.
For $z\in [\delta^{1/2}, L-\delta^{1/2}]$, we deduce 
from \eqref{fep.def1} and \eqref{chooseaep} that $|f^\delta_i(z) - f_i(z)|\le \delta^{1/3}$,
which implies \eqref{fep2} for these $z$.
In particular, since
\[
|f^i(z)- f^i(0)| \le \sqrt{z} (\int_0^z |f'|\,dz)^{1/2} = o(\sqrt z) \quad\mbox{ as }z\searrow 0,
\]
it follows that 
$|f^\delta_i(\delta^{1/2}) - f_i(0)| \le o(\delta^{1/4})$.
Then for $z\in [0,\delta^{1/2}] $,
it follows that 
\[
|f^\delta_i(z) - f_i(z)| \le 
|f^\delta_i(z) - f_i(0)| +|f_i(0) - f_i(z)|  = o(\delta^{1/4})
\quad\mbox{ as }\delta\to 0.
\]
A similar argument of course holds near $z=L$, completing the proof of
\eqref{fep2}.
To verify \eqref{fep3}, first suppose that $\delta^{1/2} < z<L-\delta^{1/2}$.
Since $a^\delta\not\in \mathcal N^\delta_{ij}$, the definitions imply that 
for every $z\in (0,L)$
\[
\delta \le | f_{ij}(z) - (a^\delta_i-a^\delta_j)| = |f^\delta_i(z) - f^\delta_jz)|.
\]
This is \eqref{fep3} for $z\in [\delta^{1/2}, L-\delta^{1/2}]$. For other values
of $z$ it follows from the fact that $f^\delta$ is affine in $[0,\delta^{1/2}]$
and in $[L-\delta^{1/2}, L]$. 

One may deduce \eqref{fep4} rather directly from
the above estimates, which imply that $|(f^\delta)'(z)| = o(\delta^{-1/4})$ for $z\in [0,\delta^{1/2}] \cup [L-\delta^{1/2}, L]$.

Next we observe that for $z\in[\delta^{1/2}, L-\delta^{1/2}]$,
if  $i\ne j$ and $\delta$ is small enough, then 
\[ 
|f^\delta_i(z) - f^\delta_j(z)| \ge
\begin{cases}
|f_i(z)-f_j(z)|^4\quad&\mbox{ if }|f_i(z)-f_j(z)|\le \delta^{1/4}\\
\frac 12|f_i(z) - f_j(z)|\quad&\mbox{ otherwise} 
\end{cases}
\] 
This follows from \eqref{fep3}  in the first case, and 
in the second case from the fact that
$|a^\delta_i|\le \delta^{1/3}$. Since $f$ is bounded,
it follows that $|f^\delta_i-f^\delta_j| \ge C^{-1}|f_i-f_j|^4$
for $C = C(f)$, and hence that
\[
-\pi \sum_{i\ne j}\log|f^\delta_i(z)-f^\delta_j(z)| \le 
-4\pi \sum_{i\ne j}\log|f_i(z)-f_j(z)|  + C
\]
if $z\in[\delta^{1/2}, L-\delta^{1/2}]$.
Thus we can apply the dominated convergence theorem to the logarithmic
interaction terms in \eqref{fep5}. Since  $(f^\delta)' = f'$ in $[\delta^{1/2}, L-\delta^{1/2}]$,
the other terms converge trivially, and \eqref{fep5} follows.

{\bf Step 2}.
To find $a^\delta$ satisfying \eqref{chooseaep}, it suffices to show that
\begin{equation}\label{notaep}
\calL^{2n}\left(
\cup_{i\ne j}\calB^\delta_{ij}
\right) \ll  \delta^{2n/3},\quad
\mbox{ for }
\calB^\delta_{ij} := \{  a\in \mathcal N^\delta_{ij} : |a_i|< \delta^{1/3} \mbox{ $\forall$  $i$} \ \},
\end{equation}
since then the set in \eqref{chooseaep}, which is the complement  of $\cup \calB^\delta_{ij}$
in $\{a :\max |a_i|\le \delta^{1/3}\}$, must have positive measure.
To prove \eqref{notaep}, first note that for every $i,j$
\[
\int_0^L |f_{ij}'(z)|\,dz \le \sqrt L \left( \int_0^L |f_i'(z) - f_j'(z)|^2
\right)^{1/2} \le C \| f'\|_{L^2((0,L)} = C.
\]
The image of $f_{ij}$ is therefore a connected curve of length at most $C$.
It follows that $\calL^2(N^\delta_{ij}) \le C \delta$. 
Next, if we define $P_{ij} : (\R^2)^n\to \R^2$ by $P_{ij}(a_1,\ldots, a_n) = a_i-a_j$, then
$\mathcal N^\delta_{ij} = P_{ij}^{-1}(N^\delta_{ij})$, and one easily computes that
the Jacobian of $P_{ij}$ (in the sense of the coarea formula) is
$JP_{ij}=1$. Then the coarea formula implies that
\begin{multline*}
\calL^{2n}(\calB^\delta_{ij}) = 
\int_{\{ \max_i |a_i|\le \delta^{1/3}\}}JP_{ij} \, 
{\bf 1}_{ P_{ij}(a)\in N^\delta_{ij} } d\calL^{2n}(a) \ \\
=
\int_{N^\delta_{ij}} \calH^{2n-2} \left( \{ a : \max_i|a_i|\le \delta^{1/3}, P_{ij}(a)=x\}\right) dx
\le C \delta^{(2n-2)/3} \calL^2(N^\delta_{ij}).
\end{multline*}
Thus $\calL^{2n}(\calB^\delta_{ij}) \le C \delta^{(2n+1)/3}$
for all $i\ne j$. This implies \eqref{notaep} for all sufficiently small $\delta>0$.

\end{proof}

\begin{lemma}\label{RS+BC}
Assume that $(w^z_\ep)\subset H^1(\omega;\C)$ are sequences satisfying
\eqref{formofw}  -- \eqref{qep.lim} for $z\in \{0,L\}$, and that 
$f\in H^1([0,L];(\R^2)^n)$ satisfies
\[
f_i(0) = q^0_i(0),\qquad 
f_i(L) = q^0_{\sigma(i)}(L) \quad\mbox{ for $i=1,\ldots, n$}
\]
for some $\sigma\in S^n$.
Then there is a sequence $(u_\ep)\subset H^1(\Omega;\C)$ such that 
(defining $v_\ep$ as usual by rescaling)
\[
\star J v_\ep \rightarrow \pi \sum_{i=1}^n \Gamma_{f_i} \mbox{ in } \cup_{R>0}F(B(R)\times (0,L)) , \qquad \liminf_{\ep \to 0} G_\ep(u_\ep) \le G_0(f)
\]
and $u_\ep(x,z) = w^z_\ep(x)$ for $x\in \omega$ and $z\in \{0,L\}$.
\end{lemma}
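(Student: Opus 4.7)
The plan is to modify the recovery sequence of Proposition \ref{PRSeq} in thin boundary layers near $\omega\times\{0,L\}$ so as to match the prescribed boundary data exactly. The chief difficulty is that the $\{p_{\ep,j}(z)\}_j$ may have collisions for $z\in\{0,L\}$, whereas the interior recovery construction relies on separated vortex paths. First, apply Lemma \ref{L.fep} to produce $f^\delta\in H^1([0,L];(\R^2)^n)$: it agrees with $f$ at the endpoints, is affine with small derivative on $[0,\sqrt\delta]\cup[L-\sqrt\delta,L]$ by \eqref{fep4}, has uniformly separated filaments on $[\sqrt\delta,L-\sqrt\delta]$, and satisfies \eqref{fep5}. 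A diagonal extraction then reduces the problem to producing, for each $\delta$, a sequence $u^\delta_\ep$ with the correct boundary values, flat-norm vorticity convergence to $\pi\sum_i\Gamma_{f^\delta_i}$, and $\liminf_{\ep\to 0}G_\ep(u^\delta_\ep)\le G_0(f^\delta)+o_\delta(1)$.

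To build $u^\delta_\ep$, fix a boundary-layer width $\nu_\ep\to 0$ and define vortex paths
\[
g^{\delta,\ep}_j(z) := \begin{cases}
(1-\tfrac{z}{\nu_\ep})\,p_{\ep,j}(0) + \tfrac{z}{\nu_\ep}\,h_\ep f^\delta_j(\nu_\ep), & z\in[0,\nu_\ep],\\[0.4em]
h_\ep f^\delta_j(z), & z\in[\nu_\ep,\,L-\nu_\ep],\\[0.4em]
\tfrac{L-z}{\nu_\ep}\,h_\ep f^\delta_j(L-\nu_\ep) + (1-\tfrac{L-z}{\nu_\ep})\,p_{\ep,\sigma(j)}(L), & z\in[L-\nu_\ep,L],
\end{cases}
\]
and set
\[
u^\delta_\ep(x,z) := \prod_{j=1}^n \exp\!\bigl(i\beta(x,g^{\delta,\ep}_j(z))\bigr)\,\zeta_\ep\!\bigl(x - g^{\delta,\ep}_j(z)\bigr).
\]
Using $\zeta_\ep$ uniformly (rather than the optimal profile $U_\ep$ of \eqref{hat.u.ep}) alters the 2D energy per slice by at most $O(1)$, since both profiles agree with $e^{i\theta}$ for $|x|\gtrsim\sqrt\ep$; and by construction $u^\delta_\ep(\cdot,0)=w^0_\ep$ and $u^\delta_\ep(\cdot,L)=w^L_\ep$ (after reindexing $k=\sigma(j)$ at $z=L$).

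On the interior slab $[\nu_\ep,L-\nu_\ep]$, the argument of Proposition \ref{PRSeq} (adapted to the profile $\zeta_\ep$) yields contribution $G_0(f^\delta)+o_\ep(1)$ to $G_\ep$. For the boundary layers, the worst-case 2D energy per slice is $\pi n^2|\log\ep|+O(1)$ (total collision), leaving an excess of $\pi n(n-1)|\log\ep|/2 + O(1)$ after the slice-wise $G_\ep$ subtraction $\pi n|\log\ep|+\pi n(n-1)|\log h_\ep|$; integrated over $[0,\nu_\ep]$ this excess is $O(\nu_\ep|\log\ep|)$. The $\partial_z$-energy, estimated as in Lemma \ref{LFTPoRS}, is at most $O(\alpha_\ep^2/\nu_\ep)$, where $\alpha_\ep:=\max_j|h_\ep^{-1}p_{\ep,j}(0)-f^\delta_j(\nu_\ep)|$. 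Using \eqref{qep.lim} and property \eqref{fep4} (smallness of $(f^\delta)'$ near the endpoints), $\alpha_\ep\le o_\delta(1)\sqrt{\nu_\ep}+o_\ep(1)$, so $\alpha_\ep^2/\nu_\ep\le o_\delta(1)^2 + o_\ep(1)^2/\nu_\ep$. Both excesses are $o(1)$ provided $o_\ep(1)^2\ll\nu_\ep\ll 1/|\log\ep|$; such a window exists after passing to a subsequence to control the (unquantified) rate of $q_{\ep,j}(0)\to q^0_j(0)$.

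The main obstacle is precisely this scale balance: the absence of a rate in \eqref{qep.lim} forces subsequence extraction, which is why the lemma asserts only a $\liminf$ (not a $\limsup$) bound on $G_\ep$. The flat-norm convergence of the rescaled vorticities to $\pi\sum_i\Gamma_{f^\delta_i}$ follows from Remark \ref{remarkRS} combined with part (c) of Theorem \ref{main} applied to $u^\delta_\ep$, and a diagonal argument in $\delta$ and $\ep$ concludes the proof for $f$.
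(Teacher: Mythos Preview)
The decisive gap is your use of $\zeta_\ep$ as the vortex profile throughout $\Omega$. You correctly observe that replacing the optimal profile $\hat U_\ep$ by $\zeta_\ep$ changes the 2D energy on each slice by $O(1)$, but you then assert that the interior slab contributes $G_0(f^\delta)+o_\ep(1)$ to $G_\ep$; this does not follow. A profile satisfying only \eqref{zetaep} has core energy $\pi|\log\ep|+C_\zeta+o(1)$ on each slice, with $C_\zeta\ge\gamma$ and strict inequality in general; the definition \eqref{Gep.def} of $G_\ep$ subtracts exactly $nL\gamma$ through $\kappa_n(\Omega)$ in \eqref{kappan.def}. Hence your interior slab actually contributes $G_0(f^\delta)+nL(C_\zeta-\gamma)+o_\ep(1)$, and the upper bound is off by the fixed positive constant $nL(C_\zeta-\gamma)$. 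Since $\zeta_\ep$ is part of the \emph{given} boundary data, you are not free to assume $C_\zeta=\gamma$.

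The paper repairs this by interpolating the profile itself: it uses the optimal $\hat U_\ep$ in the bulk and transitions linearly to $\zeta_\ep$ only over layers of width $\ep^{1/6}$ near $z=0,L$. On these thin layers the crude bound $\int_\omega e_\ep^{2d}(\tilde u_\ep)\le n^2(\pi|\log\ep|+C)$ suffices, since its integral over $[0,\ep^{1/6}]$ is $O(\ep^{1/6}|\log\ep|)=o(1)$. Simultaneously, the paper couples $\delta$ to $\ep$ by taking $\delta_\ep=\ep^{1/3}$, so that the affine region of $f^{\delta_\ep}$ and the profile-transition layer both occupy exactly $[0,\ep^{1/6}]$; on the complement the separation \eqref{goodsep} holds and the sharp estimate \eqref{uhatren} (which does require the optimal profile) applies. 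This single explicit choice of scales replaces your two-parameter diagonal argument in $(\delta,\ep)$. Your concern about the unquantified rate in \eqref{qep.lim} is legitimate but secondary to the profile issue; once the latter is fixed, it is handled by applying Lemma \ref{L.fep} to an $O(\eta_\ep)$-perturbation of $f$ in $H^1$ whose endpoint values are $q_{\ep,j}(z)$ rather than $q^0_j(z)$.
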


\begin{proof}
Let $f_\ep= h_\ep f^{\delta_\ep}$, where $f^\delta$ is the regularization of $f$ provided by Lemma \ref{L.fep} and $\delta_\ep = \ep^{1/3}$.
We then define
\[
\tilde u_\ep(x,z) := 
\prod_{j=1}^n \left[   e^{i\beta (x, f_{\ep,j}(z)) } \tilde U_\ep(x-  f_{\e,j}(z), z) \right]
\]
where $\beta$ is defined in \eqref{beta.def}, 
and (for $\hat U_\ep$ from \eqref{Uhat.outer}, \eqref{inner.energy}) we set
\[
\tilde U_\ep(x,z): = 
\begin{cases}
\zeta_\ep(x) +  \ep^{-1/6}z(\hat U_\ep(x) - \zeta_\ep(x)) &\mbox{ if } 0\le z \le \ep^{1/6} \\
\hat U_\ep(x) &\mbox{ if } \ep^{1/6}\le z \le L-\ep^{1/6}\\
\zeta_\ep(x)+ \ep^{-1/6}(L -  z) (\hat U_\ep(x) - \zeta_\ep(x)) &\mbox{ if } L- \ep^{1/6} \le z \le L \ .
\end{cases}
\]
(Recall that $\zeta_\e$ is defined in Section \ref{S1.1}.)
The flat-norm convergence of $\star J v_\ep$ follows very much as in the proof of
Proposition \ref{PRSeq}, so we omit the details.

{\bf Step 1}. 
For $\delta^{1/2} = \ep^{1/6} \le z \le L - \ep^{1/6}$, 
Lemma \ref{L.fep} implies that 
\begin{equation}
|f_{\ep,i} - f_{\ep,j}| \ge 2 \sqrt \ep \qquad\mbox{ for sufficiently small }\ep.
\label{goodsep}\end{equation}
The significance of this stems from the fact that
$\sqrt \ep$ is the intermediate length scale that we 
chose (rather arbitrarily) in the construction of $\hat U_\ep$. Once
vortices are separated by larger distances (that is, once \eqref{goodsep} holds), 
all the estimates
in the proofs of Proposition \ref{PRSeq} and Lemma \ref{LFTPoRS}
are uniform, and it follows that
\begin{multline*}
\int_{\ep^{1/6}}^{L-\ep^{1/6}} 
\int_\omega e_\ep(\tilde u_\ep)\,dx\,dz
- n \pi L(\logeps+\gamma) - n(n-1)\pi L |\log h_\ep| - \kappa_n(\Omega)\
\\
 \le 
\int_{\ep^{1/6}}^{L-\ep^{1/6}}
\frac \pi 2 |(f^{\delta_\ep})'|^2 - \pi \sum_{i\ne j}\log|f^{\delta_\ep}_i - f^{\delta_\ep}_j| \, dz
+ o(1)
\end{multline*}
as $\ep \to 0$.
In view of \eqref{fep5}, we therefore only need to show that
\begin{equation}\label{shoulders}
\int_0^ {\ep^{1/6}} 
\int_\omega  e_\ep(\tilde u_\ep) + 
\int_{L-\ep^{1/6}}^L
\int_\omega  e_\ep(\tilde u_\ep) \to 0 .
\end{equation}
We will only consider the first integral, since the estimate of the second is identical.

{\bf Step 2}. Both $\hat U_\ep$ and $\zeta_\ep$ have the form
$\rho_\ep(r) e^{i\theta}$
for $\rho_\ep$ satisfying
\begin{equation}
\rho_\ep(0) = 0, \qquad 0 \le \rho_\ep' \le \frac C \ep, \qquad 1 \ge \rho_\ep(s) \ge 1 - C \frac \ep s \ .
\label{formofmod}\end{equation}
Thus $\tilde U_\ep(\cdot, z)$ also has this form for every $z$.
It follows that $\partial_z \tilde u_\ep$
is a sum of terms of the form already estimated in Lemma \ref{LFTPoRS}, 
together with some new terms of the form
\[
\ep^{-1/6}( U_\ep(\cdot - a) - \zeta_\ep(\cdot - a)) * \mbox{(smooth functions)}
\]
where the smooth functions have modulus less than $1$.
It also follows from \eqref{formofmod}
that $|(\hat U_\ep - \zeta_\ep)(x)| \le \min \{ 1, C \ep / |x| \}$
and hence that for every $a$,
\[
\| \hat U_\ep(\cdot - a) - \zeta_\ep(\cdot - a)\|_{L^2(\omega)}^2 \le
\int_0^{\mbox{\scriptsize diam}(\omega)} \min(1, C \ep/s)^2 s \, ds \le  C \ep^2 \logeps
\]
as long as $0< \ep < 1/2$ say, where $C = C(\omega)$.
Using these facts  and arguing as in the proof of Lemma \ref{LFTPoRS},
one checks that
\[ 
\int_0^{\ep^{1/6}} \int_\omega |\partial_z \tilde u_\ep|^2dx\,dz
\le
C \int_0^{\ep^{1/6}} |(f^{\delta_\ep})'|^2 + o(1)  \  \overset{\eqref{fep4}}= \ o(1)
\]
as $\ep\to 0$.

{\bf Step 3}. We now write
\[
\tilde u_\ep^j := e^{i\beta (x, f_{\ep,j}(z)) } \tilde U_\ep(x-  f_{\e,j}(z), z),
\]
so that $\tilde u_\ep = \prod_{i=1}^n \tilde u_\ep^j$.
Since $\|\tilde u_\ep^j\|_{L^\infty} \le 1$ for every $j$, it is clear that
\[
| \nabla_x \tilde u_\ep|^2 \le (\sum_j |\nabla_x\tilde u_\ep^j|)^2 \le n \sum_j|\nabla_x\tilde u_\ep^j|^2.
\]
Also, for $a,b\in [0,1]$, by rearranging the inequality $0 \le (1-a^2)(1-b^2)$,
we find that $(1-a^2b^2) \le (1-a^2) + (1-b^2)$. By induction, we deduce that
\[
( 1 - |\prod \tilde u_\ep^j|^2)^2 \le \left(\sum (1 - |\tilde u_\ep^j|^2) \right)^2
\le n \sum (1 - |\tilde u_\ep^j|^2)^2.
\]
Therefore
\[
e^{2d}_\ep(\tilde u_\ep) \le n \sum_{j=1}^n  e^{2d} (\tilde u_\ep^j).
\]
We can then appeal to rather standard estimates of $ e^{2d} (\tilde u_\ep^j)$ to conclude that
\[
\int_\omega e^{2d} (\tilde u_\ep^j(x,z))dz \le \pi \logeps + C
\]
for every $z$, so it follows that
\[
\int_0^{\ep^{1/6}} \int_\omega e^{2d}(\tilde u_\ep) dx\,dz
\le  \ep^{1/6} n^2 \sup_{j,z} \int_\omega e^{2d} (\tilde u_\ep^j(x,z))dx \le
\ep^{1/6} n^2( \pi \logeps + C)\ ,
\]
completing the proof of \eqref{shoulders}.
\end{proof}

\begin{proof}[Proof of Theorem \ref{minimizers}]

Let  $u_\ep$ minimize $F_\ep$ 
in 
\[
\mathcal A_\ep :=
\{ u\in H^1(\Omega;\C) : \ u(x, 0) = w_\e^0(x), \ \ u(x,L) = w_\e^L(x) \}
\]
for boundary data as described in \eqref{formofw}.
We want to verify that the sequence $(u_\ep)$ generated in this way satisfies the
hypotheses of part $(a)$ of Theorem \ref{main}.  It is straightforward to check from
\eqref{formofw}--\eqref{qep.lim} that assumptions \eqref{scaling1a} and \eqref{scaling1aa}
are satisfied,
and \eqref{scaling2} follows from Lemma \ref{RS+BC}.
The only remaining hypothesis to check is that
\begin{equation}\label{orhtc}
\| \star Ju_\ep - n\pi \Gamma_0\|_{W^{-1,1}(\Omega)}\to 0 \quad\mbox{ as }\ep \to 0.
\end{equation}
To verify this we argue as in Lemma \ref{labels}. Thus, for $\delta>0$ to be specified below,
we let $\Omega^\delta := \omega \times(-\delta, L+\delta)$, and we extend
each $u_\ep$ to a function on $\Omega^\delta$, still denoted $u_\ep$, such that
$u_\ep(x,z) = w^0_\ep(x)$ for $z<0$, and 
$u_\ep(x,z) = w^L_\ep(x)$ for $z>L$. From \eqref{scaling2} and Theorem \ref{UKC},
we may pass to a subsequence and find an integer multiplicity $1$-current $J$ in $\Omega^\delta$
such that $\partial J = 0$ in $\Omega^\delta$, 
\[
\frac 1\pi \star J u_\ep \rightarrow J\mbox{ in }W^{-1,1}(\Omega^\delta),
\qquad\qquad M_\Omega(J) < M_{\Omega^\delta}(J)\le  n L + 2\delta M.
\]
We must show that the restriction of $J$ to $\Omega$ is $n\Gamma_0$.
It is for this that we will need our assumption that
\[
L < 2 R\qquad\mbox{ where }R=\dist(0,\partial \omega).
\]

Due to our assumptions on $w_\ep^{0,L}$, 
it is straightforward to check
that the restriction of $J$
to $\omega\times (-\delta, 0)$, for example,
consists of $n$ copies of the segment $\{0\}\times(-\delta,0)$
(with the correct, ``upward" orientation, henceforth not mentioned),
with a similar statement in $\omega \times(L,L+\delta)$.



We may write $J = \sum_{i\in I} T_{\gamma_i}$,
where each $\gamma_i$ is a Lipschitz curve without boundary
in $\Omega^\delta$. Then there must be subsets $I^0, I^1$ of $I$, 
both with cardinality $n$, such that
\begin{align*}
&\mbox{ for $i\in I^0$, }T_{\gamma_i} =  \{0\}\times (-\delta, 0)\qquad 
\mbox{ in } \omega\times (-\delta, 0), \\
&\mbox{ for $i\in I^1$, }T_{\gamma_i} = \{0\}\times (L,L+\delta)\quad \mbox{ in } \omega\times (L, L+\delta). 
\end{align*}
Clearly, if $i\in I^0\cap I^1$, then $\gamma_i$
connects a copy of $\{0\}\times (-\delta, 0)$ to a copy of $\{0\}\times (L,L+\delta)$,
and must have length at least $L+2\delta$.

On the other hand, if $i\in I^0\setminus I^1$, then $\gamma_i$ must connect a copy
of $\{0\}\times (-\delta, 0)$ to $\partial \omega \times[0,L]$, and must have length at
least $\delta +R$. The same applies to $i\in I^1\setminus I^0$.
Thus, if $n_0 := \# (I^0\cap I^1)$, then
\begin{align*}
nL+ 2M\delta \ge M_{\Omega^\delta}(J) &
\ge 
\sum_{i\in (I^0\cup I^1)}M_{\Omega^\delta}(T_{\gamma_i} )\\
& \ge 
n(L+2\delta) + (n-n_0)(2R-L) .
\end{align*}
Since $\delta$ may be chosen arbitrarily small, we may deduce that 
$n=n_0$, and hence that $I^0=I^1$. Similar considerations show that
$I = I^0$ -- that is, there are no indecomposable pieces other than the $n$
curves that connect $\{0\}\times (-\delta, 0)$ to $\{0\}\times(L,L+\delta)$.
Finally, the same argument shows that none of these $n$ curves can have length greater than
$L+2\delta$. Thus every curve coincides with $\{0\}\times(0,L) = \Gamma_0$ in $\Omega$.
This says that the restriction of $J$ to $\Omega$ is exactly $n\Gamma_0$, 
which completes the proof of \eqref{orhtc}.
 
Having verified \eqref{orhtc}, we may apply part (a) of
Theorem \ref{main} to conclude that
$\star \frac 1\pi Jv_\ep$ is precompact in $W^{-1,1}(B(R)\times (0,L))$ for every $R>0$,
and that every limit of a convergent subsequence has the form
$\pi \sum_{i=1}^n \Gamma_{f^*_i}$, where
\[
f^*\in \mathcal A_0 := 
\{ f\in H^1([0,L]; (\R^2)^n ) \ : \exists \sigma\in S^n, \ f_i(0) = q^0_i(0),  \ f_i(L) = q^0_{\sigma(i)}(L) 
  \}
\]
for $(q^0_i(z))$, $z= 0,L$ appearing in 
\eqref{formofw}, \eqref{qep.lim}; and that
\[
G_0(f^*) \le \liminf_{\ep\to 0}G_\ep(u_\ep) 
\]
along the subsequence.
Since $u_\ep$ minimizes $F_\ep$ in $\mathcal A_\ep$, it follows
from Lemma \ref{RS+BC} that 
\[
\liminf_{\ep\to 0} G_\ep(u_\ep) \le \inf_{f\in \mathcal A_0} G_0(f).
\]
Therefore $f^*$ minimizes $G_0(\cdot)$ in $\mathcal A_0$, as was to be shown.
\end{proof}

\section{The proof of Theorem \ref{localmin}}\label{sectionforlocalmin}

We first need a Lemma that relates local minimizers with respect to two
different topologies.

\begin{lemma}\label{L19}
Assume that $f^*\in H^1((0,L);(\R^2)^n)$ is a strict local minimizer of
$G_0$ in $\mathcal A_0$ and that $f^*_i(z)\ne f^*_j(z)$ for all  $z\in (0,L)$ and 
$i\ne j$.

Then there exists $\delta^*, R_1>0$ such that 
\[
\mbox{ if }f\in \mathcal A_{0}\ \ \  \mbox{ and }  \ \ \ 
0< \|  \pi \sum_{i=1}^n ( \Gamma_{f_i} - \Gamma_{f^*_i} )\|_{F(B(R)\times (0,L))} < \delta^*
\ \ \mbox{ for some }R\ge R_1,
\]
then $G_0(f) > G_0(f^*)$. 
\end{lemma}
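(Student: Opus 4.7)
The plan is to argue by contradiction: assume the conclusion fails and extract a sequence $(f^k)\subset\mathcal{A}_0$ with $f^k\ne f^*$, $G_0(f^k)\le G_0(f^*)$, and
$\|\pi\sum_i(\Gamma_{f^k_i}-\Gamma_{f^*_i})\|_{F(B(R_k)\times(0,L))}\to 0$
with $R_k\to\infty$. The goal will be to promote this weak ``flat closeness'' to $H^1$ convergence $f^k\to f^*$, producing a sequence that violates the assumed strict $H^1$ local minimality of $f^*$.

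First I would derive a uniform $H^1$ bound on $(f^k)$. Splitting the log term via $-\log|f^k_i-f^k_j|\ge -\log(1+|f^k_i-f^k_j|)$, using the boundary constraint built into $\mathcal{A}_0$ together with the Sobolev embedding to write $\|f^k\|_\infty \le Q+\sqrt{L}\,\|(f^k)'\|_{L^2}$ with $Q=\max_{i,z\in\{0,L\}}|q^0_i(z)|$, I would obtain
\[
G_0(f^k)\ge \tfrac{\pi}{2}\|(f^k)'\|_{L^2}^2-C\log(1+\|(f^k)'\|_{L^2}),
\]
so $G_0(f^k)\le G_0(f^*)$ will force $\|f^k\|_{H^1}$ to stay uniformly bounded.

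Next I would apply Rellich along a subsequence to obtain $f^k\rightharpoonup \tilde f$ weakly in $H^1$ and uniformly on $[0,L]$, with $\tilde f\in\mathcal{A}_0$. Uniform convergence implies $\Gamma_{f^k_i}\to\Gamma_{\tilde f_i}$ in the flat norm on each bounded cylinder, since the difference is the boundary of a ``ribbon'' $2$-current of mass at most $\|f^k_i-\tilde f_i\|_\infty\cdot L$. Combining with the flat-norm hypothesis, $\sum_i\Gamma_{\tilde f_i}=\sum_i\Gamma_{f^*_i}$ as currents on $\R^2\times(0,L)$. Slicing by height $\zeta(x,z)=z$ yields $\sum_i\delta_{\tilde f_i(z)}=\sum_i\delta_{f^*_i(z)}$ for a.e., hence by continuity of both sides every, $z\in(0,L)$. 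Since $\{f^*_i(z)\}_{i=1}^n$ are pairwise distinct for every $z$, there is a unique constant permutation $\sigma\in S^n$ with $\tilde f_i=f^*_{\sigma(i)}$; after relabeling $(f^k)$ I may assume $\tilde f=f^*$.

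The hard part will be the final step, upgrading weak to strong $H^1$ convergence so that the contradiction with strict $H^1$ local minimality actually applies. Here the non-collision hypothesis on $f^*$ is essential: uniform convergence $f^k\to f^*$ yields a uniform positive separation $|f^k_i(z)-f^k_j(z)|\ge c>0$ on $[0,L]$ for all large $k$, which makes $\log|f^k_i-f^k_j|$ uniformly bounded and allows dominated convergence to give
\[
\int_0^L\log|f^k_i-f^k_j|\,dz\to\int_0^L\log|f^*_i-f^*_j|\,dz.
\]
Combined with $G_0(f^k)\le G_0(f^*)$ and weak lower semicontinuity of the Dirichlet term, this forces $\|(f^k)'\|_{L^2}\to\|(f^*)'\|_{L^2}$, and hence strong $H^1$ convergence. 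Since $f^k\ne f^*$ for every $k$, this contradicts strict $H^1$ local minimality of $f^*$, completing the argument.
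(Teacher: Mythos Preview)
Your argument is essentially correct and follows the same overall architecture as the paper's proof: a priori $H^1$ bounds from $G_0(f^k)\le G_0(f^*)$, weak $H^1$ compactness, identification of the limit with $f^*$ (up to a permutation) via the flat-norm hypothesis, and then an upgrade to strong $H^1$ convergence to contradict strict local minimality. The paper reaches the identification step by a more hands-on ``tube'' argument, using the slice estimate \eqref{slice.est} directly to show that for every $\delta>0$ and $k$ large, each $f^*_i$ has exactly one $f^k_j$ within distance $r(\delta)$ on $[\delta,L-\delta]$; your route via the ribbon current and slicing of the limiting current identity $\sum_i\Gamma_{\tilde f_i}=\sum_i\Gamma_{f^*_i}$ is a clean alternative and is fine.

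There is, however, one genuine gap. You assert that uniform convergence $f^k\to f^*$ yields a uniform positive separation $|f^k_i(z)-f^k_j(z)|\ge c>0$ on all of $[0,L]$, and then invoke dominated convergence for the logarithmic term. But the hypothesis only guarantees $f^*_i(z)\ne f^*_j(z)$ on the \emph{open} interval $(0,L)$; the boundary data $q^0_i(0),q^0_i(L)$ are explicitly allowed to coincide (cf.\ the remarks after Theorem \ref{localmin}), so $|f^*_i(z)-f^*_j(z)|$ may tend to zero as $z\to 0$ or $z\to L$, and your uniform separation claim fails. The easy fix is to replace dominated convergence by Fatou's lemma: since $-\log|f^k_i-f^k_j|\ge -\log(2\|f^k\|_\infty)\ge -C$ and converges pointwise on $(0,L)$, Fatou gives lower semicontinuity of the interaction term. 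Combined with weak lower semicontinuity of the Dirichlet term and the fact that $G_0(f^k)\to G_0(f^*)$, a standard ``sum of two lsc terms converges, hence each converges'' argument then yields $\int|(f^k)'|^2\to\int|(f^*)'|^2$, and the upgrade to strong $H^1$ convergence goes through. This is exactly what the paper means by ``standard lower semicontinuity arguments'' in its final step.
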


\begin{proof}
Consider any sequence  $(f^k)\in \mathcal A_0$ such that 
\begin{equation}\label{fk.choice}
G_0(f^k) \le G_0(f^*),\qquad \quad \|  \sum_{i=1}^n ( \Gamma_{f^k_i}  -  \Gamma_{f^*_i})\|_{F(B(R)\times (0,L))} \to 0 \, 
\end{equation}
for some $R\ge R_1$, where $R_1$ will be fixed below.
We will show that $f^k\to f^*$ in $H^1((0,L);(\R^2)^n)$ as $k\to \infty$, after 
a possible relabelling.
Since $f^*$ is a local minimizer, this will prove that $f^k = f^*$ for all sufficiently 
large $k$. Because the sequence $(f^k)$ is arbitrary, this will establish the lemma.

First, by a $1$-dimensional Sobolev embedding theorem,
\[
\| f^k \|^2_{L^\infty} \le  \| f^k(0)\|^2 +  L \| (f^k)'\|_{L^2}^2 = C + C \| (f^k)'\|_{L^2}^2.
\]
where the constants depend on the boundary data for $\mathcal A_0$ and on $L$.
Also, it is clear that
\[
-\pi \int_0^L \sum_{i\ne j} \log |f^k_i - f^k_j| \ge -\pi n(n-1) L \log( 2 \|f^k\|_{L^\infty}).
\]
It follows that 
\[
\| f^k\|_{L^\infty}^2 - C \log (\|f^k\|_{L^\infty}) - C \le CG_0(f^k) \le C G_0(f^*).
\]
Since $x^2- C\log x\to +\infty$ as $x\to +\infty$, there exists some constant $C$ such that
\begin{equation}
 \int_0^L \sum_{i\ne j} \log |f^k_i - f^k_j| \le C, \qquad 
\| f^k\|_\infty + \| (f^k)'\|_{L^2} \le C  
\qquad\mbox{ for all $k$}.
 \label{calAstar}\end{equation}
We now fix $R_1 := \sup_k \| f^k\|_\infty +1$.

Now for $\delta \in (0,L/2)$, define $r = r(\delta) $ by
\[
r(\delta) :=
 \min \left( \ \{ 1\}  \cup   \left\{ \frac 14 | f^*_i(z) - f^*_j(z)| :  i\ne j,  \delta  \le z \le L - \delta \right\} \, \right)  .
\]
We claim that for every $\delta$ as above,
if $k$ is large enough, then for every $i$,
\begin{equation}
\min_j | f^*_i(z) - f^k_j(z)| <  r(\delta) \mbox{ for }  \delta<z<L-\delta.
\label{tube}\end{equation}
Suppose toward a contradiction that \eqref{tube} fails for some $i$ and $z$. 
Since both $f^*$ and $f^k$
are $C^{0,\frac 12}$, with modulus of continuity depending on $f^*$
but independent of $k$, see \eqref{calAstar}, it follows that there is
an interval $I^k$ such that 
\[
\min_j |f^*_i(z) - f^k_j(z)| \ge \frac 12 r(\delta)\mbox{ for all }z\in I^k,\qquad |I^k|\ge c = c(f^*).
\]
For such $z$, it follows that 
\begin{multline*}
\| \sum_{j=1}^n ( \delta_{f^*_j(z)}   -  \delta_{f^k_j(z)}) \|_{F(B(R))} \\
 \ge \int_{B(R)} ( \frac 12 r(\delta) - |f^*_i(z) - x|)^+ \Big(\sum_{j=1}^n ( \delta_{f^*_j(z)}   -  \delta_{f^k_j(z)}|)\Big)(dx)
=  \frac 12 r(\delta). 
\end{multline*}
Hence for all $k$,
\[
\int_0^L \| \sum_{j=1}^n ( \delta_{f^*_j(z)}   -  \delta_{f^k_j(z)}) \|_{F(B(R))}  dz \ge \frac12 \,  |I^k|  \, r(\delta) > c > 0.
\]
On the other hand, we may use \eqref{slice.est} to estimate
\begin{align*}
\int_0^L \| \sum_{i=1}^n ( \delta_{f^*_i(z)}   -  \delta_{f^k_i(z)}) \|_{F(B(R))}\, dz
&=
\int_0^L  \|\langle  \sum_{i=1}^n ( \Gamma_{f^*_i}  -  \Gamma_{f^k_i}), \zeta , z\rangle  \|_{F(B(R))}\, dz
\\
&\le \|  \sum_{i=1}^n ( \Gamma_{f^*_i}  -  \Gamma_{f^k_i})\|_{F(B(R)\times (0,L))}  \to 0,
\end{align*}
a contradiction. Hence \eqref{tube} holds.

Since the balls $B(f^*_i(z),  r(\delta))$ are disjoint for  $\delta < z <L-\delta$,
by definition of $r(\delta)$, it follows from \eqref{tube} that for every $z$ in this range,
each of these balls contains exactly one point $f^k_j(z)$. We may relabel the $(f^k_j)$
such that at height $z=L/2$ for example, $|f^*_i(L/2) - f^k_i(L/2)|< r(\delta)$ for all $i$.
Then \eqref{tube} and the continuity of $f^*, f^k$ imply that 
$|f^*_i(z) - f^k_i(z)|< r(\delta)$ for all $i$ and all $z\in (\delta, L-\delta)$.

Now  we let $k\to \infty$ and, using \eqref{calAstar}, 
extract a subsequence such that $f^k$ converges weakly in $H^1$, and thus uniformly,
to a limit $f^\infty\in \mathcal A_0$. It follows from the above that
$|f^*_i(z) - f^\infty_i(z)| \le r(\delta)$ for all $i$ and all $z\in (\delta, L-\delta)$.
Since $\delta$ is arbitrary, we conclude that $f^\infty = f^*$.
Thus in fact $f^k\rightharpoonup f^*$ in $H^1$, without passing to a subsequence.
Then the choice of $(f^k)$ and  standard lower semicontinuity arguments imply that 
\[
G_0(f^*) \ge \liminf G_0(f^k) \ge G_0(f^*).
\]
It follows in particular that $\int |(f^k)'|^2dz \to \int|(f^*)'|^2 dz$. This allows
us to improve weak $H^1$ convergence to strong $H^1$ convergence, completing the proof.

\end{proof}

The proof of Theorem \ref{localmin}  now follows standard arguments.

\begin{proof}[Proof of Theorem \ref{localmin}]
Fix $0<\delta^1 <\delta^*$, for $\delta^*$ from Lemma \ref{L19}, and let $u_\ep$ minimize $F_\ep$
in
\[
\bar {\mathcal A}_{\ep,\delta^1} := \{ u\in \mathcal A_\ep \ : \ \| \star J u - \pi \sum_{i=1}^n\Gamma_{h_\ep f^*_i}\|_{F(\Omega)}  \le  h_\ep\delta^1 \}  \ .
\]
Existence of a minimizer is rather standard; see for example \cite{MSZ}, Theorem 4.2
for a very similar argument.
We claim that if $\ep$ is small enough
\begin{equation}\label{uepinAep}
 \| \star J u_\ep  - \pi \sum_{i=1}^n\Gamma_{h_\ep f^*_i}\|_{F(\Omega)}  <  h_\ep\delta^1
\qquad \mbox{ or equivalently}, \ \ u_\ep \in  \mathcal A_{\ep,\delta^1}.
\end{equation}
Toward this end, consider a hypothetical sequence in $\bar {\mathcal A}_{\ep,\delta^1} $
for which  \eqref{uepinAep} fails, so that the constraint holds with equality. 
Then Lemma \ref{RS+BC} and the definition of $\mathcal A_\ep$ 
imply that  $(u_\ep)$ satisfies the hypotheses of Theorem \ref{main}.
Thus, defining as usual $v_\ep( x,z)= u_\ep(h_\ep x, z)$, we conclude that
\[
\star Jv_\ep\to \pi \sum_{i=1}^n\Gamma_{f_i} \qquad \mbox{ in }F(B(R)\times (0,L))
\mbox{ for every }R>0.
\] 
for some $f\in H^1((0,L);(\R^2)^n)$. Moreover, again appealing to Lemma \ref{RS+BC}
(the recovery sequence with correct boundary conditions) and Theorem \ref{main},
we find that
\[
G_0(f) \le G_0(f^*).
\]
Also, by rescaling our assumption that 
$\| \star J u_\ep  - \pi \sum_{i=1}^n\Gamma_{h_\ep f^*_i}\|_{F(\Omega)}  = h_\ep \delta^1$
and taking limits, we conclude that
$\|   \pi \sum_{i=1}^n ( \Gamma_{ f_i}- \Gamma_{ f^*_i} ) \|_{F(B(R)\times (0,L))}  = \delta^1$
for every $R>0$.

This is impossible, in view of Lemma \ref{L19}, so \eqref{uepinAep} must be true.

Then it is well-known (see for example \cite{MSZ}) and not hard to check that $\mathcal A_{\ep,\delta^1}$ is an open set in $H^1$, and so \eqref{uepinAep} implies that $u_\ep$ is a local minimizer.
By arguing in this way for a sequence $\delta^k\searrow 0$, 
and employing  a diagonal argument, we may generate 
a sequence $(u_\ep)$ such that the rescaled Jacobians $(Jv_\ep)$
converge  as required to $\pi \sum \Gamma_{f^*_i}$.
\end{proof}

\appendix
\section{}\label{App.A}

We finally present the proof of Lemma \ref{L3b},
which establishes certain properties of $u_\ep(\cdot, z)$ for
$z\in \calG^\ep_2$.
The main point of the proof is contained in the following

\begin{lemma}
There exist positive numbers $\theta, a,b$, 
depending on $n$ and $\omega$, such that $b<a$, and the following holds:

Assume that  $w\in H^1(\omega;\C)$ satisfies
\begin{equation}\label{L3b.h1bis}
\int_\omega e^{2d}(w)(x) \,dx \le  \pi(n+\theta)\logeps
\end{equation}
and 
\begin{equation}\label{L3b.h2bis}
\int_\omega \phi(x) J_x w(x)\, dx \ge n\pi -1 \quad
\mbox{for some $\phi \in W^{1,\infty}_0(B(r^*))$ with $Lip(\phi) \le 4/r^*$,}
\end{equation}
where $r^* :=  \min\{1,  \dist(0,\partial \omega)\}$.

Then if $\e$ is sufficiently small, there exists $p = (p_1,\ldots, p_n)$ such that
\begin{equation}
\| J_xw - \pi \sum \delta_{p_i} \|_{W^{-1,1}(\omega)} \le  \e^{a}
\label{L3bbc1}\end{equation}
and
\begin{equation}
\dist(p_i, \partial \omega)\ge \frac {r^*} {8}\mbox{ for all }i, \qquad
|p_i - p_j| \ge \e^{b} \quad\mbox{ for all $i\ne j$}
\label{L3bbc2}\end{equation}
\label{L3b.bis}\end{lemma}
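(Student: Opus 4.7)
The plan is to establish Lemma \ref{L3b.bis} via a vortex--ball construction in the spirit of Sandier and Jerrard, combined with the Bethuel--Brezis--H\'elein-type renormalized energy estimates of \cite{JSp2}.

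First, I would apply the standard vortex ball construction to $w$ at a macroscopic scale $r_0 \le r^*/16$, producing a finite, pairwise disjoint family $\{B(p_i, r_i)\}_{i=1}^N$ such that $\{|w|\le 1/2\}\cap \om \subset \cup_i B(p_i, r_i)$, with nonzero degrees $d_i = \deg(w/|w|; \partial B(p_i,r_i))$, and satisfying
\[
\int_{B(p_i,r_i)} e^{2d}_\e(w)\, dx \ \ge\ \pi |d_i|\bigl(\logeps - |\log r_0|\bigr) - C
\]
together with the Jacobian approximation $\norm{J_x w - \pi\sum_i d_i \delta_{p_i}}_{W^{-1,1}(\om)} \le C r_0$. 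Summing over $i$ and comparing with \eqref{L3b.h1bis} yields $D := \sum_i |d_i| \le n+\theta+o_\e(1)$; since the $d_i$ are nonzero integers, this forces $D \le n$ as soon as $\theta<1$ and $\e$ is small enough.

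Next, I would extract structural information from the one-sided hypothesis \eqref{L3b.h2bis}: substituting the Jacobian approximation gives $\pi\sum_i d_i \phi(p_i) \ge n\pi - 1 - C\norm{\phi}_{W^{1,\infty}} r_0$, and since the specific $\phi$ built in Lemma \ref{GvsFlat} satisfies $0\le\phi\le 1$, the left-hand side is bounded above by $\pi\sum_{d_i>0}d_i \le \pi D\le n\pi$. The integer-valuedness of $\sum_{d_i>0}d_i$ then forces $\sum_{d_i>0}d_i = n$ and $\sum_{d_i<0}d_i = 0$, so exactly $n$ balls survive, each carrying degree $+1$. Rearranging the same inequality gives $\sum_i (1-\phi(p_i)) \le 1/\pi + o(1)$, which, in view of the explicit radial structure of $\phi$ and the Lipschitz bound $\norm{\nabla\phi}_\infty\le 4/r^*$, confines every $p_i$ to a set strictly interior to $B(r^*)$, delivering $\dist(p_i, \partial\om) \ge r^*/8$.

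The separation between distinct $p_i, p_j$ is the subtlest point, and I would obtain it by invoking the sharp renormalized energy lower bound of \cite{JSp2}: because $\om$ is simply connected and all degrees equal $+1$, one has
\[
\int_\om e^{2d}_\e(w)\, dx \ \ge\ n(\pi\logeps+\gamma) + W_\om(p_1,\ldots,p_n) - o(1).
\]
Matching this against \eqref{L3b.h1bis} gives $W_\om(p_1,\ldots,p_n) \le \pi\theta\logeps + C$; in view of \eqref{expforren} and the uniform boundedness of $H_\om$ on the compact set already shown to contain the $p_i$, this yields $-\sum_{i\ne j}\log|p_i-p_j| \le \theta\logeps + C$, so $|p_i-p_j| \ge \e^{n(n-1)\theta}e^{-C}$. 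Taking $b := 2n(n-1)\theta$, with $\theta$ chosen small enough, delivers \eqref{L3bbc2}. Finally, the Jacobian bound \eqref{L3bbc1} with exponent $\e^a$ is obtained by re-running the ball construction at scale $r_0 = \e^a$ with $a<b$ --- possible precisely because we now know the vortices are separated by at least $\e^b$ --- and propagating this scale through the standard Jacobian estimate.

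The main obstacle will be coordinating the three parameters: $\theta$ must be small enough both to enforce the integer constraint $D\le n$ and to make the separation exponent $b$ usefully positive; $a<b$ is forced so that the Jacobian error does not swamp the separation; and each of these thresholds must be independent of $\e$. Beyond this constant-juggling, the most delicate technical step is extracting the location and multiplicity information from the purely one-sided vorticity hypothesis \eqref{L3b.h2bis}, which provides no direct control of $|\int \phi J_x w|$ --- only of its signed value --- so that the integer-valuedness of the degrees must be exploited with some care to rule out cancellation between positive and negative vortices.
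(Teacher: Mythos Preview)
Your argument has a genuine circularity at the separation step. You propose to deduce $|p_i - p_j| \ge \e^b$ by invoking the sharp renormalized energy lower bound of \cite{JSp2} in the form
\[
\int_\omega e^{2d}_\e(w)\,dx \ge n(\pi\logeps+\gamma) + W_\omega(p_1,\ldots,p_n) - o(1),
\]
but the hypotheses of that result (Theorem~2 in \cite{JSp2}) already require \emph{both} a Jacobian approximation at scale $s_\e$ \emph{and} a minimum pairwise separation $\rho_\alpha$ between the $p_i$, with error terms controlled by a power of $s_\e/\rho_\alpha$. This is exactly what the paper states immediately after Lemma~\ref{L3b.bis}: conclusions \eqref{L3bbc1}, \eqref{L3bbc2} are what supply the hypotheses for \cite{JSp2}, and only then does \eqref{L3a.sharper} follow. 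So you cannot use that estimate to produce the separation in the first place.

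There is a second gap hidden in your Step~3: from $\sum |d_i|\le n$, $d_i\ne 0$, and $\sum_{d_i>0} d_i = n$ you correctly get that all $d_i>0$ and $\sum d_i = n$, but this does \emph{not} yield ``exactly $n$ balls, each of degree $+1$'': a single ball with $d_1=n$ is entirely consistent with these constraints. Ruling this out is precisely the same difficulty as the separation, and the paper handles both simultaneously (Steps~2--4 of the proof) by a multi-scale expansion argument: one grows annuli around the essential vortex balls across a hierarchy of scales $\wt r^j \ll \wt R^j$, and uses that the energy on an annulus of degree $d$ scales like $\pi d^2\log(\wt R^j/\wt r^j)$. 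If some $d_k\ge 2$, or if two degree-$1$ balls lie within $\e^b$, one picks up a factor $(n+2)$ in place of $n$ over a logarithmic range of order $\logeps$, contradicting \eqref{L3b.h1bis}. This $d^2$-versus-$d$ mechanism is the missing idea in your proposal.

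Finally, a minor point: you write ``re-running the ball construction at scale $r_0=\e^a$ with $a<b$'', but for balls of radius $\sim\e^a$ not to merge vortices separated by $\e^b$ you need $\e^a \ll \e^b$, i.e.\ $a>b$, which is also what the lemma statement demands.
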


To obtain Lemma \ref{L3b} from the above, note that
if $z$ satisfies the hypotheses
of Lemma \ref{L3b}, then $w = u_\ep(\cdot, z)$
satisfies \eqref{L3b.h1bis}, \eqref{L3b.h2bis}. Indeed, \eqref{L3b.h1bis}
is exactly \eqref{L3b.h2}, and \eqref{L3b.h2bis} is a consequence of 
Lemma \ref{GvsFlat}, see \eqref{GvsF2}. Thus
\eqref{L3b.c1} \eqref{L3b.c2}
follow directly from \eqref{L3bbc1}, \eqref{L3bbc2}.
The final conclusion \eqref{L3a.sharper}
is then an immediate consequence\footnote{
If one wants to check this, note from \eqref{L3b.c1}, \eqref{L3b.c2}  that $\rho_\alpha \ge \frac 12\e^b$,
and that one may take $s_\e = \e^a$, where the notation $\rho_\alpha$ and $s_\e$ appears in 
\cite{JSp2}. From this one easily checks that hypotheses
in \cite{JSp2} relating $s_\ep$ and $\rho_\alpha$ are satisfied here.
Note also that
$\Sigma^\ep_\Omega(u;\alpha, d)$
appearing in \cite{JSp2} is defined to be
$\int_{\Omega} e^{2d}_\ep(u))dx \ - \  n(\pi\vert \log\e \vert +\gamma) +  W_{\omega}(\alpha_1,\ldots,\alpha_n)
$ when $d = (1,\ldots, 1)$, which is the relevant case here.
So conclusion \eqref{L3a.sharper} is exactly a lower
bound for $\Sigma^\ep_\omega(u(\cdot, z), p^\ep, d)$ with $d$ as above.}
 of 
Theorem 2 in \cite{JSp2}, whose
hypotheses are implied by  
\eqref{L3b.c1}, \eqref{L3b.c2}.

Throughout the proof of Lemma \ref{L3b.bis}, we will assume
that $w$ is smooth. The general case follows from this by a standard
mollification argument.

Our proof relies on a vortex ball construction, as introduced by \cite{Jlower} and \cite{S}. 
We recall some ingredients that we will need below.
Our presentation mostly follows that of \cite{Jlower} and \cite{JSo}, which can also
be used as sources, adapted to our needs, for background on topics such as the degree
$\deg(w;\partial O)$.

We will use the notation
\begin{align}
S  &:= \{x\in \omega : |w(x)| \le \frac 12 \}, \label{S.def}
\\
S_E &:= \cup \{ \mbox{components $S_i$ of $S$}  \ : \ \deg(w;\partial S_i)\ne 0 \}.\label{SE.def}
\\
S_E^\ep &:= \cup \{ \mbox{components $S_i$ of $S$}  \ : \ \deg(w;\partial S_i)\ne 0,\ \dist(S_i,\partial\omega)\ge \ep \}.\label{SEep.def}
\end{align}

If $O$ is an open subset of $\omega$ such that $\partial O \cap S_E=\emptyset$, then
\begin{equation}\label{dg.def}
\dg(w; \partial O) := \sum \big\{ \deg(u;\partial S_i)  : \mbox{components $S_i$ of $S_E$ such that }S_i\subset O \big\}.
\end{equation}
We also define
\begin{align}
\label{lambdastandard.def}\lambda_\ep(r,d) 
&:= \min_{m\in [0,1]} \left(\frac {m^2 d^2  \pi}r + \frac 1{c\e}(1-m)^2\right).
\\
\Lambda_\ep(s)
&:= \ \int_0^s ( \lambda_\e(r;1) \wedge \frac 1{C\e})\, dr.
\nonumber
\end{align}

\begin{lemma}\label{eondb}
Assume that $B(r,x)\subset \omega$ for some $r\ge \ep$, and that $|\deg(w;\partial B(r,x))| = d>0$.
Then
\begin{equation}
\int_{\partial B(r,x)}e^{2d}_\e(w) \, dx  \  \ge \    \lambda_\ep(r,d)\  \ \ge \  \lambda_\ep(\frac rd, 1).
\label{gllbd.1}\end{equation}
\end{lemma}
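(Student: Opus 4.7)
The plan is to carry out the standard one-dimensional vortex estimate on the circle $\partial B(r,x)$ by decomposing the Ginzburg--Landau energy into a modulus piece and a phase piece, and then optimizing in $m := \min_{\partial B(r,x)}|w|$.

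First I would observe that since $|\deg(w;\partial B(r,x))|=d>0$ is well-defined, $w$ does not vanish on $\partial B(r,x)$, so $m>0$. Writing $w=\rho e^{i\varphi}$ one has the pointwise identity $e^{2d}_\ep(w)=e_\ep(|w|)+\tfrac12|w|^2|\nabla\varphi|^2$; retaining only tangential components on the circle gives
\[
\int_{\partial B(r,x)} e^{2d}_\ep(w)\, d\calH^1 \ \ge \ \int_{\partial B(r,x)} e_\ep(|w|)\,d\calH^1 \ + \ \tfrac12\int_{\partial B(r,x)} \rho^2|\partial_\tau\varphi|^2\,d\calH^1.
\]
The lift $\varphi$ need not be single-valued on the circle, but $|\partial_\tau\varphi|$ is defined pointwise wherever $w\ne 0$, so the decomposition is rigorous.

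For the angular kinetic term, the degree condition gives $\int_{\partial B(r,x)}|\partial_\tau\varphi|\,d\calH^1\ge 2\pi d$. Using $\rho\ge m$ and Cauchy--Schwarz on the circle of length $2\pi r$,
\[
\tfrac12\int_{\partial B(r,x)}\rho^2|\partial_\tau\varphi|^2\,d\calH^1 \ \ge \ \frac{m^2}{4\pi r}\Big(\int_{\partial B(r,x)}|\partial_\tau\varphi|\,d\calH^1\Big)^{2} \ \ge \ \frac{\pi\,d^2 m^2}{r}.
\]
For the modulus term I invoke the arclength estimate \eqref{modulus.bound}: since $\calH^1(\partial B(r,x))=2\pi r\ge \ep$ and $\|1-|w|\|_{L^\infty(\partial B(r,x))}\ge 1-m$,
\[
\int_{\partial B(r,x)} e_\ep(|w|)\,d\calH^1 \ \ge \ \frac{c}{\ep}(1-m)^2.
\]
Adding these bounds and relaxing from the specific value $m=\min|w|$ to the infimum over $m\in[0,1]$ yields the first assertion $\int_{\partial B(r,x)}e^{2d}_\ep(w)\,d\calH^1\ge \lambda_\ep(r,d)$.

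The second inequality $\lambda_\ep(r,d)\ge \lambda_\ep(r/d,1)$ is immediate from the definitions: since $d\ge 1$ one has $d^2\ge d$, so the kinetic term in the expression defining $\lambda_\ep(r,d)$ dominates the corresponding term in $\lambda_\ep(r/d,1)=\min_{m}\big(\tfrac{\pi m^2 d}{r}+\tfrac{(1-m)^2}{c\ep}\big)$, while the potential term is identical. The only real subtlety in the argument is the local (rather than global) nature of the phase lift $\varphi$, which is handled as noted above; beyond that, the proof is a routine combination of Cauchy--Schwarz and the modulus bound.
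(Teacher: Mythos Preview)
Your proof is correct and follows precisely the approach the paper indicates: it refers the reader to \cite{Jlower}, Theorem 2.1, or to the ``very similar argument'' in Step~2 of Lemma~\ref{L8}, which is exactly the decomposition into modulus and phase parts, Cauchy--Schwarz on the circle, and the modulus bound \eqref{modulus.bound} that you carry out. The verification of the second inequality $\lambda_\ep(r,d)\ge\lambda_\ep(r/d,1)$ via $d^2\ge d$ is also the standard one.
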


For a proof, see for example \cite{Jlower} Theorem 2.1,
or consult  Lemma \ref{L8} above for a very similar argument.

By integrating  \eqref{gllbd.1}, one obtains lower
bounds for the energy $e^{2d}_\e(w)$ on 
an annulus on which one has some information about the degree.
These bounds are naturally
expressed in terms of $\Lambda_\ep$. The main point of the
vortex ball construction is to assemble these estimates in a clever way. 
These arguments lead for example to the following:

\begin{lemma}\label{L.vballs}
For all $\sigma \ge  r_0 := C \e \int_\omega e^{2d}_\e(w)dx$,
there exists a collection $\calB(\sigma) = \{ B^\sigma_k\}_{k=1}^{k(\sigma)}$ 
of disjoint balls such that 
\begin{align}
S_E^\ep&\subset \cup_k B^\sigma_k
\label{vballs1.bis}\\
\int_{B^\sigma_k\cap \om} e^{2d}_\e(w)\, dx  &\ge \frac {r_k^\sigma}\sigma \Lambda_\ep (\sigma),\qquad\quad\hspace{4em}
\mbox{ for }
\, r^\sigma_k := \operatorname{radius}(B^\sigma_k)
\label{vballs2.bis}\\
r_k^\sigma &\ge \sigma |d^\sigma_k|
\qquad\mbox{ if }B^\sigma_k\subset \omega , \qquad
\mbox{ for }
d^\sigma_k := \dg(w, \partial B^\sigma_k)  \,.
\label{vballs3.bis}
\end{align}
Moreover, if $x^\sigma_k$ denotes the
center of $B^\sigma_k$, then
\begin{equation}\label{balls.flatest}
\| Jw - \pi \sum d_k\delta_{x^\sigma_k} \|_{W^{-1,1}(\omega)} \le 
C ( r_0 + \sum_k r_k^\sigma) \int_\omega e^{2d}_\e(w)\,dx.
\end{equation}
Finally, $\Lambda_\ep(\sigma)\ge \pi \log \frac\sigma \ep - C$ for all $\sigma$.
\end{lemma}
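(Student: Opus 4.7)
\medskip

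The plan is to establish the lemma by the standard vortex ball construction of Sandier and Jerrard, with careful bookkeeping of the relationship between radius, degree, and accumulated energy. First I would construct an initial ball cover $\{B_i^0\}_{i\in I^0}$ of $S_E^\ep$ by invoking Proposition~3.3 of \cite{Jlower}: a finite disjoint collection with $r_i^0 \ge \ep$, $S_E^\ep \subset \cup_i B_i^0$, and
\[
\int_{B_i^0\cap S_E^\ep} e^{2d}_\ep(w)\,dx \ge \frac{c_0}{\ep}r_i^0.
\]
Summing this over $i$ and using $|S_E^\ep|\le \om$ one finds $\sum_i r_i^0 \le C \ep \int_\om e^{2d}_\ep(w)\,dx$, which dictates the choice of the constant in the definition of $r_0$. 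Since $\frac{c_0}{\ep} \ge \frac{\Lambda_\ep(r_0)}{r_0}$ for this choice, the initial collection already satisfies \eqref{vballs2.bis}--\eqref{vballs3.bis} at scale $\sigma=r_0$.

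Next, for $\sigma > r_0$ I would run the continuous growth-and-merge process: one simultaneously dilates each ball about its center until two balls become tangent, at which moment they are replaced by a single ball whose radius equals the \emph{sum} of the two merging radii and whose center is placed so that the new ball contains the union. This preserves $S_E^\ep \subset \cup_k B_k^\sigma$ and the disjointness of the collection, keeps the total sum of radii nondecreasing, and, by additivity of the degree across merges, yields $r_k^\sigma \ge \sigma |d_k^\sigma|$ for any ball remaining compactly contained in $\om$ (this is the content of \eqref{vballs3.bis}). The key analytic input is reserved for the pure growth phase: during the dilation of a single ball $B(r,x)\subset \om$ of current degree $d$, Lemma~\ref{eondb} combined with the coarea formula produces
\[
\int_{B(r,x)} e^{2d}_\ep(w)\,dx \ \ge \ \int_0^{r} \lambda_\ep(s,|d|)\,ds \ \ge \ |d|\int_0^{r/|d|} \lambda_\ep(t,1)\,dt \ = \ |d|\,\Lambda_\ep\!\left(\tfrac{r}{|d|}\right).
\]
Since one checks directly from the explicit form of $\lambda_\ep$ that $s\mapsto s^{-1}\Lambda_\ep(s)$ is nonincreasing, and since $r_k^\sigma/|d_k^\sigma|\ge \sigma$, this yields \eqref{vballs2.bis} on each ball that stays inside $\om$; merging preserves the bound by additivity of the left-hand side. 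The concluding estimate $\Lambda_\ep(\sigma)\ge \pi\log(\sigma/\ep)-C$ is then immediate from the explicit minimizer $m^*_\ep(r) = 1 - O(\ep/r)$ of \eqref{lambdastandard.def}, which gives $\lambda_\ep(r,1)\ge \pi/r - O(\ep/r^2)$ for $r\ge \ep$.

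The main obstacle, and the step requiring the most care, is the flat norm estimate \eqref{balls.flatest}. The plan is as follows: for $\phi\in W^{1,\infty}_0(\om)$ with $\|\phi\|_\infty,\|\nabla\phi\|_\infty\le 1$, split
\[
\int_\om \phi\left(Jw - \pi\sum_k d_k^\sigma \delta_{x_k^\sigma}\right) dx \ = \ \sum_k \int_{B_k^\sigma} \phi\,Jw - \pi d_k^\sigma \phi(x_k^\sigma) \ + \ \int_{\om\setminus \cup_k B_k^\sigma}\phi\,Jw.
\]
Inside each $B_k^\sigma$, integration by parts using $Jw = \frac12 d(j(w))$ and the degree identity $\int_{\partial B_k^\sigma}j(w)\cdot\tau = 2\pi d_k^\sigma + O(\int_{\partial B_k^\sigma}(1-|w|^2)|\nabla w|)$ yields a contribution bounded by $C r_k^\sigma$ times the local energy. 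Outside the balls one has $|w|\ge 1/2$ and a straightforward estimate (compare \eqref{dstar.J}) shows that $\int Jw$ there is controlled by $r_0$ times the total energy, since $S_E^\ep\subset\cup B_k^\sigma$ and the remaining components of $S$ contribute trivially. Summing yields \eqref{balls.flatest}. The only delicate point is handling balls that touch $\partial\om$; here one exploits that $S_E^\ep$ excludes components within distance $\ep$ of $\partial\om$, so such boundary balls carry degree zero relative to $\dg(w;\cdot)$ and can be discarded from the point-mass approximation at the cost of an extra $O(r_0)$ term absorbed into the right-hand side.
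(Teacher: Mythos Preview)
Your outline of the growth-and-merge construction is essentially correct and matches what the paper invokes by citation (Propositions~6.2 and~6.4 of \cite{JSo}); the paper does not spell out these steps but simply refers to the literature. Your derivation of \eqref{vballs2.bis}, \eqref{vballs3.bis} and the lower bound on $\Lambda_\ep$ is the standard one.

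There is, however, a genuine gap in your treatment of \eqref{balls.flatest}. You assert that outside $\cup_k B_k^\sigma$ one has $|w|\ge 1/2$, but the balls only cover $S_E^\ep$, not all of $S$: zero-degree components of $S$ (and components within $\ep$ of $\partial\omega$) may lie entirely outside $\cup_k B_k^\sigma$. Saying these ``contribute trivially'' is not justified; while $\int_{S_i}Jw=0$ on a zero-degree component $S_i$, the weighted integral $\int_{S_i}\phi\,Jw$ need not be small, and you have no control on $\operatorname{diam}(S_i)$ to exploit the Lipschitz bound on $\phi$. The paper handles this differently: before running the Jacobian estimate, it augments $\{B_k^\sigma\}$ to a collection covering \emph{all} of $S$, using Sandier's covering lemma to add extra balls whose radii sum to at most $r_0$. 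In the complement of this enlarged collection one genuinely has $|w|\ge 1/2$, and then the standard argument (as in \cite[Theorem~6.1]{SandSerfbook}) yields the bound with the prefactor $r_0+\sum_k r_k^\sigma$; this is precisely why the $r_0$ term appears on the right-hand side of \eqref{balls.flatest}.
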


\begin{proof}
In Proposition 6.4 in \cite{JSo} it is proved that a collection of
balls satisfying \eqref{vballs1.bis} -- \eqref{vballs3.bis} exists for every $\sigma$ larger than some $r_0$.
The fact that one may take $r_0= C \e \int_\omega e^{2d}_\e(w)dx$
follows  from the proofs of Propositions 6.2 and 6.4,  \cite{JSo}. 

To prove \eqref{balls.flatest}, one modifies  $\{ B^\sigma_k\}$ to obtain
a collection of balls that covers all of $S$, and
whose radii sum to at most $r_0+ \sum r^\sigma_k$.
This relies on a lemma due to \cite{S}, which shows that
$S$ may be covered by a collection of disjoint balls
whose radii sum to at most $r_0$.
Then, \eqref{balls.flatest} follows from standard arguments, 
see for example \cite[Theorem 6.1]{SandSerfbook}.
\end{proof}




With the above result as our starting point, we can now present the

\begin{proof}[proof of Lemma \ref{L3b.bis} ]
We fix positive numbers  $\theta, a, b$ such that
\[
\frac {n+\theta}{1-2a} < n+1, \qquad
\theta \le  b < a.
\]
For example, if $2b = a = \frac 1{3(n+1)}$ then both inequalities may be satisfied by
a sufficiently small positive $\theta$.

{\bf Step 1}.
First, consider the collection of balls $\calB(\sigma)$ from Lemma \ref{L.vballs},
with $\sigma = \ep^{2a}$.
We will write\ $\{ x^\sigma_i\}$ for the center of 
$B^\sigma_i$. 
Then 
\eqref{vballs2.bis} implies that 
\begin{equation}
\sum_k \int_{B^\sigma_k\cap \omega} e^{2d}_\e(w)dx
\ge
\sum_k \frac {r^\sigma_k}\sigma \Lambda_\ep(\sigma) \ge 
\sum 
\frac{r^\sigma_k}\sigma 
(1-2a)\left(   \pi\logeps - C \right) .
\label{L3b.pf00}\end{equation}
Then from  \eqref{L3b.h1bis} and  the choice of $a$,  we find that
for small enough $\e$,
\begin{equation}\label{L3b.pf0}
\sum_k \frac{r^\sigma_k}\sigma
\le  \frac{(n+\theta) }{(1- 2a ) }\frac{\logeps}{\logeps-C} < n+1\qquad\mbox{ and thus }\quad
\sum_k |d^\sigma_k|  \le n.
\end{equation}
On the other hand,  from \eqref{balls.flatest}, \eqref{L3b.pf0} and \eqref{L3b.h1bis}
we have
\begin{equation}
\| J_x w - \pi \sum d_i^\sigma \delta_{x^\sigma_i} \|_{W^{-1,1}(\omega)} \le   C\sigma(n+1) \int_\omega e^{2d}_\e(w) \le \e^a
\label{L3b.pf1}\end{equation}
if $\e$ is small enough.
This and  \eqref{L3b.h2bis} imply that 
\[
n\pi - 1 \le  \pi\sum_k d_k^\sigma \phi(x^\sigma_k) + 4 \e^a/r^* \ .
\]
If $\e$ is small enough, then by comparing this with \eqref{L3b.pf0}
and recalling that $0\le \phi \le 1$,
we see that
\begin{equation}
\mbox{$d^\sigma_k\ge 0$ for all $k$, }\qquad\qquad
\sum_k d^\sigma_k = n
\label{L3b.pf2}\end{equation}
and
\[ 
\mbox{$\phi(x^\sigma_k) \ge 1 - \frac 1\pi > \frac 12$ for all $k$ such that 
$d^\sigma_k >0$.}
\] 
Since $\mbox{Lip}(\phi) \le 4/r^*$ and 
supp$(\phi)\subset B(1)$, it follows that 
\begin{equation}
|x^\sigma_i| \le  \frac 78 r^* \qquad\mbox{ for $k$ such that }d^\sigma_k>0.
\label{L3b.pf4}\end{equation}

We also remark that \eqref{vballs3.bis}, \eqref{L3b.pf0}, and \eqref{L3b.pf2}
imply that
\[ 
n \sigma \le \sum r^\sigma_k < (n+1)\sigma \quad\mbox{ for small }\e
\] 
and thus
\begin{equation}\label{L3b.pf5}
d^\sigma_k \sigma \le  r^\sigma_k <  (d_k^\sigma +1)\sigma \quad\mbox{ for all }k,
\qquad\qquad
\sum_{d^\sigma_k=0} r^\sigma_k < \sigma.
\end{equation}
In particular, $\max_k r^\sigma_k \le (n+1)\sigma$.

To complete the proof of the lemma, it suffices to show that
\begin{equation}\label{L3b.mainclaim}
d^\sigma_k \le 1 \mbox{ for all $k$}, \qquad\mbox{ and } \quad
|x^\sigma_k - x^\sigma_\ell| \ge \e^b \mbox
{ if }d^\sigma_k, d^\sigma_\ell \ne 0\mbox{ and }k\ne \ell.
\end{equation}
Indeed, once we know \eqref{L3b.mainclaim},
then it follows from \eqref{L3b.pf2}
that there are exactly $n$ points $x^\sigma_k$
for which $d^\sigma_k$ is nonzero,
and that $d^\sigma_k = 1$ for all of these.
We take $\{ p_1,\ldots, p_n\}$ to be these
points. Then \eqref{L3b.pf4}, \eqref{L3b.mainclaim}
imply that \eqref{L3bbc2} holds, and 
\eqref{L3b.pf1} reduces to \eqref{L3bbc1}.

\medskip

{\bf Step 2}.
To start the proof of \eqref{L3b.mainclaim}, let
\[
\wt \calB^0 := \{ B^\sigma_k \in \calB(\sigma) \ : d^\sigma_k \ne 0\}.
\]
We claim that there is a collection 
$\widetilde \calB^1 = \{ \wt B^1_k\}$ of (at most $n$) closed pairwise disjoint balls
such that 
\begin{equation}
\bigcup_{\widetilde \calB^0} B^\sigma_k \subset \bigcup_{\wt \calB^1}\wt B^1_k
\label{sb1}\end{equation}
and every ball in $\wt \calB^1$ has the same radius $\wt r^1$, with
\begin{equation}
\wt r^1 \le  C(n)\sigma.
\label{sb2}\end{equation}
Such a collection can be found as follows:
\begin{itemize}
\item first replace every $B^\sigma_k\in \wt\calB^0$
by a concentric ball of radius  $\max_{\wt \calB^0} r^\sigma_k \le (n+1)\sigma$,
\item enclose intersecting balls in larger balls, without increasing the sum of the
radii, to obtain a new pairwise disjoint collection, with fewer balls.
\item repeat: increase the radii of the remaining balls, as necessary, until they are
the same size, then combine balls that intersect. After finitely many steps (at most
$n-1$ mergings) this produces a collection satisfying \eqref{sb1}, \eqref{sb2}
\end{itemize}
Let
\[
\wt R^1 :=  \sup 
 \Big\{ R \in ( \wt r^1, \frac 1{8} ) \ : \ \{ B(R, \wt x^1_i)\}\mbox{  are pairwise disjoint } \Big\}
\]
where $\{ \wt x^1_i \}$ denotes the centers  of  the balls in $\calB^1$.
We now proceed inductively, using the same procedure to find collections $\wt B^j = \{\wt B^j_i \}$ of (at most $n+1-j$)
balls such that
for $j\ge 2$,
\begin{equation}
\bigcup_{\widetilde \calB^{j-1}} B(\wt R^{j-1}, \wt x^{j-1}_k )\subset \bigcup_{\wt \calB^j}\wt B^j_k
\label{sbj1}\end{equation}
and all balls are closed and pairwise disjoint, with the same radius
\begin{equation}
\wt r^j \le  C(n) \wt R^{j-1}
\label{sbj2}\end{equation}
and where
\[
\wt R^j :=  \sup 
 \Big\{ R \in ( \wt r^j, \frac 1{8} ) \ : \ \{ B(R, \wt x^j_i)\}\mbox{  are pairwise disjoint } \Big\}
\]

Let $J$ denote the first $j$ for which either $\wt R^j = \frac {r^*}{8}$ or $\wt r^{j+1} \ge \frac {r^*}{8}$.
With each step the number of balls decreases, and
if there is only one ball left, it can expand unimpeded, so it is clear that $J\le n$.
It follows from \eqref{L3b.pf4} that the interiors of all the balls are contained in $\omega$.
Note also that
\begin{equation}
\wt R^J \ge \frac 1{8C(n)}.
\label{L3b.pf5b}\end{equation}
This is clear if $\wt R^J = \frac {r^*}8$, and if $\wt r^{J+1} \ge \frac {r^*}8$ then it follows from 
\eqref{sbj2}.

To prove \eqref{L3b.mainclaim}, it now suffices to show that 
\begin{equation}
\mbox{ $\wt \calB^1$ consists of $n$ balls, all of degree $1$} ,\qquad \mbox{ and } \ \wt R^1 > 
\e^b/2.
\label{L3b.pf6}\end{equation}
{\bf Step 3}.
We now write $\wt A^j_k := B(\wt R^j, \wt x^j_k) \setminus B(\wt r_j, \wt x^j_k)$,
and we estimate the energy contained in these annuli.

Let us write $\wt d^j_k := \dg(u ; \partial \wt B^j_k) $, and note that 
\begin{equation}\label{djk.facts}
\mbox{ $\wt d^j_k>0$ for 
all $j,k$, \qquad 
$\sum_k \wt d^j_k = n$ for every $j$,
\qquad 
$\max_j \wt d^j_k \ge 2$ \ for  \ $j\ge 2$. }
\end{equation}
For every $j$ and $k$ we deduce from \eqref{gllbd.1} that
\begin{align}
\int_{\wt A^j_k} e^{2d}_\e(w) dx
&=
\int_{\wt r^j}^{\wt R^j} \int_{\partial B(r, \wt x^j_k)} e^{2d}_\e(w) d\calH^1 \ dr
\nonumber \\
&\ge
\int_{\wt r^j}^{\wt R^j} \int_{\partial B(r, \wt x^j_k)}\lambda_\e(r,\wt d^j_k) 
\,  {\bf 1}_{ r \not \in A^j_k} \ dr \ ,
\label{L3b.pf7}\end{align}
for
\[
A^j_k := \{ r\in (\wt r^j, \wt R^j ) : \dg(u, \partial B(r,\wt x^j_k) )\ne \wt d^j_k \} .
\]

In general, since $r\mapsto \lambda_\ep(r,d)$ is  a nonincreasing function for every $d$,
if $A$ is a measurable subset of an interval $(a,b)$, then
\[
\int_a^b \lambda_\ep(r,d) {\bf 1}_{r\not \in A} \, dr \ge \int_{a+|A|}^b \lambda_\ep(r,d)\, dr.
\]
Thus we would like to estimate the measure of $A^j_k$.
Toward this end, let 
\[
Z := \cup_{ \{ B^\sigma_k\in \calB(\sigma ) \, : \, d^\sigma_k = 0 \} } B^\sigma_k
\]
and note that if $\partial B(r,\wt x^j_k)  \cap Z = \emptyset$, then
$\dg(u, \partial B(r,\wt x^j_k))$ is well-defined and equals $\wt d^j_k$, 
as a consequence of the definition of $\dg$ together with \eqref{vballs1.bis}, \eqref{sb1}, 
\eqref{sbj1},  and the definition of $Z$.
So 
\[
A^j_k \subset \{ r\in (\wt r^j, \wt R^j ) : \partial B(r,\wt x^j_k) \cap Z \ne \emptyset \} .
\]
Since $Z$ is a union of balls whose radii sum to at most $\sigma$, see \eqref{L3b.pf5},
we conclude that $|A^j_k| \le 2\sigma$.
Thus for every $j,k$,
\begin{equation}\label{rjRj}
\int_{\wt A^j_k} e^{2d}_\e(w) dx
\ge \int_{\wt r^j_*}^{\wt R^j}\lambda_\ep(r, \tilde d^j_k) \, dr,
\qquad \wt r^j_* :=\min \{ \wt r^j+2\sigma, \wt R^j\} \overset{\eqref{sbj2}}\le C(n) \wt R^{j-1}
\end{equation}
Also, it is straightforward to check from the definition of $\lambda_\ep$ that
if $r\ge \e^a$ and $d \le n$, then
\[
\lambda_\e(r,d) \ge \frac{\pi d^2}r( 1 - C(n) \e^{1-a})
\]
Thus
\[
\sum_{j=1}^J \sum_k \int_{\wt A^j_k} e^{2d}_\e(w) \, dx
\ge (1 - C \e^{1-a}) \sum_j \pi \log(\frac {\wt R^j}{\wt r^j_*}) \left(\sum_k (d^j_k)^2 \right)
\]

{\bf Step 4}. We wish to show that neither of the conditions 
appearing in \eqref{L3b.mainclaim} can be violated.
We thus  consider two cases. 

{\bf Case 1: $d^1_k >1$ for some $k$.}
Then it follows from \eqref{djk.facts} that 
$\sum_k (d^j_k)^2 > n+2$ for all $j$, and hence
that
\begin{align*}
\sum_{j=1}^J \sum_k \int_{\wt A^j_k} e^{2d}_\e(w) \, dx
&\ge 
\pi(n +2) (1 - C \e^{1-a})  \sum_{j=1}^J \log(\frac {\wt R^j}{\wt r^j_*})
\\
&=
\pi (n+2)(1-C\e^{1-a}) 
\left[ \log (\frac{\wt R^J}{\wt r^1_*})
+ \sum_{j=2}^{J} \log (\frac{\wt R^{j-1}}{\wt r^{j}_*}) 
\right]
\end{align*}
But it follows from \eqref{rjRj}, 
\eqref{L3b.pf5b} and \eqref{sb2}
\[
\frac {\wt R^{j-1}}{\wt r^j_*} \ge \frac 1{C} , \qquad \frac {\wt R^J}{\wt r^1_*} \ge \frac 1{C \sigma}
= \frac 1{C\e^{2a}}
\]
for constants depending on $n$. It follows that
\[
\sum_{j=1}^J \sum_k \int_{\wt A^j_k} e^{2d}_\e(w) \, dx
\ge  \pi(n+2) 2a \logeps - C.
\]
Combining this with \eqref{L3b.pf00} we find that
\[
\int_\omega e^{2d}_\e(w) \ge  \pi(n + 4 a) \logeps - C,
\]
contradicting \eqref{L3b.h1bis} when $\ep$ is small enough.

{\bf Case 2: $d^1_k = 1$ for all $k$, but $\wt R^1 \le  \e^b/2$. }
This implies that $\wt r^2_* \le C e^b$. Then essentially the same arguments as
above show that
\begin{align*}
\sum_{j=1}^J\sum_k \int_{\wt A^j_k} e^{2d}_\ep(w) \,dx \ge
(1-C\e^{1-a}) \left[  n \pi \log(\frac {\wt R^1}{\wt r^1_*}) +
(n+2)\pi \sum_{j=2}^J \log(\frac {\wt R^j}{\wt r^j_*})
\right].
\end{align*}
Continuing to follow the previous case, from this one deduces that
\[
\int_\omega e^{2d}_\e(w) \ge  \pi(n+2b) \logeps - C,
\]
again contradicting \eqref{L3b.h1bis}.
This verifies \eqref{L3b.pf6}
and completes the proof of the lemma.
\end{proof}



\begin{thebibliography}{SCETC}



\bibitem{Aft-Riv}
A. Aftalion and T. Rivi\`ere, ``Vortex energy and vortex bending for a rotating Bose-Einstein condensate, {\it Phys Rev A.}, 64 (2001) 043611.

\bibitem{abo} 
G. Alberti, S. Baldo\ and\ G. Orlandi, ``Variational convergence for functionals of Ginzburg-Landau type'', {\it Indiana Univ. Math. J.}, 54 (2005), no.~5, 1411--1472.


\bibitem{BFT}  V. Barutello, D. Ferrario and S. Terracini,
``On the singularities of generalized solutions to n-body-type problems''. 
{\it Int. Math. Res. Not.} IMRN 2008, Art. ID rnn 069, 78 pp. 

\bibitem{BBH} F. Bethuel, H. Brezis and F. H\'elein, ``Ginzburg-Landau Vortices'', 
{\it Progress in Nonlinear Differential Equations and their Applications 13,} 
(Birkh\"{a}user Boston, Boston, MA, 1994). 

\bibitem{BBO} F. Bethuel, H. Brezis, and G.  Orlandi, 
`` Asymptotics for the Ginzburg-Landau equation in arbitrary dimensions",
{\it J. Funct. Anal.}  186  (2001),  no. 2, 432--520.


\bibitem{BBM} J. Bourgain, H. Brezis, and P.  Mironescu,  ``$H^{1/2}$ maps with values into the circle: minimal connections, lifting, and the Ginzburg-Landau equation",
{\it Publ. Math. Inst. Hautes Études Sci.}  No. 99  (2004), 1--115.

\bibitem{Chen} K. Chen, ``Existence and minimizing properties  of retrograde orbits to the three-body problem with various choices of masses'', {\it Ann. of Math. (2)} 167 (2008), no. 2, 325--348.

\bibitem{CM} A. Chenciner and R. Montgomery, ``A remarkable periodic solution of the three body problem in the case  of equal masses'', {\it Ann. of Math.} 152 (1999),  881--901.  


\bibitem{AC.ARMA} A. Contreras, ``On the First critical field in Ginzburg-Landau theory for thin shells and manifolds'', {\it Archive for Rational Mechanics and Analysis}, Volume 200, Issue 2, pp.563--611. (2011). 

\bibitem{DK} M. del Pino and M. Kowalczyk, ``Renormalized energy of interacting Ginzburg-Landau vortex filaments'', 
{\it J. Lond. Math. Soc. (2)}  77 (2008), no. 3, 647--665.

\bibitem{dPKPW}
M. del Pino, M. Kowalczyk, F.  Pacard,  and J Wei,
``The Toda system and multiple-end solutions of autonomous planar elliptic problems'',
{\it Adv. Math.} 224 (2010), no. 4, 1462 - 1516. 

\bibitem{dPKW}
M. del Pino, M. Kowalczyk,   and J Wei,
``The Toda system and clustering interfaces in the Allen-Cahn equation", 
{\it Arch. Ration. Mech. Anal.} 190 (2008), no. 1, 141 -- 187. 

\bibitem{dPKWY} 
M. del Pino, M. Kowalczyk,  J. Wei, and J. Yang,
``Interface foliation near minimal submanifolds in Riemannian manifolds with positive Ricci curvature."
{\it Geom. Funct. Anal.} 20 (2010), no. 4, 918--957. 

\bibitem{Federer}
H. Federer, ``Geometric Measure Theory"
{\it  Die Grundlehren der mathematischen Wissenschaften, Band 153},
Springer-Verlag, New York 1969.

\bibitem{FT} D. Ferrario and S. Terracini, ``On the existence of collisionless  equivariant minimizers for the classical n-boby problem'',
{\it Invent. Math.} 155 (2004) 305--362.  

\bibitem{Jlower} R. Jerrard, ``Lower bounds for generalized Ginzburg-Landau functionals'', SIAM Math. Anal. {\bf 30} (4), 721--746, 1999.

\bibitem{waveJerrard} R. Jerrard, ``Vortex dynamics for the Ginzburg-Landau wave equation'', {\it Calc. Var. and PDE,} vol. 9, 1-30, 1999.

\bibitem{JSo} R. Jerrard and H. Soner, ``The Jacobian and the Ginzburg-Landau energy'', {\it Calc. Var and PDE,} 14, 151-191, 2002.

\bibitem{JSp1} R. Jerrard and D. Spirn, ``Refined Jacobian estimates for Ginzburg-Landau functionals'', {\it Indiana Univ. Math. Jour.,} vol. 56, 135-186, 2007.

\bibitem{JSp2} R. Jerrard and D. Spirn, ``Refined Jacobian estimates and Gross-Pitaevsky vortex dynamics'', {\it Arch. Rat. Mech. Anal.,} vol. 190, 425-475, 2008.

\bibitem{JSt} R. Jerrard and P. Sternberg, ``Critical points via Gamma-convergence: general theory and applications'', {\it Jour. Eur. Math. Soc.,} vol. 11, no. 4, 705-753, 2009.

\bibitem{KPV}   C. Kenig, G. Ponce and L. Vega, ``On the interaction of nearly parallel vortex filaments'',
{\it Comm. Math. Phys.} 243 (2003) 471--483. 


\bibitem{KMD}  R. Klein, A. Majda, and K. Damodaran,   ``Simplified
equations for the interaction of nearly parallel vortex filaments'',
{\it J. Fluid Mech.} 228 (1995), 201--248.

\bibitem{LinWave}F. H. Lin, ``Vortex dynamics for the nonlinear wave equation", 
{\it Comm. Pure Appl. Math.} 52 (1999), no. 6, 737?761.

\bibitem{LR} F.-H. Lin and T. Rivi\`re, ``A quantization property for static Ginzburg-Landau vortices",
{\it Comm. Pure Appl. Math.}  54  (2001),  no. 2, 206--228.
 
\bibitem{LM} P. L. Lions and A. Majda. ``Equilibrium Statistical Theory for Nearly Parallel Vortex Filaments '' , {\it Communications on Pure and Applied Mathematics}, Vol. LIII, 0076--0142, (2000).

\bibitem{MSZ} J. A. Montero, P. Sternberg, and W.P. Ziemer,
``Local minimizers with vortices in the Ginzburg-Landau system in three dimensions". 
{\it Comm. Pure Appl. Math.} 57 (2004), no. 1, 99--125.


\bibitem{PR} F. Pacard and T. Rivi\`ere, ``Linear and nonlinear aspects of vortice. The Gunzburg-Landau model'', 
{\it Progress in Nonlinear Differential Equations and their Applications 39,} 
(Birkh\"{a}user Boston, Boston, MA, 2000). 

\bibitem{Riv}
T. Rivi\`ere, ` Line vortices in the $U(1)$-Higgs model",
{\it  ESAIM Contr\^ole Optim. Calc. Var.}  1  (1995/96), 77--167.

\bibitem{S} E. Sandier.  ``Lower bounds for the energy of unit vector fields and applications'',
{\it J. Funct. Anal.} 152 (1998), 379--403.

\bibitem{S2} E. Sandier, `` Ginzburg-Landau minimizers from $\R^{n+1}$ to $\R^n$ and minimal connections",
{\it Indiana Univ. Math. J.}  50  (2001),  no. 4, 1807--1844.

\bibitem{SSprod}
E. Sandier and S.  Serfaty, `` A product-estimate for Ginzburg-Landau and corollaries",
{\it J. Funct. Anal.}  211  (2004),  no. 1, 219--244.

\bibitem{SandSerfbook}
E. Sandier and S. Serfaty.
``Vortices in the magnetic Ginzburg-Landau model".
{\em Progress in Nonlinear Differential Equations and their Applications}, 70. 
Birkh\"{a}user Boston, Inc., Boston, MA, 2007.


\bibitem{Solomon} B. Solomon, ``A new proof of the closure theorem for integral currents'', {\it Indiana Univ. Math. J.,} 33 no.
3:393--418, 1984.

\end{thebibliography}
\end{document}